\NeedsTeXFormat{LaTeX2e}
\documentclass[10pt,reqno]{amsart}

\usepackage{latexsym,amsmath}
\usepackage{amsfonts}
\usepackage{amssymb}
\usepackage{fixmath}

\usepackage{fullpage}
\usepackage{graphicx}
\usepackage[colorlinks=true,citecolor=blue]{hyperref}
\usepackage[dvipsnames]{xcolor}

\usepackage{enumerate}

\usepackage[T1]{fontenc}
\usepackage{fourier}
\usepackage{bbm}

\usepackage[boxed, linesnumbered, noend, noline]{algorithm2e}
\SetKwInput{KwData}{Input}
\SetKwInput{KwResult}{Output}

\numberwithin{equation}{section}

\renewcommand{\tanh}{\operatorname{th}}
\renewcommand{\sinh}{\operatorname{sh}}
\renewcommand{\cosh}{\operatorname{ch}}
\newcommand{\atanh}{\operatorname{ath}}
\newcommand{\Zbal}{Z_{\GG,\mathrm{bal}}}

\newcommand\betaf{\beta_{\mathrm f}}
\renewcommand\root{\varrho}

\newcommand\PM{\{\pm1\}}
\newcommand\ism{\cong}

\newcommand\disteq{\sim}
\newcommand\dist{\mathrm{dist}}

\newcommand{\BP}{\mathrm{BP}}

\newcommand{\fm}{\mathfrak m}

\newcommand{\fB}{\mathfrak B}

\newcommand{\fT}{\mathfrak T}

\newcommand{\vDelta}{\vec\Delta}

\newcommand{\vC}{\vec C}

\newcommand{\vd}{\vec d}

\newcommand{\vN}{\vec N}

\newcommand{\vw}{\vec w}
\newcommand{\vc}{\vec c}

\newcommand{\vX}{\vec X}

\renewcommand{\epsilon}{\eps}
\renewcommand{\subset}{\subseteq}

\newcommand\vI{\vec I}

\newcommand\vQ{\vec Q}

\newcommand\vY{\vec Y}

\newcommand\vm{\vec m}

\newcommand\vU{\vec U}

\newcommand\nix{\,\cdot\,}

\newcommand\dd{{\mathrm d}}

\renewcommand{\vec}[1]{\boldsymbol{#1}}

\newcommand\SIGMA{\vec\sigma}

\newcommand\cutm{\Delta_{\Box}}

\newtheorem{definition}{Definition}[section]
\newtheorem{claim}[definition]{Claim}

\newtheorem{remark}[definition]{Remark}
\newtheorem{theorem}[definition]{Theorem}
\newtheorem{lemma}[definition]{Lemma}
\newtheorem{proposition}[definition]{Proposition}
\newtheorem{corollary}[definition]{Corollary}

\newtheorem{fact}[definition]{Fact}

\newcommand\fG{\mathfrak{G}}
\newcommand\fS{\mathfrak{S}}

\newcommand\fD{\mathfrak{D}}

\newcommand\fW{\mathfrak{W}}
\newcommand\fV{\mathfrak{V}}

\newcommand\fA{\mathfrak{A}}
\newcommand\fE{\mathfrak{E}}

\newcommand\fR{\mathfrak{R}}
\newcommand\fF{\mathfrak{F}}
\newcommand\fP{\mathfrak{P}}

\newcommand\cB{\mathcal{B}}
\newcommand\cC{\mathcal{C}}
\newcommand\cD{\mathcal{D}}
\newcommand\cF{\mathcal{F}}

\newcommand\cE{\mathcal{E}}

\newcommand\cN{\mathcal{N}}

\newcommand\cH{\mathcal{H}}
\newcommand\cS{\mathcal{S}}
\newcommand\cT{\mathcal{T}}

\newcommand\cM{\mathcal{M}}

\newcommand\cP{\mathcal{P}}
\newcommand\cX{\mathcal{X}}
\newcommand\cY{\mathcal{Y}}
\newcommand\cV{\mathcal{V}}
\newcommand\cW{\mathcal{W}}
\newcommand\cZ{\mathcal{Z}}

\def\cE{{\mathcal E}}

\newcommand\fX{\mathfrak{X}}
\newcommand\ve{\vec e}

\newcommand\vv{\vec v}

\newcommand\vZ{\vec Z}

\newcommand\eul{\mathrm{e}}
\newcommand\eps{\varepsilon}

\newcommand\ZZ{\mathbb{Z}}

\newcommand\TT{\mathbb{T}}
\newcommand\GG{\mathbb{G}}
\newcommand\ZZpos{\mathbb{Z}_{\geq0}}
\newcommand\Var{\mathrm{Var}}
\newcommand\Erw{\mathbb{E}}
\newcommand{\vecone}{\mathbb{1}}

\newcommand{\Po}{{\rm Po}}

\newcommand\dTV{d_{\mathrm{TV}}}

\newcommand\bc[1]{\left({#1}\right)}
\newcommand\cbc[1]{\left\{{#1}\right\}}
\newcommand\bcfr[2]{\bc{\frac{#1}{#2}}}
\newcommand{\bck}[1]{\left\langle{#1}\right\rangle}
\newcommand\brk[1]{\left\lbrack{#1}\right\rbrack}
\newcommand\Scal[2]{\langle{{#1},{#2}\rangle}}
\newcommand\scal[2]{\bck{{#1},{#2}}}
\newcommand\norm[1]{\left\|{#1}\right\|}
\newcommand\abs[1]{\left|{#1}\right|}

\newcommand\RR{\mathbb{R}}
\newcommand\RRpos{\RR_{\geq0}}
\def\?#1{}
\makeatletter
\def\whp{w.h.p\@ifnextchar-{.}{\@ifnextchar.{.\?}{\@ifnextchar,{.}{\@ifnextchar){.}{\@ifnextchar:{.:\?}{.\ }}}}}}
\makeatother

\makeatletter
\def\Whp{W.h.p\@ifnextchar-{.}{\@ifnextchar.{.\?}{\@ifnextchar,{.}{\@ifnextchar){.}{\@ifnextchar:{.:\?}{.\ }}}}}}
\makeatother

\newcommand{\tensor}{\otimes}

\newcommand{\Erdos}{Erd\H{o}s}
\newcommand{\Renyi}{R\'enyi}
\newcommand{\Lovasz}{Lov\'asz}

\newcommand\pr{\mathbb{P}}

\newcommand\Lem{Lemma}
\newcommand\Prop{Proposition}

\newcommand\Thm{Theorem}

\newcommand\Cor{Corollary}
\newcommand\Sec{Section}
\newcommand\Chap{Chapter}

\DeclareMathOperator{\tr}{tr}

\DeclareMathOperator{\br}{br}

\newcommand{\supp}[1]{{\text{supp}\left(#1\right)}}



\usepackage{upgreek}

\newcommand{\enrg}{\cH}

\usepackage{wasysym}
\usepackage{dsfont}
\usepackage{paralist}

\definecolor{aoEng}{rgb}{0, 0.5,0}

\usepackage{pifont}

\newcounter{kcomcount}
\setcounter{kcomcount}{1}

\def\ex{{\mathbb E}}
\def\pr{{\mathbb P}}

\def\cH{{\mathcal H}}

\title{Fluctuations of the Ising free energy on Erd\H os-R\'enyi graphs}

\thanks{
Amin Coja-Oghlan is supported by DFG CO 646/3, DFG CO 646/5 and DFG CO 646/6.
Maurice Rolvien is supported by DFG Research Group ADYN (FOR 2975) under DFG grant no.\ 411362735.
Pavel Zakharov is supported by DFG CO 646/6.
Kostas Zampetakis is supported by DFG CO 646/5.
}

\author{Amin Coja-Oghlan, 
Dominik Kaaser, Maurice Rolvien, Pavel~Zakharov, Kostas~Zampetakis}

\address{Amin Coja-Oghlan, {\tt amin.coja-oghlan@tu-dortmund.de}, TU Dortmund, Faculty of Computer Science and Faculty of Mathematics, 12 Otto-Hahn-St, Dortmund 44227, Germany.}
\address{Dominik Kaaser, {\tt dominik.kaaser@tuhh.de}, TU Hamburg,  Institute for Data Engineering, Blohmstra\ss e 15, 21079 Hamburg, Germany.}
\address{Maurice Rolvien, {\tt maurice.rolvien@uni-hamburg.de}, University of Hamburg, Faculty of Mathematics, Informatics and Natural Sciences, Department of Informatics, Vogt-K\"olln-Str.\ 30, 22527 Hamburg, Germany.}
\address{Pavel Zakharov, {\tt pavel.zakharov@tu-dortmund.de}, TU Dortmund, Faculty of Computer Science and Faculty of Mathematics, 12 Otto-Hahn-St, Dortmund 44227, Germany.}
\address{Kostas Zampetakis, {\tt konstantinos.zampetakis@tu-dortmund.de}, TU Dortmund, Faculty of Computer Science, 12 Otto-Hahn-St, Dortmund 44227, Germany.}

\begin{document}

\begin{abstract}
	We investigate the ferromagnetic Ising model on the \Erdos-\Renyi\ random graph $\GG(n,m)$ with bounded average degree $d=2m/n$.
	Specifically, we determine the limiting distribution of $\log Z_{\GG(n,m)}(\beta,B)$, where
	$Z_{\GG(n,m)}(\beta,B)$ is the partition function at inverse temperature $\beta>0$ and external field $B\geq0$.
	If either $B>0$, or $B=0$, $d>1$ and $\beta>\atanh(1/d)$ the limiting distribution
	is a Gaussian whose variance is of order $\Theta(n)$ and is described by a family of stochastic fixed point problems that encode the root magnetisation of two correlated Galton-Watson trees.
	By contrast, if $B=0$ and either $d\leq1$ or $\beta<\atanh(1/d)$ the limiting distribution is an infinite sum of independent random variables and has bounded variance.
	\hfill[MSc:~05C80,~82B44,~82B20]
\end{abstract}

\maketitle

\section{Introduction}\label{sec_intro}

\subsection{Background and motivation}\label{sec_mot}
The Ising model has captivated mathematical physicists for a century~\cite{Ising,Lenz_1920}.
This is partly because the model furnishes perhaps the most elegant example of a non-trivial phase transition~\cite{Onsager}.
Another reason may be that, even though the definition fits in a single line, the model gives rise to profound mathematical challenges, many of which remain unresolved to this day~\cite{DC}.
Additionally, the study of the Ising model has had a remarkable impact across several disciplines.
For instance, the Ising model has been seized upon to explain assortative phenomena in statistics and computer science~\cite{Abbe_2017,Baldassarri}.

The original purpose of the Ising model was to demonstrate how ferromagnetism can emerge from the interactions of microscopic particles associated with the vertices of a graph $G=(V(G),E(G))$.
Each of these particles takes one of two spins $\pm1$.
The edges of the graph prescribe the geometry of interactions.
The model comes with two scalar parameters called the inverse temperature $\beta>0$ and the external field $B\geq0$.
Depending on the graph, phase transitions may or may not occur as $\beta,B$ vary.
Indeed, while Ising proved the absence of phase transitions if $G$ is the one-dimensional integer `lattice'~\cite{Ising}, a deep line of subsequent work revealed phase transitions on higher dimensional lattices; see \Sec~\ref{sec_discussion}.

The aim of the present contribution is to achieve a more complete understanding of the Ising model on the \Erdos-\Renyi\ random graph $\GG=\GG(n,m)$, a uniformly random graph on $n$ vertices  with $m$ edges.
Specifically, we investigate the regime where the average degree $d=2m/n$, i.e., the average number of interactions per vertex, is bounded, just like in a finite-dimensional lattice.
Since the geometry of interactions and thus the Boltzmann distribution are random, the Ising model on $\GG(n,m)$ is a specimen of a disordered system, reminiscent of but conceptually simpler than a spin glass~\cite{MM}.

In an important contribution Dembo and Montanari~\cite{Dembo_2010} proved that to the leading order the free energy of the Ising model on $\GG(n,m)$ follows the `replica symmetric solution' predicted by the cavity method, a heuristic from statistical physics~\cite{MP1,MP2}.
The replica symmetric solution comes in terms of a fixed point problem on the space $\cP([-1,1])$ of probability measures on the interval $[-1,1]$.
To elaborate, let
\begin{align*}
	Z_\GG(\beta,B)&=\sum_{\sigma\in\PM^{V(\GG)}}\exp\bc{\beta\sum_{vw\in E(\GG)}\sigma_v\sigma_w+B\sum_{v\in V(\GG)}\sigma_v}
\end{align*}
be the Ising partition function.
Of course,  due to its dependence on the random graph $Z_{\GG}(\beta,B)$ is a random variable.
Further, consider the {\em Belief Propagation operator}
\begin{align}\label{eqBPop}
	\BP_{d,\beta,B}&:\cP([-1,1])\to\cP([-1,1]),&\pi\mapsto\hat\pi&=\BP_{d,\beta,B}(\pi),
\end{align}
defined as follows.
Let $\vd\disteq\Po(d)$ be a Poisson variable and let $(\vec\mu_{\pi,i})_{i\geq0}$ be a sequence of random variables with distribution $\pi$, all mutually independent and independent of $\vd$.
Then $\hat\pi=\BP_{d,\beta,B}(\pi)$ is the distribution of the random variable
\begin{align}\label{eqBPrec}
		\frac{\sum_{s\in\PM}s\eul^{sB}\prod_{i=1}^{\vec d}\bc{1+s\vec\mu_{\pi,i}\tanh\beta}}{\sum_{s\in\PM}\eul^{sB}\prod_{i=1}^{\vec d}\bc{1+s\vec\mu_{\pi,i}\tanh\beta}}&
			\in[-1,1].
	\end{align}
Write $\BP_{d,\beta,B}^\ell$ for the $\ell$-fold application of $\BP_{d,\beta,B}$ and $\delta_1$ for the atom on $1$.
Then for any $\beta,B\geq0$ the weak limit
\begin{align}\label{eqDMweakLimit}
	\pi_{d,\beta,B}&=\lim_{\ell\to\infty}\BP^\ell_{d,\beta,B}(\delta_1)
\end{align}
exists.
Finally, define the {\em Bethe free energy} functional $\cB_{d,\beta,\pi}:\cP([-1,1])\to\RR$ by letting
\begin{align}
	\cB_{d,\beta,B}(\pi)&=\frac d2\brk{\log\cosh\beta-\ex\log\bc{1+\vec\mu_{\pi,1}\vec\mu_{\pi,2}\tanh\beta}}+\ex\log\brk{\eul^B\prod_{i=1}^{\vd}\bc{1+\vec\mu_{\pi,i}\tanh\beta}+ \eul^{-B}\prod_{i=1}^{\vd}\bc{1-\vec\mu_{\pi,i}\tanh\beta}}.\label{eqBFE}
\end{align}
Dembo and Montanari \cite[\Thm~2.4]{Dembo_2010} show that for all $d,\beta,B\geq0$,
\begin{align}\label{eqDM}
	\lim_{n\to\infty}\frac1n\log Z_{\GG}(\beta,B)&=\cB_{d,\beta,B}(\pi_{d,\beta,B})
\end{align}
in probability.%
	\footnote{%
		Actually Dembo and Montanari prove a more general result.
		Namely, they show that for any sequence of graphs that converges to a unimodular tree in the topology of local weak convergence the free energy converges to the leading order to the replica symmetric solution associated with the limiting unimodular random tree.
	We will come back to this in \Sec~\ref{sec_discussion}.
}

Due to the normalising factor $1/n$ on the l.h.s., the result~\eqref{eqDM} determines the free energy only to a first-order, law-of-large-numbers degree of precision.
The contribution of the present work is to establish the precise limiting distribution of $\log Z_{\GG}(\beta,B)$.
As we will discover, for most values of the parameters $d,\beta,B$ the random variable $\log Z_{\GG}(\beta,B)$ has variance $\Theta(n)$ with Gaussian fluctuations.
We will derive a precise but intricate expression for the variance.
Only in the high temperature phase with no external field (i.e., $B=0$ and $\beta>0$ small) do we obtain a non-Gaussian limiting distribution with simple explicit parameters.
In this regime $\log Z_{\GG}(\beta,0)$ has bounded fluctuations.

\subsection{Results}\label{sec_res}
We proceed to state the main results precisely, beginning with the somewhat simpler case $B>0$ where an external field is present.

\subsubsection{The case $B>0$}\label{sec_res_ext}
We prove that the (suitably shifted and scaled) free energy converges to a Gaussian whose variance derives from the fixed points of a family of operators on the space $\cP([-1,1]^2)$ of probability measures on the square $[-1,1]^2$.
To elaborate, for a parameter $t\in[0,1]$ define the {\em coupled Belief Propagation operator}
\begin{align}\label{eqBPtensor}
	\BP_{d,\beta,B,t}^\tensor&:\cP([-1,1]^2)\to\cP([-1,1]^2),&\pi\mapsto\hat\pi=\BP_{d,\beta,B,t}^\tensor(\pi)&&(t\in[0,1])
\end{align}
as follows.
Let $(\vec\mu_{\pi,i}^{(0)})_{i\geq0}$, $(\vec\mu_{\pi,i}^{(1)})_{i\geq0}$, $(\vec\mu_{\pi,i}^{(2)})_{i\geq0}$ with
\begin{align}\label{eqIndependentSamples}
	\vec\mu_{\pi,i}^{(0)}&=(\vec\mu_{\pi,i,1}^{(0)},\vec\mu_{\pi,i,2}^{(0)}),\enspace
	\vec\mu_{\pi,i}^{(1)}=(\vec\mu_{\pi,i,1}^{(1)},\vec\mu_{\pi,i,2}^{(1)}),\enspace
	\vec\mu_{\pi,i}^{(2)}=(\vec\mu_{\pi,i,1}^{(2)},\vec\mu_{\pi,i,2}^{(2)})\in[-1,1]^2
\end{align}
be sequences of random variables (viz., vectors) with distribution $\pi$.
In addition, let
\begin{align}\label{eqDeltat}
	\vd_t^{(0)}&\disteq\Po(dt),&
	\vd_t^{(1)}&\disteq\Po(d(1-t)),&
	\vd_t^{(2)}&\disteq\Po(d(1-t))
\end{align}
be Poisson variables.
All these random variables are mutually independent.
Then $\hat\pi=\BP_{d,\beta,B,t}^\tensor(\pi)$ is defined to be the distribution of the random vector
\begin{align}\label{eqBPtensorRec}
	\bc{\frac{\sum_{s\in\PM}s\exp(sB)\prod_{i=1}^{\vd_t^{(0)}}\bc{1+s\vec\mu_{\pi,i,h}^{(0)}\tanh\beta}\prod_{i=1}^{\vd_t^{(h)}}\bc{1+s\vec\mu_{\pi,i,h}^{(h)}\tanh\beta}}
	{\sum_{s\in\PM}\exp(sB)\prod_{i=1}^{\vd_t^{(0)}}\bc{1+s\vec\mu_{\pi,i,h}^{(0)}\tanh\beta}\prod_{i=1}^{\vd_t^{(h)}}\bc{1+s\vec\mu_{\pi,i,h}^{(h)}\tanh\beta}}}_{h=1,2}\in(-1,1)^2\enspace.
\end{align}
Write $\BP^{\tensor\,\ell}_{d,\beta,B,t}$ for the $\ell$-fold application of the operator $\BP^{\tensor}_{d,\beta,B,t}$ and let $\delta_{(1,1)}\in\cP([-1,1]^2)$ be the atom on $(1,1)$.
Furthermore, define a functional
\begin{align}\label{eqBFEtensor}
	\cB^\tensor_{\beta}(\pi)&:\cP([-1,1]^2)\to\RRpos,& \pi&\mapsto\ex\brk{\prod_{h=1}^2\log\bc{1+\vec\mu_{\pi,1,h}^{(0)}\vec\mu_{\pi,2,h}^{(0)}\tanh\beta}}.
\end{align}
Finally, let us write $\cN(M,\Sigma^2)$ for the normal distribution with mean $M$ and variance $\Sigma^2$.
Then in the case $B>0$ we obtain the following central limit theorem.

\begin{theorem}\label{thm_ex}
	For any $d,\beta,B>0$ and $t\in[0,1]$ the weak limit
	\begin{align}\label{eqfix_ex}
		\pi_{d,\beta,B,t}^\tensor&=\lim_{\ell\to\infty}\BP^{\tensor\,\ell}_{d,\beta,B,t}(\delta_{(1,1)})
	\end{align}
	exists and, in distribution,
	\begin{align}\nonumber
		\lim_{n\to\infty}\frac{\log Z_{\GG}(\beta,B)-\ex\log Z_{\GG}(\beta,B)}{\sqrt m}&=\cN(0,\Sigma(d,\beta,B)^2),&&\mbox{where}\\
	\label{eqSigmadbetaB}
	\Sigma(d,\beta,B)^2&=\int_0^1\cB^\tensor_{\beta}(\pi^\tensor_{d,\beta,B,t})\dd t-\cB^\tensor_{\beta}(\pi^\tensor_{d,\beta,B,0})>0.
	\end{align}
\end{theorem}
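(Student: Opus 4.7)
The plan is to combine an interpolation argument between two correlated copies of $\GG$ to identify the asymptotic variance with a martingale central limit theorem for the Gaussian limit.

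\textbf{Fixed point.} The weak limit~\eqref{eqfix_ex} follows by the same monotonicity mechanism as~\eqref{eqDMweakLimit}: by the ferromagnetic FKG inequality each coordinate of the recursion~\eqref{eqBPtensorRec} is non-decreasing in every input, so the iterates $\BP^{\tensor\,\ell}_{d,\beta,B,t}(\delta_{(1,1)})$ are stochastically decreasing in the product FKG order on $\cP([-1,1]^2)$ and converge weakly to a fixed point $\pi^\tensor_{d,\beta,B,t}$. Because $\vd_t^{(0)}+\vd_t^{(h)}\disteq\Po(d)$ by Poisson thinning, each single-coordinate marginal of $\pi^\tensor_{d,\beta,B,t}$ satisfies~\eqref{eqBPrec} and therefore equals $\pi_{d,\beta,B}$ independently of $t$.

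\textbf{Variance via interpolation.} For each $t\in[0,1]$ I would couple two copies $(\GG^{(1)}_t,\GG^{(2)}_t)$ of the random graph by independently sampling three \Erdos--\Renyi\ edge sets with expected sizes $mt$, $m(1-t)$, $m(1-t)$ and declaring the first shared between the copies and the latter two as private. The marginals each agree distributionally with $\GG$, the copies are independent at $t=0$ and identical at $t=1$, giving
\begin{align*}
	\Var(\log Z_\GG)=\int_0^1\frac{\dd}{\dd t}\ex\brk{\log Z_{\GG^{(1)}_t}\log Z_{\GG^{(2)}_t}}\,\dd t.
\end{align*}
Differentiating via Poisson intensities produces three terms, one per edge set, whose combination compares the contribution of a shared edge against that of two private edges. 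A two-copy extension of the cavity/local-limit analysis of~\cite{Dembo_2010}, run on pairs of correlated Galton--Watson trees with branching rates $(dt,d(1-t),d(1-t))$ at the root, identifies the per-edge limit of the integrand as $\cB^\tensor_\beta(\pi^\tensor_{d,\beta,B,t})-\cB^\tensor_\beta(\pi^\tensor_{d,\beta,B,0})$, the subtraction being the baseline covariance already present at $t=0$. Integrating and dividing by $m$ gives~\eqref{eqSigmadbetaB}. Strict positivity $\Sigma^2>0$ follows from a Jensen-type argument: $\pi^\tensor_{d,\beta,B,t}$ concentrates on the diagonal as $t\uparrow 1$ while being a product at $t=0$, and ferromagnetic correlations force $\cB^\tensor_\beta(\pi^\tensor_{d,\beta,B,t})>\cB^\tensor_\beta(\pi^\tensor_{d,\beta,B,0})$ for $t>0$.

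\textbf{Gaussian limit.} To upgrade the variance computation to a distributional statement I would apply a martingale CLT to the edge-exposure martingale $M_i=\ex[\log Z_\GG\mid\cE_1,\dots,\cE_i]$ obtained by revealing the edges of $\GG$ in uniformly random order. Flipping a single edge changes $\log Z_\GG$ by at most $2\beta$, so the increments are uniformly bounded and Lindeberg's condition is immediate. The conditional quadratic variation $\sum_{i=1}^m\ex[(M_i-M_{i-1})^2\mid\cF_{i-1}]$ has expectation $m\Sigma(d,\beta,B)^2+o(m)$ by the interpolation calculation above and self-averages thanks to the local-tree structure of $\GG$ together with uniqueness of the BP fixed point.

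\textbf{Main obstacle.} The principal difficulty lies in the coupled local-limit analysis: proving that $m^{-1}\ex\brk{\log Z_{\GG^{(1)}_t}\log Z_{\GG^{(2)}_t}}$ and its $t$-derivative converge uniformly in $t\in[0,1]$ to the explicit functionals of $\pi^\tensor_{d,\beta,B,t}$ described above. This amounts to a two-copy extension of~\eqref{eqDM} at the level of pairs of free energies and requires a uniform contraction estimate for $\BP^\tensor_{d,\beta,B,t}$ along the path $t\in[0,1]$. Both ingredients crucially exploit ferromagnetic monotonicity and the hypothesis $B>0$, which guarantees uniqueness of the BP fixed point and rules out replica-symmetry-breaking pathologies that would otherwise spoil the interpolation.
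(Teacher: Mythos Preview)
Your overall architecture---edge-exposure martingale plus a two-copy interpolation whose local limit is the coupled Galton--Watson tree $\TT^\tensor$---is exactly the one the paper uses, and your identification of the main obstacle (uniform-in-$t$ convergence of the coupled cavity quantities) is accurate.

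The one structural gap is how the interpolation feeds into the martingale CLT. You compute $\Var(\log Z_\GG)$ via $\int_0^1\frac{\dd}{\dd t}\ex[\log Z_{\GG_t^{(1)}}\log Z_{\GG_t^{(2)}}]\dd t$ and then separately assert that the conditional quadratic variation $\sum_M\ex[(M_i-M_{i-1})^2\mid\fF_{i-1}]$ ``self-averages'' to $m\Sigma^2$. But the martingale CLT (condition {\bf CLT3}) needs $\sum_M\vX_{n,M}^2\to\Sigma^2$ \emph{in probability}, not merely in expectation; knowing the variance of $\log Z_\GG$ gives only the latter. The self-averaging step is not a consequence of local-tree structure alone: different increments $\vX_{n,M}^2$ and $\vX_{n,M'}^2$ involve the same underlying graph conditioned on different filtrations, and controlling their covariances is real work.

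The paper closes this gap by an exact identity (\Lem~\ref{fact_telescope}) that writes each squared increment $\vX_{n,M}^2$ \emph{itself} as a conditional expectation over a pair of coupled graphs sharing exactly $M$ edges: $m\vX_{n,M}^2=\ex[\vDelta_{n,M}^++\vDelta_{n,M}^--2\vDelta_{n,M}^\pm\mid\fF_{n,M}]$. This replaces your continuous-$t$ interpolation with a discrete one indexed by $M$, and---crucially---makes the sum $\sum_M\vX_{n,M}^2$ a Riemann sum whose individual summands are already shown (via the coupled local-limit analysis you describe) to concentrate around $\cB^\tensor_\beta(\pi^\tensor_{d,\beta,B,M/m})-\cB^\tensor_\beta(\pi^\tensor_{d,\beta,B,0})$. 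The concentration of the full sum then follows from a short second-moment computation (\Cor~\ref{cor_intervals}) rather than from an unspecified self-averaging principle. Your continuous interpolation would establish the variance but not, without this extra identity or an equivalent device, the pathwise convergence of the quadratic variation that the CLT requires.
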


Naturally, the fact that $\Sigma(d,\beta,B)^2$ is strictly positive is part of the statement of \Thm~\ref{thm_ex}.
The theorem implies that $\log Z_{\GG}(\beta,B)$ exhibits fluctuations of order $\Theta(\sqrt m)$ around its expectation.
Moreover, as we will discover in \Sec~\ref{sec_overview} the fixed point distributions $\pi_{d,\beta,B,t}^\tensor$ admit a neat combinatorial interpretation, as does the functional~$\cB^\tensor_{\beta}$.

\subsubsection{The case $B=0$}\label{sec_res_no}
In the absence of an external field (i.e., $B=0$) the Ising model possesses an inherent symmetry under inversion.
This symmetry causes significant complications, at least if $d>1$, i.e., if the random graph $\GG(n,m)$ possesses a giant component with high probability.
More precisely, together with~\eqref{eqDM} Lyons' theorem~\cite[\Thm~1.1]{Lyons} implies that in the case $B=0$ and $d>1$ there occurs a phase transition at
\begin{align}\label{eqbetaf}
	\betaf(d)=\atanh(1/d).
\end{align}
Indeed, for $d\leq1$ or $\beta<\betaf(d)$ the distribution $\pi_{d,\beta,0}$ from \eqref{eqDMweakLimit} turns out to be just the atom at zero.
In effect, the functional~\eqref{eqBFE} simply evaluates to
\begin{align}\label{eqBFEtrivial}
	\cB_{d,\beta,0}(\delta_0)&=\log2+\frac d2\log\cosh\beta.
\end{align}
The following theorem pinpoints the limiting distribution of $\log Z_{\GG}(\beta,0)$ in this `high temperature' regime.

\begin{figure}
	\centering
	\includegraphics[scale=0.66]{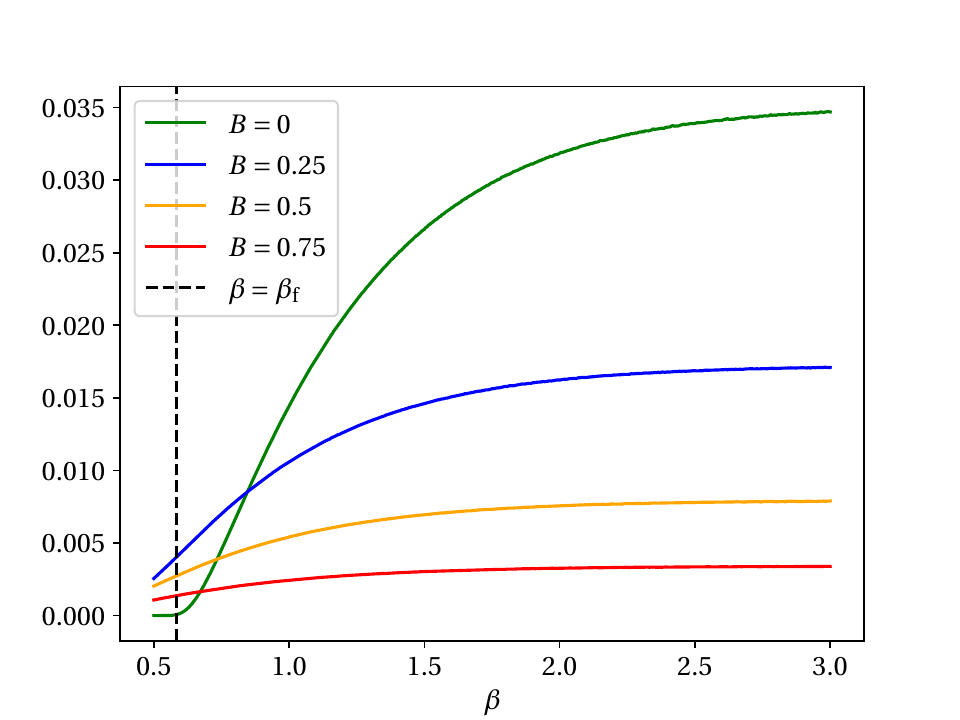}
	\caption{Plots of $\beta\mapsto\Sigma(d,\beta)^2$ at $d =1.9$ and at external fields $B=0$, $B=0.25$, $B=0.5$ and $B=0.75$.
		The dashed vertical line marks the critical inverse temperature $\betaf(1.9)$. }\label{fig_var}
\end{figure}

\begin{theorem}\label{thm_an}
	Assume that either $d\leq1$, or $d>1$ and $\beta<\betaf(d)$.
	Let $(\vY_\ell)_{\ell\geq3}$ be a sequence of independent random variables with distribution $\vY_\ell\disteq\Po(d^\ell/(2\ell))$.
	Then, in distribution,
	\begin{align}\label{eqthm_an}
    \lim_{n\to\infty}\log Z_{\GG}(\beta,0)-\left[n\log2+m\log\cosh\beta-\frac12\log\bc{1 - d \tanh \beta}\right]&=\sum_{\ell\geq3}\bc{\vY_\ell\log\bc{1+\tanh^\ell\beta}-\frac{d^\ell}{2\ell}\tanh^\ell\beta}-\frac d2\tanh\beta-\frac{d^2}4\tanh^2\beta.
	\end{align}
\end{theorem}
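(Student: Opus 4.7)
The starting point is the classical identity obtained from $\exp(\beta\sigma_v\sigma_w)=\cosh\beta\cdot(1+\tanh\beta\cdot\sigma_v\sigma_w)$. Summing out the spins, and writing $x:=\tanh\beta$,
\[Z_{\GG}(\beta,0)=2^{n}(\cosh\beta)^{m}\,W_{\GG},\qquad W_{\GG}:=\sum_{F\subseteq E(\GG),\;F\text{ even}}x^{|F|},\]
where ``$F$ even'' means every vertex has even degree in $F$. Using $-\tfrac12\log(1-dx)=\sum_{\ell\geq1}\tfrac{(dx)^\ell}{2\ell}$, the statement~\eqref{eqthm_an} rearranges to the cleaner equivalent claim
\[\log W_{\GG}\xrightarrow{d}\sum_{\ell\geq3}\vY_\ell\log(1+x^\ell).\]
The right-hand side converges almost surely, as the condition $dx<1$ makes $\sum_\ell\tfrac{d^\ell}{2\ell}\log^{2}(1+x^\ell)$ summable.

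\textbf{Cycle decomposition and Poisson convergence.} Every even subgraph of $G$ decomposes (non-uniquely) as an edge-disjoint union of simple cycles; the decomposition becomes unique when the $2$-core of $G$ is a vertex-disjoint union of simple cycles (a ``cycle forest''), in which case $W_G=\prod_{C}(1+x^{|C|})$, the product running over cycles $C$ of $G$. For $d\leq 1$, whp the $2$-core of $\GG(n,m)$ is precisely such a cycle forest on $O(1)$ vertices, and this factorisation holds verbatim. By a classical Poisson convergence result (see e.g.\ \Bollobas, \emph{Random Graphs}), for every fixed $L\geq 3$ the cycle counts $(C_3^{(n)},\dots,C_L^{(n)})$ in $\GG(n,m)$ converge jointly in distribution to independent Poisson variables $(\vY_3,\dots,\vY_L)$ with means $d^\ell/(2\ell)$. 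Hence $\sum_{\ell=3}^L C_\ell^{(n)}\log(1+x^\ell)\xrightarrow{d}\sum_{\ell=3}^L\vY_\ell\log(1+x^\ell)$ for each fixed $L$.

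\textbf{Truncation and residual control.} Write $\log W_{\GG}=\sum_{\ell=3}^{L}C_{\ell}^{(n)}\log(1+x^\ell)+\cR_{\GG,L}$ and split $\cR_{\GG,L}$ into (i)~the contribution of simple cycles of length $>L$, and (ii)~``complex'' contributions from even subgraphs that are not disjoint unions of simple cycles (equivalently, those containing two cycles sharing a vertex or edge). For~(i), $\Erw\big[\sum_{\ell>L}C_\ell^{(n)}\log(1+x^\ell)\big]\leq\sum_{\ell>L}\tfrac{(dx)^\ell}{2\ell}\to 0$ as $L\to\infty$. For~(ii), any complex connected subgraph with $k$ edges has at most $k-1$ vertices, so its expected number of copies in $\GG(n,m)$ is $O(n^{-1}d^k)$; weighting by $x^k$ and summing over $k\geq 3$ yields $O(n^{-1}\sum_{k\geq 3}(dx)^k)=O(1/n)$, vanishing uniformly in $L$. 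A standard exchange-of-limits argument then gives $\cR_{\GG,L}\xrightarrow{\Pr}0$ on letting $n\to\infty$ followed by $L\to\infty$, completing the proof.

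\textbf{Main obstacle.} The delicate ingredient is the estimate on complex subgraphs in the supercritical regime $d>1$: the cycle space of $\GG(n,m)$ then has dimension $\Theta(n)$, so \emph{a priori} exponentially many even subgraphs compete with the $O(1)$ short simple cycles. Showing that their aggregate weight nonetheless tends to zero amounts to analysing the \emph{kernel} of $\GG$, obtained by contracting degree-$2$ paths in the $2$-core: the renormalised weights on kernel edges become $x^{\text{path length}}$, and these are geometrically suppressed precisely when $d\tanh\beta<1$, i.e.\ $\beta<\betaf(d)=\atanh(1/d)$. The appearance of the sharp threshold $\betaf(d)$ thus reflects the radius of convergence of a geometric series in $dx$.
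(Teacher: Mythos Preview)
Your high-temperature expansion route is genuinely different from the paper's. The paper never expands $Z_\GG$ over even subgraphs; instead it applies small subgraph conditioning (\Thm~\ref{thm_janson}) to the truncated variable $\Zbal$, verifying the three moment conditions via \Prop s~\ref{prop_EZ}, \ref{prop_ssc}, \ref{prop_EZ2}. Cycles enter only through the planted-model calculation in \Sec~\ref{sec_prop_ssc}. For $d\leq 1$ your argument is essentially complete and arguably more transparent: \whp\ the $2$-core of $\GG(n,m)$ is a vertex-disjoint union of short cycles, so $W_\GG=\prod_C(1+x^{|C|})$ holds exactly and the Poisson limit for cycle counts finishes the job.

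For $d>1$, however, there is a genuine gap. First, the additive splitting of $\cR_{\GG,L}$ into ``long simple cycles'' plus ``complex pieces'' is not justified: in the supercritical regime every short cycle sits inside the giant $2$-core and shares vertices with arbitrarily long cycles, so $W_\GG$ does not factor over short cycles, and $\log W_\GG$ has no clean additive decomposition along those lines. One can instead bound $W_\GG/\prod_{\ell\leq L}(1+x^\ell)^{C_\ell}-1$ by $\sum_{F_1\neq\emptyset}x^{|F_1|}$, the sum over even $F_1\subseteq\GG$ with no short-cycle component, but then two further issues arise. Your step~(ii) asserts that complex connected even subgraphs with $k$ edges have expected count $O(n^{-1}d^k)$; this is true for any \emph{fixed} isomorphism type, but you must also sum over types, and already at excess two (figure-eight graphs) there are $\Theta(k)$ types with $k$ edges, so the bookkeeping is not as stated. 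More importantly, neither~(i) nor~(ii) as written covers \emph{disconnected} $F_1$, e.g.\ unions of several long cycles; controlling these requires an exponential-formula argument over components, which is where the condition $dx<1$ actually bites. All of this can be made rigorous, but your ``Main obstacle'' paragraph only locates the difficulty and gestures at a kernel argument without carrying it out. The paper's second-moment/SSC route bypasses these combinatorial subtleties entirely, at the cost of the Laplace-method computations in \Sec s~\ref{sec_prop_EZ} and~\ref{sec_prop_EZ2}.
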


\noindent
Since the sum $\sum_{\ell\geq3}\bc{\vY_\ell\log\bc{1+\tanh^\ell\beta}-\frac{d^\ell}{2\ell}\tanh^\ell\beta}$ converges in $L^2$, \Thm~\ref{thm_an} implies that the limiting distribution on the r.h.s.\ of~\eqref{eqthm_an} has variance
\begin{align*}
	\sum_{\ell\geq3}\frac{d^\ell}{2\ell}\log^2(1+\tanh^\ell\beta)<\infty.
\end{align*}
Thus, the random variable $\log Z_{\GG}(\beta,0)$ superconcentrates, i.e., has {\em bounded} fluctuations.

For $d>1$ and $\beta>\betaf(d)$, i.e., in the `low temperature regime', the Ising model behaves fundamentally differently~\cite{Dembo_2010,Lyons}.
In particular, the limiting distribution \eqref{eqDMweakLimit} is no longer just a point mass and
\begin{align*}
	\cB_{d,\beta,0}(\pi_{d,\beta,0})>\log2+\frac d2\log\cosh\beta.
\end{align*}
Furthermore, the next theorem implies that for almost all $\beta>\betaf(d)$ the free energy $\log Z_{\GG}(\beta,0)$ has a Gaussian limit with fluctuations of order $\Theta(\sqrt m)$, as in \Thm~\ref{thm_ex}.

To state the theorem, we introduce
\begin{align}\label{eqrdbeta}
	r_{d,\beta}&=\frac 12\ex\brk{\frac{\vec\mu_{\pi_{d,\beta,0},1}\vec\mu_{\pi_{d,\beta,0},2}+\tanh\beta}{1+\vec\mu_{\pi_{d,\beta,0},1}\vec\mu_{\pi_{d,\beta,0},2}\tanh\beta}}&&
	(\mbox{cf.\ \cite[Eq.~(1.8)]{Basak} and \cite[Eq.~(6.11)]{Dembo_2010}}).
\end{align}
The function $\beta\mapsto r_{d,\beta}$ is bounded, monotonically increasing and right-continuous for any $d>0$~\cite[\Thm~1.6]{Basak}.
Hence, the set $\fX_d$ of $\beta\in(0,\infty)$ where $\beta\mapsto r_{d,\beta}$ fails to be continuous is (at most) countable.

\begin{figure}
	\centering
	\includegraphics[scale=0.66]{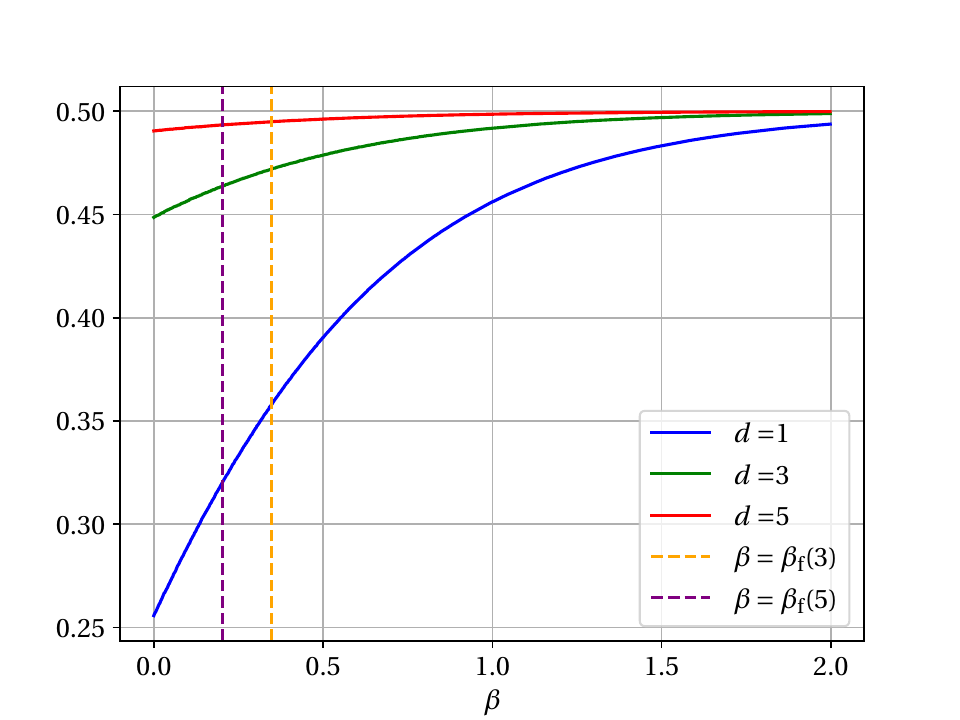}
	\caption{Plot of $\beta\mapsto {r_{d,\beta}}$ for $d=1$, $d=3$ and $d=5$.}\label{fig_rdbeta}
\end{figure}

\begin{theorem}\label{thm_no}
	Let $d>1$ and $\beta\in(\betaf(d),\infty)\setminus\fX_d$.
	For all $t\in[0,1]$ the weak limit
	\begin{align}\label{eqfix_no}
		\pi_{d,\beta,0,t}^\tensor&=\lim_{\ell\to\infty}\BP^{\tensor\,\ell}_{d,\beta,0,t}(\delta_1)
	\end{align}
	exists and in distribution,
	\begin{align*}
		\lim_{n\to\infty}\frac{\log Z_{\GG}(\beta,0)-\ex\log Z_{\GG}(\beta,0)}{\sqrt m}&=\cN(0,\Sigma(d,\beta)^2),&\mbox{where}&&
	\Sigma(d,\beta)^2&=\int_0^1\cB^\tensor_{\beta}(\pi^\tensor_{d,\beta,0,t})\dd t-\cB^\tensor_{\beta}(\pi^\tensor_{d,\beta,0,0})>0.
	\end{align*}
\end{theorem}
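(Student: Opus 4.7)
The plan is to mimic the interpolation-based strategy behind \Thm~\ref{thm_ex}, but handling the extra difficulty that at $B=0$ the BP operator admits the trivial paramagnetic fixed point $\delta_0$ alongside the symmetry-broken pure states that govern the low-temperature phase $\beta>\betaf(d)$. First I would establish the weak limit~\eqref{eqfix_no} via a monotone BP argument. Because the Ising interaction is ferromagnetic, the coupled recursion~\eqref{eqBPtensorRec} is monotone under FKG with respect to coordinatewise stochastic dominance on $[-1,1]^2$, so initialising at $\delta_{(1,1)}$ produces a stochastically decreasing sequence $(\BP_{d,\beta,0,t}^{\tensor\,\ell}(\delta_{(1,1)}))_{\ell\ge0}$. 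Tightness of $\cP([-1,1]^2)$ then delivers the limit $\pi_{d,\beta,0,t}^\tensor$; for $\beta>\betaf(d)$ this limit is the symmetry-broken $++$ coupled fixed point, not the trivial $\delta_{(0,0)}$.

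To compute the variance I would set up a two-parameter coupling in which $(\GG_t^{(1)},\GG_t^{(2)})$ is a pair of Erd\H os-R\'enyi graphs on a common vertex set with $\Po(tdn/2)$ shared edges and $\Po((1-t)dn/2)$ private edges on each side, matching the Poisson data~\eqref{eqDeltat}. At $t=0$ the two copies are independent and at $t=1$ they agree with $\GG(n,m)$. Setting $F(t)=\ex[\log Z_{\GG_t^{(1)}}(\beta,0)\log Z_{\GG_t^{(2)}}(\beta,0)]$, one has the telescoping identity $\Var(\log Z_{\GG}(\beta,0)) = F(1)-F(0)=\int_0^1 F'(t)\dd t$. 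The derivative $F'(t)$ is computed by differentiating under the Poisson weights, which amounts to inserting an infinitesimal shared edge while removing infinitesimal private ones. Using local weak convergence to a pair of coupled Galton-Watson trees with Poisson offspring data~\eqref{eqDeltat}, the joint law of the pair of coupled BP messages at the endpoints of an inserted shared edge converges to $\pi_{d,\beta,0,t}^\tensor$, and the corresponding per-edge change in the coupled log-partition function is the integrand of $\cB^\tensor_\beta$ in~\eqref{eqBFEtensor}. Removing a private edge contributes the same functional evaluated at the product fixed point $\pi_{d,\beta,0,0}^\tensor$. Summing and dividing by $m$ then yields the claimed formula for $\Sigma(d,\beta)^2$.

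To upgrade the variance asymptotic to a Gaussian CLT I would invoke a martingale CLT along the edge-exposure filtration $\cF_i=\sigma(e_1,\dots,e_i)$, writing $\log Z_{\GG}(\beta,0)-\ex\log Z_{\GG}(\beta,0)=\sum_{i\le m}\Delta_i$ with $\Delta_i=\ex[\log Z_\GG\mid\cF_i]-\ex[\log Z_\GG\mid\cF_{i-1}]$. A standard Lipschitz bound yields $|\Delta_i|=O(\beta)$, and the Lindeberg condition reduces to proving $\sum_{i\le m}\ex[\Delta_i^2\mid\cF_{i-1}]/m\to\Sigma(d,\beta)^2$ in probability, which follows from a localised version of the variance computation above. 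Strict positivity $\Sigma(d,\beta)^2>0$ is inherited from the non-degeneracy of $\pi_{d,\beta,0,t}^\tensor$ for $t>0$ in the low-temperature regime, witnessed by $r_{d,\beta}>0$ for $\beta>\betaf(d)$ per~\cite[Thm~1.6]{Basak}.

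The main obstacle is the $\mathbb{Z}_2$ symmetry at $B=0$ combined with the coexistence of several fixed points of $\BP^\tensor_{d,\beta,0,t}$: the paramagnetic $\delta_{(0,0)}$, the $++$ and $--$ symmetry-broken states, and mixed $(+,-)$ coupled fixed points. One must show that the local weak limit of the two-copy Gibbs measure, when both copies are selected by the same symmetry-breaking perturbation, actually converges to the $++$ coupled fixed point encoded by initialising at $\delta_{(1,1)}$, as opposed to any mixed one; this is essentially a statement that the random-graph analogue of the Gibbs--Peierls selection persists at the level of the coupled local weak limit. Excluding the countable set $\fX_d$ further ensures that $t\mapsto\pi_{d,\beta,0,t}^\tensor$ is continuous on $[0,1]$, legitimising the Riemann integral representation of $\Sigma(d,\beta)^2$ and sidestepping those $\beta$ at which the interpolation would jump.
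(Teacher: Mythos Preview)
Your high-level architecture matches the paper's: edge-exposure martingale CLT, a two-copy interpolation to identify the limiting variance as $\int_0^1\cB^\tensor_\beta(\pi^\tensor_{d,\beta,0,t})\dd t-\cB^\tensor_\beta(\pi^\tensor_{d,\beta,0,0})$, and local weak convergence to the coupled Galton--Watson tree $\TT^\tensor$. The existence of the weak limit via monotonicity is also essentially the paper's supermartingale argument (\Lem~\ref{lem_conv}, \Cor~\ref{cor_tensor_ell}).

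The genuine gap is that you correctly name the main obstacle but do not solve it. You write that one ``must show that the local weak limit of the two-copy Gibbs measure \ldots\ actually converges to the $++$ coupled fixed point'' and call this ``Gibbs--Peierls selection'', but you give no mechanism. This is not a technicality: at $B=0$ the unconditional measure $\mu_{\GG,\beta,0}$ has trivial marginals and \emph{fails} replica symmetry (cf.~\eqref{eqnors}), so the Aizenman--Sims--Starr computation of the per-edge increment cannot be carried out directly. Your sentence ``which follows from a localised version of the variance computation above'' hides exactly the missing step: to evaluate $\ex[\Delta_i^2\mid\cF_{i-1}]$ one needs to factorise $\scal{\SIGMA(v)\SIGMA(w)}{\mu_{\GG_{h,M},\beta,0}}$ over the endpoints of the new edge, and this requires replica symmetry of some measure with the correct magnetisations.

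The paper's resolution is \Prop~\ref{prop_pure}: one conditions on $\cS_+=\{\sum_v\sigma(v)>0\}$ and proves that $\mu_{\GG,\beta,0}(\nix\mid\cS_+)$ is replica symmetric with empirical marginals converging to $\pi_{d,\beta,0}$. The proof is substantial: it applies the pinning lemma (\Lem~\ref{prop_pinning}) to get \emph{some} pure-state decomposition, then uses the edge-magnetisation identity~\eqref{eqmax} from~\cite{Basak} together with the Belief-Propagation upper bound of \Lem~\ref{lem_max} and FKG-type monotonicity to force all pure states to have magnetisations $\pm\scal{\SIGMA(v)}{\mu_{\GG,v,\beta,0}^{(\ell)}}$ with a \emph{consistent} sign per state, and finally amalgamates the positive ones into $\cS_+$ via the cut metric. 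Only after this can the two-copy computation be run on $\mu_{\GG_{h,M},\beta,0}(\nix\mid\cS_+)$ to get \Prop~\ref{prop_tensor} and hence the variance.

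A smaller point: your explanation of $\fX_d$ is off. The continuity of $t\mapsto\pi_{d,\beta,0,t}^\tensor$ holds for all $\beta$ (\Prop~\ref{prop_fix_ex}); the restriction $\beta\notin\fX_d$ enters because~\eqref{eqmax} is only known at continuity points of $\beta\mapsto r_{d,\beta}$, and this identity is the key input to the pure-state argument (\Lem~\ref{cor_max}).
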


Despite their daunting appearance the variance formulas from \Thm s~\ref{thm_ex} and~\ref{thm_no} can actually be evaluated numerically with reasonable efficiency.
This is because the fixed point iterations~\eqref{eqfix_ex} and~\eqref{eqfix_no} turn out to converge geometrically.
Figure~\ref{fig_var} displays the variances $\Sigma(d,\beta)^2$ for some instructive parameter values.

\Thm~\ref{thm_no} is syntactically similar to \Thm~\ref{thm_ex} but for the appearance of the countable `exceptional' set $\fX_d$ of discontinuities of the function $\beta\mapsto r_{d,\beta}$ from~\eqref{eqrdbeta} where we do not know that convergence to a Gaussian limit occurs.
Actually for all we know the exceptional set may be empty, i.e., the function $\beta\mapsto r_{d,\beta}$ may be continuous for all $d>1$; Figure~\ref{fig_rdbeta} displays $\beta\mapsto r_{d,\beta}$ for a few values of $d$.
But of course a proof (or refutation) of this hypothesis remains an open problem.

\begin{figure}
	\centering
	\includegraphics[scale=1]{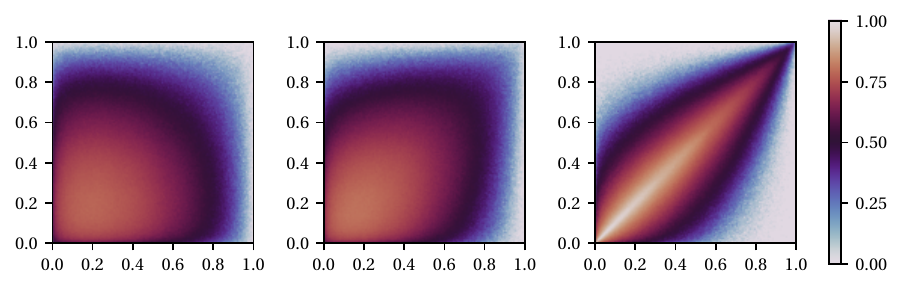}
	\caption{The distributions $\pi_{d,\beta,0,t}^\tensor$ for $d=1.9$, $\beta=0.65$, and $t=0.1, 0.5, 0.9$.
		The atom at $(0,0)$ has been removed to improve visibility.}\label{fig_heatmap}
\end{figure}

Additionally, the heat map representation in Figure~\ref{fig_heatmap} shows the distributions $\pi_{d,\beta,0,t}$ for three increasing values of $t$.
The figure shows how the probability mass gravitates towards the diagonal as $t$ increases.
Roughly speaking, as $t$ increases the two coordinates of the random pair
\begin{align}\label{eqtwocoords}
	\vec\mu^{(0)}_{\pi_{d,\beta,B,t},1}=(\vec\mu^{(0)}_{\pi_{d,\beta,B,t},1,1},\vec\mu^{(0)}_{\pi_{d,\beta,B,t},1,2})\end{align}
become more `aligned'.
For instance, for $t=0$ the Poisson variables from \eqref{eqDeltat} satisfy $\vd_t^{(0)}=0$ while $\vd_t^{(1)},\vd_t^{(2)}$ are independent $\Po(d)$ variables.
Consequently, comparing the definition~\eqref{eqBPtensorRec} of the operator $\BP_{d,\beta,B,0}^\tensor$ with the definition~\eqref{eqBPrec} of $\BP_{d,\beta,B}$, we see that the coordinates of the pair \eqref{eqtwocoords} are just two independent random variables with distribution $\pi_{d,\beta,B}$.
On the other extreme, for $t=1$ we have $\vd_t^{(1)}=\vd_t^{(2)}=0$ while $\vd_t^{(0)}$ is a $\Po(d)$ variable.
Hence, the operator \eqref{eqBPtensorRec} acts identically on both coordinates, which means that the two coordinates of~\eqref{eqtwocoords} are identical and have distribution $\pi_{d,\beta,B}$.
Naturally, for values $0<t<1$ the distribution $\pi_{d,\beta,B,t}^\tensor$ interpolates between these two extremes.
Let us make a quick note of these observations for later reference.

\begin{remark}\label{rem_extreme}
	The distribution $\pi_{d,\beta,B,0}^\tensor$ is the product measure $\pi_{d,\beta,B}\tensor\pi_{d,\beta,B}$, i.e., the two coordinates of~\eqref{eqtwocoords} are independent.
	By contrast, $\pi_{d,\beta,B,0}^\tensor$ is the distribution of an identical pair $(\vec\mu_{\pi_{d,\beta,B},1},\vec\mu_{\pi_{d,\beta,B},1})$, i.e., the two coordinates of~\eqref{eqtwocoords} are identical.
\end{remark}

\subsection{Notation}\label{sec_notation}
We denote the vertex set of the random graph $\GG(n,m)$ by $V_n=\{v_1,\ldots,v_n\}$.
Throughout the paper asymptotic notation such as $o(\nix),O(\nix)$ refers to the limit $n\to\infty$.
The random graph $\GG(n,m)$ enjoys a property $\fP$ {\em with high probability} (`\whp') if the probability that $\fP$ occurs converges to one as $n\to\infty$.
We continue to assume tacitly that $m=m(n)$ is a sequence such that $\lim_{n\to\infty}2m/n=d$ for a fixed real $d>0$.
All statements concerning the random graph are understood to hold for {\em any} sequence $m=m(n)$ with $\lim_{n\to\infty}2m/n=d$.

For a graph $G=(V,E)$ and a vertex $v\in V$ we denote by $\partial(G,v)$ the set of all neighbours of $v$.
By extension, for an integer $\ell\geq0$ we let $\partial^\ell(G,v)$ be the set of all vertices at (shortest path) distance precisely $\ell$ from $v$.
A {\em rooted graph} is a graph $G=(V,E)$ together with a distinguished root vertex $r\in V$.
For a vertex $v$ of a graph $G$ we let $\nabla^\ell(\GG,v)$ be the subgraph of $G$ induced by all vertices $u$ at distance at most $\ell$ from $v$, rooted at $v$.
Two rooted graphs $(G,r)$ and $(G',r')$ are {\em isomorphic} if there exists a graph-theoretic isomorphism $G\to G'$ that maps $r$ to $r'$.

Let $G=(V,E)$ be a graph.
For a vector $\sigma\in\PM^{V(G)}$ and a subset $U\subset V$ we write
\begin{align*}
	\sigma(U)&=\prod_{u\in U}\sigma(u).
\end{align*}
In particular, for an edge $e=\{v,w\}\in E$ we have $\sigma(e)=\sigma(v)\sigma(w)$.
The {\em Ising model on $G$ at inverse temperature $\beta$ and external field $B$} is the probability distribution on $\PM^{V(G)}$ defined by
\begin{align}
	\mu_{G,\beta,B}(\sigma)&=\frac1{Z_{G}(\beta,B)}\exp\bc{\beta\sum_{e\in E(G)}\sigma(e)+B\sum_{v\in V(G)}\sigma(v)}&&(\sigma\in\PM^{V(G)}),\qquad\mbox{ where}\label{eqIsingmu}\\
	Z_G(\beta,B)&=\sum_{\tau\in\PM^{V(G)}}\exp\bc{\beta\sum_{e\in E(G)}\tau(e)+B\sum_{v\in V(G)}\tau(v)}.\label{eqIsingZ}
\end{align}
Additionally, for a graph $G$ rooted at a vertex $r\in V(G)$ and an integer $\ell\geq0$ we will be led to consider the conditional distribution
\begin{align}\label{eqMuT}
\mu_{G,r,\beta,B}^{(\ell)}(\sigma)&=\mu_{G,\beta,B}\bc{\sigma\mid\{\forall v\in\partial^\ell(G,r):\SIGMA(v)=1\}}&&(\sigma\in\PM^{V(G)})
\end{align}
given that all vertices at distance precisely $\ell$ from the root $r$ carry spin $1$.

For a finite set $\Omega\neq\emptyset$ we let $\cP(\Omega)$ denote the set of all probability distributions on $\Omega$.
For $\mu\in\cP(\Omega)$ we let $\SIGMA_\mu\in\Omega$ denote a random element of $\Omega$ with distribution $\mu$; if and where the reference to $\mu$ is self-evident, we just write $\SIGMA$.
Moreover, for a function $X:\Omega\to\RR$ we write
\begin{align*}
	\scal{X(\SIGMA)}\mu&=\sum_{\sigma\in\Omega}X(\sigma)\mu(\sigma)
\end{align*}
for the expectation of $X(\SIGMA)$.
Further, if $\fE\subset\Omega$ is an event with $\mu(\fE)>0$, then $\mu(\nix\mid\fE)\in\cP(\Omega)$ signifies the conditional distribution given $\fE$.

For $\ell\geq1$ and probability measures $\mu,\nu$ on $[-1,1]^\ell$ we denote by $W_1(\mu,\nu)$ their $L^1$-Wasserstein distance \cite{RuschL}.
Thus, with $\Gamma(\mu,\nu)$ the set of all coupling of $\mu,\nu$, we have
\begin{align*}
	W_1(\mu,\nu)&=\inf_{\gamma\in\Gamma(\mu,\nu)}\int_{[-1,1]^\ell\times[-1,1]^\ell}\norm{x-y}_1\dd\gamma(x,y).
\end{align*}
We recall that the topology induced by $W_1(\nix,\nix)$ coincides with the topology of weak convergence \cite[\Thm~6.9]{Villani}.

{\em From here on we assume tacitly that $d,\beta>0$ and $B\geq0$.
	Unless specified otherwise all statements are understood to hold for any such $d,\beta,B$.}

\section{Overview}\label{sec_overview}

\noindent
The basic idea behind the proofs of the central limit theorems Theorems~\ref{thm_ex} and~\ref{thm_no} is to consider a family of correlated random graphs.
The degree of correlation is gauged by a coupling parameter $t\in[0,1]$, with $t=1$ meaning that the two random graphs are identical and $t=0$ corresponding to two independent random graphs.
By interpolating from $t=0$ to $t=1$ we will be able to get a handle on the variance of $\log Z_{\GG}(\beta,B)$.
To obtain the central limit theorems we will ultimately combine the analysis of the variance with an off-the-shelf martingale central limit theorem.

A broadly similar proof strategy was recently used to show that the partition function of the random 2-SAT model (i.e., the number of satisfying assignments) has a central limit theorem throughout the entire regime of clause/variable densities where satisfying assignments likely exist~\cite{2sat}.
The fundamental difference between that proof and the present contribution is that the random 2-SAT problem enjoys a very strong spatial mixing property called `Gibbs uniqueness' throughout the satisfiable regime~\cite{AchlioptasEtAl}.
The Gibbs uniqueness property constitutes the linchpin of the proofs from~\cite{2sat}.
By contrast, the Ising model on the random graph does not have the Gibbs uniqueness property for most interesting parameter regimes~\cite{meta,Dembo_2010}.
The most egregious counterexample is the case $B=0$, $d>1$ and $\beta>\betaf(d)$, i.e., low temperature and no external field.
As \Prop~\ref{prop_pure} below demonstrates, in this case two inverse `pure states' (in the sense of~\cite{Parisi}) coexist \whp, a scenario that amounts to pretty much the opposite of Gibbs uniqueness.
Additionally, the Belief Propagation operator has a spurious (and unstable) fixed point at $\delta_0$.

To deal with the challenge posed by the absence of Gibbs uniqueness, we will combine insights on the Ising model from prior work~\cite{Basak,Dembo_2010} with the positive correlation of spins under the Ising measure; see \Lem~\ref{lem_kostas} below.
In the case $B>0$ these ingredients pretty much suffice to investigate the variance and thus to establish the central limit theorem.
By contrast, in the $B=0$, $d>1$ and $\beta>\betaf(d)$ case substantial extra work is required.
In this parameter regime we need to get a precise handle on the two inverse pure states of the model, which is precisely what the aforementioned \Prop~\ref{prop_pure} delivers.
Actually the proposition plugs a significant gap in the understanding of the Ising model on the \Erdos-\Renyi\ graph and should thus be of independent interest.
Apart from insights from prior work on the Ising model, the proof of \Prop~\ref{prop_pure} combines techniques from work on diluted spin glass models from~\cite{Victor,Coja_2018,ACOPBethe,ACOPBP} with delicate correlation and monotonicity arguments.
With the pure state decomposition in place, an analysis involving careful conditioning then provides sufficient control of the variance to invoke the martingale central limit theorem and to prove \Thm~\ref{thm_no}.
Finally, the proof of \Thm~\ref{thm_an} rests on a totally different set of techniques.
Here we combine a truncated second moment argument with the small subgraph conditioning method that was originally developed towards the Hamilton cycle problem on random graphs~\cite{RW2}.

The purpose of this section is to flesh out the proof strategy in greater detail.
Along the way we will discover the combinatorics behind the operator~\eqref{eqBPtensor} and the functional~\eqref{eqBFEtensor}.
But first we need to recall the workings of the classical Belief Propagation operator~\eqref{eqBPop} that leads to the `replica symmetric solution'~\eqref{eqDM}, because Belief Propagation will play a vital role towards the proofs of \Thm s~\ref{thm_ex} and~\ref{thm_no} as well.
Subsequently we will survey the strategy behind the proofs of \Thm s~\ref{thm_ex} and~\ref{thm_no}, including the pure state decomposition, as well as of \Thm~\ref{thm_an}.
Finally, \Sec~\ref{sec_discussion} contains a detailed discussion of further related work.

\subsection{First order approximation: the replica symmetric solution}\label{sec_BP_intro}
As a preparation for the proofs of \Thm s~\ref{thm_ex} and~\ref{thm_no} we need to appraise ourselves of the workings of the proof of first order formula~\eqref{eqDM} for the free energy and the role that Belief Propagation plays in this proof.
As a key step toward~\eqref{eqDM} Dembo and Montanari show that in the case $B>0$ the distribution $\pi_{d,\beta,B}$ from~\eqref{eqDMweakLimit} is the limit of the empirical distribution of the vertex magnetisations on the random graph.
To elaborate, the {\em magnetisation} of a vertex $v_i$ in $\GG$ is defined as
	$$\scal{\SIGMA(v_i)}{\mu_{\GG,\beta,B}}=\mu_{\GG,\beta,B}(\{\SIGMA(v_i)=1\})-\mu_{\GG,\beta,B}(\{\SIGMA(v_i)=-1\}).$$
Accordingly, the empirical distribution of the vertex magnetisations reads
\begin{align}\label{eqempirical}
	\pi_{\GG,\beta,B}&=\frac1n\sum_{i=1}^n\delta_{\scal{\SIGMA(v_i)}{\mu_{\GG,\beta,B}}}\in\cP([-1,1]).
\end{align}
Of course, due to its dependence on the random graph $\GG$, $\pi_{\GG,\beta,B}$ is a {\em random} probability distribution.
The following proposition establishes that $\pi_{\GG,\beta,B}$ converges in probability to the limit $\pi_{d,\beta,B}$ from~\eqref{eqDMweakLimit} if $B>0$.

\begin{proposition}[{\cite[\Thm s~2.5 and~2.7]{Dembo_2010}}]\label{prop_DM_B>0}
	Suppose $B>0$.
	Then
	\begin{align}\label{eqprop_DM_B>0}
		\ex[W_1(\pi_{\GG,\beta,B},\pi_{d,\beta,B})]&=o(1)&&\mbox{and}\\
		\ex\abs{\scal{\SIGMA(v_1)\SIGMA(v_2)}{\mu_{\GG,\beta,B}}-\scal{\SIGMA(v_1)}{\mu_{\GG,\beta,B}}\scal{\SIGMA(v_2)}{\mu_{\GG,\beta,B}}}&=o(1).\label{eqIsingRS}
	\end{align}
\end{proposition}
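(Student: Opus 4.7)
The plan is to leverage the local weak convergence of $\GG(n,m)$ to the Poisson Galton-Watson tree $\TT$ with offspring distribution $\Po(d)$, together with the fact that at $B>0$ the Ising model on $\TT$ enjoys Gibbs uniqueness. Concretely, the root marginal on a tree truncated at depth $\ell$ is computed exactly by $\ell$ applications of the operator $\BP_{d,\beta,B}$, so by~\eqref{eqDMweakLimit} it converges weakly to $\pi_{d,\beta,B}$ as $\ell\to\infty$, independently of the boundary condition. At positive external field the two extremal (all-plus and all-minus) boundary conditions yield the same limiting root marginal because the FKG/GKS monotonicity sandwich forces them to coincide asymptotically.

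To derive~\eqref{eqprop_DM_B>0}, fix a large depth $\ell$ and a uniformly random vertex $v_i\in V_n$. A standard first moment computation shows that $\nabla^\ell(\GG,v_i)$ is tree-like \whp\ and that its distribution converges in the local weak sense to $\TT$ truncated at depth $\ell$. Writing $\scal{\SIGMA(v_i)}{\mu_{\GG,\beta,B}(\,\cdot\mid\tau)}$ for the root magnetization conditional on a boundary spin configuration $\tau$ along $\partial^\ell(\GG,v_i)$, Gibbs uniqueness implies that this quantity lies within $o_\ell(1)$ of a $\pi_{d,\beta,B}$-distributed variable, uniformly in $\tau$. Averaging over $v_i$ and sending $\ell\to\infty$ converts this into control of the Wasserstein distance via the Kantorovich dual formulation, establishing~\eqref{eqprop_DM_B>0}.

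For the factorization~\eqref{eqIsingRS}, draw $v_1,v_2\in V_n$ uniformly at random. With high probability their $\ell$-neighborhoods are vertex-disjoint and both tree-like. Conditioning on the joint boundary configuration along $\partial^\ell(\GG,v_1)\cup\partial^\ell(\GG,v_2)$ decouples the two root marginals, since once the boundary spins are fixed the two subtrees no longer interact. By Gibbs uniqueness each conditional marginal depends only negligibly on the boundary condition, so the covariance $\scal{\SIGMA(v_1)\SIGMA(v_2)}{\mu_{\GG,\beta,B}}-\scal{\SIGMA(v_1)}{\mu_{\GG,\beta,B}}\scal{\SIGMA(v_2)}{\mu_{\GG,\beta,B}}$ is $o_\ell(1)$ in expectation. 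Removing the conditioning and letting $\ell\to\infty$ produces the desired asymptotic decorrelation.

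The main obstacle is quantitative Gibbs uniqueness on the random Galton-Watson tree at $B>0$. The canonical approach uses the FKG inequality to monotonically sandwich every marginal between those under all-plus and all-minus boundaries, and then shows that the BP recursion is a strict contraction in the Wasserstein metric once $B>0$ breaks the $\pm 1$ symmetry. The positivity of $B$ is essential: without it, a second extremal marginal can persist in the low-temperature regime $\beta>\betaf(d)$, which is exactly the phenomenon that \Prop~\ref{prop_pure} is designed to handle later in the paper.
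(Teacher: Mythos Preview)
The paper does not prove this proposition; it is quoted from Dembo--Montanari~\cite{Dembo_2010}, so there is no in-paper proof to compare against. Your overall architecture---local weak convergence to the Galton--Watson tree, FKG sandwiching between extremal boundary conditions, and then boundary insensitivity to conclude both~\eqref{eqprop_DM_B>0} and~\eqref{eqIsingRS}---is exactly the strategy of~\cite{Dembo_2010}, and your treatment of the decorrelation step via disjoint $\ell$-neighbourhoods is correct.

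There is, however, a genuine gap in the step you flag as the ``main obstacle''. You assert that the BP recursion is a strict contraction in the Wasserstein metric once $B>0$. This is false in general: for $d\tanh\beta>1$ and small $B>0$ the one-step BP map is \emph{not} a contraction (its derivative can exceed $1$ near the unstable fixed point inherited from the $B=0$ case). What Dembo--Montanari actually use---and what the present paper points to in \Sec~\ref{sec_discussion}---is the GHS inequality~\cite{GHS}: the root magnetisation under the $+$ boundary is concave in $B$, which combined with the FKG sandwich forces the free and all-plus boundary marginals to coincide in the $\ell\to\infty$ limit for every $B>0$. Without GHS (or an equivalent substitute), the sandwich does not close, and your argument stalls precisely at the point you identify as critical. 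Replacing the contraction claim with the GHS concavity argument would repair the sketch.
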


\noindent
\Prop~\ref{prop_DM_B>0} does not apply to the case $B=0$ (no external field), which is due to the inversion symmetry $\mu_{\GG,\beta,0}(\sigma)=\mu_{\GG,\beta,0}(-\sigma)$ of the Ising model in this case.
We will revisit this delicate issue in \Sec~\ref{sec_B=0low} below.

To grasp the link between the empirical magnetisations $\pi_{\GG,\beta,B}$ and the distribution $\pi_{d,\beta,B}$ we need to remind ourselves of two facts.
First, that the local geometry of the random graph $\GG$ is described by a Galton-Watson process.
Second, that Belief Propagation is `exact on trees'.

On the first point, let $\TT=\TT(d)$ be the (possibly infinite) Galton-Watson tree with root $\root$ where every vertex begets a $\Po(d)$ number of children.
Moreover, let $\TT^{(\ell)}$ be the (finite) tree obtained from $\TT$ by deleting all vertices whose distance from $\root$ exceeds $\ell$.
The following well-known lemma states precisely how $\TT$ captures the local geometry of $\GG$.
Recall that $\nabla^\ell(\GG,v)$ is the subgraph of $\GG$ induced on the vertices $u$ at distance at most $\ell$ from $v$, rooted at $v$.

\begin{lemma}[{e.g.~\cite[\Prop~2.6]{Brasil}}]\label{lem_lwc}
	For any integer $\ell\geq0$ and any rooted tree $T$ we have
	\begin{align*}
		\lim_{n\to\infty}\frac1n\sum_{i=1}^n\vecone\cbc{\nabla^\ell(\GG,v_i)\ism T}&=\pr\brk{\TT^{(\ell)}\ism T}&&\mbox{in probability}.
	\end{align*}
\end{lemma}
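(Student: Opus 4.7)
Since the result is attributed to \cite[\Prop~2.6]{Brasil}, I would treat it as standard and give a brief second-moment argument. Set
\[
\vX_n(T) \;=\; \frac{1}{n}\sum_{i=1}^{n}\vecone\cbc{\nabla^\ell(\GG,v_i)\ism T},
\qquad
p_n(T)\;=\;\pr\brk{\nabla^\ell(\GG,v_1)\ism T}.
\]
By vertex-exchangeability $\ex \vX_n(T) = p_n(T)$, so the plan is to establish (i) $p_n(T)\to \pr[\TT^{(\ell)}\ism T]$ and (ii) $\Var \vX_n(T) = o(1)$, and then invoke Chebyshev.

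For (i), I would construct $\nabla^\ell(\GG,v_1)$ by breadth-first exploration of depth $\ell$. Conditional on the history of the exploration, the number of unseen neighbours of a freshly exposed vertex is hypergeometric on $n - O(1)$ candidates out of $2m - O(1)$ remaining half-edges, i.e.\ it has mean $2m/n + o(1) = d + o(1)$. Since the total number of vertices encountered up to depth $\ell$ is $O(1)$ with probability $1-o(1)$, and since $\mathrm{Hyp}(n,2m,k) \to \Po(d)$ as $n\to\infty$ whenever $k$ stays bounded, the joint distribution of the BFS offspring sequence converges to that of the Galton-Watson tree $\TT^{(\ell)}$. Equivalently, one may couple $\GG(n,m)$ to $\GG(n,p)$ with $p = d/n + o(1/n)$ (using the standard contiguity of $\GG(n,m)$ and $\GG(n,\binom{n}{2}^{-1}m)$ for degree-type local events) and read off the $\Po(d)$ limit directly. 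In either presentation one obtains $p_n(T) \to \pr[\TT^{(\ell)} \ism T]$, and, as a by-product, the bound $p_n(T) \leq C$ holds uniformly in $n$.

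For (ii), write
\[
\Var \vX_n(T) \;=\; \frac{1}{n^2}\sum_{i\neq j}\Cov\bc{\vecone\cbc{\nabla^\ell(\GG,v_i)\ism T},\vecone\cbc{\nabla^\ell(\GG,v_j)\ism T}}\;+\;O(1/n).
\]
For the off-diagonal terms I would run BFS simultaneously from $v_i$ and $v_j$. With probability $1-o(1)$ the two explorations never collide because each touches only $O(1)$ vertices out of $n$, so the two neighbourhoods are vertex-disjoint and, conditional on disjointness, the half-edges exposed from $v_i$ and those exposed from $v_j$ are governed by the same hypergeometric dynamics but on disjoint subsets of vertices. The same argument as in (i) then yields
\[
\pr\brk{\nabla^\ell(\GG,v_i)\ism T,\ \nabla^\ell(\GG,v_j)\ism T} \;=\; \pr[\TT^{(\ell)}\ism T]^{2} + o(1),
\]
so every individual covariance is $o(1)$ and, being uniformly bounded, they average to $o(1)$. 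Chebyshev then delivers $\vX_n(T) \pp \pr[\TT^{(\ell)}\ism T]$.

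I do not anticipate a genuine obstacle: the only point requiring care is the passage from the hypergeometric exposure law of $\GG(n,m)$ to the Poisson offspring law of $\TT(d)$, which is handled uniformly in the finite number of BFS steps needed to reach depth $\ell$. All bookkeeping is routine and the statement is already in the literature.
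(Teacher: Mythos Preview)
The paper does not supply its own proof of this lemma; it simply cites \cite[\Prop~2.6]{Brasil} and uses the result as a black box. Your second-moment sketch via breadth-first exploration is exactly the standard argument behind that citation, and it is correct as outlined: the hypergeometric-to-Poisson step for the offspring counts and the $o(1)$ collision probability for two independent depth-$\ell$ explorations are precisely what drives local weak convergence of $\GG(n,m)$ to the $\Po(d)$ Galton--Watson tree. There is nothing further to compare.
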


Moving on to Belief Propagation, consider a tree $T$ rooted at $\root$ and recall that $\mu_{T,\root,\beta,B}^{(\ell)}$ is the conditional distribution from~\eqref{eqMuT} where we impose a $+1$ `boundary condition' on all vertices at distance precisely $\ell$ from $\root$.
For a vertex $v\in V(T)$ let $T_v$ be the sub-tree of $T$ comprising $v$ and its successors; in other words, $T_v$ contains all vertices $w$ such that the path from $w$ to $\root$ in $T$ passes through $v$.
The root of $T_v$ is vertex $v$ itself.
In the case of the Ising model, Belief Propagation boils down to the following simple recurrence.

\begin{lemma}\label{lem_BP_tree}
	For any $B\geq0$, any tree $T$ rooted at $\root$ and any $\ell\geq1$ we have
	\begin{align}\label{eq_lem_BP_tree}
		\scal{\SIGMA(\root)}{\mu_{T,\beta,B}^{(\ell)}}&=
		\frac{\sum_{s\in\PM}s\eul^{sB}\prod_{v\in\partial(T,\root)}\bc{1+s\scal{\SIGMA(v)}{\mu_{T_v,\beta,B}^{(\ell-1)}}\tanh\beta}}{\sum_{s\in\PM}\eul^{sB}\prod_{v\in\partial(T,\root)}\bc{1+s\scal{\SIGMA(v)}{\mu_{T_v,\beta,B}^{(\ell-1)}}\tanh\beta}}\enspace.
	\end{align}
\end{lemma}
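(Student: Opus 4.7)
The identity is the standard Belief Propagation recursion for the Ising model on a tree with a hard $+1$ boundary condition imposed at depth $\ell$. My plan is to compute $\mu_{T,\beta,B}^{(\ell)}(\{\SIGMA(\root)=s\})$ for $s\in\PM$ directly from the definition \eqref{eqIsingmu}--\eqref{eqIsingZ}, exploiting the tree structure to factor the sum over configurations subtree by subtree.

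First I would fix $s\in\PM$ and sum out all spins on $T$ apart from $\root$ subject to $\SIGMA(\root)=s$ and $\SIGMA(u)=1$ for every $u\in\partial^\ell(T,\root)$. Writing $T_v$ for the subtree rooted at $v\in\partial(T,\root)$ (whose depth $\ell$-boundary at the original root becomes the depth $(\ell-1)$-boundary at $v$), the tree structure decouples the Hamiltonian across the children of $\root$, so
\begin{equation*}
\mu_{T,\beta,B}^{(\ell)}(\{\SIGMA(\root)=s\})\;\propto\;\eul^{sB}\prod_{v\in\partial(T,\root)}\sum_{\tau\in\PM}\eul^{\beta s\tau}\,\eul^{B\tau}\,\hat Z_v(\tau),
\end{equation*}
where $\hat Z_v(\tau)$ denotes the partition function of $T_v$ with $\SIGMA(v)=\tau$ and the depth $(\ell-1)$-boundary pinned to $+1$. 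By definition of $\mu^{(\ell-1)}_{T_v,\beta,B}$ we have $\mu^{(\ell-1)}_{T_v,\beta,B}(\{\SIGMA(v)=\tau\})=\eul^{B\tau}\hat Z_v(\tau)/(\eul^{B}\hat Z_v(1)+\eul^{-B}\hat Z_v(-1))$.

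Next I would invoke the elementary identity $\eul^{\beta s\tau}=\cosh\beta\,(1+s\tau\tanh\beta)$ valid for $s,\tau\in\PM$. Substituting and summing over $\tau$ gives
\begin{equation*}
\sum_{\tau\in\PM}\eul^{\beta s\tau}\eul^{B\tau}\hat Z_v(\tau)
\;=\;\cosh\beta\,\bigl(\eul^{B}\hat Z_v(1)+\eul^{-B}\hat Z_v(-1)\bigr)\Bigl(1+s\tanh\beta\,\scal{\SIGMA(v)}{\mu_{T_v,\beta,B}^{(\ell-1)}}\Bigr).
\end{equation*}
The prefactor $\cosh\beta\,(\eul^{B}\hat Z_v(1)+\eul^{-B}\hat Z_v(-1))$ is independent of $s$, so it gets absorbed into the normalising constant of $\mu_{T,\beta,B}^{(\ell)}(\{\SIGMA(\root)=s\})$.

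Finally I would assemble the result: dividing the two unnormalised values at $s=\pm1$ by their sum and computing $\scal{\SIGMA(\root)}{\mu_{T,\beta,B}^{(\ell)}}=\mu_{T,\beta,B}^{(\ell)}(\{\SIGMA(\root)=1\})-\mu_{T,\beta,B}^{(\ell)}(\{\SIGMA(\root)=-1\})$ immediately yields \eqref{eq_lem_BP_tree}. There is no genuine obstacle here; the only point requiring mild care is the bookkeeping of the depth of the boundary (depth $\ell$ in $T$ corresponds to depth $\ell-1$ in each $T_v$) and the fact that the $s$-independent per-child prefactors do cancel between numerator and denominator.
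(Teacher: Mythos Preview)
Your proposal is correct and follows essentially the same approach as the paper: both factor the constrained partition function over the subtrees $T_v$ pendant at the children of $\root$, obtain a per-child factor of the form $\sum_{\tau\in\PM}\eul^{\beta s\tau}Z_{T_v,\tau}^{(\ell-1)}$, and then rewrite this in terms of the child magnetisation $\scal{\SIGMA(v)}{\mu_{T_v,\beta,B}^{(\ell-1)}}$ so that the $s$-independent normalisers cancel between numerator and denominator. The only cosmetic difference is that you invoke the identity $\eul^{\beta s\tau}=\cosh\beta\,(1+s\tau\tanh\beta)$ explicitly at the outset, whereas the paper carries the exponentials through one more step before dividing; the bookkeeping of the boundary depth (depth $\ell$ in $T$ becoming depth $\ell-1$ in $T_v$) is handled correctly in both.
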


\noindent
The upshot of \Lem~\ref{lem_BP_tree} is that we can work out the root magnetisation $\langle{\SIGMA(\root)},{\mu_{T,\beta,B}^{(\ell)}}\rangle$ of $T$ subject to the all-ones boundary condition `bottom up' from the root magnetisations of the sub-trees $T_v$, $v\in\partial(T,\root)$.
The recurrence~\eqref{eq_lem_BP_tree}  can be verified directly in a few lines (see \Sec~\ref{sec_lem_BP_tree}).
Alternatively, \Lem~\ref{lem_BP_tree} could be derived easily from general statements about Belief Propagation~\cite{Bolthausen,MM}.

\Lem~\ref{lem_BP_tree} clarifies the combinatorial meaning of the operator $\BP_{d,\beta,B}$ from~\eqref{eqBPop}.
Indeed, due to the recursive nature of the Galton-Watson tree $\TT$, the root has a $\Po(d)$ number of children.
Moreover, for each child $v\in\partial(\TT,\root)$ the tree $\TT_v$ is again nothing but a Galton-Watson tree with offspring distribution $\Po(d)$.
Thus, \eqref{eqBPrec} matches the recurrence~\eqref{eq_lem_BP_tree} on the random tree $\TT$.
Consequently, we obtain the following immediate consequence of \Lem~\ref{lem_BP_tree}.

\begin{corollary}\label{cor_BP_tree}
	For any $B\geq0$ and any $\ell\geq0$, the random variable $\Scal{\SIGMA(\root)}{\mu_{\TT,\beta,B}^{(\ell)}}$ has distribution
\begin{align}\label{eqpidbetaBell}
	\pi_{d,\beta,B}^{(\ell)}&=\BP_{d,\beta,B}^{\ell}(\delta_1).
\end{align}
\end{corollary}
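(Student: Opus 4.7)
The plan is to prove the corollary by a straightforward induction on $\ell$, leveraging the self-similarity of the Galton--Watson tree $\TT$ together with the tree recurrence of Lemma~\ref{lem_BP_tree}.

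For the base case $\ell=0$, I would observe that the conditional measure $\mu^{(0)}_{\TT,\root,\beta,B}$ in \eqref{eqMuT} is defined by conditioning on $\SIGMA(v)=1$ for every $v\in\partial^{0}(\TT,\root)=\{\root\}$. Hence $\scal{\SIGMA(\root)}{\mu^{(0)}_{\TT,\root,\beta,B}}=1$ deterministically, which matches $\BP^{0}_{d,\beta,B}(\delta_1)=\delta_1$.

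For the inductive step, assuming that the claim has been established for $\ell-1$, I would apply Lemma~\ref{lem_BP_tree} with $T=\TT$, which expresses the root magnetisation at depth $\ell$ as
\[
	\scal{\SIGMA(\root)}{\mu^{(\ell)}_{\TT,\beta,B}}
	=\frac{\sum_{s\in\PM}s\eul^{sB}\prod_{v\in\partial(\TT,\root)}\bc{1+s\scal{\SIGMA(v)}{\mu^{(\ell-1)}_{\TT_v,\beta,B}}\tanh\beta}}{\sum_{s\in\PM}\eul^{sB}\prod_{v\in\partial(\TT,\root)}\bc{1+s\scal{\SIGMA(v)}{\mu^{(\ell-1)}_{\TT_v,\beta,B}}\tanh\beta}}.
\]
The three essential inputs are then the standard facts about the Poisson Galton--Watson process: (i) the root degree $|\partial(\TT,\root)|$ is $\Po(d)$-distributed; (ii) conditional on this degree, the subtrees $(\TT_v)_{v\in\partial(\TT,\root)}$ are mutually independent, each distributed as an independent copy of $\TT$; and (iii) by the inductive hypothesis, each $\scal{\SIGMA(v)}{\mu^{(\ell-1)}_{\TT_v,\beta,B}}$ is therefore distributed according to $\pi^{(\ell-1)}_{d,\beta,B}=\BP^{\ell-1}_{d,\beta,B}(\delta_1)$, independently across $v$.

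Substituting into the recurrence above and comparing with the definition \eqref{eqBPrec} of $\BP_{d,\beta,B}$ applied to $\pi^{(\ell-1)}_{d,\beta,B}$, one sees that the two random variables are given by the same measurable function of a $\Po(d)$ variable and an i.i.d.\ sequence from $\pi^{(\ell-1)}_{d,\beta,B}$. Hence their distributions coincide, i.e.\ $\scal{\SIGMA(\root)}{\mu^{(\ell)}_{\TT,\beta,B}}$ has distribution $\BP_{d,\beta,B}(\pi^{(\ell-1)}_{d,\beta,B})=\pi^{(\ell)}_{d,\beta,B}$, completing the induction. There is really no substantive obstacle here: the whole content is already packed into Lemma~\ref{lem_BP_tree} and the recursive definition of $\TT$, and the corollary is obtained essentially by matching symbols.
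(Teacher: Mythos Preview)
Your proof is correct and follows exactly the same approach as the paper, which treats the corollary as an immediate consequence of Lemma~\ref{lem_BP_tree} together with the recursive structure of the Galton--Watson tree~$\TT$. You have simply spelled out the induction that the paper leaves implicit in the discussion preceding the corollary.
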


\noindent
Hence, $\pi_{d,\beta,B}$ is nothing but the limiting distribution of the root magnetisations $\Scal{\SIGMA(\root)}{\mu_{\TT,\beta,B}^{(\ell)}}$ as $\ell\to\infty$.

In summary, Eq.~\eqref{eqprop_DM_B>0} from \Prop~\ref{prop_DM_B>0} shows that the empirical distribution $\pi_{\GG,\beta,B}$ on the random graph converges to the same limit as the root magnetisation of the random tree $\TT^{(\ell)}$ that describes the local geometry of $\GG$ subject to the all-ones boundary condition at distance $\ell$ from the root.
The fact that the all-ones configuration supplies the `correct' starting point is non-trivial and one of the key facts established in~\cite{Dembo_2010}.
Further, a moment's reflection shows that for $B>0$ the limiting distribution $\pi_{d,\beta,B}$ is not just an atom.
Rather, the distribution $\pi_{d,\beta,B}$ reflects the inherent randomness of the local geometry of $\GG$ around its vertices $v_i$.
This fact, to which the Ising model on the \Erdos-\Renyi\ graph owes much of its mathematical depth, sets the \Erdos-\Renyi\ graph apart from other graph classes such as integer lattices or random regular graphs~\cite{Dembo_2014}.

The second statement~\eqref{eqIsingRS} of \Prop~\ref{eqIsingRS} shows that \whp\ the correlations between the spins of different vertices are benign.
Indeed, because the \Erdos-\Renyi\ random graph model is invariant under vertex permutations,  the statement \eqref{eqIsingRS} is equivalent to
\begin{align}\label{eqRSij}
	\frac1{n^2}\sum_{i,j=1}^n\ex\abs{\scal{\SIGMA(v_i)\SIGMA(v_j)}{\mu_{\GG,\beta,B}}-\scal{\SIGMA(v_i)}{\mu_{\GG,\beta,B}}\scal{\SIGMA(v_j)}{\mu_{\GG,\beta,B}}}&=o(1).
\end{align}
In other words, for `most' vertex pairs $(v_i,v_j)$ the spins $\SIGMA(v_i),\SIGMA(v_j)$ are asymptotically independent, a property known as `replica symmetry' in cavity method parlance~\cite{pnas}.

Given replica symmetry and the distribution of the magnetisations, the Bethe free energy \eqref{eqBFE} is generally known to provide a first order approximation to $\log Z_{\GG}(\beta,B)$~\cite{ACOPBP}.
This can be verified by a coupling argument called the Aizenman-Sims-Starr scheme~\cite{Aizenman}.
The basic idea behind this technique is to reduce the calculation of the free energy to the careful computation of the effect on the partition function of a sequence of local changes to the graph, such as additions of single edges.
In combination with knowledge of the distribution of the magnetisations~\eqref{eqprop_DM_B>0} the replica symmetry condition~\eqref{eqRSij} suffices to compute the impact of such local changes accurately.

That said, the proof of \eqref{eqDM} in~\cite{Dembo_2010} happens to employ a somewhat different strategy.
Instead of the Aizenman-Sims-Starr scheme, Dembo and Montanari interpolate on the inverse temperature parameter $\beta$.
However, the Aizenman-Sim-Starr scheme provides the inspiration for the variance computation that we will perform towards \Thm s~\ref{thm_ex} and~\ref{thm_no}.

\subsection{The central limit theorem in the case $B>0$}\label{sec_B>0}
To prove the central limit theorems (\Thm s~\ref{thm_ex} and~\ref{thm_no}) we develop a refined version of the Belief Propagation formalism that captures the fluctuations of $\log Z_{\GG}(\beta,B)$.
In the case $B>0$ we can directly use some of the aforementioned ingredients of the proof of~\eqref{eqDM}, particularly \Prop~\ref{prop_DM_B>0}.
However, as we will see in \Sec~\ref{sec_B=0low} below, the case $B=0$ turns out to be more delicate.
The proofs for the following statements can be found in \Sec~\ref{sec_thm_ex}, where we prove \Lem~\ref{lem_BP_tree}, \Prop~\ref{proposition_HH}, \Lem~\ref{fact_telescope}, \Lem~\ref{fact_Zratio}, \Prop~\ref{prop_fix_ex}, \Lem~\ref{lem_lwc_tensor} and \Prop s~\ref{prop_tensor_B>0}, \ref{cor_tensor_B>0} and \ref{prop_var_pos_B>0}.
Some of the intermediate statements in this subsection actually also hold for $B=0$, in which case we will be able to reuse them in \Sec~\ref{sec_B=0low} towards the proof of \Thm~\ref{thm_no}.

We are going to derive \Thm~\ref{thm_ex} from a generic martingale central limit theorem.
The main challenge will be to get a handle on the resulting variance process.
The martingale that we use is the well-known edge exposure martingale~\cite[\Chap~7]{AlonSpencer}.
Hence, we think of the random graph $\GG=\GG(n,m)$ as being obtained by picking one random edge at a time without replacement.
Thus, let $\ve_1,\ldots,\ve_m$ be a uniformly random {\em sequence} of pairwise distinct edges and let $E(\GG(n,m))=\{\ve_1,\ldots,\ve_m\}$ be the edge set of the random graph.
Furthermore, let $\fF_{n,M}$ be the $\sigma$-algebra generated by the first $M$ edges $\ve_1,\ldots,\ve_M$.
Hence, given $\fF_{n,M}$ we know $\ve_1,\ldots,\ve_M$, while $\ve_{M+1},\ldots,\ve_m$ remain unexposed, viz.\ random (albeit not quite independent of $\ve_1,\ldots,\ve_M$ because edges are chosen without replacement).
Let
\begin{align}\label{eqZX}
	\vZ_{n,M}&=m^{-1/2}\ex\brk{\log Z_{\GG}(\beta,B)\mid\fF_{n,M}},&\vX_{n,M}&=\vZ_{n,M}-\vZ_{n,M-1}.
\end{align}
Then $(\vZ_{n,M})_{0\leq M\leq m}$ is a Doob martingale and $\vX_{n,M}$ is the associated sequence of martingale differences.
The following is an easy consequence of a general martingale central limit theorem from~\cite{HH}.

\begin{proposition}\label{proposition_HH}
	Let $B\geq0$.
	If there exists a real $\eta=\eta(d,\beta,B)>0$ such that
	\begin{align}\label{eq_fact_HH1}
		\lim_{n\to\infty}\ex\abs{\sum_{M=1}^m\vX_{n,M}^2-\eta^2}&=0
	\end{align}
	then $\lim_{n\to\infty}\vZ_{n,m}-\vZ_{n,0}=\cN(0,\eta^2)$\mbox{ in distribution}.
\end{proposition}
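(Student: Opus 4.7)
The plan is to derive the statement from a standard martingale central limit theorem such as Corollary~3.1 of~\cite{HH}, which asserts that a triangular-array martingale with increments satisfying a Lindeberg condition converges to a Gaussian provided the sum of its conditional squared increments converges in probability to a constant. Since $(\vZ_{n,M})_{0\leq M\leq m}$ is by construction a Doob martingale with respect to the filtration $(\fF_{n,M})_{0\leq M\leq m}$, the sequence $(\vX_{n,M})_{1\leq M\leq m}$ forms the associated martingale difference array and $\vZ_{n,m}-\vZ_{n,0}=\sum_{M=1}^m\vX_{n,M}$. Hence it suffices to verify a Lindeberg condition and to upgrade~\eqref{eq_fact_HH1} to a statement about the conditional second moments.

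First I would establish a uniform bound on the martingale differences. From~\eqref{eqIsingZ}, adding or deleting a single edge of a graph $G$ scales each summand of $Z_G(\beta,B)$ by a factor in $[\eul^{-\beta},\eul^{\beta}]$, which yields the deterministic Lipschitz bound $\abs{\log Z_G(\beta,B)-\log Z_{G'}(\beta,B)}\leq 2\beta$ for any pair $G,G'$ differing in one edge. A standard resampling coupling of two edge sequences that agree on their first $M-1$ coordinates then produces $|\vX_{n,M}|\leq C_\beta/\sqrt{m}$ almost surely for an absolute constant $C_\beta=C_\beta(\beta)$. This uniform smallness renders the Lindeberg condition
\begin{align*}
\sum_{M=1}^m\ex\brk{\vX_{n,M}^2\vecone\{|\vX_{n,M}|>\epsilon\}\mid\fF_{n,M-1}}\pp 0
\end{align*}
trivial for each fixed $\epsilon>0$, since the indicator vanishes deterministically once $n$ is sufficiently large.

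Second I would translate the hypothesis~\eqref{eq_fact_HH1}, which controls the raw sum of squares $U_n:=\sum_M\vX_{n,M}^2$, to convergence in probability of the predictable quadratic variation $V_n:=\sum_M\ex[\vX_{n,M}^2\mid\fF_{n,M-1}]$. The sequence $(U_n-V_n)$ is itself a martingale with orthogonal increments of size $O(1/m)$, so
\begin{align*}
\ex\brk{(U_n-V_n)^2}=\sum_{M=1}^m\ex\brk{\bc{\vX_{n,M}^2-\ex[\vX_{n,M}^2\mid\fF_{n,M-1}]}^2}\leq\sum_{M=1}^m\ex\brk{\vX_{n,M}^4}=O\bc{1/m}.
\end{align*}
Combined with the $L^1$-convergence $U_n\to\eta^2$ supplied by~\eqref{eq_fact_HH1}, this yields $V_n\pp\eta^2$, and the martingale central limit theorem from~\cite{HH} now delivers $\vZ_{n,m}-\vZ_{n,0}\to\cN(0,\eta^2)$ in distribution. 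The entire argument is essentially routine given~\eqref{eq_fact_HH1}; the only substantive inputs are the $O(1/\sqrt{m})$ bound on increments and the $L^2$-comparison between $U_n$ and $V_n$. The real content of the proofs of Theorems~\ref{thm_ex} and~\ref{thm_no} will lie in verifying the hypothesis~\eqref{eq_fact_HH1} itself, which is the task of the subsequent sections.
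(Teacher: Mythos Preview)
Your proof is correct and follows essentially the same approach as the paper: establish the deterministic bound $|\vX_{n,M}|=O(m^{-1/2})$ from the edge-Lipschitz property of $\log Z_G(\beta,B)$, then feed this into a martingale CLT from~\cite{HH}. The only difference is that the paper invokes Theorem~3.2 of~\cite{HH}, whose variance condition is already phrased in terms of the raw sum $\sum_M\vX_{n,M}^2$ rather than the predictable quadratic variation, so the hypothesis~\eqref{eq_fact_HH1} plugs in directly and your $L^2$-comparison between $U_n$ and $V_n$ becomes unnecessary.
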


Thus, \Prop~\ref{proposition_HH} reduces the proof of \Thm~\ref{thm_ex} to the analysis of the variance process $(\vX_{n,M}^2)_{1\leq M\leq m}$.
To investigate the variance process we adapt a technique originally developed in~\cite{2sat} to study the random 2-SAT problem to the present setting of the Ising model on random graphs.
Namely, we will express $\vX_{n,M}^2$ in terms of a family of coupled random graphs.
Specifically, we consider three pairs of random graphs
\begin{align}\label{eqsix}
	(\GG_{h,M}(n,m))_{h=1,2},&&(\GG_{h,M}^-(n,m))_{h=1,2},&&(\GG_{h,M}^+(n,m))_{h=1,2}
\end{align}
that each have a certain number of edges in common and additionally contain a number of independent random edges.
The six random graphs are generated via the following, admittedly mildly subtle coupling.
Let $0\leq M\leq m$ and let us assume that $n>n_0(d)$ exceeds a large enough number $n_0(d)$ that depends on $d$ only.

\begin{description}
	\item[CPL0] recall that $(\ve_1,\ldots,\ve_m)$ signifies the edge sequence of the random graph $\GG=\GG(n,m)$.
	\item[CPL1] independently for $h=1,2$ choose an independent sequence $(\ve_{h,M,1},\ldots,\ve_{h,M,m-M+1})$ of pairwise distinct edges uniformly such that
		\begin{align}\label{eqCPL1}
			\{\ve_1,\ldots,\ve_{M-1}\}\cap\bigcup_{1\leq h\leq 2}\{\ve_{h,1},\ldots,\ve_{h,M,m-M+1}\}&=\emptyset&&
		\end{align}
		and let
		\begin{align*}
			\GG_{h,M}&=\GG_{h,M}(n,m)=(V_n,\{\ve_1,\ldots,\ve_{M-1},\ve_{h,M,1},\ldots,\ve_{h,M,m-M}\}),\\
		\GG^-_{h,M}&=\GG_{h,M}^-(n,m)=(V_n,\{\ve_1,\ldots,\ve_{M-1},\ve_{h,M,1},\ldots,\ve_{h,M,m-M+1}\}).
		\end{align*}
	\item[CPL2] independently for $h=1,2$ choose an edge $\ve_{h,M}^+\not\in\{\ve_1,\ldots,\ve_M,\ve_{h,M,1},\ldots,\ve_{h,M,m-M}\}$ uniformly and let
		\begin{align*}
			\GG_{h,M}^+&=\begin{cases}
				(V_n,\{\ve_1,\ldots,\ve_M,\ve_{h,M,1},\ldots,\ve_{h,M,m-M}\})&\mbox{ if }\ve_{M}\not\in\{\ve_{h,M,1},\ldots,\ve_{h,M,m-M}\},\\
				(V_n,\{\ve_1,\ldots,\ve_M,\ve_{h,M,1},\ldots,\ve_{h,M,m-M},\ve_{h,M}^+\})&\mbox{ otherwise}.
			\end{cases}
		\end{align*}
\end{description}

\begin{remark}\label{rem_CPL}
	The random graphs $\GG$ and $(\GG_{h,M},\GG^-_{h,M},\GG^+_{h,M})_{1\leq h\leq2,\,0\leq M\leq m}$ are defined on the same probability space.
	The edge sequences $(\ve_{h,M,1},\ldots,\ve_{h,M,m-M+1})_{h=1,2}$ and the edges $\ve_{h,M}^+$ are understood to be stochastically independent for different values of $M$.
	But of course the sequence $\ve_1,\ldots,\ve_m$ is independent of and thus identical for all $M$.
\end{remark}

By construction, the two random graphs $\GG_{1,M},\GG_{2,M}$ have $M-1$ edges $\ve_1,\ldots,\ve_{M-1}$ in common with each other, as well as with the \Erdos-\Renyi\ graph $\GG$.
Additionally, each $\GG_{h,M}$ contains another $m-M$ random edges that are chosen independently for $h=1,2$.
In particular, for each $h=1,2$ the individual random graphs $\GG_{h,M}$ have the same distribution as the \Erdos-\Renyi\ graph $\GG(n,m-1)$ with $m-1$ edges.
Analogously, the random graphs $(\GG_{h,M}^-)_{h=1,2}$ share $M-1$ common edges and each have a total of $m$ edges.
Thus, individually $\GG_{h,M}^-$ is an \Erdos-\Renyi\ graph $\GG(n,m)$.
Finally, the graphs $(\GG_{h,M}^+)_{h=1,2}$ have a total of $M$ shared and $m-M$ independent edges, and each $\GG_{h,M}^+$ separately is just a $\GG(n,m)$.

The coupling {\bf CPL0}--{\bf CPL2} gives us a handle on the variance process $(\vX_{n,M}^2)_{1\leq M\leq m}$.
To see this, we introduce the three random variables
	\begin{align}\label{eqDeltaM}
		\vDelta_{n,M}^+&=\prod_{h=1,2}\log\frac{Z_{\GG_{h,M}^+}(\beta,B)}{Z_{\GG_{h,M}}(\beta,B)} ,&
		\vDelta^-_{n,M}&=\prod_{h=1,2}\log\frac{Z_{\GG_{h,M}^-}(\beta,B)}{Z_{\GG_{h,M}}(\beta,B)} ,&
		\vDelta^\pm_{n,M}&=\log\frac{Z_{\GG_{1,M}^+}(\beta,B)}{Z_{\GG_{1,M}}(\beta,B)}\log\frac{Z_{\GG_{2,M}^-}(\beta,B)}{Z_{\GG_{2,M}}(\beta,B)} .
	\end{align}

\begin{lemma}\label{fact_telescope}
	For all $B\geq0$ and $1\leq M\leq m$ we have $\vX_{n,M}^2=m^{-1}\ex\brk{\vDelta_{n,M}^++\vDelta_{n,M}^--2\vDelta_{n,M}^\pm\mid\fF_{n,M}}$.
\end{lemma}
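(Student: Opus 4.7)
The plan is to square out the martingale increment $\vX_{n,M}^2 = m^{-1}(L_M - L_{M-1})^2$, where $L_M := \ex\brk{\log Z_{\GG}(\beta,B) \mid \fF_{n,M}}$, using the elementary identity
\begin{align*}
(L_M - L_{M-1})^2 = (L_M - K_{M-1})^2 + (L_{M-1} - K_{M-1})^2 - 2(L_M - K_{M-1})(L_{M-1} - K_{M-1})
\end{align*}
with pivot $K_{M-1} := \ex\brk{\log Z_{\GG(n,m-1)}(\beta,B) \mid \fF_{n,M-1}}$. The lemma then reduces to identifying the three terms on the right-hand side with $\ex\brk{\vDelta_{n,M}^+ \mid \fF_{n,M}}$, $\ex\brk{\vDelta_{n,M}^- \mid \fF_{n,M}}$ and $\ex\brk{\vDelta_{n,M}^\pm \mid \fF_{n,M}}$, respectively.

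The identification rests on two structural properties of the coupling \textbf{CPL0}--\textbf{CPL2}. First, because the sequences $(\ve_{h,M,1},\ldots,\ve_{h,M,m-M+1})_{h=1,2}$ and the fresh edges $\ve_{h,M}^+$ are drawn independently across $h$ and are jointly independent of $\ve_M$, the triples $(\GG_{h,M},\GG_{h,M}^-,\GG_{h,M}^+)_{h=1,2}$ are conditionally independent given $\fF_{n,M}$. Second, a brief check of the construction reveals the conditional marginals: given $\fF_{n,M}$, each $\GG_{h,M}$ is distributed as $\GG(n,m-1)$ conditioned to contain $\ve_1,\ldots,\ve_{M-1}$; each $\GG_{h,M}^-$ is distributed as $\GG(n,m)$ conditioned to contain $\ve_1,\ldots,\ve_{M-1}$ (being unaffected by $\ve_M$); and each $\GG_{h,M}^+$ is distributed as $\GG(n,m)$ conditioned to contain $\ve_1,\ldots,\ve_M$. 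Hence $\ex\brk{\log Z_{\GG_{h,M}}\mid\fF_{n,M}} = K_{M-1}$, $\ex\brk{\log Z_{\GG_{h,M}^-}\mid\fF_{n,M}} = L_{M-1}$ and $\ex\brk{\log Z_{\GG_{h,M}^+}\mid\fF_{n,M}} = L_M$.

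Combining the cross-branch independence with these marginal identities, the products defining $\vDelta_{n,M}^+,\vDelta_{n,M}^-,\vDelta_{n,M}^\pm$ factorise under the conditional expectation. This yields $\ex\brk{\vDelta_{n,M}^+\mid\fF_{n,M}} = (L_M - K_{M-1})^2$, $\ex\brk{\vDelta_{n,M}^-\mid\fF_{n,M}} = (L_{M-1} - K_{M-1})^2$ and $\ex\brk{\vDelta_{n,M}^\pm\mid\fF_{n,M}} = (L_M - K_{M-1})(L_{M-1} - K_{M-1})$. Substituting into the algebraic identity collapses the pivot $K_{M-1}$ and completes the proof. The one place requiring mild care is the second case of \textbf{CPL2}, where $\ve_M$ accidentally lies in the sequence $(\ve_{h,M,i})_i$ and is `replaced' by $\ve_{h,M}^+$; here one must verify that the claimed marginal for $\GG_{h,M}^+$ still holds and that independence across branches is preserved. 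This follows from the observation that, conditionally on $\fF_{n,M-1}$, the edges $\ve_M$ and $\ve_{h,M,i}$ are exchangeable among the edges outside $\{\ve_1,\ldots,\ve_{M-1}\}$, so that the extra $m-M$ edges of $\GG_{h,M}^+$ are indeed uniform over $(m-M)$-subsets of edges outside $\{\ve_1,\ldots,\ve_M\}$.
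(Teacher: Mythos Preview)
Your proof is correct and follows essentially the same approach as the paper: both arguments rest on the conditional independence of the two branches $h=1,2$ given $\fF_{n,M}$ together with the identification of the marginal laws of $\GG_{h,M}$, $\GG_{h,M}^-$, $\GG_{h,M}^+$. The only organisational difference is that the paper factorises just the ``pure'' products $\ex[\prod_h\log Z_{\GG_{h,M}^+}\mid\fF_{n,M}]$ (and the analogous $-$ and $\pm$ terms) and lets the remaining cross-terms involving $\log Z_{\GG_{h,M}}$ cancel algebraically, whereas you introduce the pivot $K_{M-1}=\ex[\log Z_{\GG(n,m-1)}\mid\fF_{n,M-1}]$ and factorise each $\ex[\vDelta_{n,M}^\bullet\mid\fF_{n,M}]$ completely into $(L_M-K_{M-1})^2$, $(L_{M-1}-K_{M-1})^2$, $(L_M-K_{M-1})(L_{M-1}-K_{M-1})$; your version is slightly tidier but equivalent in substance, and your verification of the \textbf{CPL2} replacement case (so that $\GG_{h,M}^+$ really has the law of $\GG(n,m)$ given $\fF_{n,M}$) is accurate.
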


The merit of expressing the squared martingale differences in terms of $\vDelta_M^+,\vDelta_M^-,\vDelta_M^\pm$ is that the expressions on the right hand sides of~\eqref{eqDeltaM} correspond to `local' changes, similar in spirit to the Aizenman-Sims-Starr scheme that we alluded to in \Sec~\ref{sec_BP_intro}.
For instance, the ratio ${Z_{\GG_{h,M}^+}(\beta,B)}/{Z_{\GG_{h,M}}(\beta,B)}$ essentially accounts for the change in partition function upon adding one more shared edge $\ve_{M}$.
The ensuing changes can be expressed in terms of the Ising distributions of the `small' graph $\GG_{h,M}$.

\begin{lemma}\label{fact_Zratio}
	Suppose $B\geq0$.
	With probability $1-O(1/n)$ we have
	\begin{align}
		\log\frac{Z_{\GG_{h,M}^+}(\beta,B)}{Z_{\GG_{h,M}}(\beta,B)}&=\log\scal{\exp\bc{\beta\SIGMA(\ve_{M})}}{\mu_{\GG_{h,M},\beta,B}},&
		\log\frac{Z_{\GG_{h,M}^-}(\beta,B)}{Z_{\GG_{h,M}}(\beta,B)}&=\log\scal{\exp\bc{\beta\SIGMA(\ve_{h,m-M+1})}}{\mu_{\GG_{h,M},\beta,B}}.\label{eqfact_Zratio}
	\end{align}
\end{lemma}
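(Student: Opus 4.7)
The plan is to reduce both identities to the elementary fact that, for any graph $G=(V,E)$ and any edge $e=\{u,v\}\notin E$, writing $G'=(V,E\cup\{e\})$, we have
\begin{align*}
Z_{G'}(\beta,B)&=\sum_{\sigma\in\PM^{V}}\exp(\beta\sigma_u\sigma_v)\exp\bc{\beta\sum_{e'\in E}\sigma(e')+B\sum_{w\in V}\sigma(w)}=Z_{G}(\beta,B)\,\scal{\exp(\beta\SIGMA(e))}{\mu_{G,\beta,B}}.
\end{align*}
Taking logarithms yields both claimed identities, provided that in each case $\GG_{h,M}^{+}$ (resp.\ $\GG_{h,M}^{-}$) is $\GG_{h,M}$ together with exactly one additional edge, namely $\ve_{M}$ (resp.\ $\ve_{h,M,m-M+1}$).

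For the $\GG_{h,M}^{-}$ identity this is immediate: by the construction \textbf{CPL1} the edges $\ve_1,\ldots,\ve_{M-1},\ve_{h,M,1},\ldots,\ve_{h,M,m-M+1}$ are pairwise distinct, so $\GG_{h,M}^{-}$ differs from $\GG_{h,M}$ precisely by the edge $\ve_{h,M,m-M+1}\notin E(\GG_{h,M})$. Hence the second identity holds with probability one.

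For the $\GG_{h,M}^{+}$ identity the definition in \textbf{CPL2} shows that $\GG_{h,M}^{+}=\GG_{h,M}\cup\{\ve_{M}\}$ exactly on the event $\cbc{\ve_{M}\notin\{\ve_{h,M,1},\ldots,\ve_{h,M,m-M}\}}$. The only substantive step of the proof is therefore to bound the probability of the complementary event. Conditional on $\fF_{n,M}$, the sequence $(\ve_{h,M,1},\ldots,\ve_{h,M,m-M+1})$ is uniformly distributed among sequences of $m-M+1$ pairwise distinct elements of $\binom{V_n}{2}\setminus\{\ve_1,\ldots,\ve_{M-1}\}$, and by symmetry each such element lies in the first $m-M$ positions with probability $(m-M)/\bc{\binom{n}{2}-(M-1)}$. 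Since $\ve_M$ is a specific element of that pool,
\begin{align*}
\Pr\brk{\ve_{M}\in\{\ve_{h,M,1},\ldots,\ve_{h,M,m-M}\}\,\big|\,\fF_{n,M}}&\leq\frac{m-M}{\binom{n}{2}-(M-1)}=O(m/n^{2})=O(1/n),
\end{align*}
where the last step uses $m=O(n)$ by the standing assumption $2m/n\to d$. Combining this with the elementary identity above yields the first claim.

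There is no genuine obstacle here; the only care needed is to track the coupling \textbf{CPL0}--\textbf{CPL2} and to notice that the $O(1/n)$ exceptional probability enters solely through the (innocuous) event that the newly exposed edge $\ve_{M}$ of the underlying $\GG(n,m)$ happens to already lie among the independently chosen extensions of the common prefix $\ve_1,\ldots,\ve_{M-1}$.
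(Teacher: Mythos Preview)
Your proof is correct and follows essentially the same approach as the paper: both reduce to the elementary identity $Z_{G+e}/Z_G=\scal{\exp(\beta\SIGMA(e))}{\mu_{G,\beta,B}}$ (stated separately in the paper as Fact~\ref{fact_adde}), observe that the $\GG_{h,M}^-$ case holds deterministically from {\bf CPL1}, and bound the exceptional event $\ve_M\in\{\ve_{h,M,1},\ldots,\ve_{h,M,m-M}\}$ for the $\GG_{h,M}^+$ case by $O(1/n)$. Your write-up is slightly more explicit about the probability computation, but there is no substantive difference.
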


In light of \Lem~\ref{fact_Zratio} it would be fairly straightforward to use \Prop~\ref{prop_DM_B>0} to evaluate the expectations
\begin{align*}
	\ex\brk{\log\frac{Z_{\GG_{h,M}^+}(\beta,B)}{Z_{\GG_{h,M}}(\beta,B)}\mid\fF_{n,M}},&&
	\ex\brk{\log\frac{Z_{\GG_{h,M}^-}(\beta,B)}{Z_{\GG_{h,M}}(\beta,B)}\mid\fF_{n,M}}
\end{align*}
separately (at least if $B>0$).
But as \Lem~\ref{fact_telescope} shows what we actually need are the conditional expectations of $\vDelta_M^+,\vDelta_M^-,\vDelta_M^\pm$, which each comprise a {\em product} of terms of the form~\eqref{eqfact_Zratio}.
In fact, involving the correlated random graphs from~\eqref{eqsix}, the factors that make up $\vDelta_M^+,\vDelta_M^-,\vDelta_M^\pm$ are stochastically dependent.
In order to evaluate these products, we need to control the empirical distribution
	\begin{align*}
		\vec\pi_{n,M,\beta,B}^\tensor&=\frac1n\sum_{i=1}^n\delta_{\bc{\scal{\SIGMA(v_i)}{\mu_{\GG_{1,M},\beta,B}},\scal{\SIGMA(v_i)}{\mu_{\GG_{2,M},\beta,B}}}}\in\cP([-1,1]^2)
	\end{align*}
of the {\em joint} magnetisations of the vertices $v_i$ in $\GG_{1,M}$ and $\GG_{2,M}$.
Indeed, because the random graphs $(\GG_{h,M})_{h=1,2}$ are stochastically dependent, so are the magnetisations
\begin{align*}
	\scal{\SIGMA(v_i)}{\mu_{\GG_{h,M},\beta,B}}&=\mu_{\GG_{h,M},\beta,B}(\{\SIGMA(v_i)=1\})-\mu_{\GG_{h,M},\beta,B}(\{\SIGMA(v_i)=-1\})&&(h=1,2).
\end{align*}

What we are going to show is that the empirical distribution $\vec\pi_{n,M,\beta,B}^\tensor$ of these correlated pairs of magnetisations converges to the measure $\pi_{d,\beta,B,M/m}$ from~\eqref{eqfix_ex}.
But before we come to that we need to ascertain that the limits $\pi_{d,\beta,B,t}$ from~\eqref{eqfix_ex} actually exist.

\begin{proposition}\label{prop_fix_ex}
	For any $B\geq0$, $t\in[0,1]$ the weak limit~\eqref{eqfix_ex} exists and the map
	\begin{align}\label{eqprop_fix_ex}
		t\in[0,1]\mapsto\pi_{d,\beta,B,t}^\tensor
	\end{align}
	is $W_1$-continuous.
\end{proposition}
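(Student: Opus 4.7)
The plan is to give the iterates $\pi^{(\ell)}_{d,\beta,B,t}:=\BP^{\tensor\,\ell}_{d,\beta,B,t}(\delta_{(1,1)})$ a concrete probabilistic interpretation on coupled Galton--Watson trees and then to combine the ferromagnetic Griffiths/FKG monotonicity with a label-based coupling across $t$. First, unrolling~\eqref{eqBPtensorRec} level by level in the spirit of Corollary~\ref{cor_BP_tree}, I identify $\pi^{(\ell)}_{d,\beta,B,t}$ with the joint law of the pair of root magnetisations $(\scal{\SIGMA(\root)}{\mu_{\T_1,\root,\beta,B}^{(\ell)}},\scal{\SIGMA(\root)}{\mu_{\T_2,\root,\beta,B}^{(\ell)}})$ on coupled Galton--Watson trees $(\T_1,\T_2)$ in which every vertex begets $\vd^{(0)}_t\sim\Po(dt)$ \emph{shared} children (whose subtrees recursively follow the same coupling) together with independent $\vd^{(h)}_t\sim\Po(d(1-t))$ \emph{private} children in each tree $h=1,2$. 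Marginally, each $\T_h$ is a plain $\Po(d)$ Galton--Watson tree, and crucially this marginal law does \emph{not} depend on $t$.

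For the existence of the limit, I realise the whole sequence $(\pi^{(\ell)}_{d,\beta,B,t})_{\ell\geq0}$ on one probability space by truncating a single infinite coupled tree at depth $\ell$. Since $B\geq0$, Griffiths' inequality for the ferromagnetic Ising model with $+$ boundary gives that $\ell\mapsto\scal{\SIGMA(\root)}{\mu_{T,\root,\beta,B}^{(\ell)}}$ is almost surely non-increasing for any fixed tree $T$. Hence both coordinates converge almost surely and, being bounded, also in $L^1$, so $\pi^{(\ell)}_{d,\beta,B,t}\to\pi^\tensor_{d,\beta,B,t}$ in $W_1$. Under this common tree coupling the expected $L^1$-error of each coordinate equals the marginal-convergence error on a \emph{single} $\Po(d)$ Galton--Watson tree with $+$ boundary at depth $\ell$, which does not depend on $t$; the sum of the two marginal $L^1$-errors dominates the joint $W_1$-distance, so $W_1(\pi^{(\ell)}_{d,\beta,B,t},\pi^\tensor_{d,\beta,B,t})\to0$ \emph{uniformly in} $t\in[0,1]$.

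For $W_1$-continuity in $t$, I couple the coupled-tree processes simultaneously for all $t\in[0,1]$: at each prospective site draw once and for all three independent Poisson processes $\Pi^{(0)},\Pi^{(1)},\Pi^{(2)}$ of rate $d$ on $[0,1]$, and at parameter $t$ interpret the points of $\Pi^{(0)}\cap[0,t]$ as shared children and the points of $\Pi^{(h)}\cap[t,1]$ as private children of tree $h$. Under this joint coupling the trees at parameters $t,t'$ coincide down to depth $k$ unless some label among the $O(d^k)$ expected classification events at depths $<k$ lands in $[\min(t,t'),\max(t,t')]$, an event of probability $O(\abs{t-t'})$ for fixed $k$ by a union/Wald bound. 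Given $\eps>0$, choose $k$ so that the uniform truncation bound above makes the outer terms of
\[
W_1(\pi^\tensor_{d,\beta,B,t},\pi^\tensor_{d,\beta,B,t'})\leq W_1(\pi^\tensor_{d,\beta,B,t},\pi^{(k)}_{d,\beta,B,t})+W_1(\pi^{(k)}_{d,\beta,B,t},\pi^{(k)}_{d,\beta,B,t'})+W_1(\pi^{(k)}_{d,\beta,B,t'},\pi^\tensor_{d,\beta,B,t'})
\]
each less than $\eps$; the middle term is $O(\abs{t-t'})$ since on the coincidence event the depth-$k$ magnetisations are identical, and it drops below $\eps$ once $\abs{t-t'}$ is small enough.

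The main obstacle is establishing the \emph{$t$-uniform} truncation bound in the existence step, without which an iterate-by-iterate $W_1$-continuity estimate would amplify geometrically with $\ell$ and fail to survive the limit $\ell\to\infty$. Griffiths monotonicity reduces the joint truncation error to the marginal one, and the $t$-invariance of each marginal tree law supplies the required uniformity; this combination is precisely what makes the coupling argument effective.
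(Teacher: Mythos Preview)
Your proof is correct and follows essentially the same route as the paper's: the tree interpretation of the iterates (the paper's Lemma~\ref{lem_tensor_ell}), coordinatewise monotone convergence via the FKG/Griffiths inequality (the paper's Lemma~\ref{lem_conv} and Corollary~\ref{cor_tensor_ell}), and---crucially---the observation that each marginal tree is a $\Po(d)$ Galton--Watson tree whose law does not depend on $t$, which is exactly how the paper obtains the uniform-in-$t$ truncation bound. The only cosmetic difference is that you couple all $t\in[0,1]$ simultaneously via labelled Poisson processes, whereas the paper just couples two nearby values $t',t''$ by matching the finitely many Poisson offspring counts up to depth $\ell$; both arguments yield the same $O(|t-t'|)$ control on the middle term of your triangle inequality.
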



Based on \Prop~\ref{prop_fix_ex} we can now establish the convergence of the empirical distributions $\vec\pi_{n,M,\beta,B}^\tensor$ in the case $B>0$.

\begin{proposition}\label{prop_tensor_B>0}
	Suppose $B>0$.
	Then uniformly for all $0\leq M\leq m$ we have
	\begin{align}\label{eqprop_tensor_B>0}
		\ex[W_1(\vec\pi_{n,M,\beta,B}^\tensor,\pi_{d,\beta,B,M/m})]&=o(1)&&\mbox{and}\\
		\ex\abs{\scal{\SIGMA(v_1)\SIGMA(v_2)}{\mu_{\GG_{h,M},\beta,B}}-\scal{\SIGMA(v_1)}{\mu_{\GG_{h,M},\beta,B}}\scal{\SIGMA(v_2)}{\mu_{\GG_{h,M},\beta,B}}}&=o(1).\label{eqTensorB>0RS}
	\end{align}
\end{proposition}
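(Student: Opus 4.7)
The two assertions are handled by different means. The replica-symmetry bound \eqref{eqTensorB>0RS} is immediate: the marginal law of $\GG_{h,M}$ coincides with that of $\GG(n,m-1)$, whose average degree $2(m-1)/n$ still tends to $d$, so \Prop~\ref{prop_DM_B>0} yields \eqref{eqIsingRS} uniformly in $h$ and $M$.

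For \eqref{eqprop_tensor_B>0} the strategy combines local weak convergence of the coupled pair with coupled Belief Propagation on the limiting tree. By \Lem~\ref{lem_lwc_tensor}, at a uniformly random vertex $v_i$ the pair $(\nabla^\ell(\GG_{h,M},v_i))_{h=1,2}$ converges in distribution to the depth-$\ell$ truncation of a coupled Galton-Watson tree in which every vertex spawns $\vd^{(0)}_t\disteq\Po(dt)$ shared children and $\vd^{(h)}_t\disteq\Po(d(1-t))$ private $h$-children, with $t=M/m$. Applying \Lem~\ref{lem_BP_tree} coordinate-wise on this coupled tree together with the offspring decomposition \eqref{eqDeltat} identifies the joint root magnetisation under the all-$+1$ boundary at depth $\ell$ as a sample from $\BP^{\tensor\,\ell}_{d,\beta,B,t}(\delta_{(1,1)})$; by \Prop~\ref{prop_fix_ex} this converges in $W_1$ to $\pi^\tensor_{d,\beta,B,t}$ as $\ell\to\infty$, uniformly in $t\in[0,1]$ thanks to \eqref{eqprop_fix_ex}.

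The main step, and principal obstacle, is to replace the tree-with-$+1$-boundary magnetisations by the true graph magnetisations $\scal{\SIGMA(v_i)}{\mu_{\GG_{h,M},\beta,B}}$ that define $\vec\pi_{n,M,\beta,B}^\tensor$. This is done one graph at a time: for $B>0$ the standard GKS/FKG monotonicity of the Ising measure sandwiches $\scal{\SIGMA(v_i)}{\mu_{\GG_{h,M},\beta,B}}$ between the local $-1$- and $+1$-boundary depth-$\ell$ magnetisations on $\nabla^\ell(\GG_{h,M},v_i)$. Since $\GG_{h,M}$ is marginally $\GG(n,m-1)$, \eqref{eqprop_DM_B>0} together with Corollary \ref{cor_BP_tree} and its $-1$-boundary analogue forces both bounds to have the same weak limit $\pi_{d,\beta,B}$ on the Galton-Watson tree (this is precisely the BP fixed-point uniqueness that \Prop~\ref{prop_DM_B>0} encodes for $B>0$). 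Hence the $L^1$ gap between $\scal{\SIGMA(v_i)}{\mu_{\GG_{h,M},\beta,B}}$ and its $+1$-boundary truncation vanishes in the iterated limit $n\to\infty$ followed by $\ell\to\infty$. Applying the upgrade jointly for $h=1,2$ on the same vertex $v_i$ and averaging over $i$ yields $\ex W_1(\vec\pi_{n,M,\beta,B}^\tensor,\pi^\tensor_{d,\beta,B,M/m})=o(1)$, with uniformity in $M$ inherited because every ingredient depends on $M$ only through the bounded parameter $t=M/m\in[0,1]$.
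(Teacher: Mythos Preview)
Your proposal is correct and follows essentially the same route as the paper: local weak convergence of the coupled pair (\Lem~\ref{lem_lwc_tensor}), identification of the depth-$\ell$ boundary magnetisations via \Lem~\ref{lem_BP_tree} as samples from $\BP^{\tensor\,\ell}_{d,\beta,B,t}(\delta_{(1,1)})$, convergence of the latter by \Prop~\ref{prop_fix_ex}, and finally closing the gap between the $+1$-boundary magnetisation and the true magnetisation coordinate-wise using that each $\GG_{h,M}$ is marginally $\GG(n,m-1)$. The one minor difference is in this last step: you invoke a two-sided FKG sandwich between the $\pm1$ boundary magnetisations and appeal to Gibbs uniqueness for $B>0$, whereas the paper (\Lem~\ref{lem_GGmag}) uses only the one-sided upper bound $\scal{\SIGMA(v_1)}{\mu_{\GG,\beta,B}}\leq\scal{\SIGMA(v_1)}{\mu^{(\ell)}_{\GG,v_1,\beta,B}}$ from \Lem~\ref{lem_kostas} and then matches the \emph{means} via \eqref{eqprop_DM_B>0}, which forces the one-signed difference to vanish in $L^1$; both arguments rest on the same Dembo--Montanari input and are interchangeable.
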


\Prop~\ref{prop_tensor_B>0} is to the proof of the central limit theorem what \Prop~\ref{prop_DM_B>0} is to the proof of the first order approximation~\eqref{eqDM}.
Accordingly, the proof of \Prop~\ref{prop_tensor_B>0} combines the Belief Propagation recurrence~\eqref{eq_lem_BP_tree} with local weak convergence techniques, a bit like the proof of \Prop~\ref{prop_DM_B>0} does.
The main difference here is that we are dealing with a pair $(\GG_{1,M},\GG_{2,M})$ of correlated random graphs, rather than the plain random graph $\GG$.
Hence, we need to devise a Galton-Watson process that captures the local geometry around a vertex $v_i$  jointly in the two random graphs $(\GG_{h,M})_{h=1,2}$.

The following three-type Galton-Watson tree $\TT^\tensor=\TT^\tensor(d,t)$ fits this bill.
The three vertex types are simply type~0, type~1 and type~2.
The root $\root$ is type~0.
A type~0 vertex begets a $\Po(dt)$ number of type~0 vertices as offspring, as well as $\Po(d(1-t))$ many type~1 vertices and $\Po(d(1-t))$ type~2 vertices; these three Poissons are mutually independent.
Moreover, for $h=1,2$ the offspring of a type~$h$ vertex is just a $\Po(d)$ number of type~$h$ vertices.

We say that two possible outcomes $T,T'$ of $\TT^\tensor$ are {\em isomorphic} if there exists a tree isomorphism $T\to T'$ that preserves the root and the types.
Further, for $h=1,2$ we let $\TT^\tensor[h]$ be the sub-tree of $\TT^\tensor$ obtained by deleting all vertices of type $3-h$.
In addition, for $\ell\geq0$ we obtain $\TT^{\tensor\,(\ell)}[h]$ from $\TT^{\tensor}[h]$ by deleting all vertices whose distance from $\root$ exceeds $\ell$.

As a next step we relate the Galton-Watson tree $\TT^\tensor$ to the correlated random graphs $(\GG_{h,M})_{h=1,2}$.
Thus, let $T$ be a possible outcome of $\TT^\tensor$.
We call a vertex $v\in V_n$ an {\em $\ell$-instance} of $T$ if there exist graph isomorphisms $\iota_h: T^{(\ell)}[h]\to  \nabla^\ell(\GG_{h,M},v)$ with the following properties for $h=1,2$:
\begin{description}
	\item[ISM1] $\iota_h(\root)=v$,
	\item[ISM2] $u\in T$ has type~0 iff $\iota_1(u)=\iota_2(u)$.
\end{description}
In other words, we ask that the depth-$\ell$ neighbourhood of $v$ in $\GG_{h,M}$ be isomorphic to the truncated tree $T^{(\ell)}[h]$, and vertices of type~0 be mapped to identical vertices of $\GG_{h,M}$ for $h=1,2$.
Hence, the type~0 vertices of $T$ represent the endpoints of shared edges of $(\GG_{h,M})_{h=1,2}$.

Let $\vN_{T}^{(\ell)}(n,M)$ be the number of $\ell$-instances of $T$.
The proof of \Prop~\ref{prop_tensor_B>0} builds upon the following local weak convergence result that verifies that $\TT^\tensor$ faithfully models the joint local topology of $(\GG_{h,M})_{h=1,2}$.

\begin{lemma}\label{lem_lwc_tensor}
	Suppose that $t\in[0,1]$  and $M\sim tm$ and let $\ell\geq0$.
	Then for any possible outcome $T$ of $\TT^\tensor$ we have
	\begin{align*}
		\ex\abs{\vN_T^{(\ell)}(n,M)/n-\pr\brk{\TT^{\tensor\,(\ell)}\ism T^{(\ell)}}}&=o(1).
	\end{align*}
\end{lemma}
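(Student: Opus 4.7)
The plan is a first-and-second-moment argument. By the vertex-exchangeability of the coupling \textbf{CPL0}--\textbf{CPL1}, each $v_i$ is equally likely to be an $\ell$-instance of $T$, so $\ex[\vN_T^{(\ell)}(n,M)] = n\cdot\pr[v_1 \text{ is an } \ell\text{-instance of } T]$. Thus it suffices to establish (a) $\pr[v_1 \text{ is an } \ell\text{-instance of } T]\to\pr[\TT^{\tensor\,(\ell)}\ism T^{(\ell)}]$ and (b) $\Var[\vN_T^{(\ell)}(n,M)]=o(n^2)$; then $L^2$-convergence of $\vN_T^{(\ell)}(n,M)/n$ to a deterministic limit gives the required $L^1$-convergence via Jensen's inequality.

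For (a), I would explore the depth-$\ell$ neighbourhood of $v_1$ in $(\GG_{1,M},\GG_{2,M})$ simultaneously, one vertex at a time. Under \textbf{CPL1} the shared sequence $(\ve_1,\ldots,\ve_{M-1})$ and, conditional on it, the two extension sequences $(\ve_{h,M,\cdot})_{h=1,2}$ are uniformly chosen without replacement, with $M-1\sim tm$ and $m-M\sim(1-t)m$. The number of shared edges, $\GG_{1,M}$-only edges and $\GG_{2,M}$-only edges incident to $v_1$ is therefore asymptotically a triple of independent Poisson variables with rates $2(M-1)/n\to dt$, $2(m-M)/n\to d(1-t)$ and $2(m-M)/n\to d(1-t)$, exactly matching $\vd_t^{(0)},\vd_t^{(1)},\vd_t^{(2)}$ from~\eqref{eqDeltat}. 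Endpoints are uniform over $V_n\setminus\{v_1\}$ and, since only $O(1)$ edges are probed at bounded depth, pairwise distinct with probability $1-O(1/n)$. The recursion continues in the natural way: at a type-$0$ neighbour (reached via a shared edge) we again see three asymptotically independent Poissons with rates $dt,d(1-t),d(1-t)$; at a type-$h$ neighbour $u$ (reached via a $\GG_{h,M}$-only edge) the vertex $u$ lies outside $\nabla^\ell(\GG_{3-h,M},v_1)$ w.h.p., so only the $\GG_{h,M}$-structure is visible, and the total $\GG_{h,M}$-degree of $u$ is asymptotically Poisson($d$). Matching this branching process to $\TT^{\tensor}$ and summing over isomorphism classes of the finite tree $T$ yields (a).

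For (b), I would split
\begin{align*}
  \ex[\vN_T^{(\ell)}(n,M)^2]\;=\;\sum_{u,v\in V_n}\pr[u \text{ and } v \text{ are both }\ell\text{-instances of }T]
\end{align*}
according to whether the depth-$\ell$ neighbourhoods of $u$ and $v$ in either graph intersect. Pairs whose neighbourhoods intersect contribute $O(n)$, since each vertex has $O(1)$ other vertices within distance $2\ell+1$ in expectation; pairs whose neighbourhoods are disjoint allow the two explorations to proceed essentially independently and contribute $(n^2+O(n))\,\pr[\TT^{\tensor\,(\ell)}\ism T^{(\ell)}]^2+o(n^2)$ by the same argument as for (a). This gives $\Var[\vN_T^{(\ell)}(n,M)]=o(n^2)$, completing the proof.

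The main obstacle is the joint-independence claim underlying step~(a): the conditioning enforced by \textbf{CPL1}---extension edges chosen without replacement and jointly avoiding the shared edges---biases the probability of any fixed bounded configuration of edges only by $O(1/n)$, and it is precisely this $O(1/n)$ bias that has to be controlled in order for the three Poisson rates to decouple asymptotically. Carefully tracking these errors, together with ruling out short cycles in either graph and accidental coincidences between type-$0$ and type-$h$ vertices in the bounded-depth neighbourhood, is what the argument boils down to; it is a direct three-edge-set generalisation of the single-graph local weak convergence statement \Lem~\ref{lem_lwc}.
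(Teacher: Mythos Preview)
Your proposal is correct and part~(a) is essentially the paper's argument: the paper packages the exploration as an induction on~$\ell$ (their \Lem~\ref{lem_lwc_tensor_ex}), invoking the single-graph local weak convergence \Lem~\ref{lem_lwc} at type-$h$ vertices just as you do, so there is no meaningful difference there.

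For concentration (your part~(b)) the paper takes a different route. Rather than a second-moment computation over vertex pairs, it observes that adding or removing a single edge $e=vw$ from $\GG_{1,M}\cup\GG_{2,M}$ can change $\vN_T^{(\ell)}(n,M)$ by at most a constant depending only on $T$ and~$\ell$: any affected vertex $u$ must be joined to $v$ or $w$ by a path of length at most $\ell-1$ through vertices of degree at most the maximum degree $\Delta$ of $T$, and there are at most $O(\Delta^{\ell-1})$ such paths. With this Lipschitz bound in hand, Azuma--Hoeffding applied to the edge-exposure martingale over the $O(n)$ edges gives $\vN_T^{(\ell)}(n,M)=\ex[\vN_T^{(\ell)}(n,M)]+o(n)$ \whp. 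Both arguments are standard; the bounded-differences route is shorter and yields exponential concentration for free, while your second-moment argument is more elementary but needs the additional (routine) verification that two explorations from vertices with disjoint depth-$\ell$ neighbourhoods decouple up to $o(1)$.
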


With \Prop~\ref{prop_tensor_B>0} pinpointing the empirical distribution of the joint marginals and also providing `replica symmetry', we are ready to evaluate the expressions from~\eqref{eqDeltaM} by way of an Aizenman-Sims-Starr argument.

\begin{proposition}\label{cor_tensor_B>0}
	Suppose $B>0$.
	The integral $\Sigma(d,\beta,B)^2$ from~\eqref{eqSigmadbetaB} exists and
	\begin{align}\label{eq_cor_tensor_B>0_cor_tensor_B>0}
		\ex\abs{\sum_{M=1}^m\vX_{n,M}^2-\Sigma(d,\beta,B)^2}&=o(1).
	\end{align}
\end{proposition}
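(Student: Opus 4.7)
The plan is to execute an Aizenman--Sims--Starr style variance computation via the coupling {\bf CPL0}--{\bf CPL2}. \Lem~\ref{fact_telescope} reduces everything to analysing $m^{-1}\sum_{M=1}^m \ex\brk{\vDelta^+_{n,M} + \vDelta^-_{n,M} - 2\vDelta^\pm_{n,M}}$ together with its concentration. Using the elementary identity $\exp(\beta s) = \cosh\beta\cdot(1+s\tanh\beta)$ for $s \in \PM$, \Lem~\ref{fact_Zratio} lets me write each log-ratio of partition functions (up to $o(1)$) as $\log\cosh\beta + G_h(e)$, where
\begin{align*}
G_h(e) := \log\bc{1+\tanh\beta\cdot\scal{\SIGMA(e)}{\mu_{\GG_{h,M},\beta,B}}}.
\end{align*}

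Second, I expand each of $\vDelta^+,\vDelta^-,\vDelta^\pm$ into constant, linear, and quadratic contributions in $G_1,G_2$. The constants $(\log\cosh\beta)^2$ cancel identically in $\vDelta^+ + \vDelta^- - 2\vDelta^\pm$. The terms linear in $G_h$ involve single log-ratios averaged over an essentially uniformly random edge (either the shared edge $\ve_M$ or the auxiliary edge $\ve_{h,M,m-M+1}$). Invoking \eqref{eqTensorB>0RS} to replace $\scal{\SIGMA(e)}{\mu_{\GG_{h,M}}}$ by a product of vertex magnetisations and then \eqref{eqprop_tensor_B>0} to replace the empirical marginal by $\pi_{d,\beta,B}$, both edge types yield the {\em same} asymptotic expectation, so the linear contributions cancel up to an $o(1)$ error uniform in $M$. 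Only the quadratic contribution survives:
\begin{align*}
\ex\brk{\vDelta^+_{n,M}+\vDelta^-_{n,M}-2\vDelta^\pm_{n,M}}
&=\ex\brk{G_1(\ve_M)G_2(\ve_M)}+\ex\brk{G_1(\ve_{1,M,m-M+1})G_2(\ve_{2,M,m-M+1})}\\
&\qquad-2\ex\brk{G_1(\ve_M)G_2(\ve_{2,M,m-M+1})}+o(1).
\end{align*}

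Third, I identify each quadratic term. For the first, both factors share the same edge $\ve_M=\{v_i,v_j\}$; substituting $\scal{\SIGMA(v_i)}{\mu_{\GG_{h,M}}}\scal{\SIGMA(v_j)}{\mu_{\GG_{h,M}}}$ for $\scal{\SIGMA(\ve_M)}{\mu_{\GG_{h,M}}}$ via \eqref{eqTensorB>0RS} and applying \eqref{eqprop_tensor_B>0} with $t=M/m$, the empirical {\em joint} distribution of the two-coordinate magnetisations at two uniformly random vertices converges in $W_1$ to $\pi^\tensor_{d,\beta,B,M/m}\otimes\pi^\tensor_{d,\beta,B,M/m}$, whence this term converges to $\cB^\tensor_\beta(\pi^\tensor_{d,\beta,B,M/m})$. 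For the other two quadratic terms the two edges are stochastically independent, so the expectation factorises asymptotically into two copies of $\ex_{\pi_{d,\beta,B}}\log(1+\tanh\beta\vec\mu_1\vec\mu_2)$, whose square, by \Rem~\ref{rem_extreme}, is exactly $\cB^\tensor_\beta(\pi^\tensor_{d,\beta,B,0})$. Thus
$$\ex\brk{\vDelta^+_{n,M}+\vDelta^-_{n,M}-2\vDelta^\pm_{n,M}} \;=\;\cB^\tensor_\beta(\pi^\tensor_{d,\beta,B,M/m})-\cB^\tensor_\beta(\pi^\tensor_{d,\beta,B,0})+o(1).$$
Since \Prop~\ref{prop_fix_ex} asserts $W_1$-continuity of $t\mapsto\pi^\tensor_{d,\beta,B,t}$ and the integrand in \eqref{eqBFEtensor} is bounded continuous, the map $t\mapsto\cB^\tensor_\beta(\pi^\tensor_{d,\beta,B,t})$ is continuous; hence $\Sigma(d,\beta,B)^2$ exists as a Riemann integral and the Riemann-sum approximation $m^{-1}\sum_{M=1}^m\cB^\tensor_\beta(\pi^\tensor_{d,\beta,B,M/m})$ converges to $\int_0^1 \cB^\tensor_\beta(\pi^\tensor_{d,\beta,B,t})\dd t$, yielding $\ex\sum_M\vX_{n,M}^2\to\Sigma(d,\beta,B)^2$.

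To upgrade expectation convergence to $L^1$-convergence, I would observe that $\log\scal{\exp(\beta\SIGMA(e))}{\mu}\in[-\beta,\beta]$, whence $0\le\sum_M\vX_{n,M}^2\le 4\beta^2$ deterministically; so by Jensen it suffices to show $\Var\bc{\sum_M\vX_{n,M}^2}=o(1)$, which means controlling $\ex\brk{\vX_{n,M}^2\vX_{n,M'}^2}-\ex\vX_{n,M}^2\cdot\ex\vX_{n,M'}^2$ for $M\neq M'$. I expect this variance estimate to be the main obstacle. The natural strategy is to design an augmented {\em four}-graph coupling (exposing shared and private edges simultaneously at positions $M$ and $M'$) and to extend the local-weak-convergence machinery of \Lem~\ref{lem_lwc_tensor} and \Prop~\ref{prop_tensor_B>0} to the resulting four-coordinate joint magnetisation distribution, showing that the cross-covariances factorise asymptotically. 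Once such asymptotic decorrelation across distinct martingale positions is in place, the variance bound follows by exactly the same Aizenman--Sims--Starr expansion applied in pairs.
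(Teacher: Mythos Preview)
Your first three steps---expanding each log-ratio via $\exp(\beta s)=\cosh\beta(1+s\tanh\beta)$, using replica symmetry \eqref{eqTensorB>0RS} to factorise the edge magnetisations, and identifying the three quadratic terms as $\cB^\tensor_\beta(\pi^\tensor_{d,\beta,B,M/m})$ and twice $\cB^\tensor_\beta(\pi^\tensor_{d,\beta,B,0})$ via \eqref{eqprop_tensor_B>0} and Remark~\ref{rem_extreme}---match the paper's argument (Claims~\ref{claim_intervals_A} and~\ref{claim_intervals_C} and the proof of \Lem~\ref{lem_intervals}) essentially line for line. The Riemann-sum passage via \Prop~\ref{prop_fix_ex} is also the same.

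The variance step, however, is where you overcomplicate things. A four-graph coupling and a four-coordinate analogue of \Lem~\ref{lem_lwc_tensor} would presumably work, but the paper avoids this entirely. The key observation (\Cor~\ref{cor_intervals}) is that your expectation computation already delivers something stronger than you are using: \Prop~\ref{prop_tensor_B>0} shows that the empirical joint distribution $\vec\pi^\tensor_{n,M,\beta,B}$ \emph{concentrates} around the deterministic $\pi^\tensor_{d,\beta,B,M/m}$. Consequently, conditioning on the $\sigma$-algebra $\fG_{n,M}$ generated by the pair $(\GG_{1,M},\GG_{2,M})$, the conditional expectation $\ex[\vDelta_{n,M}\mid\fG_{n,M}]$ equals the deterministic quantity $\cB^\tensor_\beta(\pi^\tensor_{d,\beta,B,M/m})-\cB^\tensor_\beta(\pi^\tensor_{d,\beta,B,0})+o(1)$ \whp\ (the only remaining randomness given $\fG_{n,M}$ is the choice of the new edges, which is essentially uniform). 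Since $\fF_{n,M}\subset\fG_{n,M}$, the tower property gives $\ex[\vDelta_{n,M}\mid\fF_{n,M}]=\ex[\vec\Xi_{M/m}]+o(1)$ \whp\ as well. Now each summand in $\sum_M\ex[\vDelta_{n,M}\mid\fF_{n,M}]$ is deterministic up to $o(1)$, so the cross-covariances $\ex\brk{\ex[\vDelta_{n,M}\mid\fF_{n,M}]\ex[\vDelta_{n,M'}\mid\fF_{n,M'}]}-\ex[\vDelta_{n,M}]\ex[\vDelta_{n,M'}]$ are $o(1)$ uniformly in $M,M'$, and the diagonal contributes $O(m)$; thus $\Var\sum_M\ex[\vDelta_{n,M}\mid\fF_{n,M}]=o(m^2)$, which is exactly what is needed. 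No new coupling is required---the two-graph machinery already does the job once you exploit that it yields concentration and not merely convergence in expectation.
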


At this point one last step remains.
Namely, the proof of \Prop~\ref{cor_tensor_B>0} falls short of revealing that $\Sigma(d,\beta,B)^2$ is strictly positive.
To verify that this is the case we exploit the fact that $\pi_{d,\beta,B}$ fails to concentrate on a single point.
Together with the continuity of the map from~\eqref{eqprop_fix_ex} and a convexity argument we thus obtain the final piece of the jigsaw.

\begin{proposition}\label{prop_var_pos_B>0}
	Suppose $d,\beta,B>0$.
	Then $\Sigma(d,\beta,B)>0$.
\end{proposition}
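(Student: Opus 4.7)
The plan is to split $\Sigma(d,\beta,B)^2$ into a strict endpoint gap at $t=1$ and a pointwise lower bound by the $t=0$ value throughout $[0,1]$.

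Using Remark~\ref{rem_extreme} to make the endpoints explicit, with $\vec\mu,\vec\mu'$ iid from $\pi_{d,\beta,B}$ and $X:=\log(1+\vec\mu\vec\mu'\tanh\beta)$, the product-measure description at $t=0$ gives $\cB^\tensor_\beta(\pi^\tensor_{d,\beta,B,0})=(\ex X)^2$ while the diagonal description at $t=1$ gives $\cB^\tensor_\beta(\pi^\tensor_{d,\beta,B,1})=\ex X^2$. Cauchy--Schwarz produces strict inequality exactly when $X$ is non-constant, and I would verify this by ruling out $\pi_{d,\beta,B}=\delta_{m^\star}$: in that case~\eqref{eqBPrec} forces $\BP_{d,\beta,B}(\delta_{m^\star})$ to be the law of $\tanh(B+\vd\,\atanh(m^\star\tanh\beta))$ with $\vd\sim\Po(d)$, so equating outputs on $\{\vd=0\}$ and $\{\vd=1\}$ requires $m^\star\tanh\beta=0$ and then $\tanh B=m^\star=0$, contradicting $B>0$. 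Hence $X$ is non-degenerate and $\cB^\tensor_\beta(\pi^\tensor_{d,\beta,B,1})>\cB^\tensor_\beta(\pi^\tensor_{d,\beta,B,0})$. The $W_1$-continuity of $t\mapsto\pi^\tensor_{d,\beta,B,t}$ from Proposition~\ref{prop_fix_ex} combined with Lipschitz continuity of $\log(1+uv\tanh\beta)$ on $[-1,1]^2$ then delivers $\eps,\delta>0$ with $\cB^\tensor_\beta(\pi^\tensor_{d,\beta,B,t})\geq\cB^\tensor_\beta(\pi^\tensor_{d,\beta,B,0})+\eps$ for $t\in[1-\delta,1]$.

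Next, since $\Po(dt)+\Po(d(1-t))=\Po(d)$, the single-coordinate dynamics of $\BP^\tensor_{d,\beta,B,t}$ coincide with those of $\BP_{d,\beta,B}$, so both marginals of $\pi^\tensor_{d,\beta,B,t}$ equal $\pi_{d,\beta,B}$. Taking $(A_1,A_2),(B_1,B_2)$ to be independent samples from $\pi^\tensor_{d,\beta,B,t}$, this rewrites
\[
\cB^\tensor_\beta(\pi^\tensor_{d,\beta,B,t})-\cB^\tensor_\beta(\pi^\tensor_{d,\beta,B,0})=\mathrm{Cov}\bigl(\log(1+A_1B_1\tanh\beta),\log(1+A_2B_2\tanh\beta)\bigr).
\]

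The main obstacle is to show this covariance is non-negative for every $t\in[0,1]$; combined with the $\eps$-gap near $t=1$ this forces $\Sigma(d,\beta,B)^2>0$. The strategy is an FKG argument. Because $B>0$ pins $\pi_{d,\beta,B}$ (hence $\pi^\tensor_{d,\beta,B,t}$) to $[0,1]$ (respectively $[0,1]^2$), rewriting the recursion as $m_h=\tanh\bigl(B+\sum_i\atanh(\vec\mu^{(\cdot)}_{\pi,i,h}\tanh\beta)\bigr)$ exhibits it as coordinate-wise monotone increasing in every input. An induction on the iteration depth of $\BP^\tensor_{d,\beta,B,t}$ starting from the trivially associated atom $\delta_{(1,1)}$, using the standard closure of positive association under independent products of associated pairs, coordinate-wise monotone transformations, and Poisson mixing over the independent counts $\vd_t^{(h)}$, shows that each iterate is positively associated; $\pi^\tensor_{d,\beta,B,t}$ inherits the property in the $W_1$-limit. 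Conditioning on $(A_1,A_2)$ and applying Harris' inequality to the coordinate-wise increasing functions $(b_1,b_2)\mapsto\log(1+A_1b_1\tanh\beta)$ and $(b_1,b_2)\mapsto\log(1+A_2b_2\tanh\beta)$ of $(B_1,B_2)$, then using Harris once more on the increasing conditional means as functions of $(A_1,A_2)$, produces the required non-negative covariance and completes the proof.
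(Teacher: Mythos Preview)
Your proof is correct and follows the same global architecture as the paper (endpoint gap at $t=1$, pointwise lower bound on $[0,1]$, continuity to integrate). The endpoint argument is essentially identical: the paper's \Lem~\ref{lem_boundary_B>0} also reduces to $\ex[X^2]>(\ex X)^2$ via Remark~\ref{rem_extreme} and Jensen, and \Lem~\ref{lem_noatom_B>0} rules out the atom by a direct tree computation rather than your fixed-point contradiction, but these are interchangeable.

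The genuine difference is in the pointwise lower bound $\cB^\tensor_\beta(\pi^\tensor_{d,\beta,B,t})\geq\cB^\tensor_\beta(\pi^\tensor_{d,\beta,B,0})$. The paper's \Lem~\ref{lem_integrant_B>0} is a soft argument that goes \emph{through the random graph}: since $\vX_{n,M}^2\geq0$ always, \Lem~\ref{fact_telescope} forces $\ex[\vDelta_{n,M}\mid\fF_{n,M}]\geq0$, and \Lem~\ref{lem_intervals} identifies $\ex[\vDelta_{n,M}]$ with $\cB^\tensor_\beta(\pi^\tensor_{d,\beta,B,M/m})-\cB^\tensor_\beta(\pi^\tensor_{d,\beta,B,0})+o(1)$; continuity then gives the inequality for all $t$. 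Your route is instead a direct FKG argument on the tree recursion: monotonicity of the $\atanh$-parametrised BP update together with $\supp\pi^\tensor_{d,\beta,B,t}\subset[0,1]^2$ propagates positive association from $\delta_{(1,1)}$ through each iterate and to the limit, whence the covariance rewriting is non-negative. The paper's approach is essentially free once \Lem s~\ref{fact_telescope} and~\ref{lem_intervals} are in place (they are needed anyway for \Prop~\ref{cor_tensor_B>0}), whereas your argument is self-contained at the level of the distributional recursion and makes the structural reason for non-negativity explicit; it would in particular survive without any reference to $\GG(n,m)$.
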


\begin{proof}[Proof of \Thm~\ref{thm_ex}]
	The theorem is an immediate consequence of \Prop s~\ref{proposition_HH},  \ref{cor_tensor_B>0} and \ref{prop_var_pos_B>0}.
\end{proof}

\subsection{The central limit theorem in the case  $B=0$}\label{sec_B=0low}
We proceed to survey the proof of \Thm~\ref{thm_no}, the central limit theorem in the case $B=0$, $d>1$ and $\beta>\betaf(d)$.
Significant complications in this case derive from the inversion symmetry $\mu_{\GG,\beta,0}(\sigma)=\mu_{\GG,\beta,0}(-\sigma)$.
An immediate consequence of inversion symmetry is that we cannot expect the replica symmetry condition \eqref{eqIsingRS} to hold; yet replica symmetry was a cornerstone of the proof of the central limit theorem for $B>0$.
Instead, we are going to show explicitly that the phase space $\PM^{V_n}$ decomposes into two distinct `pure states' in the sense of~\cite{Parisi} and that {\em within} each of these pure states replica symmetry is restored.
This pure state decomposition (and its proof) should be of independent interest.
The proofs for the following statements can be found in \Sec s \ref{sec_B=0}, where we prove \Prop s~\ref{prop_BP_B=0}, \ref{prop_pure}, \ref{prop_tensor}, \ref{cor_tensor} and~\ref{prop_var_pos}.

Let us begin by doing our homework on the fixed point distribution $\pi_{d,\beta,0}$ from~\eqref{eqDMweakLimit} in the case $B=0$.
Let $\rho_d$ be the smallest positive solution to the equation $\rho=\exp(d(\rho-1))$.
Thus, $\rho_d$ is the extinction probability of a $\Po(d)$-Galton-Watson process.
Also recall $r_{d,\beta}$ from~\eqref{eqrdbeta} and remember that $\fX_d$ signifies the (countable) set of $\beta\in(0,\infty)$ where the function $\beta\mapsto r_{d,\beta}$ is discontinuous.

\begin{proposition}\label{prop_BP_B=0}
	The weak limit $\pi_{d,\beta,0}=\lim_{\ell\to\infty}\BP^\ell_{d,\beta,0}(\delta_\infty)$ exists and enjoys the following properties.
	\begin{enumerate}[(i)]
		\item We have $\supp{\pi_{d,\beta,0}}\subset[0,\infty)$.
		\item For $d>1$ and $\beta>\betaf(d)$ we have $\pi_{d,\beta}(\{0\})=\rho_d$ and $\int_0^\infty x\dd\pi_{d,\beta}(x)>0$.
		\item The function $\beta\mapsto r_{d,\beta}$ is monotonically increasing and
			\begin{align}\label{eqmax}
				\lim_{n\to\infty}\frac1n\ex\brk{\sum_{e\in E(\GG)}\scal{\SIGMA(e)}{\mu_{\GG,\beta,0}}}&=d\cdot r_{d,\beta}&&\mbox{for all $\beta\in(0,\infty)\setminus\fX_d$}.
			\end{align}
	\end{enumerate}
\end{proposition}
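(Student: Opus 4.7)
The plan is to handle the three parts of the proposition in order, drawing on the Belief Propagation tree representation from \Lem~\ref{lem_BP_tree} and \Cor~\ref{cor_BP_tree} throughout; the main technical hurdle will be part~(ii).

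For part~(i) and the existence of the weak limit, I would proceed by monotonicity. When $B=0$ and all incoming messages $\mu_i$ lie in $[0,1]$, the Belief Propagation recursion \eqref{eqBPrec} simplifies to $[\prod_i(1+\mu_i\tanh\beta)-\prod_i(1-\mu_i\tanh\beta)]/[\prod_i(1+\mu_i\tanh\beta)+\prod_i(1-\mu_i\tanh\beta)]\in[0,1]$, so the set of probability measures supported in $[0,1]$ is invariant under $\BP_{d,\beta,0}$ and part~(i) follows by induction from the initial condition. Existence of the weak limit is then a standard consequence of stochastic monotonicity: the BP operator preserves stochastic ordering of its input (from coordinate-wise monotonicity of the recursion in each $\mu_i$ when $B\geq0$), so the iterates starting from the top element are stochastically non-increasing and hence weakly convergent.

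For part~(ii), the equality $\pi_{d,\beta,0}(\{0\})=\rho_d$ decomposes into matching lower and upper bounds. By \Cor~\ref{cor_BP_tree}, $\BP^\ell_{d,\beta,0}(\delta_1)$ is the distribution of the root magnetisation of the Galton--Watson tree $\TT$ subject to a $+1$ boundary condition at depth $\ell$. On the event $\{\TT\text{ goes extinct}\}$, which has probability $\rho_d$, once $\ell$ exceeds the (finite, random) depth of $\TT$ the boundary condition is vacuous and, because $B=0$, the Ising measure is invariant under the global spin flip, so the root magnetisation vanishes identically; this yields $\pi_{d,\beta,0}(\{0\})\geq\rho_d$. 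For the matching upper bound and the positivity of $\int x\,\dd\pi_{d,\beta,0}$, I would invoke the Ising-on-trees reconstruction phase transition: the linearisation of $\BP_{d,\beta,0}$ around the trivial fixed point $\delta_0$ has multiplier $d\tanh\beta$, which exceeds $1$ precisely when $\beta>\betaf(d)=\atanh(1/d)$. A monotone coupling (starting BP from $\delta_1$ stochastically dominates starting from a small positive perturbation of $\delta_0$, which grows away from $\delta_0$ in the supercritical regime) then shows that on the survival event the limiting root magnetisation is a.s.\ strictly positive, yielding both $\pi_{d,\beta,0}(\{0\})=\rho_d$ and $\int_0^\infty x\,\dd\pi_{d,\beta,0}(x)>0$.

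For part~(iii), monotonicity of $\beta\mapsto r_{d,\beta}$ follows from Griffiths' second inequality: in the ferromagnetic Ising model every two-point correlation is non-decreasing in $\beta$, and this monotonicity transfers to the tree edge correlation that defines $r_{d,\beta}$. To prove \eqref{eqmax} I would combine the local weak convergence of $\GG$ to $\TT$ from \Lem~\ref{lem_lwc} with a BP computation of tree edge correlations: for a uniform edge $\{v,w\}$ of $\GG$ the local limit identifies $\scal{\SIGMA(v)\SIGMA(w)}{\mu_{\GG,\beta,0}}$ with $(\mu_v\mu_w+\tanh\beta)/(1+\mu_v\mu_w\tanh\beta)$ for two independent $\pi_{d,\beta,0}$-samples $\mu_v,\mu_w$, and the edge density $m/n\to d/2$ produces the prefactor $d$ matching \eqref{eqrdbeta}. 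The exclusion $\beta\notin\fX_d$ is needed to pass the $n\to\infty$ limit through the (monotone, hence almost-everywhere continuous) function $\beta\mapsto r_{d,\beta}$. Throughout, the hardest step remains the reconstruction-threshold argument in part~(ii), since ruling out collapse of BP onto $\delta_0$ on the surviving tree is the one ingredient that genuinely requires going beyond the general-purpose monotonicity and local-weak-convergence machinery.
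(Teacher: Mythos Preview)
Your treatment of part~(i) and the existence of the weak limit is essentially what the paper does (via \Lem~\ref{lem_conv}), so no issues there.

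For part~(ii) there is a genuine gap. Your linearisation argument establishes only that the derivative of the BP map at $\delta_0$ has spectral radius $d\tanh\beta>1$; this tells you $\delta_0$ is an \emph{unstable} fixed point, but it does not by itself prevent the iterates started from $\delta_1$ from collapsing to it. Turning the instability into a rigorous lower bound on the limiting root magnetisation on the survival event is exactly the content of Lyons' theorem (\Lem~\ref{lem_LYonORg} in the paper), and your ``monotone coupling'' sentence does not substitute for that argument. The paper proceeds differently and more cleanly: it first observes (\Lem~\ref{lem_extinction}) that the atom $\pi_{d,\beta,0}(\{0\})$ must satisfy the extinction fixed-point equation $z=\exp(d(z-1))$, hence equals either $\rho_d$ or $1$; it then invokes Lyons' theorem together with $\br(\TT)=d$ a.s.\ on survival (\Lem~\ref{cl_BrNumOfT}) to rule out $1$.

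For part~(iii) your proposed route via local weak convergence does not work as stated. Local weak convergence (\Lem~\ref{lem_lwc}) gives convergence of the local graph structure, not of Gibbs marginals; identifying $\scal{\SIGMA(v)\SIGMA(w)}{\mu_{\GG,\beta,0}}$ with the tree expression $(\mu_v\mu_w+\tanh\beta)/(1+\mu_v\mu_w\tanh\beta)$ for independent $\pi_{d,\beta,0}$-samples requires control over what boundary condition the rest of $\GG$ effectively imposes on a local neighbourhood. In the regime $d>1$, $\beta>\betaf(d)$ the tree Gibbs measure is \emph{not} unique, so this identification is precisely the hard step---in the paper it is only obtained later (\Lem~\ref{cor_max}) \emph{using} \eqref{eqmax} as input, so your argument would be circular. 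The paper instead cites \cite[\Lem~3.3]{Basak}, whose proof goes via convexity: $\beta\mapsto\frac1n\log Z_{\GG}(\beta,0)$ is convex, its pointwise limit is $\cB_{d,\beta,0}(\pi_{d,\beta,0})$ by \cite{Dembo_2010}, and convex functions have converging derivatives at continuity points of the limiting derivative, which is $r_{d,\beta}$. This is why the exceptional set $\fX_d$ appears.
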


\noindent
Part~(iii) of the proposition, which clarifies the combinatorial meaning of the function $r_{d,\beta}$, is an easy consequence of statements from~\cite{Basak,Dembo_2010}; see \Sec~\ref{sec_prop_BP_B=0} for details.

We now turn our attention to the `pure states' of the Ising model on $\GG(n,m)$.
This decomposition actually undergoes a phase transition at $\beta=\betaf(d)$.
Indeed, in \Sec~\ref{sec_B=0high} we will see that for $d\leq1$ or $d>1$, $\beta<\betaf(d)$ the distribution $\mu_{\GG,\beta,0}$ concentrates on configurations $\sigma\in\PM^{V_n}$ with average magnetisation $n^{-1}\sum_{i=1}^n\sigma(v_i)=o(1)$, which implies that $\mu_{\GG,\beta,0}$ enjoys the replica symmetry property \whp\
By contrast, we are about to verify that $\mu_{\GG,\beta,0}$ is no longer replica symmetric in the `low temperature' regime $d>1$ and $\beta>\betaf(d)$ \whp\
Yet we will demonstrate that replica symmetry is restored once we condition on either of the two events
\begin{align*}
	\cS_+&=\cS_{+,n}=\cbc{\sigma\in\PM^{V_n}:\sum_{v\in V_n}\sigma(v)>0},&\cS_-=\cS_{-,n}&=\cbc{\sigma\in\PM^{V_n}:\sum_{v\in V_n}\sigma(v)<0}.
\end{align*}
Moreover, once we condition on $\cS_+$ the empirical distribution of the magnetisations converges to the limit $\pi_{d,\beta,0}$ from~\eqref{eqDMweakLimit}.
Of course, due to inversion symmetry this implies that the empirical distribution of the {\em negated} conditional magnetisations given $\cS_-$ converges to $\pi_{d,\beta,0}$ as well.

To make these statements precise let
\begin{align*}
	\vec\pi_{n,m,\beta}^+&=\frac1n\sum_{i=1}^n\delta_{\scal{\SIGMA(v_i)}{\mu_{\GG,\beta,0}(\nix\mid\cS_n^+)}}
\end{align*}
be the empirical distribution of the conditional magnetisations
\begin{align*}
	\scal{\SIGMA(v_i)}{\mu_{\GG,\beta,0}(\nix\mid\cS_+)}&=\mu_{\GG,\beta,0}(\{\SIGMA(v_i)=1\}\mid\cS_+)-\mu_{\GG,\beta,0}(\{\SIGMA(v_i)=-1\}\mid\cS_+).
\end{align*}
Also recall from~\eqref{eqMuT} that $\mu_{\GG,v_1,\beta,0}^{(\ell)}$ denotes the Ising model with a $+1$ boundary condition imposed on all vertices at distance $\ell$ from $v_1$.

\begin{proposition}\label{prop_pure}
	If $d>1$ and $\beta\in(\betaf(d),\infty)\setminus\fX_d$, then
	\begin{enumerate}[(i)]
		\item $\ex\abs{\mu_{\GG,\beta,0}(\cS_+)-1/2}=o(1)$,
		\item $\ex\abs{\scal{\SIGMA(v_1)\SIGMA(v_2)}{\mu_{\GG,\beta,0}(\nix\mid\cS_+)}-\scal{\SIGMA(v_1)}{\mu_{\GG,\beta,0}(\nix\mid\cS_+)}\scal{\SIGMA(v_2)}{\mu_{\GG,\beta,0}(\nix\mid\cS_+)}}=o(1)$, and
		\item $\ex[W_1(\vec\pi^+_{n,m,\beta},\pi_{d,\beta,0})]=o(1)$ and
		$$\limsup_{\ell\to\infty}\limsup_{n\to\infty}\ex\abs{\scal{\SIGMA(v_1)}{\mu_{\GG,\beta,0}(\nix\mid\cS_+)}-\scal{\SIGMA(v_1)}{\mu^{(\ell)}_{\GG,v_1,\beta,0}}}=0.$$
	\end{enumerate}
\end{proposition}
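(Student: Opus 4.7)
The plan is to establish (iii) first---the substantive core of the proposition---and then derive (i) and (ii) as consequences. For (i), the inversion symmetry $\mu_{\GG,\beta,0}(\sigma)=\mu_{\GG,\beta,0}(-\sigma)$ yields $\mu_{\GG,\beta,0}(\cS_+)=\mu_{\GG,\beta,0}(\cS_-)$ deterministically, so $2\mu_{\GG,\beta,0}(\cS_+)=1-\mu_{\GG,\beta,0}(\{\sigma:\sum_v\sigma(v)=0\})$. Given (iii), the conditional mean magnetisation under $\mu_{\GG,\beta,0}(\cdot\mid\cS_+)$ converges to $\int x\,\dd\pi_{d,\beta,0}(x)>0$ by \Prop~\ref{prop_BP_B=0}(ii). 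A concentration argument---either a second-moment bound exploiting Griffiths' second inequality to control $\Var_{\mu(\cdot\mid\cS_+)}(\sum_v\SIGMA(v))$, or an Azuma bound along the edge-exposure martingale acting on the graph randomness---would then show $\sum_v\SIGMA(v)=\Theta(n)$ under $\mu(\cdot\mid\cS_+)$ \whp, so the zero-magnetisation surface carries negligible mass and (i) follows.

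The heart of (iii) is the identification of $\mu_{\GG,\beta,0}(\cdot\mid\cS_+)$ with the $+$-boundary tree measure $\mu^{(\ell)}_{\GG,v_1,\beta,0}$ in the double limit $n\to\infty$, $\ell\to\infty$. I would start on the tree side: \Cor~\ref{cor_BP_tree} identifies $\scal{\SIGMA(\root)}{\mu^{(\ell)}_{\TT,\beta,0}}$ as distributed according to $\pi^{(\ell)}_{d,\beta,0}$, which converges to $\pi_{d,\beta,0}$ by \Prop~\ref{prop_BP_B=0}, while Griffiths' second inequality supplies monotone decrease of the root magnetisation in $\ell$. Local weak convergence (\Lem~\ref{lem_lwc}) then lifts this to the random graph, yielding $\scal{\SIGMA(v_1)}{\mu^{(\ell)}_{\GG,v_1,\beta,0}}\Rightarrow\pi_{d,\beta,0}$ in distribution. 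To connect this with the $\cS_+$-conditional magnetisation, I would combine ferromagnetic stochastic monotonicity in boundary conditions with a bulk accounting: the expectation of $n^{-1}\sum_v\SIGMA(v)$ under any $+$-boundary measure dominates the same quantity under $\mu(\cdot\mid\cS_+)$, yet both must converge to $\int x\,\dd\pi_{d,\beta,0}(x)$, the former directly from the tree limit and the latter via the Bethe formula~\eqref{eqBFE} consistent with~\eqref{eqDM}. Matching bulk averages combined with FKG then forces pointwise convergence at $v_1$, giving both the empirical convergence of $\vec\pi_{n,m,\beta}^+$ to $\pi_{d,\beta,0}$ and the $L^1$-match in (iii).

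For (ii), Griffiths' second inequality applied to $\mu_{\GG,\beta,0}(\cdot\mid\cS_+)$---which remains FKG-ferromagnetic because $\cS_+$ is an increasing event---provides the one-sided bound $\scal{\SIGMA(v_1)\SIGMA(v_2)}{\mu(\cdot\mid\cS_+)}\geq\scal{\SIGMA(v_1)}{\mu(\cdot\mid\cS_+)}\scal{\SIGMA(v_2)}{\mu(\cdot\mid\cS_+)}$. A matching upper bound on the permutation-averaged form $n^{-2}\sum_{i,j}\abs{\scal{\SIGMA(v_i)\SIGMA(v_j)}{\mu(\cdot\mid\cS_+)}-\scal{\SIGMA(v_i)}{\mu(\cdot\mid\cS_+)}\scal{\SIGMA(v_j)}{\mu(\cdot\mid\cS_+)}}$ follows from the variance bound $\Var_{\mu(\cdot\mid\cS_+)}(\sum_v\SIGMA(v))=o(n^2)$ already obtained in the proof of (i), which through the vertex-exchangeability of $\GG(n,m)$ yields (ii). The principal obstacle across the whole proof is the identification of the $\cS_+$-conditional measure with the extremal $+$-phase, that is, the equivalence of the \emph{global} sign conditioning $\sum_v\sigma(v)>0$ with the \emph{local} $+$-boundary at arbitrarily large depth. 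The Dembo--Montanari framework does not provide this equivalence directly; bridging it requires a delicate combination of Griffiths/FKG monotonicity with Bethe free energy accounting in the spirit of~\cite{Victor, Coja_2018, ACOPBethe, ACOPBP}, and is where the bulk of the work resides.
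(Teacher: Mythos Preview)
Your proposal has a genuine structural gap, and it also misses the specific tool the paper leans on.

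First, the FKG claim is false: $\mu_{\GG,\beta,0}(\cdot\mid\cS_+)$ is \emph{not} FKG, because $\cS_+=\{\sum_v\sigma(v)>0\}$ is increasing but not closed under $\wedge$ (take $\sigma=(+,+,-)$, $\tau=(+,-,+)$). So Griffiths' second inequality is unavailable for the conditioned measure, and your one-sided bound in (ii) and the monotonicity arguments in (iii) are not justified. Likewise, the domination $\scal{\SIGMA(v_1)}{\mu^{(\ell)}_{\GG,v_1,\beta,0}}\geq\scal{\SIGMA(v_1)}{\mu(\cdot\mid\cS_+)}$ does not follow from standard FKG, since the $\cS_+$-conditioning is a global constraint, not a boundary pin.

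Second, and more fundamentally, your sandwich is circular. You say both the $+$-boundary average and the $\cS_+$-conditional average of $n^{-1}\sum_v\SIGMA(v)$ must converge to $\int x\,\dd\pi_{d,\beta,0}(x)$, the latter ``via the Bethe formula consistent with~\eqref{eqDM}''. But \eqref{eqDM} gives only the free energy, not the magnetisation under $\cS_+$. Extracting the $\cS_+$-magnetisation from the free energy would require differentiating in $B$ and already knowing that $\mu(\cdot\mid\cS_+)$ is the $B\downarrow0$ limit of $\mu_{\GG,\beta,B}$---which is precisely the pure-state identification you are trying to prove.

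The paper's route is quite different and explains why the hypothesis $\beta\notin\fX_d$ appears. The key input is not the free energy but the \emph{edge} magnetisation formula~\eqref{eqmax}: $\ex\scal{\SIGMA(e)}{\mu_{\GG,\beta,0}}\to d\cdot r_{d,\beta}$. Combined with the deterministic upper bound of \Lem~\ref{lem_max}, this forces $\scal{\SIGMA(\ve_m)}{\mu_{\GG,\beta,0}}$ to saturate the BP product $\fm_{v\to w}^{(\ell)}\fm_{w\to v}^{(\ell)}$ for typical edges. The paper then applies the pinning lemma (\Lem~\ref{prop_pinning}) to obtain \emph{some} decomposition into $\delta$-symmetric pieces $\cS_\tau$, and uses the edge-saturation together with \Lem~\ref{lem_kostas} to pin down, for most $\tau$, first the product $\scal{\SIGMA(\vv)}{\cdot}\scal{\SIGMA(\vw)}{\cdot}$, then $|\scal{\SIGMA(\vv)}{\cdot}|$, and finally the sign consistency across $v$ within each $\cS_\tau$ (\Lem s~\ref{lem_pure_c}--\ref{lem_pure_e}). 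The cut-metric machinery (\Lem s~\ref{lem_extremal_margs}, \ref{lem_xtriangle}) then amalgamates the positive-sign pieces into $\cS_+$. Your outline does not touch the pinning lemma or the edge-magnetisation input, and without them the identification of $\cS_+$ with the extremal phase remains open.
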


While~(i) above is a relatively simple consequence of  the inversion symmetry of $\mu_{\GG,\beta,0}$, the other two statements are substantial.
Indeed, (ii) establishes replica symmetry of the conditional measure $\mu_{\GG,\beta,0}(\nix\mid\cS_+)$.
Similarly as in Eq.~\eqref{eqRSij}, due to invariance under vertex permutations \Prop~\ref{prop_pure}~(ii) is equivalent to
\begin{align}\label{eqRSij+}
	\frac1{n^2}\sum_{i,j=1}^n\ex\abs{\scal{\SIGMA(v_i)\SIGMA(v_j)}{\mu_{\GG,\beta,0}(\nix\mid\cS_n^+)}-\scal{\SIGMA(v_i)}{\mu_{\GG,\beta,0}(\nix\mid\cS_n^+)}\scal{\SIGMA(v_j)}{\mu_{\GG,\beta,0}(\nix\mid\cS_n^+)}}=o(1).
\end{align}
Hence, given $\cS_+$ the spins $\SIGMA(v_i),\SIGMA(v_j)$ are asymptotically independent for all but $o(n^2)$ pairs $i,j$.
Finally, (iii) demonstrates that the (random) empirical distributions $\vec\pi_{n,m,\beta}^+$ converge to $\pi_{d,\beta,0}$ in probability.
In light of the discussion from~\cite{Parisi}, the three properties (i)--(iii) justify our referring to $\cS_+,\cS_-$ as {\em pure states}.

We emphasise that the conditioning on $\cS_+$ is crucial in \Prop~\ref{prop_pure}.
Due to inversion symmetry the unconditional magnetisations just satisfy $\scal{\SIGMA(v_i)}{\mu_{\GG,\beta,0}}=0$; thus, they are uninformative.
By contrast, \Prop~\ref{prop_BP_B=0}~(ii) and \Prop~\ref{prop_pure}~(iii) imply that the empirical distribution $\vec\pi^+_{n,m,\beta}$ of the conditional magnetisations converges to a limiting distribution with a strictly positive mean.
This fact together with the replica symmetry provided by \Prop~\ref{prop_pure}~(ii) easily implies that for the unconditional measure $\mu_{\GG,\beta,0}$ we have
\begin{align}\label{eqnors}
	\liminf_{n\to\infty}\ex\abs{\scal{\SIGMA(v_1)\SIGMA(v_2)}{\mu_{\GG,\beta,0}}-\scal{\SIGMA(v_1)}{\mu_{\GG,\beta,0}}\scal{\SIGMA(v_2)}{\mu_{\GG,\beta,0}}}&>0&&\mbox{if $d>1$ and $\beta>\betaf(d)$.}
\end{align}
In other words, the unconditional $\mu_{\GG,\beta,0}$ genuinely {\em fails} to be replica symmetric.
The proof of \Prop~\ref{prop_pure}
combines the insights from \Prop~\ref{prop_BP_B=0} and techniques for the study of diluted spin glass models~\cite{Victor,Coja_2018,ACOPBethe,ACOPBP} with monotonicity and positive correlation arguments.

Beyond the proof of \Thm~\ref{thm_no} \Prop~\ref{prop_pure} has several interesting combinatorial implications.
For example, it is easy to deduce from the proposition that the average magnetisation given $\cS_+$ concentrates on the strictly positive mean of $\pi_{d,\beta,0}$ if $d>1$, $\beta>\betaf(d)$.
Furthermore, the \Prop~\ref{prop_pure} implies in combination with arguments from~\cite{ACOPBethe} that $\cS_{\pm}$ are `Bethe states' in the sense of~\cite{MM}.
But since these facts are not required to prove \Thm~\ref{thm_no} we are not going to elaborate in the present work.

Ploughing ahead with the proof of \Thm~\ref{thm_no}, as in \Sec~\ref{sec_B>0} we are going to calculate $\vDelta_M^+,\vDelta_M^-,\vDelta_M^\pm$ from~\eqref{eqDeltaM} by way of their `local' expressions \eqref{eqfact_Zratio}.
To this end we observe that, due to \Prop~\ref{prop_pure}~(i) and inversion symmetry, \eqref{eqfact_Zratio} can be rewritten in terms of the conditional distribution $\mu_{\GG_{h,M},\beta,0}(\nix\mid\cS_+)$:
	\begin{align}\label{eqfact_Zratio1cond}
		\log\frac{Z_{\GG_{h,M}^+}(\beta,0)}{Z_{\GG_{h,M}}(\beta,0)}&=\log\scal{\exp\bc{\beta\SIGMA(\ve_{M})}}{\mu_{\GG_{h,M},\beta,0}(\nix\mid\cS_+)}+o(1)&&\mbox\whp,\\
		\log\frac{Z_{\GG_{h,M}^-}(\beta,0)}{Z_{\GG_{h,M}}(\beta,0)}&=\log\scal{\exp\bc{\beta\SIGMA(\ve_{h,m-M+1})}}{\mu_{\GG_{h,M},\beta,0}(\nix\mid\cS_+)}+o(1)&&\mbox\whp\label{eqfact_Zratio2cond}
	\end{align}

Since \Prop~\ref{prop_pure} implies that the conditional distributions $\mu_{\GG_{h,M},\beta,0}(\nix\mid\cS_n^+)$ are replica symmetric \whp, in order to evaluate~\eqref{eqfact_Zratio1cond}--\eqref{eqfact_Zratio2cond} we basically just need to get a handle on the empirical joint distribution of the conditional magnetisations, i.e.,
	\begin{align}\label{eq_emp_tensor}
		\vec\pi_{n,m,\beta,M}^\oplus&=\frac1n\sum_{i=1}^n\delta_{\bc{\scal{\SIGMA(v_i)}{\mu_{\GG_{1,M},\beta,0}(\nix\mid\cS_+)},\scal{\SIGMA(v_i)}{\mu_{\GG_{2,M},\beta,0}(\nix\mid\cS_+)}}}\in\cP([-1,1]^2)\enspace.
	\end{align}
At this point we do not need to emphasise anymore that the conditional magnetisations
\begin{align*}
	\scal{\SIGMA(v_i)}{\mu_{\GG_{h,M},\beta,0}(\nix\mid\cS_+)}&=\mu_{\GG_{h,M},\beta,0}(\{\SIGMA(v_i)=1\}\mid\cS_+)-\mu_{\GG_{h,M},\beta,0}(\{\SIGMA(v_i)=-1\}\mid\cS_+)
	&&(h=1,2)
\end{align*}
are correlated because the random graphs $(\GG_{h,M})_{h=1,2}$ are.
We proceed to show that $\vec\pi_{n,m,\beta,M}^\oplus$ converges to the distribution $\pi_{d,\beta,0,t}^\tensor$ from~\eqref{thm_no}, the existence of which already follows from \Prop~\ref{prop_fix_ex}.

\begin{proposition}\label{prop_tensor}
	Suppose $d>1,\beta\in(\betaf(d),\infty)\setminus\fX_d$.
	Then uniformly for all $0<M\leq m$ we have
	\begin{align}\label{eq_prop_tensor_W1}
		\ex[W_1(\vec\pi_{n,m,\beta,M}^\oplus,\pi^\tensor_{d,\beta,M/m})]&=o(1),\\
		\ex\abs{\scal{\SIGMA(v_1)\SIGMA(v_2)}{\mu_{\GG_{h,M},\beta,0}(\nix\mid\cS_n^+)}-\scal{\SIGMA(v_1)}{\mu_{\GG_{h,M},\beta,0}(\nix\mid\cS_n^+)}\scal{\SIGMA(v_2)}{\mu_{\GG_{h,M},\beta,0}(\nix\mid\cS_n^+)}}&=o(1).\label{eq_prop_tensor_rs}
	\end{align}
\end{proposition}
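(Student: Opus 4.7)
The plan is to mimic the strategy used for Proposition \ref{prop_tensor_B>0}, substituting the pure-state decomposition of Proposition \ref{prop_pure} for the role played there by Proposition \ref{prop_DM_B>0}. Conditioning on $\cS_+$ supplies the positive bias that the external field $B>0$ would normally provide, so that Belief Propagation with an all-ones boundary condition at finite depth should correctly approximate the conditional magnetisation in each of the two correlated graphs.

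First I would dispatch the replica-symmetry assertion \eqref{eq_prop_tensor_rs}. Each graph $\GG_{h,M}$ is marginally distributed as $\GG(n,m-1)$, so Proposition \ref{prop_pure}(ii) applied to that single graph yields the required bound directly. The only additional work is to confirm that the $o(1)$ error is uniform in $M$, which is immediate because the statement depends on $m$ only through the average degree $2m/n$, which is fixed.

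Next, for the convergence of the joint empirical distribution in \eqref{eq_prop_tensor_W1}, I would apply Proposition \ref{prop_pure}(iii) separately for $h=1,2$ in order to replace $\scal{\SIGMA(v_i)}{\mu_{\GG_{h,M},\beta,0}(\nix\mid\cS_+)}$ by the depth-$\ell$ all-ones boundary magnetisation $\scal{\SIGMA(v_i)}{\mu^{(\ell)}_{\GG_{h,M},v_i,\beta,0}}$, up to an error that vanishes when $\ell,n\to\infty$. The latter quantity is determined solely by the radius-$\ell$ neighbourhood $\nabla^\ell(\GG_{h,M},v_i)$ and can be evaluated via the BP recurrence of Lemma \ref{lem_BP_tree}. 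By Lemma \ref{lem_lwc_tensor}, when $M/m\to t$ the empirical distribution of the joint neighbourhoods $(\nabla^\ell(\GG_{1,M},v_i),\nabla^\ell(\GG_{2,M},v_i))$ of a uniformly random vertex $v_i$ converges in probability to the law of $(\TT^{\tensor\,(\ell)}[1],\TT^{\tensor\,(\ell)}[2])$. Unfolding the BP recurrence on the coupled tree against the offspring distribution in \eqref{eqDeltat} reproduces exactly the operator $\BP^{\tensor}_{d,\beta,0,t}$ defined by \eqref{eqBPtensorRec}, so after $\ell$ steps the joint root magnetisations are distributed as $\BP^{\tensor\,\ell}_{d,\beta,0,t}(\delta_{(1,1)})$. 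Passing $\ell\to\infty$ via Proposition \ref{prop_fix_ex} produces the limit $\pi^\tensor_{d,\beta,0,M/m}$, and a standard second-moment estimate for the empirical distribution, together with the boundedness of $[-1,1]^2$, upgrades in-probability convergence to $\ex[W_1(\nix,\nix)]$-convergence.

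The principal obstacle I anticipate is obtaining all of the above with uniformity in $M\in\{1,\ldots,m\}$ and in the coupling parameter $t=M/m$. Proposition \ref{prop_pure}(iii) is stated as a $\limsup_\ell\limsup_n$ estimate for a single Erd\H{o}s--R\'enyi graph, and a careful revisit of its proof will be required to confirm that the resulting error terms are controlled by quantities depending on $d$ and $\beta$ alone. A secondary worry is the potential degeneracy of the conditioning when $\mu_{\GG_{h,M},\beta,0}(\cS_+)$ is abnormally small; however Proposition \ref{prop_pure}(i) guarantees that this happens only on a $o(1)$-probability event, allowing us to restrict to the complementary balanced event where $\mu_{\GG_{h,M},\beta,0}(\cS_+)\in[1/3,2/3]$ for both $h=1,2$, on which division by this probability introduces only a bounded multiplicative error.
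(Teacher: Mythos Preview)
Your proposal is correct and follows essentially the same route as the paper: the replica-symmetry statement \eqref{eq_prop_tensor_rs} is reduced to \Prop~\ref{prop_pure}(ii) via the marginal distribution of $\GG_{h,M}$, and \eqref{eq_prop_tensor_W1} is obtained by first using \Prop~\ref{prop_pure}(iii) to replace the conditional magnetisations by the depth-$\ell$ boundary magnetisations (this is the paper's \Lem~\ref{cor_GGmag_B=0}), then invoking the joint local weak convergence of \Lem~\ref{lem_lwc_tensor} together with \Lem~\ref{lem_tensor_ell} to identify the level-$\ell$ empirical law with $\pi_{d,\beta,0,t}^{\tensor\,(\ell)}$, and finally letting $\ell\to\infty$ via \Cor~\ref{cor_tensor_ell}. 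Your anticipated obstacles (uniformity in $M$, degeneracy of $\mu_{\GG_{h,M},\beta,0}(\cS_+)$) are not addressed separately in the paper either; they are absorbed into the fact that all relevant estimates depend on $\GG_{h,M}$ only through its distribution as $\GG(n,m-1)$ and that \Prop~\ref{prop_pure} already bounds the expected deviation of the conditional magnetisation.
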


\noindent
The proof of \Prop~\ref{lem_lwc_tensor} builds upon the local weak convergence to the multi-type Galton-Watson tree $\TT^\tensor$ from \Lem~\ref{lem_lwc_tensor} and the insights from \Prop s~\ref{prop_BP_B=0} and~\ref{prop_pure}.

In analogy to the case $B>0$, we combine the conditional replica symmetry property from \Prop~\ref{prop_pure} with \Prop~\ref{prop_tensor} to calculate the squared martingale differences via an Aizenman-Sims-Starr argument.

\begin{proposition}\label{cor_tensor}
	Suppose $d>1,\beta\in(\betaf(d),\infty)\setminus\fX_d$.
	Then the integral $\Sigma(d,\beta,0)^2$ from~\eqref{eqSigmadbetaB} exists and
	\begin{align}\label{eq_cor_tensor_B>0_cor_tensor}
		\ex\abs{\sum_{M=1}^m\vX_{n,M}^2-\Sigma(d,\beta,0)^2}&=o(1).
	\end{align}
\end{proposition}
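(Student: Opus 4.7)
The plan is to mirror the proof of \Prop~\ref{cor_tensor_B>0}, the one genuinely new ingredient being the pure-state conditioning from \Prop~\ref{prop_pure} that restores replica symmetry in the absence of an external field. By \Lem~\ref{fact_telescope} it suffices to evaluate $\ex[\vDelta_{n,M}^+ + \vDelta_{n,M}^- - 2\vDelta_{n,M}^\pm]$ up to $o(1)$ uniformly in $M$ and then to concentrate the sum $\sum_M\vX_{n,M}^2$ around its expectation.

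First, combining \Lem~\ref{fact_Zratio} with the identity $\exp(\beta s)=\cosh(\beta)(1+s\tanh\beta)$ for $s\in\PM$, each log-ratio appearing in the definition of $\vDelta_{n,M}^{+},\vDelta_{n,M}^{-},\vDelta_{n,M}^{\pm}$ decomposes as
\begin{align*}
\log\cosh\beta+\log\bc{1+\scal{\SIGMA(u)\SIGMA(v)}{\mu_{\GG_{h,M},\beta,0}}\tanh\beta},
\end{align*}
where $\{u,v\}$ are the endpoints of the newly added edge. Since $\beta\notin\fX_d$, equations \eqref{eqfact_Zratio1cond}--\eqref{eqfact_Zratio2cond} together with \Prop~\ref{prop_pure}(i) justify substituting $\mu_{\GG_{h,M},\beta,0}(\nix\mid\cS_+)$ for $\mu_{\GG_{h,M},\beta,0}$ at an $o(1)$ additive cost w.h.p. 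Next, the conditional replica symmetry \eqref{eq_prop_tensor_rs} from \Prop~\ref{prop_tensor} factorises $\scal{\SIGMA(u)\SIGMA(v)}{\mu_{\GG_{h,M},\beta,0}(\nix\mid\cS_+)}$ as $\scal{\SIGMA(u)}{\mu_{\GG_{h,M},\beta,0}(\nix\mid\cS_+)}\scal{\SIGMA(v)}{\mu_{\GG_{h,M},\beta,0}(\nix\mid\cS_+)}$ for a typical vertex pair.

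Expanding $\prod_{h=1,2}[\log\cosh\beta+\log(1+\cdots)]$ in each of $\vDelta^+,\vDelta^-,\vDelta^\pm$, the constant pieces $\log^2\cosh\beta$ cancel identically in $\vDelta^+ + \vDelta^- - 2\vDelta^\pm$, and the single-$\log$ cross terms also cancel in expectation because each such expectation equals $\ex\log(1+\vec\mu\vec{\tilde\mu}\tanh\beta)$ with $\vec\mu,\vec{\tilde\mu}$ independent samples from $\pi_{d,\beta,0}$, irrespective of which graph and which edge is involved. The surviving bilinear contribution reads $A_M-B_M$, where $A_M$ is the bilinear expectation in $\vDelta^+$ (whose two endpoints are shared across both graphs), and $B_M$ is the common value of the bilinear expectations appearing in $\vDelta^-$ and in $\vDelta^\pm$ (in both cases the endpoints affecting graph $1$ are independent of those affecting graph $2$). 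By the joint magnetisation convergence \eqref{eq_prop_tensor_W1} paired with the bounded Lipschitz property of $(x_1,x_2)\mapsto\prod_{h=1,2}\log(1+x_h\tanh\beta)$ on $[-1,1]^2$, we identify $A_M\to\cB^\tensor_\beta(\pi^\tensor_{d,\beta,0,M/m})$; similarly Remark~\ref{rem_extreme} identifies the limit of $B_M$ with $\cB^\tensor_\beta(\pi^\tensor_{d,\beta,0,0})$.

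Summing over $M$ produces the Riemann sum $\frac1m\sum_{M=1}^m[\cB^\tensor_\beta(\pi^\tensor_{d,\beta,0,M/m})-\cB^\tensor_\beta(\pi^\tensor_{d,\beta,0,0})]$, which converges to $\Sigma(d,\beta,0)^2$ thanks to the $W_1$-continuity of $t\mapsto\pi^\tensor_{d,\beta,0,t}$ from \Prop~\ref{prop_fix_ex} and the $W_1$-Lipschitz property of $\cB^\tensor_\beta$; this convergence also verifies existence of the integral $\Sigma(d,\beta,0)^2$. To upgrade convergence of expectations to the $L^1$-statement \eqref{eq_cor_tensor_B>0_cor_tensor}, one establishes $\mathrm{Var}(\sum_M\vX_{n,M}^2)=o(1)$ via a locality argument: each $\vX_{n,M}^2=O(1/m)$ uniformly, and cross-covariances $\mathrm{Cov}(\vX_{n,M}^2,\vX_{n,M'}^2)$ become negligible once the depth-$\ell$ neighbourhoods of the edges perturbed at steps $M$ and $M'$ decouple. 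The principal obstacle is precisely this concentration step: in the absence of Gibbs uniqueness, decoupling of magnetisations at vertices that are geodesically far apart must be extracted afresh from the pure-state decomposition of \Prop~\ref{prop_pure} combined with the joint local-weak-convergence \Lem~\ref{lem_lwc_tensor}, since the classical spatial mixing arguments available in the $B>0$ regime do not apply here.
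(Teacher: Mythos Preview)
Your computation of the expectation is correct and matches the paper's route essentially line by line: decompose the log-ratios via Fact~\ref{fact_adde}, pass to the conditional measure $\mu_{\GG_{h,M},\beta,0}(\nix\mid\cS_+)$ via \Prop~\ref{prop_pure}(i), factorise using the conditional replica symmetry~\eqref{eq_prop_tensor_rs}, identify the limits via~\eqref{eq_prop_tensor_W1}, and pass to the integral using the $W_1$-continuity from \Prop~\ref{prop_fix_ex}. This is exactly the content of the paper's \Lem~\ref{lem_int} and its Claims~\ref{claim_int_A}--\ref{claim_int_C}.

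The place where you diverge---and make life unnecessarily hard for yourself---is the concentration step. You propose to bound $\mathrm{Cov}(\vX_{n,M}^2,\vX_{n,M'}^2)$ by a geometric decoupling argument based on disjointness of depth-$\ell$ neighbourhoods, and you flag this as the ``principal obstacle'' requiring fresh spatial-mixing input. In fact no such argument is needed. What you have already established (the $W_1$-convergence of the conditional law of $\vDelta_{n,M}$ given $\fG_{n,M}$ to the deterministic law $\xi_{M/m}$ of $\vec\Xi_{M/m}$) yields directly that
\[
\ex\brk{\vDelta_{n,M}\mid\fG_{n,M}}=\ex[\vec\Xi_{M/m}]+o(1)\qquad\text{\whp}
\]
Since $\fF_{n,M}\subset\fG_{n,M}$, the tower property gives the same for $\ex[\vDelta_{n,M}\mid\fF_{n,M}]$. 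Thus each term of the sum $\sum_M\ex[\vDelta_{n,M}\mid\fF_{n,M}]$ is \whp\ within $o(1)$ of a \emph{deterministic} number, and the variance bound $\Var\sum_M\ex[\vDelta_{n,M}\mid\fF_{n,M}]=o(m^2)$ follows by expanding into diagonal terms ($O(m)$ trivially) and off-diagonal terms (each $o(1)$ by the above). This is precisely the paper's \Cor~\ref{cor_int}, proved identically to \Cor~\ref{cor_intervals}. The upshot: the concentration is a free consequence of the convergence statements you already have, and no additional correlation-decay input is required.
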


Finally, we need to verify that $\Sigma(d,\beta,0)$ is strictly positive if $d>1,\beta>\betaf(d)$.
As in the case $B>0$ this ultimately follows from the fact that $\pi_{d,\beta,0}$ fails to concentrate on a single number.

\begin{proposition}\label{prop_var_pos}
	Suppose $d>1,\beta>\betaf(d)$.
	Then $\Sigma(d,\beta,0)>0$.
\end{proposition}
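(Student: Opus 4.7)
The plan is to pin down the sign of $\Sigma(d,\beta,0)^2 = \int_0^1 \cB^\tensor_\beta(\pi_{d,\beta,0,t}^\tensor)\,\dd t - \cB^\tensor_\beta(\pi_{d,\beta,0,0}^\tensor)$ by an explicit endpoint computation paired with an FKG-type pointwise lower bound. Set $f(x) = \log(1 + x\tanh\beta)$ and let $\vec\mu_1, \vec\mu_2$ be i.i.d.\ with law $\pi_{d,\beta,0}$. Remark~\ref{rem_extreme} identifies $\pi_{d,\beta,0,0}^\tensor = \pi_{d,\beta,0}\tensor\pi_{d,\beta,0}$ (the two coordinates of a sample are independent) and $\pi_{d,\beta,0,1}^\tensor$ as supported on the diagonal (the two coordinates coincide), so substituting into \eqref{eqBFEtensor} gives
\[
	g(0) := \cB^\tensor_\beta(\pi_{d,\beta,0,0}^\tensor) = \bigl(\ex f(\vec\mu_1\vec\mu_2)\bigr)^2, \qquad g(1) := \cB^\tensor_\beta(\pi_{d,\beta,0,1}^\tensor) = \ex\bigl[f(\vec\mu_1\vec\mu_2)^2\bigr],
\]
and hence $g(1) - g(0) = \Var(f(\vec\mu_1\vec\mu_2))$.

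Proposition~\ref{prop_BP_B=0}(ii) ensures that, in the regime $d>1$ and $\beta>\betaf(d)$, we have $\pi_{d,\beta,0}(\{0\}) = \rho_d < 1$ together with $\int x\,\dd\pi_{d,\beta,0}(x) > 0$; in particular $\vec\mu_1\vec\mu_2$ is not a.s.\ constant. Since $\tanh\beta > 0$ makes $f$ strictly increasing on $[-1,1]$, this forces $\Var(f(\vec\mu_1\vec\mu_2)) > 0$, and hence $g(1) > g(0)$. Proposition~\ref{prop_fix_ex} combined with the Lipschitzness of $f$ on $[-1,1]$ shows that $g(t) := \cB^\tensor_\beta(\pi_{d,\beta,0,t}^\tensor)$ is continuous on $[0,1]$, so the strict inequality persists throughout some interval $[1-\delta,1]$. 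To upgrade this local strict inequality into $\int_0^1 g(t)\,\dd t > g(0)$, I would establish the pointwise lower bound $g(t) \geq g(0)$ for every $t \in [0,1]$. Writing $\vY_h = \vec\mu_{\pi,1,h}^{(0)}\vec\mu_{\pi,2,h}^{(0)}$ under $\pi_{d,\beta,0,t}^\tensor$ and observing that the fixed-point property forces each marginal $\vY_h$ to be distributed as $\vec\mu_1\vec\mu_2$ irrespective of $t$, this pointwise bound is the FKG-type covariance inequality $\mathrm{Cov}(f(\vY_1), f(\vY_2)) \geq 0$.

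I would deduce the FKG bound from positive association of the two coordinates of a single draw from $\pi_{d,\beta,0,t}^\tensor$: both coordinates of the output in \eqref{eqBPtensorRec} share a common block of $\vd_t^{(0)}$ inputs, and the ferromagnetic weights $1 + s\vec\mu\tanh\beta$ are coordinatewise monotone in $\vec\mu$, so positive association should propagate through each iteration of $\BP^\tensor_{d,\beta,0,t}$ starting from the diagonal atom $\delta_{(1,1)}$ and survive the $\ell \to \infty$ weak limit. Independence of the two samples $\vec\mu_{\pi,1}^{(0)}, \vec\mu_{\pi,2}^{(0)}$ combined with monotonicity of $(a,b)\mapsto ab$ and the support bound $\supp{\pi_{d,\beta,0}} \subset [0,\infty)$ from Proposition~\ref{prop_BP_B=0}(i) then transfers positive association to $\vY_1,\vY_2$, and FKG applied with the monotone $f$ closes the argument. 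The main obstacle is verifying that a suitable cone of positively associated joint laws on $[0,1]^2$ (for instance those meeting Holley's criterion or having log-supermodular densities) is invariant under a single application of $\BP^\tensor_{d,\beta,0,t}$ for all $t$; once this pointwise lower bound is in hand, continuity of $g$ together with the strict inequality near $t=1$ yields $\int_0^1 g(t)\,\dd t > g(0)$, i.e., $\Sigma(d,\beta,0)^2 > 0$.
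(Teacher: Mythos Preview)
Your endpoint computation $g(1)-g(0)=\Var f(\vec\mu_1\vec\mu_2)>0$ and the continuity argument match the paper's Lemmas \ref{lem_noatom_B=0} and \ref{lem_boundary_B=0}. The difference is in how the pointwise lower bound $g(t)\geq g(0)$ is obtained.

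The paper argues indirectly via the variance interpretation: Lemma \ref{fact_telescope} gives $m\vX_{n,M}^2=\ex[\vDelta_{n,M}\mid\fF_{n,M}]\geq0$, while Lemma \ref{lem_int} shows $\ex[\vDelta_{n,M}\mid\fG_{n,M}]=g(M/m)-g(0)+o(1)$ w.h.p. Taking expectations and then $M/m\to t$ via continuity yields $g(t)\geq g(0)$ (Lemma \ref{lem_integrant_B=0}). This completely sidesteps any structural analysis of $\pi_{d,\beta,0,t}^\tensor$, but it routes through Proposition \ref{prop_pure} and hence implicitly needs $\beta\notin\fX_d$.

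Your FKG route is direct and in fact dispenses with that restriction, so it actually proves the proposition as stated. The obstacle you flag is not serious: the invariant class is simply the \emph{positively associated} laws on $[0,1]^2$---no Holley criterion or log-supermodularity needed. Given PA input $\pi$, condition on $\vd_t^{(0)}=k$ and integrate out the independent $(\vd_t^{(h)},\vec\mu^{(h)})$ blocks; each output coordinate $Y_h$ is then a monotone function of $(\vec\mu_{\pi,i,h}^{(0)})_{i\leq k}$. Since independent PA pairs assemble into a PA family, $\mathrm{Cov}(\phi_1(Y_1),\phi_2(Y_2)\mid k)\geq0$ for monotone $\phi_h$. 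To remove the conditioning, observe that for inputs in $[0,1]$ the $B=0$ recursion \eqref{eqBPtensorRec} is nondecreasing in the number of inputs, so $k\mapsto\ex[\phi_h(Y_h)\mid k]$ is monotone and Chebyshev's sum inequality applies. Positive association passes to weak limits by testing against bounded continuous monotone functions, closing the induction from $\delta_{(1,1)}$.
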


\begin{proof}[Proof of \Thm~\ref{thm_no}]
	The theorem is an immediate consequence of \Prop~\ref{proposition_HH},  \Prop~\ref{cor_tensor} and \Prop~\ref{prop_var_pos}.
\end{proof}

\subsection{The case $B=0$, high temperature}\label{sec_B=0high}

\Thm~\ref{thm_an} asserts that in the high temperature scenario without an external field, i.e., $B=0$ and $d\leq1$ or $\beta<\betaf(d)$ the free energy only exhibits bounded fluctuations.
To prove this fact we employ the small subgraph conditioning method~\cite{Janson_1993,RW2}.
Thus, we are going to argue that the variance of the partition function $Z_{\GG}(\beta,0)$ can be attributed entirely to the presence of bounded-length cycles in $\GG$.
Since it is well known that the number of cycles of a fixed length $\ell\geq3$ in $\GG$ converges to $\vc_\ell\disteq\Po(d^\ell/(2\ell))$, we will thus obtain the limiting distribution stated in \Thm~\ref{thm_an}.
The proofs for the following statements can be found in \Sec~\ref{sec_thm_an1}, where we prove \Prop s~\ref{prop_EZ}, \ref{prop_ssc} and \ref{prop_EZ2}.

To prove \Thm~\ref{thm_an} we actually apply a twist: we are not going to perform small subgraph conditioning on the partition function $Z_{\GG}(\beta,0)$, but work with a truncated random variable instead.
Specifically, let
\begin{align}\label{eqZbal}
	\Zbal(\beta)&=\sum_{\sigma\in\PM^{V_n}}\vecone\cbc{\abs{\sum_{i=1}^n\sigma(v_i)}\leq2\sqrt n\log n}\exp\bc{\beta\sum_{vw\in E(\GG)}\sigma_v\sigma_w}
\end{align}
Thus, $\Zbal(\beta)$ only takes into account `almost paramagnetic' configurations $\sigma$, i.e., configurations whose empirical magnetisation $n^{-1}\sum_{i=1}^n\sigma(v_i)$ is not very much bigger than $n^{-1/2}$ in absolute value.

The first step of the small subgraph conditioning method is to work out the precise asymptotics of the first moment.
The following proposition provides such a precise asymptotic expression for $\beta<\betaf(d)$.
Indeed, the proposition shows that for $\beta<\betaf(d)$ the first moments of $Z_{\GG}(\beta,0)$ and $\Zbal(\beta)$ are asymptotically the same.

\begin{proposition}\label{prop_EZ}
	Assume that $d\leq1$ or $\beta<\betaf(d)$.
	Then
	\begin{align*}
		\Erw[Z_{\GG}(\beta,0)]&\sim\Erw[\Zbal(\beta)]\sim
		\frac{2^{n}\cosh^m\beta}{\sqrt{1 - d  \tanh\beta}}  \exp\bc{-\frac{d}2\tanh \beta-\frac{d^2}4 \tanh^2 \beta}.
	\end{align*}
\end{proposition}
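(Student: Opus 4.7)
The plan is to combine the high-temperature cycle expansion of the Ising model with a Gaussian tail estimate on the magnetization. The starting point is the exact identity
\[
Z_\GG(\beta,0)=2^n\cosh^m\beta\sum_{F\subset E(\GG):\,F\,\mathrm{even}}s^{|F|},\qquad s=\tanh\beta,
\]
which follows from $\eul^{\beta\sigma_v\sigma_w}=\cosh\beta(1+\sigma_v\sigma_w\tanh\beta)$ and the fact that $\sum_{\sigma\in\PM^{V_n}}\prod_{vw\in F}\sigma_v\sigma_w$ equals $2^n$ when every vertex of $F$ has even degree and zero otherwise. Taking expectations and using the elementary estimate $\Pr[F\subset\GG(n,m)]\sim(d/n)^{|F|}$, valid uniformly for $|F|\leq L(n)$ with $L(n)$ slowly growing, the task reduces to enumerating even subgraphs of $K_n$ weighted by $(ds/n)^{|F|}$.

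Next, I would argue that the dominant contributions come from vertex-disjoint unions of short simple cycles. The number of $\ell$-cycles in $K_n$ is $\binom{n}{\ell}(\ell-1)!/2\sim n^\ell/(2\ell)$, so the expected count of $\ell$-cycles surviving in $\GG(n,m)$ equals $\lambda_\ell=d^\ell/(2\ell)$. Invoking the classical result that $(\vc_\ell)_{\ell\geq3}$ converges jointly to independent Poissons $\Po(\lambda_\ell)$, the contribution of vertex-disjoint cycle unions is asymptotically
\[
\prod_{\ell\geq3}\Erw\brk{(1+s^\ell)^{\vc_\ell}}=\prod_{\ell\geq3}\exp(\lambda_\ell s^\ell)=\exp\bc{\sum_{\ell\geq3}\frac{(ds)^\ell}{2\ell}}=\frac{1}{\sqrt{1-ds}}\exp\bc{-\frac{ds}{2}-\frac{(ds)^2}{4}},
\]
using $\sum_{\ell\geq1}x^\ell/\ell=-\log(1-x)$. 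Multiplying by $2^n\cosh^m\beta$ yields the claimed asymptotic for $\Erw[Z_\GG(\beta,0)]$. The hypothesis that either $d\leq1$ or $\beta<\betaf(d)$ is precisely the condition $ds<1$, which makes the geometric series above converge. The main technical obstacle lies in ruling out two error sources: (a) even subgraphs containing a cycle of length exceeding $L(n)$, for which a crude first-moment bound by $\sum_{\ell>L(n)}(ds)^\ell/(2\ell)=o(1)$ suffices; and (b) even subgraphs that fail to be vertex-disjoint unions of simple cycles, such as theta graphs or short cycles sharing a vertex, for which the expected number per constant edge count is $O(1/n)$ and thus subdominant.

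For the equivalence $\Erw[Z_\GG(\beta,0)]\sim\Erw[\Zbal(\beta)]$, I would show that configurations $\sigma\in\PM^{V_n}$ with $|\sum_i\sigma(v_i)|>2\sqrt n\log n$ contribute negligibly. Fixing $\sigma$ with $k=n/2+x$ spins equal to $+1$, the number of bichromatic edges in $\GG(n,m)$ is hypergeometric with parameters $\binom{n}{2},k(n-k),m$. A direct asymptotic analysis of its moment generating function yields
\[
\log\Erw\brk{\eul^{\beta\sum_{vw\in E(\GG)}\sigma_v\sigma_w}}=m\log\cosh\beta+\frac{2dx^2\tanh\beta}{n}+O(1),
\]
and combining with $\log\binom{n}{n/2+x}=n\log2-2x^2/n+O(1)$ shows that the contribution of configurations with a prescribed $x$ carries a Gaussian envelope $\exp(-2x^2(1-ds)/n)$. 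Summing over $\sigma$ grouped by $x$, the tail mass on $|x|>\sqrt n\log n$ is bounded by a factor $\eul^{-\Omega(\log^2 n)}$ times the leading term $2^n\cosh^m\beta/\sqrt{1-ds}$ and is therefore negligible. This would complete the comparison and thus the proof.
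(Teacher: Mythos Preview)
Your approach is correct and genuinely different from the paper's. The paper writes $\Erw[Z_\GG(\beta,0)]$ as a double sum over the magnetization $\alpha=|\sigma^{-1}(1)|$ and the number $M$ of monochromatic edges, applies Stirling's formula, and runs the Laplace method on the resulting one-variable function $f(z)=h(z)+\frac{m}{n}\log\bigl(2z(1-z)\eul^{-\beta}+(1-2z+2z^2)\eul^\beta\bigr)$; the condition $d\tanh\beta<1$ enters as strict concavity of $f$, and the Gaussian integral around $z=1/2$ produces all constants at once, including the $(1-d\tanh\beta)^{-1/2}$ from $f''(1/2)$. Your high-temperature expansion explains more transparently \emph{why} the answer has the form $2^n\cosh^m\beta$ times a cycle-generating exponential, and the first-moment computation is shorter. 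The paper's method, by contrast, extends mechanically to the second moment (\Prop~\ref{prop_EZ2}), where the even-subgraph picture would require tracking pairs of cycle collections and becomes considerably heavier; your envelope argument for $\Erw[Z_\GG]\sim\Erw[\Zbal]$ is essentially the paper's \Lem~\ref{lem_EZb} in compressed form.

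One caution on the details you flag as ``the main technical obstacle'': the bound $\sum_{\ell>L(n)}(ds)^\ell/(2\ell)$ controls only single long cycles, not arbitrary even subgraphs containing one, and your Poisson-limit phrasing of the main term tacitly assumes a uniform integrability that you do not supply. The cleaner execution of your own idea is to sum $\sum_{F\text{ even in }K_n}(ds/n)^{|F|}$ directly, observe that connected even subgraphs with $k$ edges and at most $k-1$ vertices contribute $O(n^{-1})$ per edge-count, and handle the exponential of the single-cycle generating function combinatorially rather than probabilistically.
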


As a next step we assess the correlations between the numbers of short cycles in $\GG$ and the truncated partition function $\Zbal(\beta)$.
Thus, let $\vC_\ell$ be the number of cycles of length $\ell$ in $\GG$.
Moreover, let
\begin{align}\label{eqlambdadelta}
	\vartheta_\ell&=\frac{d^\ell}{2\ell},&\chi_\ell&=\tanh^\ell\beta.
\end{align}

\begin{proposition}\label{prop_ssc}
	Assume that $d\leq1$ or $\beta<\betaf(d)$.
	Then for any fixed $L$ and $c_3,\ldots,c_L\geq0$ we have
	\begin{align*}
		\lim_{n\to\infty}\frac{\ex\brk{\Zbal(\beta)\mid\vC_3=c_3,\ldots,\vC_L=c_L}}{\ex\brk{\Zbal(\beta)}}&=\prod_{\ell=3}^L(1+\chi_\ell)^{c_\ell}\exp(-\vartheta_\ell\chi_\ell).
	\end{align*}
\end{proposition}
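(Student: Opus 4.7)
The plan is to apply the method of joint factorial moments, following the small subgraph conditioning template of~\cite{Janson_1993,RW2}. Specifically, the first goal will be the asymptotic
\begin{align*}
\frac{\ex[\Zbal(\beta)\prod_{\ell=3}^L(\vC_\ell)_{c_\ell}]}{\ex[\Zbal(\beta)]}=\prod_{\ell=3}^L\bc{\vartheta_\ell(1+\chi_\ell)}^{c_\ell}+o(1),
\end{align*}
where $(x)_k=x(x-1)\cdots(x-k+1)$ is the falling factorial. Since the unconditional $(\vC_\ell)_{\ell=3}^L$ jointly converges to a product of independent $\Po(\vartheta_\ell)$ variables with matching factorial moments $\prod_\ell\vartheta_\ell^{c_\ell}$, the method of moments then shows that under the size-biased law $\tilde\Pr[\cdot]=\ex[\Zbal(\beta)\vecone\{\cdot\}]/\ex[\Zbal(\beta)]$ the cycle counts $(\vC_\ell)_{\ell=3}^L$ converge jointly to independent $\Po(\vartheta_\ell(1+\chi_\ell))$. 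Consequently
\begin{align*}
\frac{\ex[\Zbal(\beta)\vecone\{\vC_\ell=c_\ell\,\forall\ell\leq L\}]}{\ex[\Zbal(\beta)]}\to\prod_{\ell=3}^L\frac{\bc{\vartheta_\ell(1+\chi_\ell)}^{c_\ell}}{c_\ell!}\eul^{-\vartheta_\ell(1+\chi_\ell)},
\end{align*}
and dividing by $\Pr[\vC_\ell=c_\ell\,\forall\ell]\to\prod_\ell\vartheta_\ell^{c_\ell}\eul^{-\vartheta_\ell}/c_\ell!$ reproduces the claim.

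To prove the factorial moment asymptotic I would expand $\prod_\ell(\vC_\ell)_{c_\ell}$ as a sum over ordered tuples $\mathcal{C}$ of pairwise distinct cycles and argue, via the usual short-cycle edge-budget bound, that only vertex-disjoint tuples contribute to leading order. For such a tuple with total edge count $K=\sum_\ell\ell c_\ell=O(1)$, pulling out the $\mathcal{C}$-edges gives
\begin{align*}
\ex[\Zbal(\beta)\vecone\{\mathcal{C}\subseteq E(\GG)\}]=\Pr[\mathcal{C}\subseteq E(\GG)]\sum_{\sigma\text{ bal}}\eul^{\beta s_\mathcal{C}(\sigma)}w_\mathcal{C}(\sigma),
\end{align*}
where $s_\mathcal{C}(\sigma)=\sum_{e\in\mathcal{C}}\sigma(e)$ and $w_\mathcal{C}(\sigma)$ is the expected Gibbs weight of $\sigma$ in the residual uniform graph $\GG(n,m-K)$ on the $\binom{n}{2}-K$ remaining vertex pairs. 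A hypergeometric computation yields $\Pr[\mathcal{C}\subseteq E(\GG)]\sim(d/n)^K$, and re-running the first moment calculation of \Prop~\ref{prop_EZ} for the restricted model gives $w_\mathcal{C}(\sigma)\sim\cosh^{-K}\beta\cdot w(\sigma)$ uniformly for balanced $\sigma$, with $w(\sigma)=\ex[\exp(\beta\sum_e\sigma(e)\vecone\{e\in E(\GG)\})]$. It remains to evaluate the ratio $\sum_{\sigma\text{ bal}}\eul^{\beta s_\mathcal{C}(\sigma)}w(\sigma)\,/\,\sum_{\sigma\text{ bal}}w(\sigma)$. Writing $\eul^{\beta\sigma(e)}=\cosh\beta\cdot(1+\tanh\beta\cdot\sigma(e))$ at every $e\in\mathcal{C}$ and invoking the cycle identity
\begin{align*}
\prod_{e\in C}\sigma(e)=\prod_{v\in V(C)}\sigma(v)^2=1\qquad\text{for every cycle }C\subseteq\mathcal{C},
\end{align*}
every subset $S\subseteq\mathcal{C}$ in the resulting expansion that is \emph{not} a union of whole cycles leaves some cycle-vertex with odd degree, and the corresponding partial product $\prod_{v\in\mathrm{odd}(S)}\sigma(v)$ averages to $o(1)$ under the balanced $w$-weighted law. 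Only the $2^{c_3+\cdots+c_L}$ subsets that split cleanly into whole cycles survive, summing to $\prod_\ell(\cosh^\ell\beta+\sinh^\ell\beta)^{c_\ell}$. Combining the four factors---$\sim\prod_\ell(n^\ell/(2\ell))^{c_\ell}$ vertex-disjoint cycle tuples, the $(d/n)^K$ edge-containment probability, the $\cosh^{-K}\beta$ residual-model adjustment, and the surviving expansion---the $\cosh$'s cancel and the stated limit $\prod_\ell(\vartheta_\ell(1+\chi_\ell))^{c_\ell}$ drops out.

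The principal technical obstacle will be rigorously justifying the partial-product estimate $\ex[\prod_{v\in T}\sigma(v)]=o(1)$ for non-empty $T$ under the $w$-weighted balanced law. Because $w(\sigma)$ depends on $\sigma$ only through $S(\sigma)=\sum_i\sigma(v_i)$, this reduces to a Gaussian computation: the calculation behind \Prop~\ref{prop_EZ} concentrates the $S$-marginal of $\sigma\mapsto w(\sigma)\vecone\{|S|\leq 2\sqrt n\log n\}/\ex[\Zbal(\beta)]$ on the $\sqrt n$-scale, while conditionally on $S$ the $|T|=O(1)$ coordinates $(\sigma(v))_{v\in T}$ are sampled without replacement from a pool of proportion $1/2+S/(2n)$ positives. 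The closed-form identity $\ex[\prod_{v\in T}\sigma(v)\mid S]=O((S/n)^{|T|})+O(1/n)$ then yields $o(1)$ after integration over $|S|=O(\sqrt n\log n)$. With that estimate in place, the combinatorial bookkeeping described above is routine.
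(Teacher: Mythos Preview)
Your argument is correct and leads to the same factorial moment identity that the paper establishes, but the route is genuinely different. The paper packages the computation via a \emph{planted model}: it introduces a reweighted random graph $\GG^*(\SIGMA^*)$ together with the Nishimori-type identity $\ex[\Zbal(\beta)Y(\GG)]/\ex[\Zbal(\beta)]=\ex[Y(\GG^*(\SIGMA^*))]$ (\Lem~\ref{lem_nishi}), and then computes the cycle statistics of $\GG^*(\SIGMA^*)$ by noting that, conditionally on $\SIGMA^*$, edges between vertices of types $s,t\in\PM$ appear with densities governed by a $2\times2$ matrix $Q$ whose trace powers give $\ex[\vC_\ell]\sim\vartheta_\ell(1+\tanh^\ell\beta)$. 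Your approach instead works directly with the high-temperature expansion $\eul^{\beta\sigma(e)}=\cosh\beta\,(1+\sigma(e)\tanh\beta)$ over the edges of a fixed cycle tuple $\mathcal C$, exploiting the parity identity $\prod_{e\in C}\sigma(e)=1$ for cycles and the near-exchangeability of balanced $\sigma$ to kill the off-cycle terms. The planted-model calculation is arguably cleaner and generalises painlessly (e.g.\ to Potts, where the trace of the analogous $q\times q$ matrix does the work), whereas your expansion is more elementary and Ising-specific, trading the planted machinery for the explicit partial-product estimate $\ex[\prod_{v\in T}\sigma(v)]=o(1)$ that you correctly flag as the one technical step requiring care. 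Both reach the joint factorial moments $\prod_\ell(\vartheta_\ell(1+\chi_\ell))^{c_\ell}$ and then finish identically via the method of moments.
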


Finally, we work out the precise asymptotics of the second moment of $\Zbal(\beta)$.

\begin{proposition}\label{prop_EZ2}
	Assume that $d\leq1$ or $\beta<\betaf(d)$.
	Then
	\begin{align*}
		\ex[\Zbal(\beta)^2]&\sim\ex[\Zbal(\beta)]^2\exp\bc{\sum_{\ell\geq3}\vartheta_\ell\chi_\ell^2}.
	\end{align*}
\end{proposition}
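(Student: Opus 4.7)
The plan is a standard second-moment computation, localized at the paramagnetic saddle via Laplace's method. First, opening up the square and using that $\GG(n,m)$ is uniform over $m$-edge graphs on $V_n$, up to a factor $1+o(1)$ one has
\begin{align*}
\ex[\Zbal(\beta)^2]=\sum_{(\sigma,\tau)\text{ balanced}}\left(\binom{n}{2}^{-1}\sum_{1\leq i<j\leq n}\eul^{\beta(\sigma_i\sigma_j+\tau_i\tau_j)}\right)^{m}.
\end{align*}
Using $\eul^{\beta xy}=\cosh\beta\cdot(1+xy\tanh\beta)$ for $x,y\in\PM$, the inner average depends on $(\sigma,\tau)$ only through the magnetisations $\alpha=n^{-1}\sum_i\sigma_i$, $\gamma=n^{-1}\sum_i\tau_i$ and the overlap $\rho=n^{-1}\sum_i\sigma_i\tau_i$, and expands to
\begin{align*}
\cosh^2\beta\cdot\bigl[1+\tanh\beta(\alpha^2+\gamma^2)+\tanh^2\beta\cdot\rho^2\bigr]+O(1/n).
\end{align*}

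Second, I will group the double sum by the triple $(\alpha,\gamma,\rho)$. The number of pairs realising prescribed values equals the multinomial coefficient in the four joint-type counts $n_{st}=\tfrac n4(1+s\alpha+t\gamma+st\rho)$, which Stirling converts into a Gaussian weight of total mass $4^n$ and scale $n^{-1/2}$ in each of the three variables. The balanced cut-off restricts $\alpha,\gamma$ to the interval $[-2n^{-1/2}\log n,2n^{-1/2}\log n]$, eliminating only a super-polynomially small tail that does not affect the Laplace asymptotics. The hypothesis (either $d\leq1$ or $\beta<\betaf(d)$) forces $d\tanh\beta<1$ and a fortiori $d\tanh^2\beta<1$, so the three Gaussian integrals all converge and produce
\begin{align*}
\ex[\Zbal(\beta)^2]\sim\frac{4^n\cosh^{2m}\beta}{(1-d\tanh\beta)\sqrt{1-d\tanh^2\beta}}\cdot\fC(d,\beta),
\end{align*}
where $\fC(d,\beta)$ captures the exponential corrections produced by the $O(1/n)$ term in the edge average and by the cubic/quartic terms in the Stirling expansion of the multinomial.

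Third, dividing by the first moment asymptotics supplied by \Prop~\ref{prop_EZ} yields
\begin{align*}
\frac{\ex[\Zbal(\beta)^2]}{\ex[\Zbal(\beta)]^2}\longrightarrow\frac{\exp\bigl(-\tfrac d2\tanh^2\beta-\tfrac{d^2}{4}\tanh^4\beta\bigr)}{\sqrt{1-d\tanh^2\beta}}.
\end{align*}
Applying the Taylor series $-\tfrac12\log(1-x)=\sum_{\ell\geq1}x^\ell/(2\ell)$ at $x=d\tanh^2\beta$ and separating the $\ell=1,2$ terms, the right-hand side equals precisely $\exp\bigl(\sum_{\ell\geq3}\vartheta_\ell\chi_\ell^2\bigr)$, which is the claim.

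The main obstacle I anticipate is the rigorous identification $\fC(d,\beta)=\exp\bigl(-\tfrac d2\tanh^2\beta-\tfrac{d^2}{4}\tanh^4\beta\bigr)$. This demands expanding $\log$ of the edge average to order $n^{-2}$ and the Stirling entropy to quartic order in $(\alpha,\gamma,\rho)$, then carefully collecting the resulting Gaussian moments. Fortunately, the computation is the two-replica analogue of the single-variable bookkeeping already carried out in the proof of \Prop~\ref{prop_EZ}, where the same mechanism produces the first-moment factor $\exp(-\tfrac d2\tanh\beta-\tfrac{d^2}{4}\tanh^2\beta)$, so the calculation should go through in parallel.
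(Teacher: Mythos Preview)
Your overall strategy — a three-variable Laplace expansion over the two magnetisations and the overlap — matches the paper's approach in \Sec~\ref{sec_prop_EZ2}, and the algebraic identity in your third paragraph is correct. But your very first displayed formula is wrong at the precision you need: for $\GG(n,m)$ (edges drawn \emph{without} replacement) the quantity $\ex\brk{\exp(\beta\sum_{e\in E(\GG)}(\sigma(e)+\tau(e)))}$ is \emph{not} $(1+o(1))$ times $\bigl(\binom n2^{-1}\sum_{i<j}\eul^{\beta(\sigma_i\sigma_j+\tau_i\tau_j)}\bigr)^m$. The latter is the expectation under \emph{with}-replacement sampling, and since $m^2/\binom n2=\Theta(1)$ the ratio of the two is a genuine $\Theta(1)$ constant depending on $d,\beta$. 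Already at the first-moment level the with-replacement formula yields $2^n\cosh^m\beta\,\eul^{-\frac d2\tanh\beta}/\sqrt{1-d\tanh\beta}$, whereas \Prop~\ref{prop_EZ} shows the true $\GG(n,m)$ value carries an additional factor $\eul^{-\frac{d^2}4\tanh^2\beta}$. The two correction sources you list cannot reproduce this factor: the $O(1/n)$ diagonal term in the edge average yields the contributions linear in $d$ (the $-\tfrac d2\tanh\beta$-type terms), while the cubic/quartic Stirling terms for the multinomial integrate to $o(1)$ against the Gaussian. Carrying your computation through as written therefore produces the wrong $\fC(d,\beta)$ and hence the wrong limit for $\ex[\Zbal^2]/\ex[\Zbal]^2$.

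The paper avoids this by not collapsing the edge-type information prematurely. In \Lem~\ref{lem_EZ2a} the expectation is expanded over the full vector $\psi\in\Psi$ of edge-type counts, giving a product of hypergeometric binomials $\binom{\binom{\rho_s}2}{\psi_s}$ and $\binom{\rho_s\rho_t}{\psi_{\{s,t\}}}$; the Stirling expansion of \emph{these} (\Lem~\ref{lem_EZ2b}) is precisely where the without-replacement corrections $\exp(-\psi_s^2/\rho_s^2-\ldots)$ enter. Only after localising $\psi$ near its maximiser $\psi^*$ (\Lem~\ref{lem_EZ2c}) and evaluating those corrections at $\psi^*$ does the argument reduce to the three-dimensional Laplace problem in $(\alpha_1,\alpha_2,\rho)$ (\Lem s~\ref{lem_EZ2d}--\ref{lem_EZ2e}). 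Your route is salvageable by inserting the same $\psi$-layer before the Laplace step, but the shortcut as stated does not work.
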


\Prop~\ref{prop_EZ}, \ref{prop_ssc} and~\ref{prop_EZ2} precisely verify the conditions of the small subgraph conditioning method as summarised in~\cite[\Thm~1]{Janson_1993} (see \Thm~\ref{thm_janson} below).
Therefore, the proof of \Thm~\ref{thm_an} is now immediate.

\begin{proof}[Proof of \Thm~\ref{thm_an}]
	\Thm~\ref{thm_an} follows from \Prop s~\ref{prop_EZ}--\ref{prop_EZ2} and~\cite[\Thm~1]{Janson_1993}.
\end{proof}

\subsection{Discussion}\label{sec_discussion}
As pointed out at the start of the section, the contribution most closely related to the present work on a technical level is the recent article by Chatterjee, Coja-Oghlan, M\"uller, Riddlesden, Rolvien, Zakharov and Zhu on the random 2-SAT problem~\cite{2sat}.
Specifically, that article shows that throughout the satisfiable phase of the random 2-SAT problem the number of satisfying assignments is asymptotically log-normal.
Similarly to the present work (see the coupling {\bf CPL0--CPL2}), the proof is based on investigating pairs of correlated random 2-SAT instances that share a certain number of clauses and each contain a number of independent random clauses.
That proof strategy, in turn, is inspired by the proof of the central limit theorem for fully connected spin glass models by Chen, Dey and Panchenko~\cite{ChenDeyPanchenko}, which is based on an interpolating family of Hamiltonians.

The key fact that facilitates the proof of the 2-SAT central limit theorem is that the 2-SAT problem on the appropriate Galton-Watson tree has a unique Gibbs measure throughout the satisfiable phase~\cite{AchlioptasEtAl}.
This fact immediately implies that the uniform distribution on satisfying assignments is `replica symmetric' and that the empirical marginals converge to the Belief Propagation fixed point on the Galton-Watson tree.
As we pointed out earlier, the Ising model on the \Erdos-\Renyi\ graph does not generally enjoy the Gibbs uniqueness property.
Yet the work of Dembo and Montanari~\cite{Dembo_2010} provides a workable substitute for Gibbs uniqueness in the presence of an external field, i.e., $B>0$.
To be precise, the results from \cite[\Sec~4]{Dembo_2010} imply that in the case $B>0$ the free boundary condition and the all-$1$ boundary condition induce the same Gibbs measure on the Galton-Watson tree $\TT$.
This implies replica symmetry and the convergence of the empirical magnetisations; \Prop~\ref{prop_DM_B>0} summarises the precise statement.
Thus, by harnessing these results in place of Gibbs uniqueness, we can prove \Thm~\ref{thm_ex} concerning the case $B>0$ by following a broadly similar strategy as in~\cite{2sat}.

By contrast, in the case $B=0$ matters get more involved.
For a start, for $d>1$ and $\beta>\betaf(d)$ it is not just that the Gibbs measure on the Galton-Watson tree is no longer unique, but the free boundary condition renders an uninformative and unstable Belief Propagation fixed point distribution (the atom at zero).
Furthermore, as we pointed out in Eq.~\eqref{eqnors}, the replica symmetry condition~\eqref{eqIsingRS} does not hold in this case.
Instead, we need to derive the pure state decomposition as stated in \Prop~\ref{prop_pure}.
The proof of this proposition constitutes the main technical challenge of the present work.
To elaborate, the proof of \Prop~\ref{prop_pure} relies on a general information theoretic decomposition result for probability measures called the `pinning lemma' from~\cite{Coja_2018}, see \Lem~\ref{prop_pinning} below.
The pinning lemma is a generalisation of earlier results from Montanari~\cite{MontanariPinning} and Raghavendra and Tan~\cite{RaghavendraTan}.
It was seized upon in prior work~\cite{ACOPBethe,ACOPBP} to study diluted models of disordered systems such as spin glass models.
The pinning lemma implies that the Ising model on $\GG$ admits a `simple' decomposition into a `small' number of pure states where the conditional Ising distribution enjoys replica symmetry.
Starting from the pinning lemma, ultimately we are going to argue by means of monotonicity and positive correlation arguments that the only pure state decomposition that can be reconciled with the free energy formula \eqref{eqDM} is the one stated in \Prop~\ref{prop_pure}.
Once this is in place we will then show that in the proof of the central limit theorem the pure state decomposition can be used in lieu of the Gibbs uniqueness property.

The proof of \Thm~\ref{thm_an} relies on small subgraph conditioning, a technique originally developed by Robinson and Wormald~\cite{RW2} in order to count Hamilton cycles in random regular graphs.
Janson~\cite{Janson_1993} stated a generalised version of small subgraph conditioning that we shall employ (see \Thm~\ref{thm_janson} below).
Mossel, Neeman and Sly~\cite{Mossel_2015} applied small subgraph conditioning to the assortative stochastic block model, a random graph model closely related to the Ising model on \Erdos-\Renyi\ graphs.
There are two main differences between the present case of the Ising model and the stochastic block model.
First, in the (symmetric) stochastic block model the partition function only runs over balanced partitions, whereas here we need to explicitly truncate the partition function (see \eqref{eqZbal}).
Second, in the stochastic block model edges are present with certain probabilities independently, while here we consider a fixed number of edges.

Beyond the aforementioned articles that use or develop techniques related to the present ones, a substantial number of contributions deal with Ising models on random graphs.
The first to study the Ising model on the \Erdos-\Renyi\ random graph were Bovier and Gayrard~\cite{BovierGayard}.
They considered the case of `dense' random graphs where the average degree $d=2m/n$ diverges as $n\to\infty$.%
	\footnote{Strictly speaking, Bovier and Gayrard deal with the directed version of the \Erdos-\Renyi\ graph.
	The same applies to the work of Kabluchko, L\"owe and Schubert~\cite{KLS1,KLS2,KLS3}.}
In this regime an appropriate rescaling of the Hamiltonian is necessary.
The rescaled partition function reads
\begin{align}\label{eqBG}
	Z'_{\GG}(\beta,B)&=\sum_{\sigma\in\PM^{V_n}}\exp\bc{\frac{\beta}{2d}\sum_{e\in E(\GG)}\sigma(e)+B\sum_{v\in V_n}\sigma(v)}.
\end{align}
Bovier and Gayrard prove that the commensurate free energy, i.e., $\frac1n\log Z'_{\GG}(\beta,B)$, converges in probability to the very same limit as the free energy of the Curie-Weiss Ising model with identical $\beta,B$.
Since the Curie-Weiss Ising model is just the Ising model on the (deterministic) complete graph with the free energy rescaled as in~\eqref{eqBG} by the degree $n-1$, the symmetry properties of the complete graph entail that the limiting free energy is given by a simple analytic expression~\cite{Ellis,Kac}.
A further analogy between the Curie-Weiss model and the Ising model on the dense \Erdos-\Renyi\ graph is that in both cases the empirical distribution of the magnetisations converges to an atom if $B\neq0$ or $\beta<1$ and to a two-point distribution if $B=0$ and $\beta>1$.
Furthermore, the result of Bovier and Gayrard implies that
\begin{align}\label{eqqa}
	\ex[\log Z'_{\GG}(\beta,B)]\sim\log\ex[Z'_{\GG}(\beta,B)].
\end{align}
In disordered systems jargon \eqref{eqqa} posits that the `quenched' free energy matches the `annealed' free energy.

Following up on Bovier and Gayrard, Kabluchko, L\"owe and Schubert~\cite{KLS1,KLS2,KLS3} determined the limiting distribution of the free energy in the dense case $d=d(n)\gg n^{1/3}$ but $n-d(n)\gg1$ and obtained simple explicit expressions for the parameters of this distribution.
Specifically, for $d\gg n^{1/3}$ in the low temperature regime $\beta>1$ they obtain a central limit theorem for $\log Z'_{\GG}(\beta,0)$.
Moreover, in the high temperature regime $0<\beta<1$ with $d\gg n^{1/2}$ they show that $Z'_{\GG}(\beta,0)$ (without the log) converges to a Gaussian.
Additionally, Kabluchko, L\"owe and Schubert worked out the limiting distribution of the magnetisation $\sum_{v\in V_n}\SIGMA(v)$ of a configuration drawn from the Boltzmann distribution.

The case of sparse graphs turned out to be far more intricate than the aforementioned dense case.
This manifests itself, for instance, in the fact that the limiting distribution $\pi_{d,\beta,B}$ of the empirical distribution of the vertex magnetisations is not generally an atom or a two-point distribution~\cite{Dembo_2010}.
In effect, in the sparse case the `quenched-equals-annealed' equality~\eqref{eqqa} is generally false.
Indeed, in the case of finite average degrees $d$ (which we consider in the present paper), it is not difficult to verify that
\begin{align}\label{eqqna}
	\ex[\log Z_{\GG}(\beta,B)]&\leq\log\ex[Z_{\GG}(\beta,B)]-\Omega(n)&&\mbox{ if $B>0$ or $B=0$, $d>1$ and $\beta>\betaf(d)$}.
\end{align}

In light of these intricacies it is all the more remarkable that Dembo and Montanari established the `replica symmetric solution' not only for the sparse \Erdos-\Renyi\ graph but for a general class of graph sequences that converge to unimodular trees in the topology of local weak convergence~\cite{Brasil,Dembo_2010}.
The proofs are based on comparing the fixed points of the Belief Propagation operator on the limiting tree when starting from the free and the all-ones boundary conditions by means of the GHS inequality~\cite{GHS}.
Additionally, Basak and Dembo~\cite{Basak} investigate the local limit of the spin configurations on such graph sequences.
They show that in the case $B=0$ the local limit of the configurations results by initialising the limiting unimodular tree from the all-$1$ and the all-$(-1)$ boundary conditions.
Furthermore, under the assumption that the graph sequence enjoys a certain expansion property, they prove that the local limit of a random configuration with positive overall magnetisation results from the all-$1$ boundary condition only.
While the sparse random graph $\GG$ does not generally satisfy said expansion condition, we will be able to use a partial result~\cite[\Lem~3.3]{Basak} to derive \Prop~\ref{prop_BP_B=0}.
Furthermore, as substantial added insight that \Prop~\ref{prop_pure} on the pure state decomposition contributes by comparison to the local convergence results from~\cite{Basak} is the presence of the conditional replica symmetry condition (\Prop~\ref{prop_pure}~(ii)).
Together with the techniques from~\cite{ACOPBethe} this condition could be extended to show that $\cS_\pm$ are `Bethe states' in the sense of~\cite{Brasil,MM} (details omitted).
Finally, Giardin\`a, Giberti, van der Hofstad and Prioriello~\cite{GGHP} established a central limit theorem for the magnetisation $\sum_{v\in V_n}\SIGMA(v)$ of a random configuration $\SIGMA$ on locally tree-like graph sequences.
Their proof hinges on the free energy formula from~\cite{Dembo_2010}, which is needed to study the cumulant generating functions of the magnetisation.

\Thm s \ref{thm_an} and~\ref{thm_no} leave out the critical case of the Ising model on the \Erdos-\Renyi\ graph with $\beta=\betaf(d)$ and $B=0$.
Very recently Prodromidis and Sly~\cite{ProdromidisSly} proved that in this case the mixing time of Glauber dynamics is polynomial in $n$.
But deriving the precise limiting distribution of the free energy in the critical case remains an interesting open problem.

Apart from the \Erdos-\Renyi\ random graph, the Ising model has also been investigated on the conceptually simpler random regular graph.
Indeed, Galanis, Stefankovic, Vigoda and Yang~\cite{Galanis_2016} derive the limiting distribution of general ferromagnetic spin systems, including the Ising and Potts models, on random regular graphs by means of the small subgraph conditioning method.
They employ this result towards a complexity-theoretic reduction of certain computational counting problems.
In contrast to the sparse \Erdos-\Renyi\ case, in the case of random regular graphs the quenched and annealed free energies coincide and the limit of the empirical distribution of the vertex magnetisations is either an atom or a mixture of two atoms.
Subsequent contributions~\cite{BDS,DuZhou,HJP} derive a more precise picture in the case of random regular graphs and/or graph sequences that converge locally to a $d$-regular tree for the Potts and Ising models.
Additionally, Barbier, Chan and Macris~\cite{BarbierChanMacris} studied multi-overlaps of a general class of ferromagnetic $p$-spin models on random hypergraphs.
Their proof is based on a perturbation of the Hamiltonian that does not shift the limiting free energy to the first order, but that seems to potentially distort the limiting distribution of the free energy.

Classically the Ising model has been studied on lattices such as $\ZZ^d$ for integers $d\geq1$.
Indeed, Ising's thesis~\cite{Ising} establishes the absence of a phase transition in the case $d=1$.
Subsequently Onsager~\cite{Onsager} proved that a phase transition occurs in the case $d=2$.
For the rich literature on the Ising model on lattices, which involves a wealth of mathematical techniques, we refer to~\cite{DC}.

In the present paper we deal with the {\em ferromagnetic} Ising model on random graphs.
Remarkably, the antiferromagnetic version, which results by taking $B=0$ and $\beta<0$ in the definition~\eqref{eqIsingmu}, appears to be far more complex.
Indeed, according to physics predictions the antiferromagnetic Ising on random graphs model behaves like a genuine spin glass model that exhibits `replica symmetry breaking', at least in the low temperature phase~\cite{pnas,MP1,MP2}.
Hence, while \Prop s~\ref{prop_DM_B>0} and~\ref{prop_pure} show that the ferromagnetic Ising model on the random graph either behaves replica symmetrically or that the phase space decomposes into two mutually inverse pure states, in the antiferromagnetic model at low temperature an unbounded number of pure states are expected.
At this point only the existence of a replica symmetry breaking phase transition has been verified rigorously~\cite{CEJJKK,Coja_2018}.

\section{Preliminaries}\label{sec_pre}

\noindent
In this section we summarise a few results from the literature that we are going to build upon.
We begin with basics about the Ising ferromagnet.

\subsection{Ferromagnetic facts}\label{sec_ff}

A key property of the ferromagnetic Ising model is a `positive correlation' between spins.
Roughly speaking, given that $\SIGMA(v_i)=1$ for some vertex $v_i\in V_n$ the conditional probability that $\SIGMA(v_j)=1$ for some other vertex $v_j$ should be at least as large as the unconditional probability of the event $\SIGMA(v_j)=1$.
The precise correlation property that we will use repeatedly follows from a general inequality that is known as the `four functions theorem'.

\begin{lemma}[{Ahlswede-Daykin inequality/four functions theorem~\cite{AhlswedeDaykin}}]\label{lem_ADin}
Let $L$ be a finite distributive lattice and let $f_1,f_2,f_3,f_4: L \to \RR_{\ge 0}$ be functions satisfying
\begin{align}\label{eqlem_ADin1}
      f_1(x)f_2(y) \le f_3(x\vee y)f_4(x\wedge y)\;, && \text{for all } x, y \in L \enspace .
    \end{align}
Then for any $X, Y \subseteq L$ we have
\begin{align}\label{eqlem_ADin2}
    \bc{\sum_{x \in X}f_1(x)} \bc{\sum_{x\in Y}f_2(x)}
      \le
    \bc{\sum_{x \in X\vee Y}f_3(x)} \bc{\sum_{x\in X \wedge Y}f_4(x)}
    \enspace,
  \end{align}
  where $X \vee Y = \{ x \vee y : x \in X, y\in Y\}$ and $X \wedge Y = \{x \wedge y: x \in X, y \in Y\}$.
\end{lemma}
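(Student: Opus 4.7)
My plan is to reduce the inequality to the Boolean hypercube $\{0,1\}^n$ with $X = Y$ equal to the whole lattice, and then induct on $n$. Two standard reductions do the work. First, given arbitrary $X, Y \subseteq L$, I would replace $f_1, f_2, f_3, f_4$ by $f_1 \cdot \vecone_X$, $f_2 \cdot \vecone_Y$, $f_3 \cdot \vecone_{X \vee Y}$, $f_4 \cdot \vecone_{X \wedge Y}$; these still satisfy \eqref{eqlem_ADin1} (if $x \in X$ and $y \in Y$ then $x \vee y \in X \vee Y$ and $x \wedge y \in X \wedge Y$ by definition, so the original inequality applies, while otherwise the left-hand side vanishes), so it suffices to prove the special case $X = Y = L$. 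Second, by Birkhoff's representation theorem every finite distributive lattice embeds as a sublattice of $\{0,1\}^n$ for some $n$; extending each $f_i$ by zero outside $L$ preserves \eqref{eqlem_ADin1} because the sublattice property ensures $x \vee y, x \wedge y \in L$ whenever $x, y \in L$. It therefore remains to prove the inequality for $L = \{0,1\}^n$ and $X = Y = L$.

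The base case $n = 1$ is the main obstacle and requires a short algebraic argument. Writing $a_i = f_i(0)$ and $b_i = f_i(1)$, the hypothesis becomes
\[
a_1 a_2 \leq a_3 a_4, \qquad b_1 b_2 \leq b_3 b_4, \qquad a_1 b_2 \leq b_3 a_4, \qquad b_1 a_2 \leq b_3 a_4,
\]
and the goal is $(a_1+b_1)(a_2+b_2) \leq (a_3+b_3)(a_4+b_4)$. Expanding the difference yields
\[
(a_3 + b_3)(a_4 + b_4) - (a_1 + b_1)(a_2 + b_2) = (a_3 a_4 - a_1 a_2) + (b_3 b_4 - b_1 b_2) + \bigl[(a_3 b_4 + b_3 a_4) - (a_1 b_2 + b_1 a_2)\bigr].
\]
The first two summands are non-negative by hypothesis. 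For the bracketed summand, set $u = a_1 b_2$, $v = b_1 a_2$, $\alpha = a_3 b_4$, $\beta = b_3 a_4$; the last two hypotheses give $u, v \leq \beta$, while multiplying the first two yields $uv = a_1 a_2 b_1 b_2 \leq a_3 a_4 b_3 b_4 = \alpha\beta$. The inequality $u + v \leq \alpha + \beta$ then follows by a short case distinction: if $u \leq \alpha$ use $v \leq \beta$; if $u > \alpha$ (which forces $\alpha < \beta$ because $u \leq \beta$), the bound $v \leq \alpha\beta/u$ combined with the fact that $t \mapsto t + \alpha\beta/t$ attains its maximum on $[\alpha,\beta]$ at the endpoints yields $u + v \leq \alpha + \beta$.

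The inductive step $n-1 \to n$ is then routine. Writing elements of $\{0,1\}^n$ as $(x, \varepsilon)$ with $x \in \{0,1\}^{n-1}$ and $\varepsilon \in \{0,1\}$, I would define $g_i(x) = f_i(x,0) + f_i(x,1)$ on $\{0,1\}^{n-1}$. For fixed $x, y \in \{0,1\}^{n-1}$, the four functions $\varepsilon \mapsto f_1(x,\varepsilon)$, $\varepsilon \mapsto f_2(y,\varepsilon)$, $\varepsilon \mapsto f_3(x \vee y,\varepsilon)$, $\varepsilon \mapsto f_4(x \wedge y,\varepsilon)$ on $\{0,1\}$ inherit the four-function hypothesis because $(x,\varepsilon) \vee (y,\delta) = (x \vee y, \varepsilon \vee \delta)$ and similarly for meets. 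The base case therefore gives $g_1(x) g_2(y) \leq g_3(x \vee y) g_4(x \wedge y)$, so the $g_i$ satisfy \eqref{eqlem_ADin1} on $\{0,1\}^{n-1}$. Applying the inductive hypothesis to the $g_i$ with $X = Y = \{0,1\}^{n-1}$ and using the identity $\sum_{x \in \{0,1\}^{n-1}} g_i(x) = \sum_{z \in \{0,1\}^n} f_i(z)$ closes the induction and yields \eqref{eqlem_ADin2}.
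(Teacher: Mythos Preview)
The paper does not prove this lemma; it is quoted from \cite{AhlswedeDaykin} as a known result. Your proof is correct and is essentially the original argument of Ahlswede and Daykin: reduce to $X=Y=L$ via indicator functions, embed into $\{0,1\}^n$ via Birkhoff, handle $n=1$ by the algebraic lemma, and induct by collapsing the last coordinate.
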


We apply \Lem~\ref{lem_ADin} to the Ising model as follows.
For a set $V$, a subset $U\subseteq V$ and a configuration $\tau\in\PM^U$ let
\begin{align}\label{eqsubcube}
	\cS_{\tau}&=\cbc{\sigma\in\PM^V:\forall u\in U:\sigma(u)=\tau(u)}.
\end{align}
Thus, $\cS_{\tau}$ is the discrete cube consisting of all configurations $\sigma$ that match $\tau$ on $U$.

\begin{lemma}\label{lem_kostas}
	Let $G=(V,E)$ be a graph, $B\geq0$, $U\subset V$ and $v\in V$.
	Moreover, suppose that $\tau,\tau'\in\PM^U$ satisfy $\tau(u)\leq\tau'(u)$ for all $u\in U$.
	Then $\scal{\SIGMA(v)}{\mu_{G,\beta,B}(\nix\mid\cS_{\tau})}\leq\scal{\SIGMA(v)}{\mu_{G,\beta,B}(\nix\mid\cS_{\tau'})}.$
\end{lemma}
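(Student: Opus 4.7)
The plan is to derive the inequality from the Ahlswede-Daykin inequality, Lemma \ref{lem_ADin}, applied to the distributive lattice $L = \PM^V$ equipped with the coordinate-wise partial order $\sigma \leq \sigma'$ iff $\sigma(u) \leq \sigma'(u)$ for every $u \in V$. The structural input is that the (unnormalised) Ising weight
\begin{align*}
w(\sigma) &= \exp\bc{\beta \sum_{e \in E} \sigma(e) + B \sum_{u \in V} \sigma(u)}
\end{align*}
is log-supermodular on $L$, i.e., $w(\sigma) w(\sigma') \leq w(\sigma \vee \sigma') w(\sigma \wedge \sigma')$ for all $\sigma, \sigma' \in \PM^V$; this is where the ferromagnetic assumption $\beta > 0$ enters.

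First I would reduce the magnetisation inequality to a probability inequality. Using $\scal{\SIGMA(v)}{\mu_{G,\beta,B}(\nix \mid \cS_\tau)} = 2\mu_{G,\beta,B}(\{\SIGMA(v) = +1\} \mid \cS_\tau) - 1$, cross-multiplication turns the claim into
\begin{align*}
\mu_{G,\beta,B}(\cS_\tau \cap \{\SIGMA(v)=+1\})\, \mu_{G,\beta,B}(\cS_{\tau'}) &\leq \mu_{G,\beta,B}(\cS_{\tau'} \cap \{\SIGMA(v)=+1\})\, \mu_{G,\beta,B}(\cS_\tau).
\end{align*}
Clearing the common factor $Z_G(\beta,B)^{-2}$ turns this into an inequality between four sums of $w$-values over subsets of $L$, which is precisely the format of Lemma \ref{lem_ADin}.

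Next I would apply Lemma \ref{lem_ADin} with $X = \cS_\tau$, $Y = \cS_{\tau'}$, $f_1 = f_3 = w \cdot \vecone\{\sigma(v) = +1\}$, and $f_2 = f_4 = w$. The lattice identities $X \vee Y = \cS_{\tau'}$ and $X \wedge Y = \cS_\tau$ drop out of $\tau \leq \tau'$: for $\sigma \in \cS_\tau$, $\sigma' \in \cS_{\tau'}$ and $u \in U$ we have $(\sigma \vee \sigma')(u) = \max(\tau(u), \tau'(u)) = \tau'(u)$ and $(\sigma \wedge \sigma')(u) = \tau(u)$, and conversely every element of $\cS_{\tau'}$ (respectively $\cS_\tau$) arises this way. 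The pointwise condition \eqref{eqlem_ADin1} reduces to
\begin{align*}
w(\sigma) \vecone\{\sigma(v) = +1\} \cdot w(\sigma') &\leq w(\sigma \vee \sigma') \vecone\{(\sigma \vee \sigma')(v) = +1\} \cdot w(\sigma \wedge \sigma'),
\end{align*}
where the indicator part is automatic ($\sigma(v) = +1$ forces $(\sigma \vee \sigma')(v) = +1$), leaving the log-supermodularity of $w$ as the sole remaining ingredient.

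The only step with substance is verifying this log-supermodularity, and this is where the ferromagnetic sign $\beta > 0$ is essential. The field contribution $B(\sigma(u) + \sigma'(u))$ is modular because $a + b = \max(a,b) + \min(a,b)$ for $a, b \in \PM$. For each edge $\{x,y\} \in E$, the function $(a, b) \mapsto ab$ on $\PM^2$ is supermodular (a quick four-case check, with equality unless $\sigma, \sigma'$ are incomparable on $\{x,y\}$); multiplying by $\beta > 0$ and composing over all edges yields the log-supermodularity of $w$. This brief edge-wise check is the only real obstacle, and the rest of the argument is bookkeeping.
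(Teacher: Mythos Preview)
Your proof is correct and follows essentially the same route as the paper's: both invoke the Ahlswede--Daykin four functions theorem (\Lem~\ref{lem_ADin}) and reduce to the log-supermodularity of the Ising weight via the edge-wise check $\sigma(x)\sigma(y)+\sigma'(x)\sigma'(y)\leq(\sigma\vee\sigma')(x)(\sigma\vee\sigma')(y)+(\sigma\wedge\sigma')(x)(\sigma\wedge\sigma')(y)$. The only cosmetic difference is that the paper works on the lattice $\PM^{V\setminus U}$, encoding the boundary conditions $\tau,\tau'$ into the functions $f_i$ and the event $\{\SIGMA(v)=+1\}$ into the set $X$ (thereby obtaining full stochastic domination of $\mu_{G,\beta,B}(\nix\mid\cS_{\tau'})$ over $\mu_{G,\beta,B}(\nix\mid\cS_{\tau})$ before specialising), whereas you work on $\PM^V$ and make the dual choice---boundary conditions in the sets $X,Y$, the event in the functions $f_1,f_3$.
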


\noindent
Hence, the magnetisation of vertex $v$ is an increasing function of the vector $\tau$ that defines the subcube~$\cS_{\tau}$.

\begin{proof}[Proof of \Lem~\ref{lem_kostas}]
	If $v\in U$ then there is nothing to show.
	Hence, we may assume that $v \in V\setminus U$.
	Let $W=V\setminus U$ for brevity.
	We are going to apply \Lem~\ref{lem_ADin} to the lattice $L=\PM^{W}$ with $\wedge$, $\vee$ being the coordinatewise min and max, respectively.
	To this end, given a configuration $\theta \in \PM^{U}$ we define  $\enrg(\cdot\mid \theta): \PM^{W} \to \RR$ by
  \begin{align*}
	  \enrg (\sigma \mid \theta) &=\beta \sum_{uw \in E(G) } \sigma(u)\sigma(w)  \vecone\cbc{u,w \in W} +\sigma(w)\theta(u) \vecone\cbc{u \in U, w \in W }
	  +B\sum_{w\in W}\sigma(w)
  \end{align*}
  Assuming $\tau(u)\leq\tau'(u)$ for all $u\in U$ let
  \begin{align}\label{eq_foufunDef}
    f_1,f_2,f_3,f_4&: \PM^{W} \to \RR_{\ge 0},  && f_1(\sigma)=f_3(\sigma)= \exp\bc{\enrg(\sigma \mid \tau')},
    \quad
    f_2(\sigma)=f_4(\sigma)= \exp\bc{ \enrg(\sigma \mid \tau)}
	\end{align}
  It is easy to verify that the above functions satisfy the hypothesis of \Lem~\ref{lem_ADin}. Indeed, for any $\sigma, \sigma' \in \PM^{W}$ we have
  \begin{align*}
	  \sigma(u)\sigma(w) + \sigma'(u)\sigma'(w) &\le (\sigma(u) \vee \sigma'(u))(\sigma(w) \vee \sigma'(w)) + (\sigma(u) \wedge \sigma'(u))(\sigma(w) \wedge \sigma'(w))  && \mbox{ for all  $u,w \in W$, and} \\
	  \sigma(w)\tau'(u) + \sigma'(w)\tau(u) &\le (\sigma(w) \vee \sigma'(w)) \tau'(u) +(\sigma(w) \wedge \sigma'(w)) \tau(u), && \mbox{ for all } w \in W,\; u \in U.
  \end{align*}
  Summing the above on $u w \in E(G)$ and taking exponentials establishes the condition~\eqref{eqlem_ADin1} from \Lem~\ref{lem_ADin}:
  \begin{align*}
    \exp\bc{ \enrg(\sigma \mid \tau')+ \enrg(\sigma' \mid \tau)} \le
    \exp\bc{ \enrg(\sigma \vee \sigma' \mid \tau')+\enrg(\sigma \wedge \sigma' \mid \tau)}\enspace.
  \end{align*}

  Now let $A \subseteq L$ be any upwards-closed subset of $L$.
  Then $A \vee L = A$ and $A \wedge L = L$.
Therefore, applying~\eqref{eqlem_ADin2} to the functions from \eqref{eq_foufunDef} with $X=A$ and $Y= L$ yields
\begin{align}\label{eq_resffgives}
\bc{\sum_{ \sigma \in \PM^{W}} \exp\bc{ \enrg(\sigma  \mid \tau')}}
\bc{\sum_{ \sigma \in A} \exp\bc{ \enrg(\sigma  \mid \tau)}}
\le
\bc{\sum_{ \sigma \in A} \exp\bc{ \enrg(\sigma  \mid \tau')}}
\bc{\sum_{ \sigma \in \PM^{W}} \exp\bc{ \enrg(\sigma  \mid \tau)}}
\enspace.
\end{align}
Letting $A^\tau = \cbc{\sigma \in \cS_{\tau}: \sigma|_{W} \in A }$ we find
\begin{align*}
	\mu_{G,\beta,B}\bc{A^\tau\mid\cS_{\tau}}=  \frac{\sum_{ \sigma \in A} \exp\bc{ \enrg(\sigma  \mid \tau)}}{\sum_{ \sigma \in \PM^{W}} \exp\bc{ \enrg(\sigma  \mid \tau)}}
  \enspace.
\end{align*}
Therefore, we may rewrite \eqref{eq_resffgives} as
	$$\mu_{G,\beta,0}\bc{A^\tau\mid\cS_\tau} \le \mu_{G,\beta,0}(A^{\tau'}\mid\cS_{\tau'}),$$
i.e., $\mu_{G,\beta,0}\bc{\nix \mid\cS_{\tau'}}$ stochastically dominates $\mu_{G,\beta,0}\bc{\nix \mid\cS_{\tau}}$.
Hence, the assertion follows from the fact that $\sigma\mapsto\sigma(v)$ is an increasing function of $\sigma\in\PM^{W}$.
\end{proof}

Furthermore, we build upon results of Lyons~\cite{LyonsBRN,Lyons} on the Ising model on infinite trees.
An infinite tree $T$ with root $\root$ is {\em locally finite} if every vertex of $T$ has a finite number of neighbours.
Lyons established a threshold for the root-magnetisation to be strictly positive in terms of a parameter of $T$ called the \emph{branching number}.
To introduce this parameter, call a finite set $\Lambda \subseteq V(T)$ of vertices of $T$ a {\em cut set of $T$} if every infinite path starting at $\root$ passes through $\Lambda$.
Then the branching number of $T$ is defined as
  \begin{align}\label{eq_Lyons_BrF}
	  \br(T) =\inf \cbc{\lambda > 0 : \inf_{\Lambda \textrm{ cut set of }T}\sum_{v \in \Lambda}\lambda^{-\dist_T(v, \root)} = 0 }
    \enspace.
  \end{align}

\begin{lemma}[{\cite[Theorem 2.1]{Lyons}}]\label{lem_LYonORg}
	Let $T$ be a locally finite tree rooted at $\root$. Then for all $\beta > \betaf\bc{\br(T)}$ we have
	\begin{align*}
		\liminf_{\ell\to\infty}\scal{\SIGMA(\root)}{\mu_{T, \root,\beta, 0}^{(\ell)}} > 0 \enspace.
	\end{align*}
\end{lemma}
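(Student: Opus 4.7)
My plan follows Lyons' original strategy, combining the exact tree BP recursion with a cut-set/flow characterisation of the branching number.

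First, by the ferromagnetic monotonicity of Lemma~\ref{lem_kostas}, the root magnetisation $m_\ell := \scal{\SIGMA(\root)}{\mu^{(\ell)}_{T,\root,\beta,0}}$ is non-increasing in $\ell$, since moving the all-$1$ boundary outward weakens the boundary condition on every inner layer. Hence $m_\infty := \lim_{\ell\to\infty} m_\ell$ exists and coincides with the $\liminf$, and $m_\ell \geq 0$ by ferromagnetic positivity. Applying Lemma~\ref{lem_BP_tree} with $B=0$ and passing to effective-field variables $h_v = \atanh m_v$, the recursion on $T$ becomes additive:
\begin{align*}
h_v = \sum_{u \text{ child of } v} f(h_u), \qquad \text{where } f(x) = \atanh(\tanh\beta \cdot \tanh x).
\end{align*}
The function $f$ is concave on $[0,\infty)$, vanishes at $0$, and satisfies $f'(0) = \tanh\beta$, so it behaves like multiplication by $\tanh\beta$ near the origin.

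The core step is to show that $h_\root$ stays bounded away from $0$ as $\ell\to\infty$ once $\tanh\beta > 1/\br(T)$. Choose $\lambda$ with $1/\tanh\beta < \lambda < \br(T)$, which is possible precisely because $\beta > \betaf(\br(T))$. By the definition~\eqref{eq_Lyons_BrF} of the branching number there is $\epsilon > 0$ such that $\sum_{v\in\Lambda}\lambda^{-\dist_T(v,\root)} \geq \epsilon$ for every cut set $\Lambda$ of $T$, which via max-flow/min-cut on a tree is equivalent to the existence of a unit flow $\phi$ from $\root$ to infinity with edge capacity $\lambda^{-\dist_T(e,\root)}$. Using $\phi$ as a test flow and the concavity of $f$ through Jensen's inequality at each internal vertex, one proves inductively a uniform lower bound $h_\ell \geq c(\beta,\lambda,\epsilon) > 0$, whence $m_\infty \geq \tanh(c) > 0$.

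The main obstacle is the nonlinear concavity/flow induction sketched in the previous paragraph. The naive linearisation bound $f(x) \leq (\tanh\beta)x$ points the wrong way for a lower bound on $m_\infty$; one has to exploit the concavity of $f$ more cleverly, for example through a comparison with a branching random walk with positive drift or an electrical-network argument in the spirit of Thomson's principle and the Nash-Williams inequality, using the slack $\lambda\tanh\beta > 1$ to absorb the nonlinear error terms. This is precisely where the branching number enters as the sharp critical threshold and where the tree's full multi-scale structure is exploited, rather than merely its exponential growth rate.
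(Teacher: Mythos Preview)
The paper does not prove this lemma at all; it is quoted verbatim from Lyons~\cite{Lyons} as an external input, so there is no in-paper argument to compare against. Your sketch is aimed in the right direction and correctly identifies the ingredients Lyons uses: the monotonicity in $\ell$, the additive recursion in the $\atanh$ variables, the concavity of $f$, and the cut-set/flow characterisation of $\br(T)$.

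That said, what you have written is explicitly a plan, not a proof, and the gap you name in your final paragraph is the entire content of the theorem. You correctly observe that the linear bound $f(x)\le(\tanh\beta)x$ goes the wrong way, and you gesture at ``a comparison with a branching random walk'' or ``an electrical-network argument'' without carrying either out. The actual argument in~\cite{Lyons} is more delicate than a single Jensen step: Lyons works with a specific quantitative lower bound on $f$ (essentially $f(x)\ge (\tanh\beta)x - Cx^2$ for small $x$, combined with a truncation) and propagates a carefully chosen invariant of the form $h_v\ge c\,\phi(v)$ along the flow, where the quadratic error is controlled by the total mass of the flow through each vertex. The slack $\lambda\tanh\beta>1$ is used to show that the linear gain dominates the accumulated quadratic loss level by level. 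Your sketch does not specify the invariant, the truncation, or how the error terms are summed, so as written it does not constitute a proof. If you want to include a self-contained argument rather than cite~\cite{Lyons}, you would need to fill in exactly this induction.
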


We are ultimately going to apply \Lem~\ref{lem_LYonORg} to the Galton-Watson tree $\TT=\TT(d)$.
To this end we need to get a handle on the branching number of $\TT$.

\begin{lemma}[{\cite[\Cor~5.10]{LyonsPeres}}\label{cl_BrNumOfT}]
	If $d>1$ then $\pr\brk{\br(\TT)=d\mid\TT\mbox{ is infinite}}=1$.
\end{lemma}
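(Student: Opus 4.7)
The plan is to bound $\br(\TT)$ both above and below by $d$, with almost-sure equality holding conditionally on non-extinction. The upper bound holds unconditionally and is routine; the lower bound is the substantive part.

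For the upper bound, the natural family of test cut sets is the generations $\Lambda_n = \partial^n(\TT,\root)$. Since $\Po(d)$ has all moments, the Kesten-Stigum theorem applies: the non-negative martingale $\vW_n := |\Lambda_n|/d^n$ converges almost surely to a random variable $\vW$ with $\ex\vW = 1$, and $\{\vW > 0\}$ coincides with $\{\TT\text{ is infinite}\}$ up to a null event. In particular $|\Lambda_n| = O(d^n)$ a.s., so for any $\lambda > d$,
\begin{align*}
    \sum_{v \in \Lambda_n} \lambda^{-\dist_{\TT}(v,\root)} = |\Lambda_n|\lambda^{-n} = \vW_n\bc{d/\lambda}^n \to 0\quad\text{a.s.,}
\end{align*}
and the definition~\eqref{eq_Lyons_BrF} yields $\br(\TT) \le d$ almost surely.

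For the lower bound I would invoke Lyons' duality between branching number and Bernoulli percolation: for any locally finite infinite tree $T$, the critical parameter $p_c(T)$ for $p$-Bernoulli bond percolation to produce an infinite cluster containing $\root$ equals $1/\br(T)$. Applied to $\TT$, the thinning property of Poisson variables shows that $p$-percolation on $\TT$ is again a Galton-Watson tree, now with offspring law $\Po(pd)$. This thinned tree is supercritical (equivalently, the $\root$-cluster survives with positive probability) exactly when $pd>1$. Hence $p_c(\TT) = 1/d$ conditionally on non-extinction, and the duality then forces $\br(\TT) \ge d$ a.s.\ on that event.

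The main obstacle is Lyons' duality itself, which is a non-trivial theorem relying on a max-flow min-cut argument for trees weighted by $\lambda^{-\dist(\cdot,\root)}$ combined with the identification of infinite percolation clusters with supports of positive unit flows. A self-contained route avoiding this black box is a direct second moment argument on $S_n = \sum_{v \in \Lambda_n} \lambda^{-n}$ for fixed $\lambda < d$: a many-to-one identity gives $\ex S_n = (d/\lambda)^n \to \infty$ while the genealogical structure of $\TT$ yields $\ex S_n^2 \le C(\ex S_n)^2$, so Paley-Zygmund produces $\liminf_n S_n > 0$ with positive probability. An additional monotonicity argument rules out that strictly cheaper cutsets (not of the form $\Lambda_n$) could undercut $S_n$, and a branching-process zero-one law finally upgrades the positive-probability statement to the a.s.\ statement on non-extinction.
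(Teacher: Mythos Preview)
The paper does not prove this lemma at all: it is stated as a citation to \cite[\Cor~5.10]{LyonsPeres} and used as a black box. So there is no ``paper's own proof'' to compare against, and your attempt goes well beyond what is required here.

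That said, your sketch is broadly the right shape for how the result is established in the cited reference. The upper bound is fine (and in fact does not even need Kesten--Stigum; $\ex|\Lambda_n|=d^n$ plus Borel--Cantelli already gives $\br(\TT)\le d$ a.s.). For the lower bound, invoking Lyons' percolation duality $p_c(T)=1/\br(T)$ is the standard route and is correct, though note a small gap: showing that $\Po(pd)$ is supercritical for $pd>1$ only gives that percolation survives with \emph{positive} probability, whereas you need this to hold a.s.\ on $\{\TT\text{ infinite}\}$; the upgrade does require the inherited zero--one law you allude to. Your alternative ``self-contained'' second-moment route has a more serious gap: establishing $\liminf_n \sum_{v\in\Lambda_n}\lambda^{-n}>0$ controls only the generation cutsets, and the ``additional monotonicity argument'' you gesture at to handle arbitrary cutsets is precisely the max-flow/min-cut content you were trying to avoid---it is not a triviality.
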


\subsection{A martingale central limit theorem}\label{sec_HH_clt}

The following generic theorem provides the starting point for the proofs of \Thm s~\ref{thm_ex} and~\ref{thm_no}.

\begin{theorem}[{\cite[Theorem 3.2]{HH}}]\label{thm_hh_original}
	Let $(\cZ_{n,M}, \cF_{n,M})_{0\leq M \leq m_n, n \geq 1}$ be a zero-mean, square-integrable martingale array with differences $\cX_{n,M} = \cZ_{n,M} - \cZ_{n,M-1}$ for $1 \leq M \leq m_n$. Assume that there exists a constant $\eta^2>0$ such that
	\begin{description}
		\item[CLT1] $\lim_{n \to \infty}\max_{1 \leq M \leq m_n} |\cX_{n,M}| = 0 $ in probability.
		\item[CLT2]	$\limsup_{n\to\infty}\ex[\max_{1 \leq M \leq m_n} \cX_{n,M}^2]<\infty.$
		\item[CLT3]$\lim_{n \to \infty}\sum_{M=1}^{m_n}\cX_{n,M}^2 = \eta^2$ in probability.
	\end{description}
	Then $\cZ_{n,m_n}$ converges in distribution to a Gaussian with mean zero and variance $\eta^2$.
\end{theorem}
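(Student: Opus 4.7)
The plan is to establish the theorem via the method of characteristic functions, using an exponential martingale device. For $t\in\RR$ set $\phi_{n,M}(t)=\ex[\exp(it\cX_{n,M})\mid\cF_{n,M-1}]$ and introduce the complex-valued process $L_{n,M}(t)=\exp(it\cZ_{n,M})\prod_{j=1}^M\phi_{n,j}(t)^{-1}$. A direct verification shows that $(L_{n,M}(t))_M$ is an $\cF_{n,M}$-martingale with $\ex[L_{n,m_n}(t)]=1$. Writing $P_n:=\prod_{j=1}^{m_n}\phi_{n,j}(t)$ and $c:=\exp(-t^2\eta^2/2)$, the decomposition
\[\ex[\exp(it\cZ_{n,m_n})] = \ex[L_{n,m_n}(t)P_n] = c\,\ex[L_{n,m_n}(t)] + \ex[L_{n,m_n}(t)(P_n-c)],\]
combined with a Cauchy--Schwarz bound on the last term, reduces the entire argument to proving (i) $P_n\to c$ in probability, and (ii) $\ex[|L_{n,m_n}(t)|^2]=\ex[|P_n|^{-2}]=O(1)$; L\'evy's continuity theorem then yields the desired weak convergence.

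First I would perform a truncation to reduce to bounded martingale differences. Fix a small $\delta>0$, truncate each $\cX_{n,M}$ at level $\delta$ and recentre to obtain a new martingale-difference array $\tilde\cX_{n,M}$ with $|\tilde\cX_{n,M}|\le 2\delta$. Condition CLT1 guarantees that the truncated partial sums agree with $\cZ_{n,M}$ with probability $1-o(1)$, and the recentring shift is controlled in $L^1$ by $\delta^{-1}\ex[\sum_M\cX_{n,M}^2\vecone\{\max_j|\cX_{n,j}|>\delta\}]=o(1)$, which vanishes by CLT1 together with the $L^2$-boundedness of $\sum_M\cX_{n,M}^2$ furnished by CLT2 and CLT3. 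From here on assume $|\cX_{n,M}|\le\delta$.

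For bounded differences the third-order Taylor expansion of $e^{ix}$ and the martingale property $\ex[\cX_{n,M}\mid\cF_{n,M-1}]=0$ yield
\[\phi_{n,M}(t) = 1 - \tfrac{t^2}{2}\ex[\cX_{n,M}^2\mid\cF_{n,M-1}] + R_{n,M}(t),\qquad |R_{n,M}(t)|\le\tfrac{|t|^3\delta}{6}\ex[\cX_{n,M}^2\mid\cF_{n,M-1}].\]
Since every $\phi_{n,M}(t)$ lies within $O(t^2\delta^2)$ of $1$, the expansion $\log(1-z)=-z+O(z^2)$ applies and summation produces
\[\sum_{M=1}^{m_n}\log\phi_{n,M}(t) = -\tfrac{t^2}{2}\cV_{n,m_n} + O(|t|^3\delta)\cdot\cV_{n,m_n},\qquad \cV_{n,m_n}:=\sum_{M=1}^{m_n}\ex[\cX_{n,M}^2\mid\cF_{n,M-1}].\]
The same reasoning gives the lower bound $|P_n|\ge\exp(-t^2\cV_{n,m_n})$, which combined with the boundedness of $\cV_{n,m_n}$ proved below yields $\ex[|P_n|^{-2}]=O(1)$, establishing (ii).

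The principal obstacle, and the only place where CLT1 and CLT2 are genuinely essential, is replacing the conditional quadratic variation $\cV_{n,m_n}$ by the unconditional quantity $\sum_M\cX_{n,M}^2$ controlled by CLT3. Their difference is the terminal value of the zero-mean martingale $\sum_M(\cX_{n,M}^2-\ex[\cX_{n,M}^2\mid\cF_{n,M-1}])$, whose second moment is bounded by
\[\sum_M\ex[\cX_{n,M}^4] \le \ex\brk{\max_M\cX_{n,M}^2\cdot\sum_M\cX_{n,M}^2} \le \delta^2\,\ex\brk{\textstyle\sum_M\cX_{n,M}^2} = O(\delta^2),\]
using $|\cX_{n,M}|\le\delta$ in the penultimate step and CLT3 in the last. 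Therefore $\cV_{n,m_n}=\sum_M\cX_{n,M}^2+O(\delta)$ in $L^2$, so $\cV_{n,m_n}\to\eta^2$ in probability modulo an $O(\delta)$ error; plugging back into the Taylor expansion then gives $P_n\to\exp(-t^2\eta^2/2)$ in probability up to an $O(|t|^3\delta)$ factor. Sending $\delta\downarrow 0$ after $n\to\infty$ finishes the proof.
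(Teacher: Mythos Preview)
The paper does not supply a proof of this statement: Theorem~\ref{thm_hh_original} is quoted from Hall and Heyde's monograph~\cite{HH} and used as a black box (see \Sec~\ref{sec_HH_clt}). There is thus no ``paper's own proof'' to compare against.

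Your characteristic-function approach via the exponential martingale $L_{n,M}(t)$ is standard in spirit, but as written it has a genuine gap at step~(ii). You assert that $\ex[|L_{n,m_n}(t)|^2]=\ex[|P_n|^{-2}]=O(1)$ follows from the lower bound $|P_n|\ge\exp(-t^2\cV_{n,m_n})$ ``combined with the boundedness of $\cV_{n,m_n}$ proved below''. But all you actually establish below is that $\cV_{n,m_n}\to\eta^2$ in probability; this gives no control whatsoever on $\ex[\exp(2t^2\cV_{n,m_n})]$, which is what the Cauchy--Schwarz step requires. Under the hypotheses {\bf CLT1}--{\bf CLT3} alone, $\cV_{n,m_n}$ admits no uniform or almost-sure upper bound (for instance, it could equal $\eta^2$ with probability $1-1/n$ and $n$ with probability $1/n$), so $|P_n|^{-2}$ can have arbitrarily heavy tails and the argument does not close.

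The standard remedy, and essentially what Hall--Heyde do, is a localisation by stopping: introduce $\tau_n=\inf\{M:\cV_{n,M}>K\}\wedge m_n$ for a large constant $K$, work with the stopped array so that $\cV_{n,\tau_n}\le K+\delta^2$ deterministically (making all your exponential bounds legitimate), and then use {\bf CLT3} to show $\pr[\tau_n<m_n]\to0$, so that stopping does not affect the weak limit. Without this device your bound on $\ex[|P_n|^{-2}]$ is unjustified. A smaller but related issue is that your truncation paragraph asserts $\ex[\sum_M\cX_{n,M}^2\vecone\{\max_j|\cX_{n,j}|>\delta\}]=o(1)$ via ``$L^2$-boundedness of $\sum_M\cX_{n,M}^2$ furnished by {\bf CLT2} and {\bf CLT3}''; but {\bf CLT3} gives only convergence in probability, not $L^2$-boundedness or uniform integrability of $\sum_M\cX_{n,M}^2$, so this step also needs the same localisation idea (or an explicit uniform-integrability argument) to be made rigorous.
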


As we will see, conditions {\bf CLT1} and~{\bf CLT2} are easily satisfied in the present context of the Ising model on random graphs.
However, establishing {\bf CLT3} turns out to be anything but straightforward.

\subsection{The pinning lemma}\label{sec_extremal}
The `pinning lemma' is an important ingredient to the proof of \Thm~\ref{thm_no} and specifically to the proof of \Prop~\ref{prop_pure}.
The lemma provides that probability measures on sets of the form $\Omega=\PM^V$ for a finite set $V\neq\emptyset$ can generally be written as `canonical' convex combinations of a `small' number of measures that are `close' to product measures.
Of course, we need to clarify what we mean by `canonical', `small' and `close'.

For `close' we refer to the {\em cut metric} $\cutm(\nix,\nix)$ on the space $\cP(\PM^V)$ of probability distributions on $\PM^V$ as introduced in~\cite{ACOPBethe}.
For two probability measures $\mu,\nu\in\cP(\PM^V)$ let $\Gamma(\mu,\nu)$ be the set of all couplings of $\mu,\nu$.
Thus, $\Gamma(\mu,\nu)$ contains all probability measures $\gamma\in\cP(\PM^V\times\PM^V)$ such that for all $\sigma\in\PM^V$ we have
\begin{align*}
	\mu(\sigma)&=\sum_{\tau\in\PM^V}\gamma(\sigma,\tau)&&\mbox{and}&\nu(\sigma)&=\sum_{\tau\in\PM^V}\gamma(\tau,\sigma).
\end{align*}
Then the cut distance of $\mu,\nu\in\cP(\PM^V)$ is defined as
\begin{align}\label{eqcutm}
	\cutm(\mu,\nu)&=\frac1{|V|}\min_{\gamma\in\Gamma(\mu,\nu)}\max_{\substack{U\subset V\\\cB\subset\Omega^V\times\Omega^V}}\abs{\sum_{(\sigma,\tau)\in\cB}\sum_{u\in U}\gamma(\sigma,\tau)(\sigma(u)-\tau(u))}
\end{align}
Intuitively, $\cutm(\mu,\nu)$ gauges the largest remaining `discrepancy' between $\mu$ and $\nu$ under the best possible coupling.
The cut metric is {\em much} weaker that other metrics such as the total variation distance.
It can be verified that $\cutm(\nix,\nix)$ lives up to its name and is indeed a metric on $\cP(\PM^V)$~\cite{ACOPBethe}.
The definition~\eqref{eqcutm} of the cut metric is inspired by the cut metric from the theory of graphons~\cite{Lovasz}, which in turn generalises the work of Frieze and Kannan~\cite{FK}.
Moreover, both the definition~\eqref{eqcutm} and the notion of graphons are closely related to the Aldous-Hoover representation~\cite{Aldous,Hoover}, on which Panchenko's concept of an asymptotic Gibbs measure is based~\cite{Panchenko_2013_3}.

For a probability measure $\mu\in\cP(\PM^V)$ define
\begin{align*}
	\bar\mu(\sigma)&=\prod_{v\in V}\mu(\{\SIGMA(v)=\sigma(v)\})&&(\sigma\in\PM^V).
\end{align*}
In words, $\bar\mu\in\cP(\PM^V)$ is the product measure on $\PM^V$ with the same marginal probabilities $\bar\mu(\{\SIGMA(v)=1\})=\mu(\{\SIGMA(v)=1\})$ as $\mu$ itself.
Following \cite{Coja_2019}, we say that $\mu$ is {\em $\eps$-extremal} if $\cutm(\mu,\bar\mu)<\eps$.
Furthermore, we say that $\mu$ is {\em $\eps$-symmetric} if
\begin{align}\label{eqdefsym}
	\frac1{|V|^2}\sum_{v,w\in V}\sum_{s,s'\in\PM}\abs{\mu(\{\SIGMA(v)=s,\,\SIGMA(w)=s'\})-\mu(\{\SIGMA(v)=s\})\mu(\{\SIGMA(w)=s'\})}&<\eps.
\end{align}
In words, an $\eps$-symmetric measure $\mu$ has the property that for `most' pairs $v,w$ the spins $\SIGMA(v),\SIGMA(w)$ are approximately independent.
Clearly, this concept is closely related to the notion of replica symmetry; cf.~\eqref{eqRSij}.
The following close relationship between extremality and symmetry was established in~\cite{ACOPBethe}.

\begin{lemma}\label{lem_extremal_symmetric}
	For any $\eps>0$ there exist $\delta>0$ and $N>0$ such that for all sets $V$ of size $|V|>N$ and all $\mu\in\cP(\PM^V)$ the following is true.
	\begin{enumerate}[(i)]
		\item if $\mu$ is $\delta$-extremal, then $\mu$ is $\eps$-symmetric.
		\item if $\mu$ is $\delta$-symmetric, then $\mu$ is $\eps$-extremal.
	\end{enumerate}
\end{lemma}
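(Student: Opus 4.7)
The plan is to handle the two directions by complementary methods while exploiting the common \emph{pairwise covariance function}
\[
\Phi(u,w)=\ex_\mu[\SIGMA(u)\SIGMA(w)]-\ex_\mu[\SIGMA(u)]\ex_\mu[\SIGMA(w)],\qquad u,w\in V,
\]
as the natural link between the two notions. The definition~\eqref{eqdefsym} of $\eps$-symmetry already controls $\|\Phi\|_1/|V|^2$: indeed $|\Phi(u,w)|\leq\sum_{s,s'\in\PM}|\mu(\SIGMA(u)=s,\SIGMA(w)=s')-\mu(\SIGMA(u)=s)\mu(\SIGMA(w)=s')|$, so $\delta$-symmetry yields $\sum_{u,w\in V}|\Phi(u,w)|<\delta|V|^2$. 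Direction~(ii) will be constructive (build a coupling whose cut-norm discrepancy is controlled by $\|\Phi\|_1$), while direction~(i) will be extractive (probe an inherited coupling with well-chosen test sets).

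For direction~(ii), I would use the \emph{coordinate-resampling coupling} $\gamma$: first draw $\SIGMA\sim\mu$, and then, independently across $v\in V$, draw $\TAU(v)$ from the marginal $\mu(\SIGMA(v)=\nix)$. By construction $\TAU\sim\bar\mu$, and $\TAU(v)$ is independent of $\SIGMA$ and of $(\TAU(u))_{u\neq v}$. For any admissible $U\subseteq V$ and $\cB\subseteq\PM^V\times\PM^V$, Cauchy-Schwarz with $\vecone_\cB^2\leq 1$ gives
\begin{align*}
\bc{\sum_{(\sigma,\tau)\in\cB}\sum_{u\in U}\gamma(\sigma,\tau)(\sigma(u)-\tau(u))}^{\!2}
&\leq\ex_\gamma\brk{\bc{\sum_{u\in U}(\SIGMA(u)-\TAU(u))}^{\!2}}
=\sum_{u\in U}\ex_\gamma[(\SIGMA(u)-\TAU(u))^2]+\sum_{\substack{u,v\in U\\u\neq v}}\Phi(u,v),
\end{align*}
the off-diagonal simplification using that $\ex_\gamma[\sigma(u)\tau(v)]=\ex_\gamma[\tau(u)\sigma(v)]=\ex_\gamma[\tau(u)\tau(v)]=\ex_\mu[\sigma(u)]\ex_\mu[\sigma(v)]$ for $u\neq v$. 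The diagonal is at most $4|V|$ and, by the remark above, the off-diagonal is at most $\|\Phi\|_1\leq\delta|V|^2$. Dividing by $|V|^2$ yields $\cutm(\mu,\bar\mu)^2\leq\delta+4/|V|$, so choosing $\delta=\eps^2/2$ and $N>8\eps^{-2}$ establishes $\eps$-extremality.

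For direction~(i), suppose $\cutm(\mu,\bar\mu)<\delta$, witnessed by an optimal coupling $\gamma^\star$. For each $w\in V$ and each sign pair $(s,s')\in\PM^2$ I would plug the \emph{aligned} test set $\cB_{w,s'}=\{(\sigma,\tau):\sigma(w)=\tau(w)=s'\}$ together with the sign-optimising selection $U_{w,s,s'}=\{u\in V:\sgn[\mu(\SIGMA(u)=s,\SIGMA(w)=s')-\mu(\SIGMA(u)=s)\mu(\SIGMA(w)=s')]=+1\}$ (and an analogous choice for the opposite sign) into the definition of $\cutm$. The alignment in $\cB_{w,s'}$ is essential: it forces $\sigma$ and $\tau$ to share the boundary value $s'$ at $w$, so the $\tau$-contribution reduces via the product structure of $\bar\mu$ to the explicit quantity $\mu(\SIGMA(w)=s')\ex_\mu[\sigma(u)]$ up to an error of order $\gamma^\star(\sigma(w)\neq\tau(w))$. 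Averaging over $w$ and summing over the four sign pairs $(s,s')$ then converts the family of cut-norm bounds into a bound of the form
\[
\frac{1}{|V|^2}\sum_{u,w\in V}\sum_{s,s'\in\PM}|\mu(\SIGMA(u)=s,\SIGMA(w)=s')-\mu(\SIGMA(u)=s)\mu(\SIGMA(w)=s')|\leq C\sqrt{\delta},
\]
which produces $\eps$-symmetry once $\delta$ is chosen small enough in terms of $\eps$.

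The main obstacle is direction~(i). In~(ii) we design both the coupling and the algebra and the job is essentially a single second-moment calculation. In~(i), by contrast, we inherit a coupling $\gamma^\star$ that can mix $\sigma$ and $\tau$ arbitrarily, so the test sets must be chosen to isolate pairwise information about $\mu$ alone while neutralising the $\tau$-side. The alignment trick accomplishes this by piggybacking on the product structure of $\bar\mu$, but the bookkeeping across the four sign pairs together with the unavoidable square-root loss incurred when passing from the cut norm to the pairwise $\ell^1$ norm on a $[-1,1]$-valued kernel is the delicate step; an expeditious alternative is to invoke the abstract equivalence between the cut metric on $\cP(\PM^V)$ and the weak topology induced by pairwise marginals and simply cite it.
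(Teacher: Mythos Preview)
The paper does not prove this lemma; it is quoted from \cite{ACOPBethe}, so there is no ``paper's own proof'' to compare against beyond the citation. Your direction~(ii) is a clean, correct, self-contained argument: the independent-resampling coupling together with Cauchy--Schwarz gives exactly $\cutm(\mu,\bar\mu)^2\le\|\Phi\|_1/|V|^2+4/|V|$, which is stronger than a mere citation.

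Direction~(i), however, has a real gap. Your alignment trick produces, for each $w$, an error of order $|U|\cdot\gamma^\star(\SIGMA(w)\neq\TAU(w))$; after averaging over $w$ the total error is governed by $\frac1{|V|}\sum_w\gamma^\star(\SIGMA(w)\neq\TAU(w))$. This coordinate-wise disagreement is \emph{not} controlled by the cut metric: the definition~\eqref{eqcutm} only bounds $\ex_\gamma[\vecone_\cB\sum_{u\in U}(\SIGMA(u)-\TAU(u))]$ for a \emph{fixed} $U$, whereas $\sum_w|\SIGMA(w)-\TAU(w)|$ would require $U$ to depend on $(\sigma,\tau)$. Concretely, for $\mu=\bar\mu$ uniform on $\PM^V$ the flip coupling $\TAU=-\SIGMA$ has cut quantity $O(|V|^{-1/2})$ yet $\gamma(\SIGMA(w)\neq\TAU(w))=1$ for every $w$; so small cut value does not force small pointwise disagreement, and nothing in your argument singles out the optimal $\gamma^\star$ among near-optimal couplings. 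Separately, the asserted ``square-root loss from cut norm to pairwise $\ell^1$ norm'' is not a valid inequality even for PSD kernels: for $\Phi=e_1e_1^\top$ one has $\|\Phi\|_\Box=\|\Phi\|_1=1$, but rescaling shows no dimension-free bound of the form $\|\Phi\|_1/|V|^2\le C\sqrt{\|\Phi\|_\Box/|V|^2}$ can hold. Your own fallback---citing the equivalence---is precisely what the paper does; if you want a self-contained proof of~(i) you will need a genuinely different mechanism (e.g.\ a two-replica/overlap argument or the regularity-type decomposition used in \cite{ACOPBethe,Coja_2019}), not the single-coordinate probing sketched here.
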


\noindent
In addition, we are going to need the following two inequalities from~\cite{Coja_2019}.

\begin{lemma}[{\cite[\Lem s~3.12 and~3.13]{Coja_2019}}]\label{lem_extremal_margs}
	For any $\mu,\nu\in\cP(\PM^V)$ we have
	\begin{align}\label{eq_cutm_mag}
		\frac1{|V|}\sum_{v\in V}\abs{\scal{\SIGMA(v)}\mu-\scal{\SIGMA(v)}\nu}&\leq8\cutm(\mu,\nu).
	\end{align}
	Moreover, if $\mu,\nu\in\cP(\PM^V)$ are $\eps$-extremal, then
	\begin{align}\label{eqmargsclose}
		\cutm(\mu,\nu)\leq2\eps+\frac1{|V|}\sum_{v\in V}\abs{\scal{\SIGMA(v)}\mu-\scal{\SIGMA(v)}\nu}.
	\end{align}
\end{lemma}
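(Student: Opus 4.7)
The plan is to establish the two inequalities separately, noting that both follow from direct manipulation of the defining formula~\eqref{eqcutm} of the cut metric once one exploits the specific structure of the objective being minimized.

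For the first inequality~\eqref{eq_cutm_mag}, fix an optimal coupling $\gamma^{\star}$ realising $\cutm(\mu,\nu)$ and, for each $v\in V$, express the marginal difference as
\begin{align*}
\scal{\SIGMA(v)}\mu-\scal{\SIGMA(v)}\nu=\sum_{(\sigma,\tau)}\gamma^{\star}(\sigma,\tau)(\sigma(v)-\tau(v)).
\end{align*}
Split $V=V^{+}\sqcup V^{-}$ according to the sign of this difference. For $U\in\{V^{+},V^{-}\}$ we then have $\abs{\sum_{v\in U}\bc{\scal{\SIGMA(v)}\mu-\scal{\SIGMA(v)}\nu}}=\abs{\sum_{(\sigma,\tau)}\sum_{v\in U}\gamma^{\star}(\sigma,\tau)(\sigma(v)-\tau(v))}$, which, for the choice $\cB=\Omega^{V}\times\Omega^{V}$, is exactly one instance of the quantity maximised in~\eqref{eqcutm} and is therefore at most $|V|\cutm(\mu,\nu)$. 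To recover the constant stated in~\eqref{eq_cutm_mag}, one further partitions each of $V^{+}$ and $V^{-}$ by splitting the sum over $(\sigma,\tau)$ according to the sign of the inner sum $\sum_{v\in U}(\sigma(v)-\tau(v))$; each of the resulting four pieces is again of the form appearing in~\eqref{eqcutm}. Summing up the absolute values gives the stated $8\cutm(\mu,\nu)$ bound.

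For the second inequality~\eqref{eqmargsclose}, use that $\cutm(\cdot,\cdot)$ is a metric on $\cP(\PM^{V})$ together with the $\eps$-extremality of $\mu$ and $\nu$ to write
\begin{align*}
\cutm(\mu,\nu)\leq\cutm(\mu,\bar\mu)+\cutm(\bar\mu,\bar\nu)+\cutm(\bar\nu,\nu)\leq 2\eps+\cutm(\bar\mu,\bar\nu).
\end{align*}
It remains to prove that $\cutm(\bar\mu,\bar\nu)\leq\frac{1}{|V|}\sum_{v\in V}\abs{\scal{\SIGMA(v)}\mu-\scal{\SIGMA(v)}\nu}$, noting that $\bar\mu,\bar\nu$ have the same marginals as $\mu,\nu$. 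To this end, construct the product coupling $\gamma$ of $\bar\mu$ and $\bar\nu$ in which, independently for each $v\in V$, the pair $(\sigma(v),\tau(v))$ is drawn from the optimal (monotone) coupling of the two Bernoulli marginals. Under $\gamma$ we have $\gamma(\{\sigma(v)\neq\tau(v)\})=\tfrac12\abs{\scal{\SIGMA(v)}\mu-\scal{\SIGMA(v)}\nu}$. For any $U\subset V$ and $\cB\subset\Omega^{V}\times\Omega^{V}$ the triangle inequality then yields
\begin{align*}
\abs{\sum_{(\sigma,\tau)\in\cB}\sum_{u\in U}\gamma(\sigma,\tau)(\sigma(u)-\tau(u))}\leq\sum_{u\in V}2\gamma(\{\sigma(u)\neq\tau(u)\})=\sum_{v\in V}\abs{\scal{\SIGMA(v)}\mu-\scal{\SIGMA(v)}\nu},
\end{align*}
and dividing by $|V|$ delivers the desired control on $\cutm(\bar\mu,\bar\nu)$.

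The main technical nuisance is keeping track of the exact numerical constants: a crude application of the coupling-based bound readily yields constants in the right ballpark, but obtaining the precise factors stated in~\eqref{eq_cutm_mag} and~\eqref{eqmargsclose} requires the careful two-level splitting in part~(i) (by sign of the marginal difference, then by sign of the inner sum) and, in part~(ii), the observation that the product-coupling bound does not pick up any extra factor beyond the $|\sigma(u)-\tau(u)|\le 2$ already absorbed into the $\tfrac12$ in the disagreement probability.
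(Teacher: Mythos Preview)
The paper does not actually prove this lemma; it is quoted verbatim from~\cite[\Lem s~3.12 and~3.13]{Coja_2019} and used as a black box. So there is no ``paper's own proof'' to compare against, and your proposal should be judged on its own.

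Your argument is correct in substance. For~\eqref{eq_cutm_mag}, once you fix an optimal coupling $\gamma^{\star}$, take $U=V^{+}$ (respectively $U=V^{-}$) and $\cB=\Omega^{V}\times\Omega^{V}$: the quantity inside the absolute value in~\eqref{eqcutm} is then exactly $\sum_{v\in U}\bc{\scal{\SIGMA(v)}\mu-\scal{\SIGMA(v)}\nu}$, which already equals $\sum_{v\in U}\abs{\scal{\SIGMA(v)}\mu-\scal{\SIGMA(v)}\nu}$ by the choice of $U$. Adding the two contributions yields
\[
\frac1{|V|}\sum_{v\in V}\abs{\scal{\SIGMA(v)}\mu-\scal{\SIGMA(v)}\nu}\leq 2\,\cutm(\mu,\nu),
\]
which is strictly stronger than the stated constant~$8$. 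Your ``two-level splitting'' to recover the factor~$8$ is therefore unnecessary and the accompanying sentence is a little confused: the simple argument does not overshoot, it undershoots. (The constant~$8$ in~\cite{Coja_2019} presumably comes from a more general setting with larger alphabets.)

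Your proof of~\eqref{eqmargsclose} via the triangle inequality $\cutm(\mu,\nu)\leq\cutm(\mu,\bar\mu)+\cutm(\bar\mu,\bar\nu)+\cutm(\bar\nu,\nu)$ and the product-of-monotone-couplings bound on $\cutm(\bar\mu,\bar\nu)$ is clean and correct; the identity $\gamma(\{\sigma(v)\neq\tau(v)\})=\tfrac12\abs{\scal{\SIGMA(v)}\mu-\scal{\SIGMA(v)}\nu}$ is exactly the total-variation coupling of two $\PM$-valued Bernoullis, and the final triangle-inequality step loses no constant.
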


\noindent
Thus, in order to prove that two $\eps$-extremal measures are close in the cut metric, we just need to bound the absolute distances of their magnetisations.
Furthermore, the following lemma provides an enhanced triangle inequality for the cut metric.

\begin{lemma}[{\cite[\Lem~3.14]{Coja_2019}}]\label{lem_xtriangle}
	Let $\mu^{(1)},\nu^{(1)},\ldots,\mu^{(\ell)},\nu^{(\ell)}\in\cP(\Omega^V)$ and let $p_1,\ldots,p_\ell\geq0$ be such that $\sum_{i=1}^\ell p_i=1$.
	Then
	\begin{align*}
		\cutm\bc{\sum_{i=1}^\ell p_i\mu^{(i)},\sum_{i=1}^\ell p_i\nu^{(i)}}\leq\sum_{i=1}^\ell p_i\cutm(\mu^{(i)},\nu^{(i)}).
	\end{align*}
\end{lemma}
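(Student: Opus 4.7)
The plan is to exploit the coupling-based definition of the cut metric in \eqref{eqcutm} directly. For each $i\in\{1,\ldots,\ell\}$, I would first pick a coupling $\gamma^{(i)}\in\Gamma(\mu^{(i)},\nu^{(i)})$ that attains the minimum in the definition of $\cutm(\mu^{(i)},\nu^{(i)})$ (such a minimiser exists because $\Gamma(\mu^{(i)},\nu^{(i)})$ is a compact subset of a finite-dimensional simplex and the inner $\max$ over the finite collection of pairs $(U,\cB)$ is continuous in $\gamma$; alternatively, for any $\eta>0$ one may pick a near-optimal $\gamma^{(i)}$ and let $\eta\to0$ at the end).

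Next, I would form the convex combination $\gamma=\sum_{i=1}^\ell p_i\gamma^{(i)}$ and verify that $\gamma\in\Gamma\bc{\sum_i p_i\mu^{(i)},\sum_i p_i\nu^{(i)}}$; this is immediate from the fact that the marginal maps $\gamma\mapsto\sum_\tau\gamma(\nix,\tau)$ and $\gamma\mapsto\sum_\tau\gamma(\tau,\nix)$ are linear. Then for any fixed test pair $(U,\cB)$ with $U\subseteq V$ and $\cB\subseteq\Omega^V\times\Omega^V$, linearity gives
\begin{align*}
\sum_{(\sigma,\tau)\in\cB}\sum_{u\in U}\gamma(\sigma,\tau)(\sigma(u)-\tau(u))
&=\sum_{i=1}^\ell p_i\sum_{(\sigma,\tau)\in\cB}\sum_{u\in U}\gamma^{(i)}(\sigma,\tau)(\sigma(u)-\tau(u)),
\end{align*}
and the ordinary triangle inequality bounds the modulus of the left hand side by $\sum_i p_i$ times the modulus of the $i$th summand.

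The crucial observation is that, although the pair $(U,\cB)$ need not be the pair attaining the maximum inside the definition of $\cutm(\mu^{(i)},\nu^{(i)})$, the quantity $\bigl|\sum_{(\sigma,\tau)\in\cB}\sum_{u\in U}\gamma^{(i)}(\sigma,\tau)(\sigma(u)-\tau(u))\bigr|$ is by construction no larger than the maximum over all such pairs, which equals $|V|\cdot\cutm(\mu^{(i)},\nu^{(i)})$ by optimality of $\gamma^{(i)}$. Hence for every $(U,\cB)$,
\begin{align*}
\abs{\sum_{(\sigma,\tau)\in\cB}\sum_{u\in U}\gamma(\sigma,\tau)(\sigma(u)-\tau(u))}
\leq|V|\sum_{i=1}^\ell p_i\cutm(\mu^{(i)},\nu^{(i)}).
\end{align*}
Taking the maximum of the left hand side over $(U,\cB)$, dividing by $|V|$, and observing that the resulting quantity is an upper bound on $\cutm\bc{\sum_i p_i\mu^{(i)},\sum_i p_i\nu^{(i)}}$ (since $\gamma$ is merely one particular coupling and the cut metric takes the infimum over all couplings), yields the claim.

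There is no real obstacle here: the argument is a direct convexity/linearity computation combined with the elementary triangle inequality, and the only mild point of care is to justify the existence of the optimal couplings $\gamma^{(i)}$ (or to pass to near-optimal couplings and send the error to zero). The same scheme would work verbatim over any finite product space $\Omega^V$, not just $\{\pm1\}^V$.
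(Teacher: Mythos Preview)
Your argument is correct. The paper does not actually supply a proof of this lemma: it is quoted verbatim from \cite[\Lem~3.14]{Coja_2019} and used as a black box. Your convex-combination-of-optimal-couplings argument is precisely the standard proof of this fact, so there is nothing to compare against here.
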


The most important fact in this section, and a cornerstone of the proof of \Prop~\ref{prop_pure}, is the following `pinning lemma'.
For a set $V$, a subset $U\subseteq V$ and $\tau\in\PM^U$ we let $\cS_\tau$ be the subcube from~\eqref{eqsubcube}.
Think of $\cS_\tau$ as the set of configurations $\sigma$ where the `pin' each coordinate $u\in U$ to $\tau(u)$.

\begin{lemma}[{\cite[\Lem~3.5]{Coja_2018}}]\label{prop_pinning}
	For any $\delta>0$ there exists a bounded random variable $\vI\geq0$ such that for all finite sets $V$ and all probability distributions $\mu\in\cP(\PM^V)$ the following is true.
	Let $\vU\subseteq V$ be a uniformly random subset of size $|\vU|=\min\{\vI,|V|\}$.
	Then
	\begin{align*}
		\pr\brk{\sum_{\tau\in\PM^{\vU}}\mu(\cS_{\tau})\vecone\cbc{\mu(\nix\mid\cS_{\tau})\mbox{ is $\delta$-symmetric\,}}>1-\delta}&>1-\delta.
	\end{align*}
\end{lemma}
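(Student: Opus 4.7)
The plan is to derive the pinning lemma via an information-theoretic ``entropy budget'' argument of the kind pioneered by Montanari~\cite{MontanariPinning} and sharpened in~\cite{Coja_2018}: because the single-spin entropy never exceeds $\log 2$, only boundedly many coordinate revelations can on average carry substantial information about a fresh random spin. The first step is to reduce the target notion of $\delta$-symmetry to an average mutual-information bound. Pair by pair, Pinsker's inequality followed by Cauchy--Schwarz supplies an absolute constant $c > 0$ such that any $\nu \in \cP(\PM^V)$ with
\begin{align*}
\frac{1}{|V|^2}\sum_{v,w \in V} I_\nu(\SIGMA(v); \SIGMA(w)) < c\,\delta^2
\end{align*}
is $\delta$-symmetric in the sense of~\eqref{eqdefsym}. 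It therefore suffices to enforce this mutual-information bound with high probability after random pinning.

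For the dynamic part let $\vu_1, \vu_2, \ldots$ be a uniform sample without replacement from $V$ and let $\vv$ be an independent uniform coordinate of $V$; both are independent of $\SIGMA \sim \mu$. Writing $\cS_k = \cS_{(\SIGMA(\vu_1), \ldots, \SIGMA(\vu_k))}$ for the subcube frozen by the first $k$ observations and
\begin{align*}
h_k = H\bc{\SIGMA(\vv) \,\big|\, \SIGMA(\vu_1), \ldots, \SIGMA(\vu_k), \vv, \vu_1, \ldots, \vu_k},
\end{align*}
one has $0 \leq h_k \leq h_0 \leq \log 2$, and $(h_k)_{k\geq 0}$ is non-increasing because conditioning cannot raise entropy. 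The mutual-information chain rule together with the uniformity of $\vv$ and $\vu_{k+1}$ yields
\begin{align*}
h_k - h_{k+1} = \ex\brk{\frac{1}{|V|^2} \sum_{v, w \in V} I_{\mu(\,\cdot\,\mid\cS_k)}(\SIGMA(v); \SIGMA(w))},
\end{align*}
with the outer expectation taken over the random pin and the observed values. Consequently the telescoping sum $\sum_{k \geq 0}(h_k - h_{k+1})$ is bounded by $\log 2$.

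To close the argument, pick $K = K(\delta)$ large enough that $(\log 2)/K < c\,\delta^4$ and let $\vI$ be uniform on $\{0, 1, \ldots, K-1\}$; this is a bounded random variable depending only on $\delta$, and the set $\vU$ of the lemma is the image of the prefix $(\vu_1, \ldots, \vu_{\min\{\vI, |V|\}})$. The Cesaro identity $\frac{1}{K}\sum_{k=0}^{K-1}(h_k - h_{k+1}) = (h_0 - h_K)/K$ combined with the display above gives an expected average pairwise mutual information in the pinned measure of at most $(\log 2)/K < c\,\delta^4$. A first Markov bound then implies that with joint probability at least $1 - \delta^2$ over $\vI$, the pin set, and the observed spins, this pairwise mutual information is at most $c\,\delta^2$, so by the opening Pinsker step the pinned measure is $\delta$-symmetric. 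A second Markov bound applied to the conditional $\mu$-mass of non-$\delta$-symmetric pins $\tau$ promotes this joint statement into the required ``$1-\delta$ over $\vU$, then $1-\delta$ over $\tau$'' form. The main thing to monitor is that the constants in the two Markov applications and Pinsker's quadratic loss all fit inside the budget imposed by the choice of $K$; the boundary case $|V| < \vI$ is trivial, since then $\vU = V$ and the pinned measure is a point mass, tautologically $\delta$-symmetric.
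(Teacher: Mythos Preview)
The paper does not actually prove this lemma; it is quoted verbatim from~\cite[\Lem~3.5]{Coja_2018} and used as a black box. Your sketch reproduces the standard information-theoretic argument behind that result (going back to~\cite{MontanariPinning,RaghavendraTan}) and is essentially correct.

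One small imprecision worth flagging: the identity you display for $h_k-h_{k+1}$ is not literally an equality, because $\vu_{k+1}$ is uniform on $V\setminus\{\vu_1,\ldots,\vu_k\}$ rather than on all of $V$, and $\vv$ may coincide with a pinned coordinate. What one actually gets is
\[
h_k-h_{k+1}=\ex\brk{\frac{1}{|V|(|V|-k)}\sum_{v,w\in V}I_{\mu(\nix\mid\cS_k)}(\SIGMA(v);\SIGMA(w))}\geq \ex\brk{\frac{1}{|V|^2}\sum_{v,w\in V}I_{\mu(\nix\mid\cS_k)}(\SIGMA(v);\SIGMA(w))},
\]
since the terms with $v$ or $w$ among the pinned coordinates contribute zero mutual information. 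The inequality is in the direction you need, so the rest of the argument (telescoping, the choice of $K$, and the two Markov steps) goes through unchanged.
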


\noindent
Thus, if we choose a suitable random set $\vU\subset V$ of coordinates, then there is a high probability that under the decomposition $(\cS_\tau)_{\tau\in\PM^{\vU}}$ of the cube $\PM^V$ into subcubes almost the entire mass of $\mu$ is placed on subcubes where the conditional distribution $\mu(\nix\mid\cS_\tau)$ is $\delta$-symmetric.
The size of the set $\vU$ is determined by the random variable $\vI$ and given its size, $\vU$ is just a uniformly random subset of $V$.
Crucially, the random variable $\vI\geq0$ is bounded and depends on $\eps$ only.
Hence, the number of coordinates that get `pinned' is independent of the size of $V$ and of the measure $\mu$.

Since the conditional distributions $\mu(\nix\mid\cS_\tau)$ are likely mostly $\delta$-symmetric, they bear a conceptual semblance of the `pure states' of a physical system as set out in~\cite{Parisi}.
The `pure state decomposition' provided by \Lem~\ref{prop_pinning} is indeed `small' because the random variable $\vI$ is bounded independently of $V$ or $\mu$.
Furthermore, it is `canonical' to the extent that the `pure states' $\cS_\tau$ are simply subcubes of the discrete cube $\PM^V$.
Both these facts will play a key role in the proof of \Prop~\ref{prop_pure}.

Finally, we are going to have to investigate how local changes such as the addition of a few edges affect the Ising distribution.
An important tool to this end is the following notion.
Let $\mu,\nu\in\cP(\PM^V)$ be probability distributions.
Following~\cite{Coja_2019} we say that $\nu$ is {\em $c$-contiguous} w.r.t.\ $\mu$ if $\nu(\sigma)\leq c\mu(\sigma)$ for all $\sigma\in\PM^V$.
In addition, $\mu$ and $\nu$ are {\em mutually $c$-contiguous} if $\nu(\sigma)\leq c\mu(\sigma)$ and $\mu(\sigma)\leq c\nu(\sigma)$ for all $\sigma\in\PM^V$.

\begin{lemma}[{\cite[\Lem~3.17]{Coja_2019}}]\label{lem_contig}
	For any $c>0$, $\eps>0$ there exist $\delta>0$, $N>0$ such that for all sets $V$ of size $|V|\geq N$ and all $\mu,\nu\in\cP(\PM^V)$ the following is true.
	If $\mu$ is $\delta$-extremal and $\nu$ is $c$-contiguous with respect to $\mu$, then $\nu$ is $\eps$-extremal and $\cutm(\mu,\nu)<\eps$.
\end{lemma}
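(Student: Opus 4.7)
My strategy is to interpose an auxiliary measure $\nu^{\star}$ between $\nu$ and the product $\bar\mu$, and to establish $\cutm(\nu,\nu^{\star})\leq c\delta$ and $\cutm(\nu^{\star},\bar\mu)=o(1)$ as $|V|\to\infty$. Together with $\cutm(\mu,\bar\mu)<\delta$ and the triangle inequality for $\cutm$, this yields $\cutm(\mu,\nu)<\eps$ once $\delta$ is small and $|V|\geq N$. The $\eps$-extremality of $\nu$ then follows from the triangle bound $\cutm(\nu,\bar\nu)\leq\cutm(\nu,\bar\mu)+\cutm(\bar\mu,\bar\nu)$, with the second summand controlled by average marginal differences via~\eqref{eqmargsclose} applied to the two product measures $\bar\mu,\bar\nu$.

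\textbf{Construction of $\nu^{\star}$.} Fix a coupling $\gamma\in\Gamma(\mu,\bar\mu)$ witnessing $\cutm(\mu,\bar\mu)<\delta$, set $g(\sigma):=\nu(\sigma)/\mu(\sigma)\in[0,c]$, and define the tilted coupling $\gamma^{\star}(\sigma,\tau):=g(\sigma)\,\gamma(\sigma,\tau)$. Its first marginal is $\nu$, and its second marginal $\nu^{\star}(\tau):=\sum_\sigma g(\sigma)\gamma(\sigma,\tau)$ is pointwise at most $c\bar\mu(\tau)$. Writing $g(\sigma)=\int_0^c\vecone\{g(\sigma)>t\}\,\dd t$ via the layer-cake identity and applying the cut bound of $\gamma$ to each slice $\{g>t\}\times\PM^V$ (itself a valid cut event) yields $\cutm(\nu,\nu^{\star})\leq c\delta$.

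\textbf{Proximity to $\bar\mu$ and main obstacle.} Because $\bar\mu$ is an \emph{exact} product and $\nu^{\star}\leq c\bar\mu$, the bound $\Kl{\nu^{\star}}{\bar\mu}\leq\log c$ together with subadditivity of relative entropy against a product yields $\sum_{v\in V}\Kl{\nu^{\star}_v}{\bar\mu_v}\leq\log c$. Pinsker and Cauchy--Schwarz then give
\begin{align*}
\frac{1}{|V|}\sum_{v\in V}\abs{\scal{\SIGMA(v)}{\nu^{\star}}-\scal{\SIGMA(v)}{\bar\mu}}&=O\bc{\sqrt{\log c/|V|}}=o(1),
\end{align*}
which is precisely the magnetisation input needed by~\eqref{eqmargsclose}. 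The delicate step is upgrading this marginal bound to a genuine cut-metric estimate $\cutm(\nu^{\star},\bar\mu)=o(1)$: information-theoretic arguments alone only yield $\dTV(\nu^{\star},\bar\mu)=O(1)$, which is far too weak. Since the cut metric is much weaker than total variation, one should be able to exploit the product factorisation of $\bar\mu$ through a coordinatewise coupling whose per-coordinate cut contribution averages to $o(1)$, absorbing the $O(1)$ total-variation discrepancy. Formalising this transfer of extremality from $\bar\mu$ to its contiguous perturbation $\nu^{\star}$ --- essentially a product-target analogue of~\eqref{eqmargsclose} --- is the main technical challenge; everything else is bookkeeping via triangle inequalities and Lemma~\ref{lem_extremal_margs}.
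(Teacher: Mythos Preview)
The paper does not prove this lemma; it is quoted verbatim from~\cite[Lemma~3.17]{Coja_2019} and used as a black box, so there is no in-paper proof to compare against. I can still assess your argument on its own merits.

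Your layer-cake step is correct and elegant: the tilted coupling $\gamma^{\star}=g\cdot\gamma$ really is a coupling of $\nu$ with some $\nu^{\star}\leq c\bar\mu$, and slicing $g=\int_0^c\vecone\{g>t\}\dd t$ together with the fact that each slice $\cB\cap(\{g>t\}\times\PM^V)$ is an admissible cut event does give $\cutm(\nu,\nu^{\star})\leq c\delta$. The subsequent bookkeeping via~\eqref{eq_cutm_mag}--\eqref{eqmargsclose} to pass from $\cutm(\mu,\nu)<\eps$ to $\eps$-extremality of $\nu$ is also fine.

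The genuine gap is exactly the one you flag, and it is more serious than you suggest. You have reduced the statement ``$\mu$ is $\delta$-extremal and $\nu\leq c\mu$ implies $\cutm(\mu,\nu)$ small'' to the special case ``$\bar\mu$ is a product measure (hence $0$-extremal) and $\nu^{\star}\leq c\bar\mu$ implies $\cutm(\bar\mu,\nu^{\star})$ small''. But this special case is an \emph{instance} of the lemma you are trying to prove, with $\delta=0$. Your entropy bound $\sum_v\Kl{\nu^{\star}_v}{\bar\mu_v}\leq\log c$ only controls the average marginal gap; to convert this into a cut bound via~\eqref{eqmargsclose} you would need $\nu^{\star}$ to be $\eps'$-extremal for some small $\eps'$, i.e., $\cutm(\nu^{\star},\overline{\nu^{\star}})$ small---which is again ``contiguous to extremal implies extremal'', the very thing under proof. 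So the reduction is circular: you have traded the general $\delta$-extremal base measure for a product one, but have not supplied any argument that exploits the product structure to break the circle. The examples you might have in mind (density depending on $O(1)$ coordinates, or a parity constraint) all admit ad hoc couplings, but a uniform argument for arbitrary bounded $h=\nu^{\star}/\bar\mu$ is still needed and is the actual content of the lemma.
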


\subsection{Small subgraph conditioning}\label{sec_ssc}
The proof of \Thm~\ref{thm_an} is based on the small subgraph conditioning technique for which the following theorem provides the general framework.

\begin{theorem}[{\cite[\Thm~1]{Janson_1993}}]\label{thm_janson}
	Let $(\kappa_i)_{i\geq1}$ and $(\xi_i)_{i\geq1}$ be sequences such that $\kappa_i>0$ and $\xi_i>-1$ for all $i$.
	Also let $(\cC_{i,n})_{i,n\geq1}$ and $(\cZ_n)_{n\geq1}$ be random variables such that $\cC_{i,n}\in\ZZpos$ and $\ex[\cZ_n]\neq0$.
	Moreover, let $(\cY_i)_{i\geq1}$ be independent Poisson variables with $\ex[\cY_i]=\kappa_i$.
	Assume that the following conditions hold.
	\begin{description}
		\item[SSC0] $\sum_{i\geq1}\kappa_i\xi_i^2<\infty$.
		\item[SSC1] For all $\ell>0$ and $c_1,\ldots,c_\ell\geq0$ we have $\lim_{n\to\infty}\pr\brk{\forall1\leq i\leq\ell:\cC_{i,n}=c_i}=\pr\brk{\forall 1\leq i\leq\ell:\cY_i=c_i}.$
		\item[SSC2] For all $\ell>0$ and $c_1,\ldots,c_\ell\geq0$ we have
			\begin{align*}
				\lim_{n\to\infty}\frac{\ex\brk{\cZ_n\mid\forall 1\leq i\leq\ell:\cC_{i,n}=c_i}}{\ex[\cZ_n]}&=\prod_{i=1}^\ell(1+\xi_i)^{c_i}\exp(-\kappa_i\xi_i).
			\end{align*}
		\item[SSC3] $\limsup_{n\to\infty}\ex\brk{\cZ_n^2}/\ex\brk{\cZ_n}^2\leq\exp\sum_{i\geq1}\kappa_i\xi_i^2$.
	\end{description}
	Then the infinite product
	\begin{align}\label{eqJansonW}
		\cW&=\prod_{i=1}^\infty(1+\xi_i)^{\cY_i}\exp(-\kappa_i\xi_i)
	\end{align}
	converges almost surely and in $L^2(\RR)$ and $\ex[\cW]=1$, $\ex[\cW^2]=\exp\bc{\sum_{i\geq1}\kappa_i\xi_i^2}.$
	Furthermore,
	\begin{align*}
		\lim_{n\to\infty}\frac{\cZ_n}{\ex[\cZ_n]}&=\cW&&\mbox{in distribution}.
	\end{align*}
\end{theorem}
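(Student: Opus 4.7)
The plan is to follow the classical Robinson--Wormald strategy as extended by Janson. First I would verify that the infinite product $\cW$ from~\eqref{eqJansonW} is well-defined in $L^2$. Using the Poisson moment generating function $\Erw[z^{\cY_i}]=\exp(\kappa_i(z-1))$, each factor $(1+\xi_i)^{\cY_i}\exp(-\kappa_i\xi_i)$ has mean $1$ and second moment $\exp(\kappa_i\xi_i^2)$. Hence the partial products $\cW_L=\prod_{i=1}^L(1+\xi_i)^{\cY_i}\exp(-\kappa_i\xi_i)$ form a positive $L^2$-bounded martingale with
\begin{align*}
  \Erw[\cW_L^2]=\exp\bc{\sum_{i\leq L}\kappa_i\xi_i^2},
\end{align*}
which is uniformly bounded by condition~{\bf SSC0}. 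The martingale convergence theorem then yields almost sure and $L^2$ convergence to a limit $\cW$ with $\Erw[\cW]=1$ and $\Erw[\cW^2]=\exp(\sum_i\kappa_i\xi_i^2)$.

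Next, let $\cE_{n,\ell}=\Erw\brk{\cZ_n/\Erw[\cZ_n]\mid\cC_{1,n},\ldots,\cC_{\ell,n}}$. Condition~{\bf SSC2} gives pointwise convergence on each cylinder $\{\cC_{i,n}=c_i\}_{i\leq\ell}$ of the conditional ratio to $\prod_{i\leq\ell}(1+\xi_i)^{c_i}\exp(-\kappa_i\xi_i)$, while~{\bf SSC1} gives joint weak convergence of $(\cC_{i,n})_{i\leq\ell}$ to independent Poissons $(\cY_i)_{i\leq\ell}$. Combining these via a truncation $\max_{i\leq\ell}\cC_{i,n}\leq K$ (tightness follows from~{\bf SSC1}) and a residual bound on the complementary event via Cauchy--Schwarz and~{\bf SSC3}, I would deduce that $\cE_{n,\ell}\Rightarrow\cW^{(\ell)}:=\prod_{i\leq\ell}(1+\xi_i)^{\cY_i}\exp(-\kappa_i\xi_i)$ as $n\to\infty$, and moreover that $\Erw[\cE_{n,\ell}^2]\to\Erw[(\cW^{(\ell)})^2]=\exp(\sum_{i\leq\ell}\kappa_i\xi_i^2)$.

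The orthogonal decomposition of conditional expectation then yields
\begin{align*}
  \Erw\brk{(\cZ_n/\Erw[\cZ_n])^2}=\Erw[\cE_{n,\ell}^2]+\Erw\brk{(\cZ_n/\Erw[\cZ_n]-\cE_{n,\ell})^2}.
\end{align*}
Bounding the left-hand side by~{\bf SSC3}, letting $n\to\infty$ first and then $\ell\to\infty$, I obtain
\begin{align*}
  \limsup_{\ell\to\infty}\limsup_{n\to\infty}\Erw\brk{(\cZ_n/\Erw[\cZ_n]-\cE_{n,\ell})^2}\leq\exp\bc{\sum_i\kappa_i\xi_i^2}-\lim_{\ell\to\infty}\exp\bc{\sum_{i\leq\ell}\kappa_i\xi_i^2}=0.
\end{align*}
A triangle inequality applied to any bounded Lipschitz test function $f$ then chains this $L^2$ approximation with the weak convergence $\cE_{n,\ell}\Rightarrow\cW^{(\ell)}$ from the previous step and the $L^2$ convergence $\cW^{(\ell)}\to\cW$ from the first step, so a diagonal argument in $(n,\ell)$ gives $\cZ_n/\Erw[\cZ_n]\Rightarrow\cW$.

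The step I expect to be the main obstacle is the moment convergence $\Erw[\cE_{n,\ell}^2]\to\Erw[(\cW^{(\ell)})^2]$: mere distributional convergence together with the $L^2$ bound inherited from~{\bf SSC3} does not automatically yield second-moment convergence because $L^2$-boundedness is just at the edge of what is needed for uniform integrability of the squares. One has to exploit the explicit product structure of $\cE_{n,\ell}$ together with a careful truncation of $(\cC_{i,n})$ supplied by~{\bf SSC1}, effectively running a dominated-convergence argument on the finite-range event $\{\max_{i\leq\ell}\cC_{i,n}\leq K\}$ and controlling the complementary tail via Cauchy--Schwarz against the $L^2$ bound from {\bf SSC3}. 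Once this uniform-integrability step is in place, the remainder of the argument is essentially bookkeeping.
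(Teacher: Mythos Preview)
The paper does not give its own proof of this theorem; it is quoted verbatim from Janson~\cite{Janson_1993} and used as a black box. Your sketch is therefore not competing against anything in the paper, and it does follow the standard Robinson--Wormald/Janson route: martingale convergence for $\cW$, then an $L^2$ approximation of $\cZ_n/\Erw[\cZ_n]$ by the finite-level conditional expectations $\cE_{n,\ell}$ via the Pythagorean identity, combined with weak convergence $\cE_{n,\ell}\Rightarrow\cW^{(\ell)}$.

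One remark on the step you flag as the main obstacle: you do \emph{not} actually need the full second-moment convergence $\Erw[\cE_{n,\ell}^2]\to\Erw[(\cW^{(\ell)})^2]$. In the Pythagorean decomposition
\[
\Erw\brk{(\cZ_n/\Erw[\cZ_n]-\cE_{n,\ell})^2}=\Erw\brk{(\cZ_n/\Erw[\cZ_n])^2}-\Erw[\cE_{n,\ell}^2],
\]
it suffices to take $\limsup_n$ on the left, bound the first term on the right via~{\bf SSC3}, and bound $\liminf_n\Erw[\cE_{n,\ell}^2]\geq\Erw[(\cW^{(\ell)})^2]$ by lower semicontinuity of the second moment under weak convergence (Fatou for the function $x\mapsto x^2$). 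This immediately gives
\[
\limsup_{n\to\infty}\Erw\brk{(\cZ_n/\Erw[\cZ_n]-\cE_{n,\ell})^2}\leq\exp\bc{\sum_{i\geq1}\kappa_i\xi_i^2}-\exp\bc{\sum_{i\leq\ell}\kappa_i\xi_i^2},
\]
and no uniform-integrability argument for the squares is required. With this simplification your outline is complete and is essentially Janson's own argument.
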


\section{Proof of \Thm~\ref{thm_ex}}\label{sec_thm_ex}

\noindent
We prove the various statements from \Sec s~\ref{sec_BP_intro} and~\ref{sec_B>0} towards the proof of \Thm~\ref{thm_ex}.
Some of these statements, specifically \Lem~\ref{lem_BP_tree}, \Prop~\ref{proposition_HH} and \Lem~\ref{fact_Zratio} will also play a role in the proof of \Thm~\ref{thm_no}.

\subsection{Proof of \Lem~\ref{lem_BP_tree}}\label{sec_lem_BP_tree}

\noindent
The lemma can be deduced from general facts about Belief Propagation~\cite{MM}, but we provide a self-contained proof for the sake of completeness.
Recalling the conditional distribution $\mu_{T,\beta,B}^{(\ell)}$ associated with a tree $T$ rooted at $\root$ from~\eqref{eqMuT}, we let
\begin{align}\label{eqZTs}
	Z_{T,s}^{(\ell)}(\beta,B)&=\sum_{\tau\in\PM^{V(T)}}\vecone\cbc{\forall{v\in\partial^\ell(T,\root)}:\tau(v)=1,\,\tau(\root)=s}\exp\bc{\beta\sum_{vw\in E(T)}\tau_v\tau_w+B\sum_{v\in V(T)}\tau_v}&&(s=\PM)
\end{align}
be the contribution to the partition function of $\mu_{T,\beta,B}^{(\ell)}$ from configurations with root spin $\tau(\root)=s$.
Further, let
\begin{align*}
	Z_{T}^{(\ell)}(\beta,B)=Z_{T,-1}^{(\ell)}(\beta,B)+Z_{T,1}^{(\ell)}(\beta,B).
\end{align*}
Then the definition~\eqref{eqMuT} of $\mu_{T,\beta,B}^{(\ell)}$ implies that
\begin{align}\label{eq_lem_BP_tree_1}
	\scal{\SIGMA(\root)}{\mu_{T,\beta,B}^{(\ell)}}&=\sum_{s\in\PM}\frac{sZ_{T,s}^{(\ell)}(\beta,B)}{Z_{T,-1}^{(\ell)}(\beta,B)+Z_{T,1}^{(\ell)}(\beta,B)}.
\end{align}
Rearranging the sum~\eqref{eqZTs}, we see that
\begin{align}\label{eq_lem_BP_tree_2}
	Z_{T,s}^{(\ell)}(\beta,B)&=\eul^{Bs}\prod_{v\in\partial(T,\root)}\eul^{\beta}Z_{T_v,s}^{(\ell-1)}(\beta,B)+\eul^{-\beta}Z_{T_v,-s}^{(\ell-1)}(\beta,B).
\end{align}
Combining~\eqref{eq_lem_BP_tree_1}--\eqref{eq_lem_BP_tree_2}, we obtain
\begin{align*}
	\scal{\SIGMA(\root)}{\mu_{T,\beta,B}^{(\ell)}}&=\frac{\sum_{s\in\PM}s\eul^{Bs}\prod_{v\in\partial(T,\root)}\eul^{\beta}Z_{T_v,s}^{(\ell-1)}(\beta,B)+\eul^{-\beta}Z_{T_v,-s}^{(\ell-1)}(\beta,B)}{\sum_{s\in\PM}\eul^{Bs}\prod_{v\in\partial(T,\root)}\eul^{\beta}Z_{T_v,s}^{(\ell-1)}(\beta,B)+\eul^{-\beta}Z_{T_v,-s}^{(\ell-1)}(\beta,B)}\\
												  &=
												  \frac{\sum_{s\in\PM}s\eul^{Bs}\prod_{v\in\partial(T,\root)}\eul^{\beta}\mu_{T_v,\beta,B}^{(\ell-1)}(\{\SIGMA(v)=s\})+\eul^{-\beta}\mu_{T_v,\beta,B}^{(\ell-1)}(\{\SIGMA(v)=-s\})}{\sum_{s\in\PM}\eul^{Bs}\prod_{v\in\partial(T,\root)}\eul^{\beta}\mu_{T_v,\beta,B}^{(\ell-1)}(\{\SIGMA(v)=s\})+\eul^{-\beta}\mu_{T_v,\beta,B}^{(\ell-1)}(\{\SIGMA(v)=-s\})}\\
												  &=
		\frac{\sum_{s\in\PM}s\eul^{sB}\prod_{v\in\partial(T,\root)}\bc{1+s\scal{\SIGMA(v)}{\mu_{T_v,\beta,B}^{(\ell-1)}}\tanh\beta}}{\sum_{s\in\PM}\eul^{sB}\prod_{v\in\partial(T,\root)}\bc{1+s\scal{\SIGMA(v)}{\mu_{T_v,\beta,B}^{(\ell-1)}}\tanh\beta}}\enspace,
\end{align*}
as claimed.

\subsection{Proof of \Prop~\ref{proposition_HH}}\label{sec_proposition_HH}

\noindent
We derive \Prop~\ref{proposition_HH} from \Thm~\ref{thm_hh_original}.
Let us start with the following fact.

\begin{fact}\label{fact_adde}
	Let $\beta,B\geq0$.
	For any graph $G$ and any edge $e\in E(G)$ we have
	\begin{align*}
		\frac{Z_G(\beta,B)}{Z_{G-e}(\beta,B)}&=\scal{\exp(\beta\SIGMA(e))}{\mu_{G-e,\beta,B}}\in[\eul^{-\beta},\eul^{\beta}].
	\end{align*}
\end{fact}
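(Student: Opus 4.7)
The proof plan is a one-line algebraic rearrangement followed by a convexity observation, so no genuine obstacle arises. The idea is to factor the contribution of the single edge $e$ out of the partition function sum. Writing $e=\{v,w\}$, for every configuration $\sigma\in\PM^{V(G)}$ the definition~\eqref{eqIsingZ} gives
\begin{align*}
\exp\bc{\beta\sum_{f\in E(G)}\sigma(f)+B\sum_{u\in V(G)}\sigma(u)}=\exp(\beta\sigma(e))\cdot\exp\bc{\beta\sum_{f\in E(G)\setminus\{e\}}\sigma(f)+B\sum_{u\in V(G)}\sigma(u)},
\end{align*}
and the second factor is precisely the unnormalised Boltzmann weight of $\sigma$ on the graph $G-e$. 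Summing over $\sigma$ and dividing by $Z_{G-e}(\beta,B)$ therefore yields
\begin{align*}
\frac{Z_G(\beta,B)}{Z_{G-e}(\beta,B)}=\sum_{\sigma\in\PM^{V(G)}}\exp(\beta\sigma(e))\,\mu_{G-e,\beta,B}(\sigma)=\scal{\exp(\beta\SIGMA(e))}{\mu_{G-e,\beta,B}},
\end{align*}
which is the claimed identity.

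For the two-sided bound I would simply observe that $\SIGMA(e)=\SIGMA(v)\SIGMA(w)\in\{-1,+1\}$ pointwise, so that the random variable $\exp(\beta\SIGMA(e))$ takes only the two values $\eul^{-\beta}$ and $\eul^{\beta}$. Its expectation under any probability measure, in particular under $\mu_{G-e,\beta,B}$, must therefore lie in the convex hull $[\eul^{-\beta},\eul^{\beta}]$. The only slightly non-trivial conceptual point is recognising that edge removal corresponds to dividing the Boltzmann weight by a single factor that can then be pulled inside the expectation; this is immediate from the product structure of the Ising Hamiltonian on edges, and the proof is no more than a few lines.
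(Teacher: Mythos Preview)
Your proof is correct and follows essentially the same approach as the paper: factor out the single-edge contribution $\exp(\beta\sigma(e))$ from the Boltzmann weight, recognize the remainder as the unnormalised weight on $G-e$, and divide by $Z_{G-e}(\beta,B)$ to obtain the expectation. Your explicit justification of the interval bound via the convex hull of $\{\eul^{-\beta},\eul^{\beta}\}$ is a welcome addition that the paper leaves implicit.
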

\begin{proof}
	By the definition of the Ising model,
	\begin{align}\nonumber
		\frac{Z_G(\beta,B)}{Z_{G-e}(\beta,B)}&=\sum_{\sigma\in\PM^{V(G)}}\exp\bc{B\sum_{v\in V(G)}\sigma(v)+\beta\sum_{vw\in E(G)\setminus\cbc e}\sigma(v)\sigma(w)}\frac{\exp(\beta\sigma(e))}{Z_{G-e}(\beta,B)}\\
											 &=\sum_{\sigma\in \PM^{V(G)}}\mu_{G-e,\beta,B}(\sigma)\exp(\beta\sigma(e)))=\scal{\exp(\beta\SIGMA(e))}{\mu_{\GG-e,\beta,B}},\nonumber
	\end{align}
	as claimed.
\end{proof}

\begin{proof}[Proof of \Prop~\ref{proposition_HH}]
	Let $m=m_n\sim dn/2$ be the number of edges of $\GG$.
	Assuming~\eqref{eq_fact_HH1}, we just need to check that the random variables $\vZ_{n,M}$ and $\vX_{n,M}$ from~\eqref{eqZX} together with the $\sigma$-algebras $\fF_{n,M}$ satisfy conditions {\bf CLT1} and {\bf CLT2} from \Thm~\ref{thm_hh_original}.
	We remind ourselves that $\fF_{n,M}$ is the $\sigma$-algebra generated by the first $M$ edges $\ve_1,\ldots,\ve_M$ of $\GG$.
	Hence, for any $1\leq M\leq m=m_n$ we have
	\begin{align}\nonumber
		|\vX_{n,M}|&=m^{-1/2}\abs{\ex[\log Z_{\GG}(\beta,B)\mid\fF_{n,M}]-\ex[\log Z_{\GG}(\beta,B)\mid\fF_{n,M-1}]}\\
				   &\leq m^{-1/2}\max_{G,\,e\in E(G),\,e'\not\in E(G)}\abs{\log Z_{G}(\beta,B)-\log Z_{G-e+e'}(\beta,B)}\enspace;\label{eq_proposition_HH_1}
	\end{align}
	to be precise, the maximum is taken over all possible outcomes $G$ of $\GG$, all edges $e\in E(G)$ and all non-edges $e'\not\in E(G)$.
	In other words, $G-e+e'$ is obtained from $G$ by removing edge $e\in E(G)$ and inserting $e'$ instead.
	Hence, Fact~\ref{fact_adde} shows that
	\begin{align}
		\frac{Z_G(\beta,B)}{Z_{G-e}(\beta,B)},	\frac{Z_{G-e+e'}(\beta,B)}{Z_{G-e}(\beta,B)}\in[\eul^{-\beta},\eul^{\beta}].\label{eq_proposition_HH_2}
	\end{align}
	Finally, {\bf CLT1} and~{\bf CLT2} follow from \eqref{eq_proposition_HH_1}--\eqref{eq_proposition_HH_2}, while  {\bf CLT3} follows directly from assumption~\eqref{eq_fact_HH1}.
\end{proof}

\subsection{Proof of \Lem~\ref{fact_telescope}}\label{sec_fact_telescope}
By the definition \eqref{eqZX} of $\vX_{n,M}$ we have
\begin{align}\nonumber
	m\vX_{n,M}^2&=\ex\brk{\log Z_{\GG}(\beta,B)\mid\fF_{n,M}}^2+\ex\brk{\log Z_{\GG}(\beta,B)\mid\fF_{n,M-1}}^2\\&\qquad-2\ex\brk{\log Z_{\GG}(\beta,B)\mid\fF_{n,M}}\ex\brk{\log Z_{\GG}(\beta,B)\mid\fF_{n,M-1}}.\label{eq_fact_telescope_1}
\end{align}
Moreover, writing out the first expression from~\eqref{eqDeltaM}, we obtain
\begin{align}\nonumber
	\ex\brk{\vDelta_{n,M}^+\mid\fF_{n,M}}&=\ex\brk{\prod_{h=1}^2\log Z_{\GG^+_{h,M}}(\beta,B)-\log Z_{\GG_{h,M}}(\beta,B)\mid\fF_{n,M}}\\
										 &=\ex\brk{\prod_{h=1}^2\log Z_{\GG^+_{h,M}}(\beta,B)\mid\fF_{n,M}}+\ex\brk{\prod_{h=1}^2\log Z_{\GG_{h,M}}(\beta,B)\mid\fF_{n,M}}\nonumber\\
										 &\qquad\qquad\qquad\qquad\ \qquad\qquad\qquad-2\ex\brk{\log Z_{\GG^+_{1,M}}(\beta,B)\log Z_{\GG_{2,M}}(\beta,B)\mid\fF_{n,M}}.
	\label{eq_fact_telescope_2}
\end{align}
The coupling {\bf CPL0}--{\bf CPL2} ensures that the two random graphs $(\GG_{h,M}^+)_{h=1,2}$ share the $M$ random edges $\ve_1,\ldots,\ve_M$ that are $\fF_{n,M}$-measurable, while the remaining edges that constitute $(\GG_{h,M}^+)_{h=1,2}$ are chosen independently given $\fF_{n,M}$.
Therefore,
\begin{align}\label{eq_fact_telescope_3}
	\ex\brk{\prod_{h=1}^2\log Z_{\GG^+_{h,M}}(\beta,B)\mid\fF_{n,M}}&=
	\prod_{h=1,2}\ex\brk{\log Z_{\GG^+_{h,M}}(\beta,B)\mid\fF_{n,M}}
	=\ex\brk{\log Z_{\GG}(\beta,B)\mid\fF_{n,M}}^2.
\end{align}
Combining~\eqref{eq_fact_telescope_2}--\eqref{eq_fact_telescope_3}, we obtain
\begin{align}\nonumber
	\ex\brk{\vDelta_{n,M}^+\mid\fF_{n,M}}&=\ex\brk{\log Z_{\GG}(\beta,B)\mid\fF_{n,M}}^2+\ex\brk{\prod_{h=1}^2\log Z_{\GG_{h,M}}(\beta,B)\mid\fF_{n,M}}\\&\qquad\qquad\qquad\qquad\qquad\qquad-2\ex\brk{\log Z_{\GG^+_{1,M}}(\beta,B)\log Z_{\GG_{2,M}}(\beta,B)\mid\fF_{n,M}}.\label{eq_fact_telescope_4}
\end{align}
Applying similar reasoning to $\vDelta_{n,M}^-,\vDelta_{n,M}^\pm$, we find
\begin{align}\nonumber
	\ex\brk{\vDelta_{n,M}^-\mid\fF_{n,M}}&=\ex\brk{\log Z_{\GG}(\beta,B)\mid\fF_{n,M-1}}^2+\ex\brk{\prod_{h=1}^2\log Z_{\GG_{h,M}}(\beta,B)\mid\fF_{n,M}}\\&\qquad-2\ex\brk{\log Z_{\GG^-_{1,M}}(\beta,B)\log Z_{\GG_{2,M}}(\beta,B)\mid\fF_{n,M}},\label{eq_fact_telescope_5}\\
	\ex\brk{\vDelta_{n,M}^\pm\mid\fF_{n,M}}&=\ex\brk{\log Z_{\GG}(\beta,B)\mid\fF_{n,M}}\ex\brk{\log Z_{\GG}(\beta,B)\mid\fF_{n,M-1}}+\ex\brk{\prod_{h=1}^2\log Z_{\GG_{h,M}}(\beta,B)\mid\fF_{n,M}}\nonumber\\
										   &\qquad-\ex\brk{\log Z_{\GG^+_{1,M}}(\beta,B)\log Z_{\GG_{2,M}}(\beta,B)\mid\fF_{n,M}}-\ex\brk{\log Z_{\GG^-_{1,M}}(\beta,B)\log Z_{\GG_{2,M}}(\beta,B)\mid\fF_{n,M}}.\label{eq_fact_telescope_6}
\end{align}
Finally, combining~\eqref{eq_fact_telescope_1} with~\eqref{eq_fact_telescope_4}--\eqref{eq_fact_telescope_6}, we obtain the assertion.

\subsection{Proof of \Lem~\ref{fact_Zratio}}\label{sec_fact_Zratio}

\noindent
Step {\bf CPL1} of the coupling ensures that $\GG^-_{h,M}$ is obtained from $\GG_{h,M}$ simply by adding edge $\ve_{h,m-M+1}$.
Therefore, the second equality in~\eqref{eqfact_Zratio} follows from Fact~\ref{fact_adde}.
To obtain the first equality, we take a closer look at {\bf CPL2}, which ensures that $\GG_{h,M}=\GG_{h,M}^+-\ve_M$ unless $\ve_M\in\{\ve_{h,1},\ldots,\ve_{h,m-M}\}$.
But the event $\ve_M\in\{\ve_{h,1},\ldots,\ve_{h,m-M}\}$ has probability $O(1/n)$ because $M\leq m=O(n)$.
Hence, Fact~\ref{fact_adde} implies that the first identity in~\eqref{eqfact_Zratio} holds with probability $1-O(1/n)$.

\subsection{Proof of \Prop~\ref{prop_fix_ex}}\label{sec_prop_fix_ex}

We begin by verifying that the sequence $(\Scal{\SIGMA(\root)}{\mu_{\TT,\beta,B}^{(\ell)}})_\ell$ of root marginalisations converges almost surely as $\ell\to\infty$ for all $B\geq0$.
For $B>0$ this was already established in~\cite[{\Sec~4}]{Dembo_2010}, and for $B=0$ convergence is implied by~\cite[\Lem~1.18]{Basak}.
But toward the proof of \Prop~\ref{prop_fix_ex} we need to know that the convergence of the root marginalisations occurs with respect to a specific filtration on the space of all Galton-Watson trees.

To be precise, let $\fT$ be the space of all possible outcomes of the Galton-Watson tree $\TT$.
Moreover, let $\fA_\ell$ be the $\sigma$-algebra on $\fT$ generated by the events $\cbc{\TT^{(\ell)}\ism T}$ for all finite rooted trees $T$.
Then $(\fA_\ell)_\ell$ is a filtration and the random variable
\begin{align}\label{eqRellT}
	\vec R_\ell(\TT)&=\scal{\SIGMA(\root)}{\mu_{\TT,\beta,B}^{(\ell)}}
\end{align}
is $\fA_\ell$-measurable.

\begin{lemma}\label{lem_conv}
	For all $B\geq0$ the sequence $(\vec R_\ell(\TT))_{\ell}$ is a supermartingale with respect to the filtration $(\fA_\ell)_\ell$, and $0\leq \vec R_\ell(\TT)\leq1$ for all $\ell$.
\end{lemma}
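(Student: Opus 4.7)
The plan is to prove both assertions from the Belief Propagation recursion of \Lem~\ref{lem_BP_tree} combined with the ferromagnetic monotonicity provided by \Lem~\ref{lem_kostas}. The crucial preliminary observation is that conditioning on $\SIGMA|_{\partial^\ell(\TT,\root)} \equiv 1$ decouples the Ising measure into independent components: each subtree rooted at a vertex at depth $\ell$ contributes an independent factor that cancels in the marginal at the root. Consequently $\vec R_\ell(\TT)$ depends only on the truncated tree $\TT^{(\ell)}$ and is therefore $\fA_\ell$-measurable. The bounds $0 \leq \vec R_\ell(\TT) \leq 1$ then follow by induction on $\ell$ using \eqref{eq_lem_BP_tree}: the base case $\ell=0$ gives $\vec R_0(\TT)=1$ since the boundary pins $\SIGMA(\root)=1$, and for the inductive step, setting $x_v = \vec R_{\ell-1}(\TT_v)\tanh\beta$ for each child $v$ of the root, the inductive hypothesis forces $x_v \in [0,\tanh\beta) \subset [0,1)$, whereupon \eqref{eq_lem_BP_tree} rewrites as
\begin{align*}
\vec R_\ell(\TT) = \frac{\eul^B\prod_v(1+x_v) - \eul^{-B}\prod_v(1-x_v)}{\eul^B\prod_v(1+x_v) + \eul^{-B}\prod_v(1-x_v)} \in [0,1],
\end{align*}
since $B \geq 0$ and $x_v \in [0,1)$.

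To establish the supermartingale property, the plan is to verify the stronger deterministic inequality $\vec R_{\ell+1}(\TT) \leq \vec R_\ell(\TT)$ for every realisation of $\TT$. Writing $V_\ell = \partial^\ell(\TT,\root)$ and $V_{\ell+1} = \partial^{\ell+1}(\TT,\root)$, and letting $\rho$ denote the conditional distribution of $\SIGMA|_{V_\ell}$ given $\SIGMA|_{V_{\ell+1}} \equiv 1$ under $\mu_{\TT,\beta,B}$, the tower property gives
\begin{align*}
\vec R_{\ell+1}(\TT) = \sum_{\tau \in \PM^{V_\ell}} \rho(\tau) \, \scal{\SIGMA(\root)}{\mu_{\TT,\beta,B}(\nix \mid \cS_\tau)},
\end{align*}
with $\cS_\tau$ as in \eqref{eqsubcube}. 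Because $V_\ell$ separates the root from any vertex beyond depth $\ell$, each conditional root magnetisation on the right coincides with the root magnetisation on the truncated tree $\TT^{(\ell)}$ with boundary condition $\tau$ at $V_\ell$. \Lem~\ref{lem_kostas} then dominates each such magnetisation by the one obtained at $\tau \equiv +1$, which is precisely $\vec R_\ell(\TT)$; averaging against the probability measure $\rho$ yields $\vec R_{\ell+1}(\TT) \leq \vec R_\ell(\TT)$. Since $\vec R_\ell$ is $\fA_\ell$-measurable and bounded, taking $\fA_\ell$-conditional expectations of this pointwise inequality delivers the supermartingale property.

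The main subtlety to watch is the careful bookkeeping around the tree Markov property: it is what simultaneously yields $\fA_\ell$-measurability of $\vec R_\ell(\TT)$ and lets us identify each conditional magnetisation in the decomposition of $\vec R_{\ell+1}$ with a root magnetisation on $\TT^{(\ell)}$ under a genuine $\PM$-valued boundary condition, which is exactly the setting required to invoke \Lem~\ref{lem_kostas}. Beyond this the argument is purely mechanical.
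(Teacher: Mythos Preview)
Your proof is correct and follows essentially the same approach as the paper: both establish the pointwise inequality $\vec R_{\ell+1}(\TT)\le\vec R_\ell(\TT)$ via \Lem~\ref{lem_kostas}, from which the supermartingale property is immediate. Your version is simply more detailed, spelling out the tower/Markov decomposition and verifying $\fA_\ell$-measurability and the $[0,1]$ bounds explicitly, whereas the paper compresses all of this into a one-line appeal to \Lem~\ref{lem_kostas}.
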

\begin{proof}
\Lem~\ref{lem_kostas} implies that $\vec R_\ell(\TT)\geq\vec R_{\ell+1}(\TT)\geq0$, because the Boltzmann distribution $\mu_{\TT,\beta,B}^{(\ell)}$ imposes a $+1$ boundary condition on all vertices at distance precisely $\ell$ from $\root$.
\end{proof}

Hence, the martingale convergence theorem shows that $(\vec R_\ell(\TT))_\ell$ converges to a random variable
\begin{align}\label{eqRT}
		\vec R(\TT)=\lim_{\ell\to\infty}\vec R_\ell(\TT)\in[0,1]&&\mbox{almost surely}.
	\end{align}
This now easily implies convergence of the measures $(\pi_{d,\beta,B}^{(\ell)})_\ell$ from~\eqref{eqpidbetaBell}.

\begin{corollary}\label{cor_conv}
	For all $B\geq0$ the sequence $(\pi_{d,\beta,B}^{(\ell)})_{\ell}$ converges in $W_1$ to a distribution $\pi_{d,\beta,B}$ supported on $[0,1]$.
\end{corollary}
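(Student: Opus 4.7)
The plan is to combine Corollary \ref{cor_BP_tree}, Lemma \ref{lem_conv}, and the martingale convergence theorem together with a simple Wasserstein coupling argument. The key observation is that, by Corollary \ref{cor_BP_tree}, $\pi_{d,\beta,B}^{(\ell)}$ is precisely the law of the random variable $\vec R_\ell(\TT) = \scal{\SIGMA(\root)}{\mu_{\TT,\beta,B}^{(\ell)}}$ defined on the space $\fT$ of Galton-Watson tree realisations. Thus, convergence of the laws $\pi_{d,\beta,B}^{(\ell)}$ will follow from convergence of these random variables in a sufficiently strong sense.

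First, I invoke Lemma \ref{lem_conv}, which says that $(\vec R_\ell(\TT))_{\ell\geq 0}$ is a $[0,1]$-valued supermartingale with respect to the filtration $(\fA_\ell)_\ell$. Doob's martingale convergence theorem then implies the almost sure convergence stated in \eqref{eqRT}, namely $\vec R(\TT)=\lim_{\ell\to\infty}\vec R_\ell(\TT)\in[0,1]$ almost surely. Define $\pi_{d,\beta,B}$ to be the law of the limit random variable $\vec R(\TT)$; since $\vec R(\TT)\in[0,1]$ almost surely, the measure $\pi_{d,\beta,B}$ is supported on $[0,1]$.

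To upgrade from almost sure convergence of the random variables to $W_1$-convergence of their laws, I use the natural coupling provided by defining $\vec R_\ell(\TT)$ and $\vec R(\TT)$ on the same probability space, which directly yields the bound
\[
W_1\bc{\pi_{d,\beta,B}^{(\ell)},\pi_{d,\beta,B}}\leq\ex\abs{\vec R_\ell(\TT)-\vec R(\TT)}.
\]
Since $|\vec R_\ell(\TT)-\vec R(\TT)|\leq 1$ almost surely, the dominated convergence theorem shows that the right-hand side tends to zero as $\ell\to\infty$, giving the claimed $W_1$-convergence.

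There is no real obstacle here: all the substantial work has already been done in Lemma \ref{lem_conv} (the FKG-based monotonicity that gives the supermartingale property) and in Corollary \ref{cor_BP_tree} (the identification of $\pi_{d,\beta,B}^{(\ell)}$ as the law of $\vec R_\ell(\TT)$). The corollary is essentially a bookkeeping consequence combining these ingredients with bounded convergence, and the uniform boundedness of $\vec R_\ell(\TT)$ in $[0,1]$ is what makes the passage from almost sure convergence to $W_1$-convergence automatic.
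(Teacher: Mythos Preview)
Your proof is correct and follows essentially the same approach as the paper: both identify $\pi_{d,\beta,B}^{(\ell)}$ as the law of $\vec R_\ell(\TT)$ via \Cor~\ref{cor_BP_tree}, apply the supermartingale convergence from \Lem~\ref{lem_conv} to get an almost sure limit $\vec R(\TT)\in[0,1]$, and then pass to $W_1$-convergence of the laws. Your version spells out the coupling bound $W_1(\pi_{d,\beta,B}^{(\ell)},\pi_{d,\beta,B})\leq\ex|\vec R_\ell(\TT)-\vec R(\TT)|$ and the dominated convergence step explicitly, whereas the paper compresses this into a single sentence.
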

\begin{proof}
	\Cor~\ref{cor_BP_tree} demonstrates that $(\pi_{d,\beta,B}^{(\ell)})_\ell$ $W_1$-converges to the law of $\vec R(\TT)$, which is supported on $[0,1]$ by \Lem~\ref{lem_conv}.
\end{proof}

As a next step we point out the following combinatorial interpretation of the probability distribution
\begin{align*}
	\pi_{d,\beta,B,t}^{\tensor\,(\ell)}&=\BP^{\tensor\,\ell}_{d,\beta,B,t}(\delta_{(1,1)})
\end{align*}
obtained by $\ell$-fold iteration of the operator from~\eqref{eqBPtensor}.

\begin{lemma}\label{lem_tensor_ell}
	Let $B\geq0$ and $t\in[0,1]$.
	Then $\pi_{d,\beta,B,t}^{\tensor\,(\ell)}$ is the distribution of the random pair
	$$\bc{\scal{\SIGMA(\root)}{\mu^{(\ell)}_{\TT^\tensor[h],\beta,B}}}_{h=1,2}.$$
\end{lemma}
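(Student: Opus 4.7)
The plan is to prove this by induction on the depth parameter $\ell$. For the base case $\ell = 0$, both conditional measures $\mu^{(0)}_{\TT^\tensor[h],\beta,B}$ force $\SIGMA(\root)=1$, so the pair of root magnetisations is deterministically $(1,1)$, matching $\pi^{\tensor\,(0)}_{d,\beta,B,t} = \delta_{(1,1)}$ as required.

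For the inductive step, I would first unpack the structure of $\TT^\tensor$ at the root. By construction the root $\root$ begets three mutually independent groups of children: $\vd_t^{(0)} \disteq \Po(dt)$ many type-$0$ offspring that appear in both $\TT^\tensor[1]$ and $\TT^\tensor[2]$, together with $\vd_t^{(h)} \disteq \Po(d(1-t))$ many type-$h$ offspring for each $h \in \{1,2\}$, present only in $\TT^\tensor[h]$. Crucially, the sub-trees hanging off these children are mutually independent and distributed as follows: the sub-tree of $\TT^\tensor$ rooted at a type-$0$ child is itself a copy of $\TT^\tensor$ (by the recursive self-similarity of the offspring rules), whereas the sub-tree in $\TT^\tensor[h]$ hanging from a type-$h$ child is an ordinary Galton-Watson tree with offspring distribution $\Po(d)$.

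Next I would apply Lemma~\ref{lem_BP_tree} to each tree $\TT^\tensor[h]$, expressing $\scal{\SIGMA(\root)}{\mu^{(\ell)}_{\TT^\tensor[h],\beta,B}}$ via the Belief Propagation recurrence over the children of the root. By the inductive hypothesis, each type-$0$ child $v$ contributes a pair
\[
\bc{\scal{\SIGMA(v)}{\mu^{(\ell-1)}_{\TT^\tensor[1]_v,\beta,B}},\scal{\SIGMA(v)}{\mu^{(\ell-1)}_{\TT^\tensor[2]_v,\beta,B}}}
\]
jointly distributed as $\pi^{\tensor\,(\ell-1)}_{d,\beta,B,t}$, and different type-$0$ children yield independent such pairs. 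Each type-$h$ child $v$ contributes only to coordinate $h$, supplying an independent sample whose law is the $h$-th marginal of $\pi^{\tensor\,(\ell-1)}_{d,\beta,B,t}$. A short side-induction, rooted in the symmetric initialisation $\delta_{(1,1)}$, shows that this marginal coincides with $\pi^{(\ell-1)}_{d,\beta,B}$, which means that the type-$h$ contributions can also be realised by drawing samples $\vec\mu^{(h)}_{\pi,i}$ from $\pi = \pi^{\tensor\,(\ell-1)}_{d,\beta,B,t}$ and reading off the $h$-th coordinate.

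Plugging these independent ingredients into the two BP recurrences (one for each $h$) then matches, term for term, the definition~\eqref{eqBPtensorRec} of $\BP^\tensor_{d,\beta,B,t}$ applied to $\pi^{\tensor\,(\ell-1)}_{d,\beta,B,t}$: the type-$0$ children furnish the correlated samples $\vec\mu^{(0)}_{\pi,i}$ entering both coordinates, while the type-$h$ children furnish the samples $\vec\mu^{(h)}_{\pi,i}$ entering only the $h$-th coordinate, with Poisson multiplicities exactly $\vd_t^{(0)}$, $\vd_t^{(1)}$, $\vd_t^{(2)}$. The only point requiring care is the bookkeeping of independence among the three streams of children and among their sub-trees; this is routine once one invokes the standard fact that the children of a Galton-Watson root yield independent copies of the offspring sub-process. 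Granting this, the two output coordinates have precisely the joint law $\pi^{\tensor\,(\ell)}_{d,\beta,B,t}$, closing the induction.
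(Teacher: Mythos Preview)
Your proof is correct and follows exactly the approach the paper sketches. The paper's own proof is a one-line reference to \Lem~\ref{lem_BP_tree}, the definition \eqref{eqBPtensor}--\eqref{eqBPtensorRec} of $\BP^\tensor_{d,\beta,B,t}$, and the construction of $(\TT^\tensor[h])_{h=1,2}$; your induction on $\ell$ is precisely the detailed execution of that sketch, including the side-observation that the $h$-th marginal of $\pi^{\tensor\,(\ell-1)}_{d,\beta,B,t}$ equals $\pi^{(\ell-1)}_{d,\beta,B}$.
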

\begin{proof}
	In analogy to \Cor~\ref{cor_BP_tree}, this follows from \Lem~\ref{lem_BP_tree}, the definition \eqref{eqBPtensor}--\eqref{eqBPtensorRec} of $\BP^\tensor_{d,\beta,B,t}$ and the construction of the pair $(\TT^\tensor[h])_{h=1,2}$ of Galton-Watson trees.
\end{proof}

\begin{corollary}\label{cor_tensor_ell}
	Let $B\geq0$, $t\in[0,1]$.
	Then the $W_1$-limit $\pi_{d,\beta,B,t}^\tensor=\lim_{\ell\to\infty}\pi_{d,\beta,B,t}^{\tensor\,(\ell)}$ exists and is supported on $[0,1]^2$.
\end{corollary}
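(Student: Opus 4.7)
The plan is to reduce the claimed $W_1$-convergence to coordinate-wise almost-sure convergence of two coupled root magnetisations on the Galton-Watson tree $\TT^\tensor$. By Lemma~\ref{lem_tensor_ell}, $\pi^{\tensor\,(\ell)}_{d,\beta,B,t}$ is precisely the law of the random pair
\begin{align*}
	\vec R^\tensor_\ell &= \bc{\scal{\SIGMA(\root)}{\mu^{(\ell)}_{\TT^\tensor[1],\beta,B}},\,\scal{\SIGMA(\root)}{\mu^{(\ell)}_{\TT^\tensor[2],\beta,B}}}\in[0,1]^2.
\end{align*}
The first step is to argue that, for $h=1,2$, the coordinate sequence $\vec R^{(h)}_\ell := \scal{\SIGMA(\root)}{\mu^{(\ell)}_{\TT^\tensor[h],\beta,B}}$ is almost-surely nonincreasing in $\ell$ and bounded in $[0,1]$. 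This follows precisely as in Lemma~\ref{lem_conv} applied to the fixed (random) subtree $\TT^\tensor[h]$: Lemma~\ref{lem_kostas} shows that the $+1$ boundary condition at distance $\ell$ from $\root$ dominates any boundary condition at distance $\ell+1$, so $\vec R^{(h)}_{\ell+1}\le \vec R^{(h)}_\ell$ for every realisation of $\TT^\tensor[h]$.

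With the coordinate-wise monotonicity in place, the bounded monotone convergence theorem supplies an almost-sure limit $\vec R^{(h)} = \lim_{\ell\to\infty}\vec R^{(h)}_\ell\in[0,1]$ for each $h=1,2$. Consequently the pair converges almost surely, $\vec R^\tensor_\ell\to \vec R^\tensor := (\vec R^{(1)}, \vec R^{(2)})$ in $[0,1]^2$, and the distribution of $\vec R^\tensor$ is the natural candidate for $\pi_{d,\beta,B,t}^\tensor$. To upgrade almost-sure convergence to convergence in $W_1$, I would use the trivial coupling of $\pi^{\tensor\,(\ell)}_{d,\beta,B,t}$ and the law of $\vec R^\tensor$ induced by realising $\vec R^\tensor_\ell$ and $\vec R^\tensor$ on a common probability space. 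Since $\norm{\vec R^\tensor_\ell - \vec R^\tensor}_1\le 2$ deterministically and tends to $0$ almost surely, dominated convergence yields $\ex\norm{\vec R^\tensor_\ell-\vec R^\tensor}_1 \to 0$, hence $W_1(\pi^{\tensor\,(\ell)}_{d,\beta,B,t},\pi_{d,\beta,B,t}^\tensor)\to 0$. The limit is supported on $[0,1]^2$ by construction.

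The principal obstacle is verifying coordinate-wise monotonicity in the two-tree setting, but once one observes that the joint structure of $\TT^\tensor$ is irrelevant for this step, the argument for each coordinate reduces to the one already established in Lemma~\ref{lem_conv} applied to the single-type Galton-Watson tree $\TT^\tensor[h]$. Everything else is a routine application of bounded/dominated convergence, so no further technical input is needed beyond Lemmas~\ref{lem_kostas}, \ref{lem_conv} and~\ref{lem_tensor_ell}.
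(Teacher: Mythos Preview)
Your proposal is correct and follows essentially the same approach as the paper: identify $\pi_{d,\beta,B,t}^{\tensor\,(\ell)}$ via Lemma~\ref{lem_tensor_ell} as the law of the pair of root magnetisations on $\TT^\tensor[h]$, use the coordinate-wise monotonicity from Lemma~\ref{lem_kostas} (equivalently, Lemma~\ref{lem_conv}) to obtain almost-sure convergence of each coordinate in $[0,1]$, then combine and upgrade to $W_1$ by dominated convergence. The paper's proof is the same argument, phrased by noting that each $\TT^\tensor[h]$ is distributed as $\TT$ so Lemma~\ref{lem_conv} applies directly.
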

\begin{proof}
	Let $\fT^\tensor$ be the space of all possible outcomes of the multi-type Galton-Watson tree $\TT^\tensor$ and let
	\begin{align}\label{eq_cor_tensor_ell_1}
		\vec R_\ell^\tensor(\TT^\tensor,h)&=\scal{\SIGMA(\root)}{\mu^{(\ell)}_{\TT^\tensor[h],\beta,B}}.
	\end{align}
	Since each $\TT^\tensor[h]$ individually has the same distribution as the plain Galton-Watson tree $\TT$, \Lem~\ref{lem_conv} shows that $(\vec R^\tensor_\ell(\TT^\tensor,h))_\ell$ converges almost surely to a random variable $\vec R^\tensor(\TT^\tensor,h)\in[0,1]$ for $h=1,2$.
	Therefore, by the triangle inequality the pairs $(\vec R_\ell^\tensor(\TT^\tensor,1),\vec R^\tensor_\ell(\TT^\tensor,2))_\ell$ converge to $(\vec R^\tensor(\TT^\tensor,1),\vec R^{\tensor}(\TT^\tensor,2))\in[0,1]^2$ on $\fT^\tensor$ almost surely.
	Consequently, $(\pi_{d,\beta,B}^{\tensor\,(\ell)})_{\ell}$ converges to the law of $(\vec R^\tensor(\TT^\tensor,1),\vec R^\tensor(\TT^\tensor,2))$ with respect to the $W_1$-metric.
\end{proof}

\begin{proof}[Proof of \Prop~\ref{prop_fix_ex}]
	The existence of the limit $\pi_{d,\beta,B,t}^\tensor$ follows directly from \Cor~\ref{cor_tensor_ell}.
	To prove the continuity statement, fix $\eps>0$.
	Let $\vec R_\ell^\tensor(\TT^\tensor,h)$ be the random variable from~\eqref{eq_cor_tensor_ell_1}, and let $\vec R^\tensor(\TT^\tensor,h)$ be the almost sure and thus $L^1$-limit of the sequence $(\vec R_\ell^\tensor(\TT^\tensor,h))_\ell$.
	Recalling the random variables $\vec R_\ell(\TT)$ from~\eqref{eqRellT} and their almost sure limit $\vec R(\TT)$, for $\ell>0$ we let
	\begin{align*}
		\fE(\eps,\ell)&=\cbc{T\in\fT:|\vec R_\ell(T)-\vec R(T)|>\eps}\subset\fT.
	\end{align*}
	Then there exists $\ell_0=\ell_0(\eps)>0$ such that for all $\ell>\ell_0$ we have $\pr\brk{\TT\in\fE(\eps,\ell)}<\eps$.
	Since the individual trees $\TT^\tensor[h]$ for $h=1,2$ have the same distribution as $\TT$ for all $t\in[0,1]$, we conclude that for all $\ell>\ell_0$,
	\begin{align}\nonumber
		\pr\brk{\abs{\vec R_\ell(\TT^\tensor,1)-\vec R(\TT^\tensor,1)}+\abs{\vec R_\ell(\TT^\tensor,2)-\vec R(\TT^\tensor,2)}>2\eps}&\leq\pr\brk{\TT^\tensor[1]\in\fE(\eps,\ell)}+\pr\brk{\TT^\tensor[2]\in\fE(\eps,\ell)}\\&=2\pr\brk{\TT\in\fE(\eps,\ell)}\label{eq_prop_fix_ex_1}
		<2\eps.
	\end{align}

	Now, pick $\delta=\delta(\eps,\ell)>0$ small enough and suppose that $t',t''\in[0,1]$ satisfy $|t'-t''|<\delta$.
	Then for either value $t\in\{t',t''\}$ the definition of the multi-type Galton-Watson tree $\TT^\tensor$ provides that a type~0 vertex spawns $\Po(d(1-t))$ offspring of type~0, and $\Po(dt)$ offspring of type~1 and type~2.
	(The offspring distributions of type~1 and type~2 vertices are independent of $t$.)
	Hence, provided that $|t'-t''|<\delta$ for a sufficiently small $\delta=\delta(\eps,\ell)>0$, we can couple these Poisson variables such that
	\begin{align}\label{eq_prop_fix_ex_2}
		\pr\brk{\TT^{\tensor\,(\ell)}(d,t')\neq\TT^{\tensor\,(\ell)}(d,t'')}<\eps.
	\end{align}
	Combining~\eqref{eq_prop_fix_ex_1} and~\eqref{eq_prop_fix_ex_2} and applying the triangle inequality, we obtain
	\begin{align*}
		W_1(\pi_{d,\beta,B,t'}^\tensor,\pi_{d,\beta,B,t''}^\tensor)&\leq4\eps+\pr\brk{\TT^{\tensor\,(\ell)}(d,t')\neq\TT^{\tensor\,(\ell)}(d,t'')}
		+2\pr\brk{\TT\in\fE(\eps,\ell)}<7\eps,
	\end{align*}
	whence $t\mapsto\pi_{d,\beta,B,t}^\tensor$ is $W_1$-continuous.
\end{proof}

\subsection{Proof of \Lem~\ref{lem_lwc_tensor}}\label{sec_lem_lwc_tensor}

The key step towards the proof is to establish the following statement.

\begin{lemma}\label{lem_lwc_tensor_ex}
	Suppose that $t\in[0,1]$, $\ell\geq0$ and $M\sim tm$.
	Then for any rooted tree $T$ we have
	\begin{align*}
		\pr\brk{v_1\mbox{ is an $\ell$-instance of }T}\sim \pr\brk{\TT^{\tensor\,(\ell)}\ism T}.
	\end{align*}
\end{lemma}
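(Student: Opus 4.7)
The plan is to run a joint breadth-first search from $v_1$ in the pair $(\GG_{1,M},\GG_{2,M})$ and show layer by layer that the exploration agrees in distribution, up to $o(1)$ total variation error, with the multi-type Galton-Watson tree $\TT^{\tensor\,(\ell)}(d,t)$. This is a standard local weak convergence argument adapted to the coupled setting.

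First I would simplify the coupling. Step~\textbf{CPL1} conditions the shared sequence $\ve_1,\ldots,\ve_{M-1}$ and each private sequence $(\ve_{h,M,i})_{i\leq m-M}$ on pairwise distinctness; since $m=\Theta(n)$ and $\binom{n}{2}=\Theta(n^2)$, this conditioning costs only $O(1/n)$ in total variation. Hence I may treat the shared edges and the two private sequences as three independent uniform samples without replacement from $\binom{V_n}{2}$, of sizes $M-1\sim tm$ and $m-M\sim(1-t)m$, respectively.

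For the base case, let $\vd^{(0)}_n$ be the number of shared edges incident to $v_1$ and $\vd^{(h)}_n$ ($h=1,2$) the number of private edges of graph~$h$ at $v_1$. A standard hypergeometric-to-Poisson estimate gives
\begin{align*}
\dTV\bc{\vd^{(0)}_n,\Po(dt)}=O(1/n), \qquad \dTV\bc{\vd^{(h)}_n,\Po(d(1-t))}=O(1/n),
\end{align*}
and independence of the three counts is immediate from the independence of the three samples. Conditional on these degrees, the identities of the neighbors are uniform, and the probability that any two of them coincide is $O(1/n)$. So with probability $1-o(1)$ the first level of the joint exploration is a disjoint collection of type-$0$, type-$1$, and type-$2$ children with the correct Poisson multiplicities, matching the offspring distribution of $\root$ in $\TT^\tensor$.

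For the inductive step I exploit the recursive structure of $\TT^\tensor$: at an already-explored type-$0$ vertex $u$ I reveal the remaining shared neighbors of $u$ (the new type-$0$ children) and the remaining private neighbors in each $\GG_{h,M}$ (the new type-$h$ children); at a type-$h$ vertex I reveal only the remaining private neighbors in $\GG_{h,M}$, matching the $\Po(d)$ offspring distribution of that type. The relevant degrees are again hypergeometric, now with parameters perturbed by the $O(1)$ already-revealed vertices and edges, so the Poisson approximation still holds with an $O(1/n)$ error. A union bound over the $O(1)$ BFS steps keeps the probability of a collision negligible, and summing over isomorphism classes of $T^{(\ell)}$ gives $\pr\brk{v_1\text{ is an $\ell$-instance of }T}=\pr\brk{\TT^{\tensor\,(\ell)}\ism T}+o(1)$. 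The only subtlety is to make the single-step Poisson approximation uniform, which I would do via an explicit $O(1/n)$ total-variation bound iterated $O(|T^{(\ell)}|)$ times; since $|T^{(\ell)}|$ is a constant independent of $n$, all accumulated errors remain $o(1)$.
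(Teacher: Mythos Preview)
Your approach is essentially the paper's: both run an inductive/BFS exploration from $v_1$, couple the three degree counts $(\vd^{(0)}_n,\vd^{(1)}_n,\vd^{(2)}_n)$ to independent Poissons $(\Po(dt),\Po(d(1-t)),\Po(d(1-t)))$, rule out collisions among the revealed neighbours, and recurse on the children with asymptotically unchanged parameters. The paper packages the recursion slightly differently---it conditions on the event $\fE$ that the depth-$\ell$ neighbourhood in $\GG_{1,M}\cup\GG_{2,M}$ is acyclic and of size $O(\log n)$, then invokes the induction hypothesis for type-$0$ children and the plain local weak convergence (\Lem~\ref{lem_lwc}) for type-$h$ children---but this amounts to the same computation you sketch.

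One wording issue to fix: at a type-$h$ vertex $u$ you write that you ``reveal only the remaining private neighbors in $\GG_{h,M}$.'' If ``private'' means private-edge neighbours, this is wrong---you must reveal \emph{all} remaining neighbours of $u$ in $\GG_{h,M}$, i.e.\ both shared-edge and private-$h$-edge neighbours, giving total offspring $\sim\Po(dt)+\Po(d(1-t))=\Po(d)$ as required by the tree. (Equivalently, the paper here just appeals to the fact that $\GG_{h,M}$ alone is a $\GG(n,m-1)$ and invokes \Lem~\ref{lem_lwc}.) If you meant ``explore only in $\GG_{h,M}$,'' the argument is fine; just make the degree count explicit.
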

\begin{proof}
	We proceed by induction on $\ell$.
	In the case $\ell=0$ there is nothing to show because the `tree' $\TT_{d,t}^{\tensor\,(\ell)}$ consists of the root only.
	Going from $\ell-1$ to $\ell>0$, let $\vX_\tau$ be the number of children of $\root$ of type $\tau\in\{0,1,2\}$ in $\TT_{d,t}^\tensor$.
	Moreover, let $\vY_0$ be the number of shared edges $e\in\{\ve_1,\ldots,\ve_{M-1}\}$ incident with $v_1$.
	Additionally, let $\vY_h$ be the number of edges $e\in\{\ve_{h,1},\ldots,\ve_{h,m-M}\}$ incident with $v_1$.
	The construction of $(\GG_{h,M})_{h=1,2}$ and of $\TT^\tensor$ ensures that
	\begin{align}\label{eq_lem_lwc_tensor_ex_1}
		\dTV((\vX_0,\vX_1,\vX_2),(\vY_0,\vY_1,\vY_2))&=o(1).
	\end{align}

	Consider the event $\fE$ that the depth-$\ell$ neighbourhood of $v_1$ in the union $\GG_{1,M}\cup\GG_{2,M}=(V_n,E(\GG_{1,M})\cup E(\GG_{2,M}))$ is acyclic and contains a total of at most $\log n$ edges.
	Then the Chernoff bound shows together with a routine breadth-first search exploration argument that
	\begin{align}\label{eq_lem_lwc_tensor_ex_2}
		\pr\brk{\fE}&=1-O(1/n).
	\end{align}
	Let
	\begin{align*}
		\cY_0&=|\partial(\GG_{1,M},v_1)\cap\partial(\GG_{2,M},v_1)|,&
		\cY_h&=|\partial(\GG_{h,M},v_1)\setminus\partial(\GG_{3-h,M},v_1)|&&(h=1,2).
	\end{align*}
	Hence, $\cY_0$ is the number of `shared' neighbours of $v_1$ in $\GG_{1,M}$ and $\GG_{2,M}$, and $\cY_h$ is the number of `separate' neighbours of $v_1$ in $\GG_{h,M}$.
	Given $\fE$ we have $\cY_0\cap\cY_1=\cY_0\cap\cY_2=\cY_1\cap\cY_2=\emptyset$ and $|\cY_0|=\vY_0$ and $|\cY_h|=\vY_h$ for $h=1,2$.

	Now, given $\fE$ and a vertex $v\in\cY_0\cup\cY_1\cup\cY_2$ let $\cN_\ell(v)$ be the set of all vertices $w\in\nabla^\ell(\GG_{1,M}\cup\GG_{2,M},v_1)$ such that the shortest path from $w$ to $v_1$ passes through $v$.
	Obtain $\GG_{h,M}[v]$ from $\GG_{h,M}$ by deleting all the vertices in $\nabla^\ell(\GG_{h,M},v_1)\setminus\cN_\ell(v)$.
	Since on $\fE$ we have $|\nabla^\ell(\GG_{h,M},v_1)|=O(\log n)$, the graph $\GG_{h,M}[v]$ contains $n+O(\log n)\sim n$ vertices and $m+O(\log n)\sim dn/2$ edges.
	Thus, the pair $(\GG_{h,M}[v])_{h=1,2}$ is distributed as a pair of random graphs $(\GG_{h,M+O(\log n)}(n+O(\log n),m+O(\log n)))_{h=1,2}$, i.e., a pair of random graphs produced by the coupling {\bf CPL0}--{\bf CPL2} with asymptotically unchanged parameters.
	Therefore, by induction for every possible outcome $T'$ of $\TT_{d,t}^\tensor$ and any vertex $v\in\cY_0$ we have
	\begin{align}\label{eq_lem_lwc_tensor_ex_3}
		\pr\brk{v\mbox{ is an $(\ell-1)$-instance of $T'$ in $(\GG_{h,M}[v])_{h=1,2}$}\mid\fE}\sim \pr\brk{\TT^{\tensor\,(\ell-1)}_{d,t}\ism T'}.
	\end{align}
	Similarly, by \Lem~\ref{lem_lwc} for any vertex $v\in \cY_h$ and any possible outcome $T''$ of the plain Galton-Watson tree $\TT_d$ we have
	\begin{align}\label{eq_lem_lwc_tensor_ex_4}
		\pr\brk{\nabla^{\ell-1}(\GG_{h,M}[v],v)\ism T''\mid\fE}\sim \pr\brk{\TT^{(\ell-1)}_{d}\ism T''}&&(h=1,2).
	\end{align}
	Finally, given $\fE$ the events from~\eqref{eq_lem_lwc_tensor_ex_3}--\eqref{eq_lem_lwc_tensor_ex_4} are asymptotically independent.
	Therefore, the assertion follows from \eqref{eq_lem_lwc_tensor_ex_1}--\eqref{eq_lem_lwc_tensor_ex_4} and the recursive definition of the tree $\TT_{d,t}^\tensor$.
\end{proof}

\begin{proof}[Proof of \Lem~\ref{lem_lwc_tensor}]
	\Lem~\ref{lem_lwc_tensor_ex} implies that
	\begin{align}\label{eq_lem_lwc_tensor_1}
		\ex\brk{\vN_T^{(\ell)}(n,M)}&\sim n\pr\brk{\TT_{d,t}^\tensor\ism T}.
	\end{align}
	Furthermore, adding or removing a single edge $e=vw$ to or from $\GG_{1,M}\cup\GG_{2,M}$ can alter the number of $\ell$-instances of $T$ by only a bounded amount.
	To see this, let $\Delta$ be the maximum degree of $T$.
	For $e$ to turn a vertex $u$ from  being an $\ell$-instance to not being an $\ell$-instance or vice versa, one of the vertices $v,w$ has to have distance at most $\ell-1$ from $u$.
	Say this is vertex $v$.
	Then there has to be a path $u=u_1,\ldots,u_l=v$ with $l\leq\ell-1$ such that $u_1,\ldots,u_l$ have degree at most $\Delta$.
	Since there exist no more than $\Delta^{\ell-1}$ paths of length $\ell-1$ emanating from $v$ where every vertex has degree at most $\Delta$, the number of affected vertices $u$ is bounded.
	Consequently, a straightforward application of Azuma--Hoeffding shows that \whp
	\begin{align}\label{eq_lem_lwc_tensor_2}
		\vN_T^{(\ell)}(n,M)=\ex\brk{\vN_T^{(\ell)}(n,M)}+o(n).
	\end{align}
	The assertion follows from \eqref{eq_lem_lwc_tensor_1}--\eqref{eq_lem_lwc_tensor_2}.
\end{proof}

\subsection{Proof of \Prop~\ref{prop_tensor_B>0}}\label{sec_prop_tensor_B>0}

We begin with an observation about the \Erdos-\Renyi\ graph $\GG$.
Recall from~\eqref{eqMuT} that for a rooted graph $(G,r)$ we denote by $\mu^{(\ell)}_{G,r,\beta,B}$ the Ising distribution with a $+1$ boundary condition imposed on all vertices at distance $\ell$ from the root $r$.
\Lem~\ref{lem_kostas} directly implies that for any $(G,r)$, any $\beta,B\geq0$ and any $\ell\geq0$ we have
\begin{align}\label{eqlem_GGmag0}
	\scal{\SIGMA(r)}{\mu_{G,\beta,B}}\leq\scal{\SIGMA(r)}{\mu^{(\ell)}_{G,r,\beta,B}}.
\end{align}
The following lemma shows that on the \Erdos-\Renyi\ graph $\GG$ rooted at $v_1$ the bound~\eqref{eqlem_GGmag0} gets asymptotically tight in the limit $\ell\to\infty$, provided $B>0$.
The statement is implicit in~\cite[\Sec~6]{Dembo_2010} but we provide a short proof for completeness.

\begin{lemma}\label{lem_GGmag}
	If $B>0$ then $\limsup_{\ell\to\infty}\limsup_{n\to\infty}\ex\abs{\scal{\SIGMA(v_1)}{\mu_{\GG,\beta,B}}-\scal{\SIGMA(v_1)}{\mu^{(\ell)}_{\GG,v_1,\beta,B}}}=0.$
\end{lemma}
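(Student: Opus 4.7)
The plan is to combine the monotonicity already noted in~\eqref{eqlem_GGmag0} with two separate applications of local weak convergence and the first-order result of Dembo--Montanari. The monotonicity gives $\scal{\SIGMA(v_1)}{\mu_{\GG,\beta,B}} \leq \scal{\SIGMA(v_1)}{\mu^{(\ell)}_{\GG,v_1,\beta,B}}$ deterministically, so the absolute value in the claim equals the (nonnegative) difference of the two magnetisations, and it suffices to prove that the difference of expectations vanishes in the iterated limit.

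First I would handle the free marginal. By exchangeability of the vertices of $\GG(n,m)$, one has $\ex\scal{\SIGMA(v_1)}{\mu_{\GG,\beta,B}} = \ex\int x\, \dd\pi_{\GG,\beta,B}(x)$, where $\pi_{\GG,\beta,B}$ is the empirical magnetisation distribution from~\eqref{eqempirical}. Since $x \mapsto x$ is bounded and $1$-Lipschitz on $[-1,1]$, \Prop~\ref{prop_DM_B>0} together with the Kantorovich--Rubinstein duality yields
\begin{align*}
\lim_{n\to\infty}\ex\scal{\SIGMA(v_1)}{\mu_{\GG,\beta,B}} = \int x\, \dd\pi_{d,\beta,B}(x) = \ex[\vec R(\TT)],
\end{align*}
where the last equality uses \Cor~\ref{cor_conv}, which identifies $\pi_{d,\beta,B}$ as the law of the almost sure limit $\vec R(\TT)$ from~\eqref{eqRT}.

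Second I would handle the boundary-conditioned marginal for fixed $\ell$. The key observation is the Markov property of the Ising model: pinning every vertex at graph distance exactly $\ell$ from $v_1$ to spin $+1$ decouples $\nabla^\ell(\GG,v_1)$ from its complement, because any path from $v_1$ to a vertex at distance $>\ell$ passes through $\partial^\ell(\GG,v_1)$. Hence $\scal{\SIGMA(v_1)}{\mu_{\GG,v_1,\beta,B}^{(\ell)}}$ is a function of the rooted graph $\nabla^\ell(\GG,v_1)$ alone, and moreover this function is continuous in the local topology (e.g.\ via the Belief Propagation recursion of \Lem~\ref{lem_BP_tree} iterated $\ell$ times, starting from the all-ones boundary). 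By vertex exchangeability and \Lem~\ref{lem_lwc}, $\nabla^\ell(\GG,v_1)$ converges in distribution to $\TT^{(\ell)}$; bounded continuous functions therefore converge in expectation, so
\begin{align*}
\lim_{n\to\infty}\ex\scal{\SIGMA(v_1)}{\mu^{(\ell)}_{\GG,v_1,\beta,B}} = \ex\scal{\SIGMA(\root)}{\mu^{(\ell)}_{\TT,\beta,B}} = \ex[\vec R_\ell(\TT)].
\end{align*}

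Finally I would let $\ell \to \infty$. By \Lem~\ref{lem_conv} the sequence $\vec R_\ell(\TT)$ is a nonnegative, bounded, decreasing-in-$\ell$ family converging almost surely to $\vec R(\TT)$, so dominated convergence gives $\ex[\vec R_\ell(\TT)] \searrow \ex[\vec R(\TT)]$. Combining the three displays above then yields $\limsup_{\ell\to\infty}\limsup_{n\to\infty}\ex|\scal{\SIGMA(v_1)}{\mu_{\GG,\beta,B}} - \scal{\SIGMA(v_1)}{\mu^{(\ell)}_{\GG,v_1,\beta,B}}| = 0$, as desired. The main subtlety is the Markov/decoupling step in the second bullet; everything else is a routine combination of local weak convergence and the monotone martingale structure of the boundary-conditioned root magnetisations already established in \Sec~\ref{sec_prop_fix_ex}. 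Note that the hypothesis $B>0$ enters only through \Prop~\ref{prop_DM_B>0}, which guarantees that the $W_1$-limit of the empirical magnetisations equals the all-ones Belief Propagation fixed point $\pi_{d,\beta,B}$.
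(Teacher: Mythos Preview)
Your proof is correct and follows essentially the same approach as the paper's: both arguments use the monotonicity~\eqref{eqlem_GGmag0} to remove the absolute value, invoke \Prop~\ref{prop_DM_B>0} to identify the limit of the free marginal with the mean of $\pi_{d,\beta,B}$, use local weak convergence (\Lem~\ref{lem_lwc} together with \Cor~\ref{cor_BP_tree}) to identify the limit of the boundary-conditioned marginal with the mean of $\pi_{d,\beta,B}^{(\ell)}$, and finally let $\ell\to\infty$ via the convergence $\pi_{d,\beta,B}^{(\ell)}\to\pi_{d,\beta,B}$. Your phrasing in terms of $\vec R_\ell(\TT)$, $\vec R(\TT)$ and dominated convergence is just an equivalent repackaging of the paper's use of the means of these distributions, and your explicit appeal to the Markov property for the decoupling step is exactly what the paper uses implicitly in its equation~\eqref{eqlem_GGmag2}.
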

\begin{proof}
	Recall from~\eqref{eqpidbetaBell} that $\pi_{d,\beta,B}^{(\ell)}=\BP^\ell_{d,\beta,B}(\delta_1)$.
	Hence, \eqref{eqDMweakLimit} and Equation~\eqref{eqprop_DM_B>0} from \Prop~\ref{prop_DM_B>0} show that
	\begin{align}\label{eqlem_GGmag1}
		\limsup_{\ell\to\infty}\limsup_{n\to\infty}\ex[W_1(\pi_{\GG,\beta,B},\pi^{(\ell)}_{d,\beta,B})]&=\limsup_{n\to\infty}\ex[W_1(\pi_{\GG,\beta,B},\pi_{d,\beta,B})]=0.
	\end{align}
	Further,  \Lem~\ref{lem_lwc} (the local weak convergence result) and \Cor~\ref{cor_BP_tree} show that the mean of $\pi^{(\ell)}_{d,\beta,B}$ satisfies
	\begin{align}\label{eqlem_GGmag2}
		\int_{-1}^1\xi\dd\pi^{(\ell)}_{d,\beta,B}(\xi)&=\ex\scal{\SIGMA(\root)}{\mu_{\TT,\beta,B}^{(\ell)}}
		=\frac1n\sum_{i=1}^n\ex\scal{\SIGMA(v_i)}{\mu^{(\ell)}_{\GG,v_i,\beta,B}}+o(1)
		=\ex\scal{\SIGMA(v_1)}{\mu^{(\ell)}_{\GG,v_1,\beta,B}}+o(1).
	\end{align}
	Combining \eqref{eqlem_GGmag1} and \eqref{eqlem_GGmag2}, we obtain
	\begin{align}\label{eqlem_GGmag3}
		\lim_{n\to\infty}\ex\scal{\SIGMA(v_1)}{\mu_{\GG,\beta,B}}=
		\int_{-1}^1\xi\dd\pi_{d,\beta,B}(\xi)&=
		\lim_{\ell\to\infty}\int_{-1}^1\xi\dd\pi^{(\ell)}_{d,\beta,B}(\xi)=
		\lim_{\ell\to\infty}\lim_{n\to\infty}\ex\scal{\SIGMA(v_1)}{\mu^{(\ell)}_{\GG,v_1,\beta,B}}.
	\end{align}
	Finally, since \eqref{eqlem_GGmag0} shows that $\Scal{\SIGMA(v_1)}{\mu^{(\ell)}_{\GG,v_1,\beta,B}}\geq\scal{\SIGMA(v_1)}{\mu_{\GG,\beta,B}}$, the assertion follows from~\eqref{eqlem_GGmag3}.
\end{proof}

\begin{corollary}\label{cor_GGmag}
	If $d,\beta,B>0$ and $0\leq M\leq m$, then
		\begin{align*}
			\limsup_{\ell\to\infty}\limsup_{n\to\infty}\sum_{h=1,2}\ex\abs{\scal{\SIGMA(v_1)}{\mu_{\GG_{h,M},\beta,B}}-\scal{\SIGMA(v_1)}{\mu^{(\ell)}_{\GG_{h,M},v_1,\beta,B}}}&=0.
		\end{align*}
\end{corollary}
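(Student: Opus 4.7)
The strategy is to reduce the corollary directly to \Lem~\ref{lem_GGmag} by exploiting the fact that, marginally, each $\GG_{h,M}$ is distributed as an \Erdos-\Renyi\ random graph on $n$ vertices with $m-1$ edges. Once this marginal distributional identity is established, the corollary follows from two applications of \Lem~\ref{lem_GGmag} together with the triangle inequality.

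First, I will observe that the coupling {\bf CPL0}--{\bf CPL2} is symmetric in the following sense: for each fixed $h\in\{1,2\}$ and each $0\leq M\leq m$, the edge set
\begin{align*}
	E(\GG_{h,M})=\{\ve_1,\ldots,\ve_{M-1},\ve_{h,M,1},\ldots,\ve_{h,M,m-M}\}
\end{align*}
is a uniformly random subset of size $m-1$ among all subsets of $\binom{V_n}{2}$ of that cardinality. Indeed, $\ve_1,\ldots,\ve_{M-1}$ is a uniformly random sequence of distinct edges, and conditionally on those, $\ve_{h,M,1},\ldots,\ve_{h,M,m-M}$ is chosen as a uniformly random sequence of additional distinct edges disjoint from $\{\ve_1,\ldots,\ve_{M-1}\}$. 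By exchangeability, the union is a uniformly random $(m-1)$-edge subset. Hence $\GG_{h,M}\stackrel{d}{=}\GG(n,m-1)$ for each fixed $h$ and $M$.

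Second, I invoke the blanket hypothesis of \Sec~\ref{sec_notation} that all statements concerning the random graph hold for any sequence $m=m(n)$ with $2m/n\to d$. Since $2(m-1)/n\to d$ as well, \Lem~\ref{lem_GGmag} applied to $\GG(n,m-1)$ gives
\begin{align*}
	\limsup_{\ell\to\infty}\limsup_{n\to\infty}\ex\abs{\scal{\SIGMA(v_1)}{\mu_{\GG_{h,M},\beta,B}}-\scal{\SIGMA(v_1)}{\mu^{(\ell)}_{\GG_{h,M},v_1,\beta,B}}}=0
	\qquad(h=1,2).
\end{align*}
Summing over $h\in\{1,2\}$ yields the claim.

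The proof contains no real obstacle; the only subtle point is verifying the marginal distributional identity $\GG_{h,M}\stackrel{d}{=}\GG(n,m-1)$, which is immediate from the construction but worth stating explicitly because the \emph{joint} distribution of $(\GG_{1,M},\GG_{2,M})$ is of course not that of two independent \Erdos-\Renyi\ graphs. Fortunately, the statement of \Cor~\ref{cor_GGmag} only involves each $\GG_{h,M}$ separately, so the joint dependence between $\GG_{1,M}$ and $\GG_{2,M}$ plays no role here.
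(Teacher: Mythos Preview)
Your proposal is correct and follows essentially the same approach as the paper: the paper's proof simply notes that each $\GG_{h,M}$ is individually distributed as $\GG(n,m-1)$ with $2(m-1)/n\sim d$, then invokes \Lem~\ref{lem_GGmag} and the union bound. Your write-up is just a more detailed version of this, with the marginal distributional identity spelled out explicitly.
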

\begin{proof}
	Since each graph $\GG_{h,M}$ separately is distributed as the random graph $\GG(n,m-1)$ and $2(m-1)/n\sim 2m/n\sim d$, the assertion follows from \Lem~\ref{lem_GGmag} and the union bound.
\end{proof}

\begin{proof}[Proof of \Prop~\ref{prop_tensor_B>0}]
	Let
	\begin{align*}
		\vec\pi_{n,M,\beta,B}^{\tensor\,(\ell)}&=\frac1n\sum_{i=1}^n\delta_{\bc{\scal{\SIGMA(v_i)}{\mu_{\GG_{1,M},v_i,\beta,B}^{(\ell)}},\scal{\SIGMA(v_i)}{\mu_{\GG_{2,M},v_i,\beta,B}^{(\ell)}}}}
	\end{align*}
	be the empirical distribution of the joint magnetisations $(\langle{\SIGMA(v_i)},{\mu_{\GG_{h,M},v_i,\beta,B}^{(\ell)}}\rangle)_{h=1,2}$ with a $+1$ boundary condition imposed at distance $\ell$ from $v_i$.
	\Cor~\ref{cor_GGmag} shows that
	\begin{align}\label{eq_prop_tensor_B>0_1}
		\limsup_{\ell\to\infty}\limsup_{n\to\infty}W_1\bc{\vec\pi_{n,M,\beta,B}^{\tensor\,(\ell)},\vec\pi_{n,M,\beta,B}^{\tensor}}&=0.
	\end{align}
	Furthermore, the local weak convergence from \Lem~\ref{lem_lwc_tensor} implies together with \Lem~\ref{lem_tensor_ell} that for any $\ell\geq0$,
	\begin{align}\label{eq_prop_tensor_B>0_2}
		W_1\bc{\vec\pi_{n,M,\beta,B}^{\tensor\,(\ell)},\pi_{d,\beta,B,t}^{\tensor\,(\ell)}}&=o(1).
	\end{align}
	Thus, the assertion follows from	\eqref{eq_prop_tensor_B>0_1}--\eqref{eq_prop_tensor_B>0_2} and \Cor~\ref{cor_tensor_ell}.
\end{proof}

\subsection{Proof of \Prop~\ref{cor_tensor_B>0}}\label{sec_cor_tensor_B>0}
In light of \Lem~\ref{fact_telescope} we are going to study the terms $\vDelta_{n,M}^+,\vDelta_{n,M}^-,\vDelta_{n,M}^\pm$ from \eqref{eqDeltaM}.
Let $\fG_{n,M}$ be the $\sigma$-algebra generated by the random graphs $(\GG_{h,M})_{h=1,2}$ and set
\begin{align*}
	\vDelta_{n,M}=\vDelta^+_{n,M}+\vDelta^-_{n,M}-2\vDelta^{\pm}_{n,M}.
\end{align*}
Establishing the following lemma constitutes the key step towards the proof of \Prop~\ref{cor_tensor_B>0}.

\begin{lemma}\label{lem_intervals}
	Assume that $B>0$.
	Uniformly for all $0\leq M\leq m$ we have
	\begin{align*}
		\ex\brk{\vDelta_{n,M}\mid\fG_{n,M}}&=\cB_{\beta}^\tensor(\pi^\tensor_{d,\beta,B,M/m})-\cB_{\beta}^\tensor(\pi^\tensor_{d,\beta,B,0})+o(1)&&\mbox\whp
	\end{align*}
\end{lemma}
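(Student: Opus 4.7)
The proof proceeds by a direct computation of the conditional expectation of each of the three summands of $\vDelta_{n,M}$. The starting point is \Lem~\ref{fact_Zratio}, which, outside an exceptional event of probability $O(1/n)$, rewrites every logarithmic ratio in~\eqref{eqDeltaM} as $g_h(e):=\log\scal{\exp(\beta\SIGMA(e))}{\mu_{\GG_{h,M},\beta,B}}$ for the appropriate edge $e$; the exceptional event contributes $o(1)$ to every expectation because $|g_h(e)|\le\beta$ deterministically. Given $\fG_{n,M}$, the coupling {\bf CPL0}--{\bf CPL2} renders the three edges $\ve_M,\,\ve_{1,m-M+1},\,\ve_{2,m-M+1}$ conditionally mutually independent and nearly uniform over $\binom{V_n}{2}$, the only deviation being the $O(m/n^2)=O(1/n)$ constraint that they avoid the already exposed edges.

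Using the identity $\scal{\exp(\beta\SIGMA(v_i)\SIGMA(v_j))}{\mu}=\cosh\beta\cdot(1+\tanh\beta\cdot\scal{\SIGMA(v_i)\SIGMA(v_j)}{\mu})$, each $g_h$ splits as $g_h(v_iv_j)=\log\cosh\beta+\log(1+\tanh\beta\cdot\scal{\SIGMA(v_i)\SIGMA(v_j)}{\mu_{\GG_{h,M},\beta,B}})$. The replica symmetry bound~\eqref{eqTensorB>0RS} of \Prop~\ref{prop_tensor_B>0} together with vertex exchangeability allows the substitution of $\scal{\SIGMA(v_i)\SIGMA(v_j)}{\mu_{\GG_{h,M},\beta,B}}$ by the product $\mu_i^{(h)}\mu_j^{(h)}$, with $\mu_i^{(h)}:=\scal{\SIGMA(v_i)}{\mu_{\GG_{h,M},\beta,B}}$, at $o(1)$ cost after the $1/n^2$ averaging (invoking Lipschitz continuity of $x\mapsto\log(1+x\tanh\beta)$ on $[-1,1]$). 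Writing $f_h(i,j):=\log(1+\mu_i^{(h)}\mu_j^{(h)}\tanh\beta)$, $\bar f_h:=n^{-2}\sum_{i,j}f_h(i,j)$, and $\overline{f_1f_2}:=n^{-2}\sum_{i,j}f_1(i,j)f_2(i,j)$, the conditional edge-independence then yields
\begin{align*}
\ex\brk{\vDelta^+_{n,M}\mid\fG_{n,M}}&=(\log\cosh\beta)^2+\log\cosh\beta\cdot(\bar f_1+\bar f_2)+\overline{f_1f_2}+o(1),\\
\ex\brk{\vDelta^-_{n,M}\mid\fG_{n,M}}&=(\log\cosh\beta)^2+\log\cosh\beta\cdot(\bar f_1+\bar f_2)+\bar f_1\bar f_2+o(1),\\
\ex\brk{\vDelta^\pm_{n,M}\mid\fG_{n,M}}&=(\log\cosh\beta)^2+\log\cosh\beta\cdot(\bar f_1+\bar f_2)+\bar f_1\bar f_2+o(1).
\end{align*}
The leading constants and the linear-in-$f_h$ contributions cancel exactly in $\vDelta_{n,M}=\vDelta^+_{n,M}+\vDelta^-_{n,M}-2\vDelta^\pm_{n,M}$, leaving $\ex\brk{\vDelta_{n,M}\mid\fG_{n,M}}=\overline{f_1f_2}-\bar f_1\bar f_2+o(1)$ \whp

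Both $\overline{f_1f_2}$ and $\bar f_1\bar f_2$ are bounded Lipschitz functionals of two independent samples from the empirical joint distribution $\vec\pi^\tensor_{n,M,\beta,B}$. Invoking the uniform $W_1$-convergence $\ex[W_1(\vec\pi^\tensor_{n,M,\beta,B},\pi^\tensor_{d,\beta,B,M/m})]=o(1)$ from \Prop~\ref{prop_tensor_B>0} and comparing with the definition~\eqref{eqBFEtensor} gives $\overline{f_1f_2}\to\cB^\tensor_\beta(\pi^\tensor_{d,\beta,B,M/m})$ \whp\ For the second term, the coordinate marginals of $\pi^\tensor_{d,\beta,B,t}$ both equal $\pi_{d,\beta,B}$ for every $t\in[0,1]$, since the subtrees $\TT^\tensor[h]$ used in \Lem~\ref{lem_tensor_ell} are each distributed as the plain Galton-Watson tree $\TT$. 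Hence each $\bar f_h$ converges to $\ex\log(1+\mu_1\mu_2\tanh\beta)$ with $\mu_1,\mu_2$ drawn independently from $\pi_{d,\beta,B}$, and by \Rem~\ref{rem_extreme} the square of this quantity equals $\cB^\tensor_\beta(\pi^\tensor_{d,\beta,B,0})$, completing the identification.

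The main obstacle is ensuring that all the approximation errors -- the replica-symmetry substitution, the edge-independence approximation, and the $W_1$-convergence of the empirical pair distribution -- hold on a common event of probability $1-o(1)$ \emph{uniformly} across $0\le M\le m$, which is precisely why the uniform-in-$M$ formulation of \Prop~\ref{prop_tensor_B>0} is essential. A secondary subtlety is that the cancellation of the $(\log\cosh\beta)^2$ and $\log\cosh\beta\cdot(\bar f_1+\bar f_2)$ contributions across $\vDelta^+,\vDelta^-,\vDelta^\pm$ must be genuinely exact up to $o(1)$; any slower-decaying error in these leading terms would swamp the $O(1)$ target expression $\cB^\tensor_\beta(\pi^\tensor_{d,\beta,B,M/m})-\cB^\tensor_\beta(\pi^\tensor_{d,\beta,B,0})$.
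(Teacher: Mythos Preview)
Your proposal is correct and follows essentially the same approach as the paper's proof. Both arguments apply \Lem~\ref{fact_Zratio}, the identity $\scal{\exp(\beta\SIGMA(e))}{\mu}=\cosh\beta(1+\scal{\SIGMA(e)}{\mu}\tanh\beta)$, replica symmetry from~\eqref{eqTensorB>0RS}, and the $W_1$-convergence~\eqref{eqprop_tensor_B>0} to reduce to the same cancellation; the paper merely packages the computation via auxiliary random variables $\vec\Xi_t^+,\vec\Xi_t^-,\vec\Xi_t^\pm$ and their laws (Claims~\ref{claim_intervals_A} and~\ref{claim_intervals_C}), whereas you compute the three conditional expectations directly and identify $\bar f_1\bar f_2$ with $\cB^\tensor_\beta(\pi^\tensor_{d,\beta,B,0})$ exactly as the paper does in~\eqref{eq_lem_intervals_52}.
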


The proof of \Lem~\ref{lem_intervals} proceeds in several steps.
We begin by disecting the log-terms that make up the three expressions $\vDelta_{n,M}^+,\vDelta_{n,M}^-,\vDelta_{n,M}^\pm$ from~\eqref{eqDeltaM}.
Let $\vv_{M},\vw_{M}$ be the two vertices incident with edge $\ve_M$ from step {\bf CPL0} of the construction of $\GG_{h,M}$.
Then given $\fG_{n,M}$ the pair $(\vv_M,\vw_M)$ is within total variation distance $o(1)$ of a uniformly random pair of vertices from $V_n$.
Indeed, the only condition imposed on $(\vv_M,\vw_M)$ is that $\vv_M\neq\vw_M$ and that $\{\vv_M,\vw_M\}$ is not an edge of $\GG_{1,M}\cup\GG_{2,M}$ already, and a uniformly random pair of vertices from $V_n$ satisfies these two conditions \whp
Additionally, let $\vv_{h,M},\vw_{h,M}$ be the vertices that constitute the edge $\ve_{h,M,m-M}$ from {\bf CPL2} that gets added to $\GG_{h,M}$ to obtain $\GG_{h,M}^-$.

\begin{claim}\label{claim_intervals_A}
	Let $B>0$.
	Then 
	for every $0\leq M\leq m$ \whp\ we have
	\begin{align}\label{eq_lem_intervals3}
		\log\frac{Z_{\GG_{h,M}^+}(\beta,B)}{Z_{\GG_{h,M}}(\beta,B)}&=\log\cosh\beta+\log\bc{1+\scal{\SIGMA(\vv_{M})}{\mu_{\GG_{h,M}}}\scal{\SIGMA(\vw_{M})}{\mu_{\GG_{h,M}}}\tanh\beta}+o(1),\\
\label{eq_lem_intervals3a}
		\log\frac{Z_{\GG_{h,M}^-}(\beta,B)}{Z_{\GG_{h,M}}(\beta,B)}&=\log\cosh\beta+\log\bc{1+\scal{\SIGMA(\vv_{h,M})}{\mu_{\GG_{h,M}}}\scal{\SIGMA(\vw_{h,M})}{\mu_{\GG_{h,M}}}\tanh\beta}+o(1).
	\end{align}
\end{claim}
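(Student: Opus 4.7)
The plan is to combine Lemma~\ref{fact_Zratio} with the elementary identity $\exp(\beta s) = \cosh\beta\,(1+s\tanh\beta)$, valid for $s \in \{-1,+1\}$, and then invoke the replica symmetry property~\eqref{eqTensorB>0RS} from Proposition~\ref{prop_tensor_B>0} to replace a joint expectation by a product of marginals.

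First, Lemma~\ref{fact_Zratio} yields, with probability $1-O(1/n)$,
\[
\log\frac{Z_{\GG_{h,M}^+}(\beta,B)}{Z_{\GG_{h,M}}(\beta,B)} = \log\scal{\exp(\beta\SIGMA(\vv_M)\SIGMA(\vw_M))}{\mu_{\GG_{h,M},\beta,B}},
\]
using the convention $\SIGMA(\ve_M) = \SIGMA(\vv_M)\SIGMA(\vw_M)$. Because this product lies in $\{-1,+1\}$, the identity above lets us factor the inner expectation as $\cosh\beta\cdot\bc{1 + \scal{\SIGMA(\vv_M)\SIGMA(\vw_M)}{\mu_{\GG_{h,M},\beta,B}}\tanh\beta}$. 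Pulling $\cosh\beta$ out of the logarithm reduces \eqref{eq_lem_intervals3} to showing that, whp,
\[
\scal{\SIGMA(\vv_M)\SIGMA(\vw_M)}{\mu_{\GG_{h,M},\beta,B}} = \scal{\SIGMA(\vv_M)}{\mu_{\GG_{h,M},\beta,B}}\cdot\scal{\SIGMA(\vw_M)}{\mu_{\GG_{h,M},\beta,B}} + o(1),
\]
together with the observation that $x \mapsto \log(1+x\tanh\beta)$ is Lipschitz on $[-1,1]$---its argument stays in $[1-\tanh\beta,1+\tanh\beta]$, bounded away from $0$---so the $o(1)$ approximation passes through the logarithm.

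The decoupling step proceeds as follows. Conditional on $\fG_{n,M}$, the unordered pair $\{\vv_M,\vw_M\}$ is, up to total variation $o(1)$, uniformly distributed over all pairs of distinct vertices of $V_n$: indeed, $\ve_M$ is drawn uniformly without replacement from a pool of $\binom{n}{2}-(M-1) = \Theta(n^2)$ candidate edges. Together with the vertex-exchangeability of the marginal distribution of $\GG_{h,M}$, the replica symmetry bound~\eqref{eqTensorB>0RS} of Proposition~\ref{prop_tensor_B>0} then gives
\[
\ex\abs{\scal{\SIGMA(\vv_M)\SIGMA(\vw_M)}{\mu_{\GG_{h,M},\beta,B}} - \scal{\SIGMA(\vv_M)}{\mu_{\GG_{h,M},\beta,B}}\scal{\SIGMA(\vw_M)}{\mu_{\GG_{h,M},\beta,B}}} = o(1),
\]
so the target bound holds whp by Markov's inequality.

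The proof of~\eqref{eq_lem_intervals3a} is structurally identical, with the additional edge $\ve_{h,M,m-M+1}$ from step CPL1 replacing $\ve_M$; its endpoints $\vv_{h,M},\vw_{h,M}$ are again asymptotically uniformly distributed over distinct vertex pairs given $\fG_{n,M}$, so the same vertex-exchangeability plus replica-symmetry argument applies. No substantive new obstacle arises; uniformity in $M$ is automatic because every ingredient---Lemma~\ref{fact_Zratio}, Proposition~\ref{prop_tensor_B>0}, and the near-uniformity of the edge endpoints---is itself uniform in $M\in\{0,\ldots,m\}$.
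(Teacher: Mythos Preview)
Your proof is correct and follows essentially the same approach as the paper: apply Lemma~\ref{fact_Zratio}, rewrite $\scal{\exp(\beta\SIGMA(\ve))}{\mu}$ as $\cosh\beta\,(1+\scal{\SIGMA(\ve)}{\mu}\tanh\beta)$, and then invoke the replica symmetry bound~\eqref{eqTensorB>0RS} of Proposition~\ref{prop_tensor_B>0} together with the near-uniformity of the edge endpoints to decouple the joint magnetisation into a product. Your write-up is in fact slightly more explicit than the paper's in spelling out the Lipschitz step and the use of Markov's inequality.
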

\begin{proof}
	According to \Lem~\ref{fact_Zratio} with probability $1-O(1/n)$ we have
	\begin{align}\label{eq_lem_intervals1}
		\log\frac{Z_{\GG_{h,M}^+}(\beta,B)}{Z_{\GG_{h,M}}(\beta,B)}&=\log\scal{\exp\bc{\beta\SIGMA(\ve_{M})}}{\mu_{\GG_{h,M},\beta,B}}=\log\cosh\beta+\log\bc{1+\scal{\SIGMA(\vv_{M})\SIGMA(\vw_{M})}{\mu_{\GG_{h,M}}}\tanh\beta}.
	\end{align}
	Since given $\fG_{n,M}$ the vertex pair $(\vv_{M},\vw_{M})$ is asymptotically uniformly random, the replica symmetry property \eqref{eqTensorB>0RS} from \Prop~\ref{prop_tensor_B>0} implies that \whp\
	\begin{align}\label{eq_lem_intervals2}
		\scal{\SIGMA(\vv_{M})\SIGMA(\vw_{M})}{\mu_{\GG_{h,M}}}=\scal{\SIGMA(\vv_{M})}{\mu_{\GG_{h,M}}}\scal{\SIGMA(\vw_{M})}{\mu_{\GG_{h,M}}}+o(1).
	\end{align}
	Combining~\eqref{eq_lem_intervals1} and~\eqref{eq_lem_intervals2} yields \eqref{eq_lem_intervals3}
	Similar reasoning applies to $\vv_{h,M},\vw_{h,M}$, whence we obtain~\eqref{eq_lem_intervals3a}.
\end{proof}

Following~\eqref{eqIndependentSamples}, for a probability measure $\pi\in\cP([-1,1]^2)$ we denote by $\vec\mu_{\pi,i}^{(k)}=(\vec\mu_{\pi,i,h}^{(k)})_{h=1,2}\in[-1,1]^2$ mutually independent random variables with distribution $\pi$ for $k\in\{0,1,2\}$ and $i\geq1$.
For $0\leq t\leq 1$ we define
\begin{align}\label{eq_Xi_1}
		\vec\Xi^+_t&=\prod_{h=1}^2\log\cosh\beta+\log\bc{1+\vec\mu_{\pi_{d,\beta,B,t}^\tensor,1,h}^{(0)}\vec\mu_{\pi_{d,\beta,B,t}^\tensor,2,h}^{(0)}\tanh\beta},\\
		\vec\Xi^-_t&=\prod_{h=1}^2\log\cosh\beta+\log\bc{1+\vec\mu_{\pi_{d,\beta,B,t}^\tensor,2h+1,h}^{(1)}\vec\mu_{\pi_{d,\beta,B,t}^\tensor,2h+2,h}^{(1)}\tanh\beta},\label{eq_Xi_2}\\
		\vec\Xi^\pm_t&=\prod_{h=1}^2\log\cosh\beta+\log\bc{1+\vec\mu_{\pi_{d,\beta,B,t}^\tensor,2h+1,h}^{(2)}\vec\mu_{\pi_{d,\beta,B,t}^\tensor,2h+2,h}^{(2)}\tanh\beta},\label{eq_Xi_3}
	\end{align}
and set $\vec\Xi_t=\vec\Xi^+_t+\vec\Xi^-_t-2\vec\Xi^\pm_t.$
Let $\vec\delta_{n,M}^+$ be the conditional distribution of the random variable $\vDelta_{n,M}^+$ given $\fG_{n,M}$.
Thus, depending on the random graphs $(\GG_{h,M})_{h=1,2}$, $\vec\delta_{n,M}^+$ is a {\em random} probability distribution on $\RR$.
Define $\vec\delta_{n,M}^-$ and $\vec\delta_{n,M}^\pm$ analogously with respect to $\vDelta_{n,M}^-$ and $\vDelta_{n,M}^\pm$.
Furthermore, let $\xi^+_t$, $\xi_t^-$ and $\xi_t^\pm$ be the laws of the random variables $\Xi^+_t,\Xi^-_t$ and $\Xi^\pm_t$, respectively.
We are going to show that $\vec\delta_{n,M}^+,\vec\delta_{n,M}^-,\vec\delta_{n,M}^\pm$ are close to $\xi_t^+,\xi_t^-,\xi_t^\pm$ with $t=M/m$ \whp

\begin{claim}\label{claim_intervals_C}
	Let $B>0$.
	For all $0\leq M\leq m$ we have
	\begin{align}\label{eq_claim_intervals_C}
		\ex\brk{W_1(\vec\delta_{n,M}^+,\xi_{M/m}^+)+W_1(\vec\delta_{n,M}^-,\xi_{M/m}^-)+W_1(\vec\delta_{n,M}^\pm,\xi_{M/m}^\pm)}&=o(1).
	\end{align}
\end{claim}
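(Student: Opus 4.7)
The plan is to express each of $\vDelta_{n,M}^+,\vDelta_{n,M}^-,\vDelta_{n,M}^\pm$ through the local formulas supplied by Claim~\ref{claim_intervals_A} and then identify their conditional distributions with $\xi_{M/m}^+,\xi_{M/m}^-,\xi_{M/m}^\pm$ by combining the convergence of the empirical joint magnetisation distribution from \Prop~\ref{prop_tensor_B>0} with a product-measure approximation for random vertex pairs.

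To begin, let us condition on $\fG_{n,M}$. By Claim~\ref{claim_intervals_A}, \whp,
\begin{align*}
\vDelta_{n,M}^+&=\prod_{h=1,2}\brk{\log\cosh\beta+\log\bc{1+\scal{\SIGMA(\vv_M)}{\mu_{\GG_{h,M}}}\scal{\SIGMA(\vw_M)}{\mu_{\GG_{h,M}}}\tanh\beta}}+o(1),
\end{align*}
and analogous formulas hold for $\vDelta_{n,M}^-$ and $\vDelta_{n,M}^\pm$ with $(\vv_M,\vw_M)$ replaced by independent pairs $(\vv_{h,M},\vw_{h,M})$ as appropriate. Because $\ve_M,\ve_{h,M,m-M+1}$ are chosen uniformly among non-edges of $\GG_{1,M}\cup\GG_{2,M}$, the pairs $(\vv_M,\vw_M)$ and $(\vv_{h,M},\vw_{h,M})$ are, given $\fG_{n,M}$, within total variation distance $o(1)$ of uniformly random pairs of distinct vertices from $V_n\times V_n$, chosen independently across the three (or two) pairs in question. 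This reduces the claim to understanding the laws of certain bounded continuous functions of magnetisation pairs at uniformly sampled vertices.

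Next, I would invoke \Prop~\ref{prop_tensor_B>0} to control the empirical joint magnetisation distribution $\vec\pi_{n,M,\beta,B}^\tensor$, which converges in $W_1$-expectation to $\pi_{d,\beta,B,M/m}^\tensor$ uniformly in $M$. A standard argument then upgrades this to convergence of two-point empirical distributions: if $\frac1n\sum_i\delta_{X_i}\to\mu$ in $W_1$ on a bounded set, then $\frac1{n(n-1)}\sum_{i\neq j}\delta_{(X_i,X_j)}\to\mu\otimes\mu$ in $W_1$. Applying this to the pairs $\bigl(\scal{\SIGMA(v_i)}{\mu_{\GG_{h,M}}}\bigr)_{h=1,2}\in[-1,1]^2$, the law of $\bigl((\scal{\SIGMA(\vv_M)}{\mu_{\GG_{h,M}}})_{h=1,2},(\scal{\SIGMA(\vw_M)}{\mu_{\GG_{h,M}}})_{h=1,2}\bigr)$ tends, in $W_1$-expectation, to the law of two independent samples $(\vec\mu_{\pi_{d,\beta,B,M/m}^\tensor,1}^{(0)},\vec\mu_{\pi_{d,\beta,B,M/m}^\tensor,2}^{(0)})$. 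Plugging this into the formula for $\vDelta_{n,M}^+$ and using that $x\mapsto\log\cosh\beta+\log(1+x\tanh\beta)$ is Lipschitz on $[-1,1]$ then matches $\vec\delta_{n,M}^+$ with $\xi_{M/m}^+$, since the expression~\eqref{eq_Xi_1} is precisely the product of the two $h$-components evaluated at the same shared pair $\vec\mu^{(0)}_{\cdot,i}$. For $\vDelta^-_{n,M}$ the pairs $(\vv_{1,M},\vw_{1,M})$ and $(\vv_{2,M},\vw_{2,M})$ are independent, so only the $h$-th marginals of $\pi^\tensor_{d,\beta,B,M/m}$ are sampled in each factor, which matches~\eqref{eq_Xi_2} (the index shifts in $\vec\mu^{(1)}$ enforce independence). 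For $\vDelta^\pm_{n,M}$ the $h=1$ factor uses the shared pair $(\vv_M,\vw_M)$ while the $h=2$ factor uses the independent pair $(\vv_{2,M},\vw_{2,M})$; but since the two resulting log-terms involve disjoint samples that are independent of each other, this again matches~\eqref{eq_Xi_3}.

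The $W_1$-convergence of the underlying magnetisation distributions is then transferred to $W_1$-convergence of $\vec\delta_{n,M}^+$, $\vec\delta_{n,M}^-$, $\vec\delta_{n,M}^\pm$ to $\xi_{M/m}^+$, $\xi_{M/m}^-$, $\xi_{M/m}^\pm$ using that the functions in~\eqref{eq_Xi_1}--\eqref{eq_Xi_3} are bounded Lipschitz in their magnetisation arguments (magnetisations lie in $[-1,1]$ and $\tanh\beta<1$ keeps $1+x\tanh\beta$ bounded away from zero). Taking expectations over $\fG_{n,M}$ and using the $o(1)$ error in Claim~\ref{claim_intervals_A} gives~\eqref{eq_claim_intervals_C}, with uniform control in $M$ inherited from \Prop~\ref{prop_tensor_B>0}.

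The main obstacle is the combinatorial bookkeeping for the mixed term $\vDelta_{n,M}^\pm$: one has to verify that the shared edge $\ve_M$ (used in the $h=1$ factor) and the independent edge $\ve_{2,M,m-M+1}$ (used in the $h=2$ factor) produce log-terms whose joint law decouples, in the limit, into a product that exactly matches the independent indexing of $\vec\mu^{(2)}$ in~\eqref{eq_Xi_3}. This reduces to showing that the vertex pairs $(\vv_M,\vw_M)$ and $(\vv_{2,M},\vw_{2,M})$ are, given $\fG_{n,M}$, asymptotically independent and uniform, which follows directly from the independence built into {\bf CPL1}--{\bf CPL2} together with the $O(\log n)$ sparsity of local neighbourhoods.
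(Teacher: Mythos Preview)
Your proposal is correct and follows essentially the same route as the paper's proof: invoke Claim~\ref{claim_intervals_A}, observe that the relevant vertex pairs are asymptotically uniform and independent given $\fG_{n,M}$, apply \Prop~\ref{prop_tensor_B>0} to identify the limiting law of the magnetisation pairs, and push through the bounded Lipschitz map. Your treatment of $\vDelta_{n,M}^\pm$ is more cautious than necessary---since $\ve_M$ and $\ve_{2,M,m-M+1}$ are conditionally independent given $\fG_{n,M}$, the argument is identical to that for $\vDelta_{n,M}^-$ (indeed $\xi_t^-$ and $\xi_t^\pm$ coincide as distributions), and the paper simply dispatches all three cases with ``analogously''.
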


To be quite explicit, the expectation in \eqref{eq_claim_intervals_C} refers to the choice of $(\GG_{h,M})_{h=1,2}$.
As far as the conditional distributions $\vec\delta_{n,M}^+,\vec\delta_{n,M}^-,\vec\delta_{n,M}^\pm$ are concerned, the graphs $(\GG_{h,M})_{h=1,2}$ are deemed fixed and the only randomness stems from the extra edges added in the course of {\bf CPL1}--{\bf CPL2} to obtain $\GG_{h,M}^+$ and $\GG_{h,M}^-$, respectively.

\begin{proof}[Proof of Claim~\ref{claim_intervals_C}]
	Since the vertex pair $(\vv_M,\vw_M)$ is asymptotically uniformly random given $\fG_{n,M}$,
	Eq.~\eqref{eqprop_tensor_B>0} from \Prop~\ref{prop_tensor_B>0} show that \whp\ given $\fG_{n,M}$ the two pairs of magnetisations
	\begin{align*}
		\bc{\scal{\SIGMA(\vv_{M})}{\mu_{\GG_{h,M}}}}_{h=1,2},&&\bc{\scal{\SIGMA(\vw_{M})}{\mu_{\GG_{h,M}}}}_{h=1,2}
	\end{align*}
	converge in distribution to two independent samples from $\pi_{d,\beta,B,M/m}^\tensor$.
	Hence, \eqref{eq_lem_intervals3} implies that
	\begin{align*}
		W_1(\vec\delta_{n,M}^+,\xi_{M/m}^+)&=o(1)&&\mbox\whp
	\end{align*}
	Analogously, we obtain $W_1(\vec\delta_{n,M}^-,\xi_{M/m}^-),W_1(\vec\delta_{n,M}^\pm,\xi_{M/m}^\pm)=o(1)$ \whp
\end{proof}

\begin{proof}[Proof of \Lem~\ref{lem_intervals}]
	Combing Claims~\ref{claim_intervals_A} and~\ref{claim_intervals_C}, we see that \whp
	\begin{align}\label{eq_lem_intervals_50}
		\ex\brk{\vDelta_{n,M}\mid\fG_{n,M}}&=\ex\brk{\vec\Xi_{M/m}}+o(1).
	\end{align}
	Furthermore, inspecting the definition~\eqref{eq_Xi_1} and reminding ourselves of~\eqref{eqBFEtensor}, we calculate
	\begin{align}\label{eq_lem_intervals_51}
	\ex\brk{\vec\Xi_{M/m}^+}&=\log^2\cosh\beta+2\ex\brk{\log\bc{1+\vec\mu_{\pi_{d,\beta,t}^\tensor,1,1}^{(0)}\vec\mu_{\pi_{d,\beta,t}^\tensor,2,1}^{(0)}\tanh\beta}}\log\cosh\beta+\cB_\beta^\tensor(\pi_{d,\beta,B,M/m}^\tensor).
	\end{align}
	Similarly, we obtain
	\begin{align}\nonumber
		\ex\brk{\vec\Xi_{M/m}^-}&=\ex\brk{\vec\Xi_{M/m}^-}\\
								&=\log^2\cosh\beta+2\ex\brk{\log\bc{1+\vec\mu_{\pi_{d,\beta,t}^\tensor,1,1}^{(0)}\vec\mu_{\pi_{d,\beta,t}^\tensor,2,1}^{(0)}\tanh\beta}}\log\cosh\beta+\ex\brk{\log\bc{1+\vec\mu_{\pi_{d,\beta,t}^\tensor,1,1}^{(0)}\vec\mu_{\pi_{d,\beta,t}^\tensor,2,1}^{(0)}\tanh\beta}}^2\nonumber\\
								&=\log^2\cosh\beta+2\ex\brk{\log\bc{1+\vec\mu_{\pi_{d,\beta,t}^\tensor,1,1}^{(0)}\vec\mu_{\pi_{d,\beta,t}^\tensor,2,1}^{(0)}\tanh\beta}}\log\cosh\beta+\ex\brk{\log\bc{1+\vec\mu_{\pi_{d,\beta,0}^\tensor,1,1}^{(0)}\vec\mu_{\pi_{d,\beta,0}^\tensor,2,1}^{(0)}\tanh\beta}}^2\nonumber\\
								&=\log^2\cosh\beta+2\ex\brk{\log\bc{1+\vec\mu_{\pi_{d,\beta,t}^\tensor,1,1}^{(0)}\vec\mu_{\pi_{d,\beta,t}^\tensor,2,1}^{(0)}\tanh\beta}}\log\cosh\beta+\cB_\beta^\tensor(\pi_{d,\beta,B,0}^\tensor).
\label{eq_lem_intervals_52}
	\end{align}
	Combining~\eqref{eq_lem_intervals_50}--\eqref{eq_lem_intervals_52} completes the proof.
\end{proof}

\begin{corollary}\label{cor_intervals}
	If $B>0$ then $\Var\sum_{0\leq M<m}\ex\brk{\vDelta_{n,M}\mid\fF_{n,M}}=o(m^2)$.
\end{corollary}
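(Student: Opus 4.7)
Set $c_M := \cB^\tensor_\beta(\pi^\tensor_{d,\beta,B,M/m}) - \cB^\tensor_\beta(\pi^\tensor_{d,\beta,B,0})$, which is deterministic in $M$, and write $Y_M := \ex[\vDelta_{n,M}\mid\fF_{n,M}]$. Since each log-$Z$ ratio entering $\vDelta_{n,M}$ lies in $[-\beta,\beta]$ by Fact~\ref{fact_adde}, we have the deterministic uniform bound $\max\{|\vDelta_{n,M}|,|Y_M|,|c_M|\}\leq 4\beta^2$. The plan is first to establish the $L^1$-estimate
\begin{align}\label{eq_plan_Y}
    \max_{0\leq M<m}\ex\abs{Y_M-c_M}=o(1),
\end{align}
and then to deduce the variance bound via Cauchy-Schwarz and boundedness.

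The heart of the argument is \eqref{eq_plan_Y}, which I would prove by retracing the proof of \Lem~\ref{lem_intervals} while replacing the conditioning on $\fG_{n,M}$ throughout by conditioning on $\fF_{n,M}$. \Cl~\ref{claim_intervals_A} applies verbatim and yields
\begin{align*}
    \vDelta_{n,M}^{+}=\prod_{h=1,2}\bigl[\log\cosh\beta+\log\bigl(1+\scal{\SIGMA(\vv_M)}{\mu_{\GG_{h,M}}}\scal{\SIGMA(\vw_M)}{\mu_{\GG_{h,M}}}\tanh\beta\bigr)\bigr]+o(1)\quad\text{with high probability},
\end{align*}
with parallel identities for $\vDelta_{n,M}^{-}$ and $\vDelta_{n,M}^{\pm}$ involving the endpoints of $\ve_{h,M,m-M+1}$ and $\ve_M$. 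Conditioning on $\fF_{n,M}$ freezes the endpoints $\vv_M,\vw_M$ of $\ve_M$ while the joint magnetisations at these vertices remain random through the fresh \textbf{CPL1} edges that constitute $(\GG_{h,M})_{h=1,2}$. A two-vertex adaptation of \Lem~\ref{lem_lwc_tensor}, applied to the fixed vertex pair $(\vv_M,\vw_M)$ rather than to a uniformly random pair, shows that for typical $\fF_{n,M}$ the depth-$\ell$ neighborhoods of $\vv_M$ and $\vw_M$ in $(\GG_{h,M})_{h=1,2}$ jointly converge in distribution to two independent copies of the truncated multi-type Galton-Watson tree $\TT^{\tensor\,(\ell)}$ with coupling parameter $M/m$. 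Letting $\ell\to\infty$ and invoking \Cor~\ref{cor_GGmag} to pass from the $+1$-boundary-conditioned magnetisations back to the true ones, the joint magnetisations converge in distribution to two independent samples from $\pi^\tensor_{d,\beta,B,M/m}$. Hence $\ex[\vDelta_{n,M}^{+}\mid\fF_{n,M}]\to\ex\vec\Xi_{M/m}^{+}$ in $L^1$, uniformly in $M$; the convergences $\ex[\vDelta_{n,M}^{-}\mid\fF_{n,M}]\to\ex\vec\Xi_{M/m}^{-}$ and $\ex[\vDelta_{n,M}^{\pm}\mid\fF_{n,M}]\to\ex\vec\Xi_{M/m}^{\pm}$ proceed identically, with $\ve_{h,M,m-M+1}$ playing the role of $\ve_M$ and its endpoints being uniformly random and independent of $\fF_{n,M}$. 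Summing with signs and reusing the algebraic identities~\eqref{eq_lem_intervals_51} and~\eqref{eq_lem_intervals_52} from the proof of \Lem~\ref{lem_intervals} gives~\eqref{eq_plan_Y}.

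Granted~\eqref{eq_plan_Y}, the bound $|Y_M-c_M|\leq 8\beta^2$ delivers $\ex(Y_M-c_M)^2\leq 8\beta^2\ex|Y_M-c_M|=o(1)$ uniformly in $M$. Since the $c_M$ are deterministic, Cauchy-Schwarz yields
\begin{align*}
    \Var\sum_{M=0}^{m-1}Y_M=\Var\sum_{M=0}^{m-1}(Y_M-c_M)\leq\ex\Bigl(\sum_{M=0}^{m-1}(Y_M-c_M)\Bigr)^2\leq m\sum_{M=0}^{m-1}\ex(Y_M-c_M)^2=o(m^2),
\end{align*}
as claimed. The main technical obstacle lies in the two-vertex local weak convergence underpinning~\eqref{eq_plan_Y}: conditioning on $\fF_{n,M}$ reveals the edges $\ve_1,\ldots,\ve_M$ and could in principle create short paths between $\vv_M$ and $\vw_M$, or unusually high-degree vertices in their neighborhoods in $(\GG_{h,M})_{h=1,2}$. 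One resolves this by verifying that with high probability over $\fF_{n,M}$ the endpoints $\vv_M,\vw_M$ have bounded degree in $\ve_1,\ldots,\ve_{M-1}$ and are at distance $\omega(1)$ in $\GG_{1,M}\cup\GG_{2,M}$, so their local neighborhoods are asymptotically independent Galton-Watson trees, by the same breadth-first exploration argument that underlies \Lem~\ref{lem_lwc_tensor}.
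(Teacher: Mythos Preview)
Your proposal is correct, and the overall architecture—establish the $L^1$-estimate \eqref{eq_plan_Y} and then deduce the variance bound via Cauchy--Schwarz—is sound. Your final step is in fact slightly cleaner than the paper's: once one knows $\ex|Y_M-c_M|=o(1)$ uniformly, the bound $\Var\sum Y_M\leq m\sum\ex(Y_M-c_M)^2=o(m^2)$ is immediate, whereas the paper expands the variance into diagonal and off-diagonal terms and bounds each covariance separately.

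Where you diverge from the paper is in the route to \eqref{eq_plan_Y}. You propose to re-run the local weak convergence analysis of \Lem~\ref{lem_intervals} with $\fG_{n,M}$ replaced by $\fF_{n,M}$, which forces you to develop a ``two-vertex adaptation'' of \Lem~\ref{lem_lwc_tensor} for the fixed endpoints $(\vv_M,\vw_M)$; this is workable but, as you acknowledge, carries a genuine technical burden (conditioning on $\fF_{n,M}$ freezes the type-$0$ part of the neighbourhood, so one must argue that this frozen part is itself tree-like with the right degree profile with high probability). The paper bypasses all of this with one line: since $\fF_{n,M}\subset\fG_{n,M}$, the tower property gives $Y_M=\ex\bigl[\ex[\vDelta_{n,M}\mid\fG_{n,M}]\,\big|\,\fF_{n,M}\bigr]$, and \Lem~\ref{lem_intervals} already shows $\ex[\vDelta_{n,M}\mid\fG_{n,M}]=c_M+o(1)$ with high probability; conditional Jensen then yields $\ex|Y_M-c_M|\leq\ex\bigl|\ex[\vDelta_{n,M}\mid\fG_{n,M}]-c_M\bigr|=o(1)$. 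So the key lemma you need is already in hand, and the two-vertex local weak convergence is unnecessary.
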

\begin{proof}
	We expand
	\begin{align}\nonumber
		\Var\sum_{0\leq M<m}\ex\brk{\vDelta_{n,M}\mid\fF_{n,M}}&=\sum_{0\leq M<m}\Var\bc{\ex\brk{\vDelta_{n,M}\mid\fF_{n,M}}}\\&+2\sum_{0\leq M<M'<m}\ex\brk{\ex\brk{\vDelta_{n,M}\mid\fF_{n,M}}\ex\brk{\vDelta_{n,M'}\mid\fF_{n,M'}}}-\ex\brk{\vDelta_{n,M}}\ex\brk{\vDelta_{n,M'}}.
		\label{eq_cor_intervals_1}
	\end{align}
	Since the random variables $\vDelta_{n,M}$ are bounded uniformly, we immediately obtain
	\begin{align}\label{eq_cor_intervals_2}
		\sum_{0\leq M<m}\Var\bc{\ex\brk{\vDelta_{n,M}\mid\fF_{n,M}}}&=O(m).
	\end{align}

	To bound the second summand from~\eqref{eq_cor_intervals_1} fix $0\leq M<M'\leq m$.
	Since $\fF_{n,M}\subset\fG_{n,M}$, Claim~\ref{claim_intervals_C} implies that
	\begin{align}\label{eq_cor_intervals_10}
		\ex\brk{\vDelta_{n,M}\mid\fF_{n,M}}&=\ex\brk{\vec\Xi_{n,M/m}}+o(1),&
		\ex\brk{\vDelta_{n,M'}\mid\fF_{n,M'}}&=\ex\brk{\vec\Xi_{n,M'/m}}+o(1)&&\mbox\whp
	\end{align}
	Furthermore, taking expectations of the l.h.s.\ expressions from~\eqref{eq_cor_intervals_10}, we get
	\begin{align}\label{eq_cor_intervals_11}
		\ex\brk{\vDelta_{n,M}}&=\ex\brk{\vec\Xi_{n,M/m}}+o(1),&
		\ex\brk{\vDelta_{n,M'}}&=\ex\brk{\vec\Xi_{n,M'/m}}+o(1).
	\end{align}
	Combining~\eqref{eq_cor_intervals_10}--\eqref{eq_cor_intervals_11}, we arrive at the bound
	\begin{align}
		\sum_{0\leq M<M'<m}\ex\brk{\ex\brk{\vDelta_{n,M}\mid\fF_{n,M}}\ex\brk{\vDelta_{n,M'}\mid\fF_{n,M'}}}-\ex\brk{\vDelta_{n,M}}\ex\brk{\vDelta_{n,M'}}&=o(m^2).
		\label{eq_cor_intervals_12}
	\end{align}
	Finally, the assertion follows from~\eqref{eq_cor_intervals_1}, \eqref{eq_cor_intervals_2} and~\eqref{eq_cor_intervals_12}.
\end{proof}

\begin{proof}[Proof of \Prop~\ref{cor_tensor_B>0}]
	Since the random variables $\vDelta_{n,M}$ are all bounded uniformly, \Lem~\ref{lem_intervals} yields
	\begin{align*}
		\frac1m\sum_{0\leq M<m}\ex\brk{\vDelta_{n,M}}&=\frac1m\sum_{0\leq M<m}\cB_{\beta}^\tensor(\pi^\tensor_{d,\beta,B,M/m})-\cB_{\beta}^\tensor(\pi^\tensor_{d,\beta,B,0})+o(1).
	\end{align*}
	Due to the continuity statement from \Prop~\ref{prop_fix_ex} we can replace the sum on the r.h.s.\ by an integral to obtain
	\begin{align}\label{eq_cor_tensor_B>0_1}
		\frac1m\sum_{0\leq M<m}\ex\brk{\vDelta_{n,M}}&=\int_0^1\cB_{\beta}^\tensor(\pi^\tensor_{d,\beta,B,t})\dd t-\cB_{\beta}^\tensor(\pi^\tensor_{d,\beta,B,0})+o(1).
	\end{align}
	Further, \eqref{eq_cor_tensor_B>0_1} implies together with \Cor~\ref{cor_intervals} and Chebyshev's inequality that
	\begin{align}\label{eq_cor_tensor_B>0_10}
		\ex\abs{
		\int_0^1\cB_{\beta}^\tensor(\pi^\tensor_{d,\beta,B,t})\dd t-\cB_{\beta}^\tensor(\pi^\tensor_{d,\beta,B,0})-\frac1m\sum_{0\leq M<m}\vDelta_{n,M}}&=o(1).
	\end{align}
	Thus, the assertion follows from \eqref{eq_cor_tensor_B>0_10} and \Lem~\ref{fact_telescope}.
\end{proof}

\subsection{Proof of \Prop~\ref{prop_var_pos_B>0}}\label{sec_prop_var_pos_B>0}

The fundamental reason why the random variable $\log Z_{\GG}(\beta,B)$ has variance $\Omega(n)$ is that the limiting distribution $\pi_{d,\beta,B}$ of the vertex magnetisations fails to concentrate on a single point, as verified by the following lemma.

\begin{lemma}\label{lem_noatom_B>0}
	Assume that $B>0$.
	Then $\pi_{d,\beta,B}$ is not supported on a single point.
\end{lemma}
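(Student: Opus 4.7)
The plan is to exhibit two disjoint events in the Galton--Watson tree $\TT$, each of positive probability, on which the limiting root magnetisation $\vec R(\TT)=\lim_{\ell\to\infty}\scal{\SIGMA(\root)}{\mu^{(\ell)}_{\TT,\beta,B}}$ attains two distinct deterministic values. By \Cor~\ref{cor_conv} (and its proof via \Cor~\ref{cor_BP_tree}), the distribution $\pi_{d,\beta,B}$ is precisely the law of $\vec R(\TT)$, so producing two atoms of $\vec R(\TT)$ at different points already rules out $\pi_{d,\beta,B}$ being a Dirac mass.

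The first event is $\fA=\{\vd_{\root}=0\}$, i.e.\ the root has no offspring in $\TT$. By the Poisson offspring distribution $\pr[\fA]=\eul^{-d}>0$. On $\fA$ there are no vertices of $\TT$ at distance $\ell\geq1$ from $\root$, so the conditioning in~\eqref{eqMuT} is vacuous, $\mu^{(\ell)}_{\TT,\beta,B}$ reduces to the single-spin measure $\mu(\sigma)\propto\eul^{B\sigma}$, and hence $\vec R_\ell(\TT)=\tanh B$ for every $\ell\geq1$. In particular $\vec R(\TT)=\tanh B$ on $\fA$.

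The second event is $\fB=\{\vd_{\root}=1,\ \vd_{v}=0\}$, where $v$ is the unique child of the root. Again by the Poisson offspring distribution, $\pr[\fB]=\eul^{-d}\cdot d\eul^{-d}=d\eul^{-2d}>0$. On $\fB$, for every $\ell\geq2$ the set $\partial^\ell(\TT,\root)$ is empty, so $\mu^{(\ell)}_{\TT,\beta,B}$ is the unconditional Ising measure on the two-vertex path $(\root,v)$. Applying \Lem~\ref{lem_BP_tree} with the single child $v$ whose own magnetisation equals $\tanh B$ (by the same vacuous-boundary argument applied to $T_v$) yields after a short calculation
\begin{align*}
\vec R_\ell(\TT)=\frac{\eul^{B}(1+\tanh B\tanh\beta)-\eul^{-B}(1-\tanh B\tanh\beta)}{\eul^{B}(1+\tanh B\tanh\beta)+\eul^{-B}(1-\tanh B\tanh\beta)}=\frac{\tanh B\,(1+\tanh\beta)}{1+\tanh^2 B\,\tanh\beta}\qquad(\ell\geq 2).
\end{align*}
Since $B,\beta>0$ give $\tanh^2 B\,\tanh\beta<\tanh\beta$, this value strictly exceeds $\tanh B$, so $\vec R(\TT)>\tanh B$ on $\fB$.

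As $\fA$ and $\fB$ are disjoint, both carry positive probability, and $\vec R(\TT)$ takes distinct values on them, the law of $\vec R(\TT)$ is supported on at least two points. There is no real obstacle here: the whole argument boils down to the two explicit small-tree computations above, which are trivialisations of the BP recursion in \Lem~\ref{lem_BP_tree}.
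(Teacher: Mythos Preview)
Your proof is correct and follows essentially the same approach as the paper: both exhibit the two events $\{\vd_\root=0\}$ and $\{\vd_\root=1,\ \vd_v=0\}$, compute the root magnetisation on each, and verify the two resulting values differ. Your expression $\tanh B\,(1+\tanh\beta)/(1+\tanh^2 B\,\tanh\beta)$ is algebraically equivalent to the paper's $\eta(\beta,B)$, and your inequality $\tanh^2 B\,\tanh\beta<\tanh\beta$ plays the same role as the paper's observation that $\eta(0,B)=\tanh B$ with $\partial\eta/\partial\beta>0$.
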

\begin{proof}
	We produce two distinct points that belong to the support of $\pi_{d,\beta,B}$.
	With probability $\exp(-d)$ the Galton-Watson tree $\TT$ consists of the root only.
	In this case we have $\Scal{\SIGMA(\root)}{\mu_{\TT,\root}^{(\ell)}}=\tanh B$ for all $\ell\geq1$.
	Hence,
	\begin{align}\label{eq_lem_noatom_B>0_1}
		\tanh B\in\supp{\pi_{d,\beta,B}}.
	\end{align}

	Further, with probability $d\exp(-2d)$ the tree $\TT$ consists of the root $\root$ and a single child $\theta$ (and the edge $\root\theta$, of course).
	A direct calculation shows that in this case we have
	\begin{align}\label{eq_lem_noatom_B>0_2}
		\scal{\SIGMA(\root)}{\mu_{\TT,\root}^{(\ell)}}&=\eta(\beta,B)=\frac{\exp(2(\beta+B))-\exp(2(\beta-B))}{2+\exp(2(\beta+B))+\exp(2(\beta-B))}.
	\end{align}
	The function $\eta(\beta,B)$ satisfies $\eta(0,B)=\tanh B$ and $\frac\partial{\partial\beta}\eta(\beta,B)>0$.
	Hence, \eqref{eq_lem_noatom_B>0_1} and~\eqref{eq_lem_noatom_B>0_2} demonstrate that the two distinct points $\tanh B,\eta(\beta,B)$ belong to the support of $\pi_{d,\beta,B}$.
\end{proof}

To show that the integral $\Sigma(d,\beta,B)=\int_0^1\cB^\tensor_{\beta}(\pi^\tensor_{d,\beta,B,t})-\cB^\tensor_{\beta,B,0}(\pi^\tensor_{d,\beta,B,0})\dd t$ from~\eqref{eqSigmadbetaB} is strictly positive, we first ascertain that the function being integrated is non-negative for all $t$.

\begin{lemma}\label{lem_integrant_B>0}
	Assume that $B>0$.
	Then $\cB^\tensor_\beta(\pi^\tensor_{d,\beta,B,t})\geq\cB^\tensor_\beta(\pi^\tensor_{d,\beta,B,0})$ for all $0\leq t\leq1$.
\end{lemma}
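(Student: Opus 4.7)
The plan is to exploit the hierarchical structure of $\pi_{d,\beta,B,t}^\tensor$ encoded by the multi-type Galton-Watson tree $\TT^\tensor$. Writing $f(x,y)=\log(1+xy\tanh\beta)$ and letting $(X_i,Y_i)_{i=1,2}$ denote two independent samples from $\pi_{d,\beta,B,t}^\tensor$, the definition \eqref{eqBFEtensor} reads
\begin{align*}
\cB_{\beta}^\tensor(\pi_{d,\beta,B,t}^\tensor)=\ex\brk{f(X_1,X_2)f(Y_1,Y_2)}.
\end{align*}
By \Rem~\ref{rem_extreme} at $t=0$ the four variables $X_1,Y_1,X_2,Y_2$ become mutually independent with common marginal $\pi_{d,\beta,B}$, so $f(X_1,X_2)$ and $f(Y_1,Y_2)$ are independent and $\cB_{\beta}^\tensor(\pi_{d,\beta,B,0}^\tensor)=\ex\brk{f(X_1,X_2)}^2$. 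Hence the lemma reduces to the covariance inequality $\ex\brk{f(X_1,X_2)f(Y_1,Y_2)}\geq\ex\brk{f(X_1,X_2)}^2$ for $(X_i,Y_i)\sim\pi_{d,\beta,B,t}^\tensor$.

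First I would work at finite depth $\ell$, realising $(X_i^{(\ell)},Y_i^{(\ell)})\sim\pi_{d,\beta,B,t}^{\tensor\,(\ell)}$ via \Lem~\ref{lem_tensor_ell} as the pair of root magnetisations of $\TT^\tensor[1],\TT^\tensor[2]$ with $+1$ boundary at depth $\ell$ arising from an independent copy of the coupled tree $\TT^\tensor$. Let $\cG_i$ be the $\sigma$-algebra generated by the type-0 sub-tree (up to depth $\ell$) of the $i$-th copy. The construction of $\TT^\tensor$ ensures that the type-1 and type-2 descendants attached at each type-0 vertex are mutually independent and share the same offspring distribution (both being $\Po(d(1-t))$ followed by independent plain $\Po(d)$-Galton-Watson sub-trees). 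Hence $X_i^{(\ell)}$ and $Y_i^{(\ell)}$ are conditionally i.i.d.\ given $\cG_i$; and since the two copies are themselves independent, the four variables $X_1^{(\ell)},Y_1^{(\ell)},X_2^{(\ell)},Y_2^{(\ell)}$ are jointly independent given $(\cG_1,\cG_2)$.

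It follows that $f(X_1^{(\ell)},X_2^{(\ell)})$ and $f(Y_1^{(\ell)},Y_2^{(\ell)})$ are conditionally independent given $(\cG_1,\cG_2)$ and share the same conditional expectation, which yields the key identity
\begin{align*}
\ex\brk{f(X_1^{(\ell)},X_2^{(\ell)})\,f(Y_1^{(\ell)},Y_2^{(\ell)})\mid\cG_1,\cG_2}=\ex\brk{f(X_1^{(\ell)},X_2^{(\ell)})\mid\cG_1,\cG_2}^2.
\end{align*}
Taking unconditional expectations and invoking Jensen's inequality gives
\begin{align*}
\cB_{\beta}^\tensor(\pi_{d,\beta,B,t}^{\tensor\,(\ell)})\geq\ex\brk{f(X_1^{(\ell)},X_2^{(\ell)})}^2=\cB_{\beta}^\tensor(\pi_{d,\beta,B,0}^{\tensor\,(\ell)}),
\end{align*}
where the last equality holds because at $t=0$ the operator $\BP_{d,\beta,B,0}^\tensor$ acts coordinatewise on the product initial condition $\delta_{(1,1)}$, so $\pi_{d,\beta,B,0}^{\tensor\,(\ell)}=\pi_{d,\beta,B}^{(\ell)}\tensor\pi_{d,\beta,B}^{(\ell)}$. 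Finally, the inequality passes to the limit $\ell\to\infty$ because $(x,y)\mapsto\log(1+xy\tanh\beta)$ is bounded and Lipschitz on $[-1,1]^2$, so $\cB_\beta^\tensor$ is continuous in the $W_1$-metric, and the $W_1$-convergence $\pi_{d,\beta,B,t}^{\tensor\,(\ell)}\to\pi_{d,\beta,B,t}^\tensor$ is supplied by \Cor~\ref{cor_tensor_ell}.

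There is no serious obstacle here: once the conditional i.i.d.\ reduction given the shared type-0 backbone is identified, the rest is Fubini combined with Jensen. The only mildly delicate point is justifying the conditional i.i.d.\ structure at finite depth, which rests on the symmetry between types~1 and~2 in the construction of $\TT^\tensor$, and the routine continuity-based passage to the BP fixed point.
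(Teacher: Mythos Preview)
Your proof is correct and takes a genuinely different route from the paper's. The paper argues indirectly via the random graph: by \Lem~\ref{fact_telescope} one has $m\vX_{n,M}^2=\ex[\vDelta_{n,M}\mid\fF_{n,M}]\geq0$, and \Lem~\ref{lem_intervals} identifies $\ex[\vDelta_{n,M}]$ with $\cB_\beta^\tensor(\pi_{d,\beta,B,M/m}^\tensor)-\cB_\beta^\tensor(\pi_{d,\beta,B,0}^\tensor)+o(1)$; combined with the continuity of $t\mapsto\cB_\beta^\tensor(\pi_{d,\beta,B,t}^\tensor)$ from \Prop~\ref{prop_fix_ex} this yields the inequality for all $t$. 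In other words, the paper recognises the difference as (asymptotically) a conditional variance and lets non-negativity of squares do the work.

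Your argument is more direct and self-contained: you isolate the structural reason for non-negativity, namely the conditional exchangeability of $\TT^\tensor[1]$ and $\TT^\tensor[2]$ given the type-$0$ backbone, which turns $\cB_\beta^\tensor(\pi_{d,\beta,B,t}^{\tensor\,(\ell)})$ into the second moment of a conditional expectation and reduces the claim to Jensen. This avoids the detour through $\GG$, does not actually use the hypothesis $B>0$ anywhere (so it also yields \Lem~\ref{lem_integrant_B=0} in one stroke), and makes transparent why the integrand in the variance formula \eqref{eqSigmadbetaB} is pointwise non-negative. The paper's route, on the other hand, is essentially free once \Lem~\ref{lem_intervals} is in place for the main argument, so in context it costs nothing extra.
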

\begin{proof}
	Fix any $\eps>0$.
	\Lem~\ref{fact_telescope} and \Lem~\ref{lem_intervals} show that for sufficiently large $n$ for all $0\leq M\leq m$ we have
	\begin{align}\label{eq_lem_integrant_B>0_100}
		\cB_{\beta}^\tensor(\pi^\tensor_{d,\beta,B,M/m})-\cB_{\beta}^\tensor(\pi^\tensor_{d,\beta,B,0})\geq-\eps.
	\end{align}
	Since \Prop~\ref{prop_fix_ex} provides that the function $t\in[0,1]\mapsto\cB_\beta^\tensor(\pi^\tensor_{d,\beta,B,t})$ is continuous and thus uniformly continuous, the assertion follows from \eqref{eq_lem_integrant_B>0_100}.
\end{proof}

Finally, we use a convexity argument to prove that the function value is strictly positive at $t=1$.

\begin{lemma}\label{lem_boundary_B>0}
	Assume that $B>0$.
	Then $\cB^\tensor_\beta(\pi^\tensor_{d,\beta,B,1})>\cB^\tensor_\beta(\pi^\tensor_{d,\beta,B,0})$ for all $0\leq t\leq1$.
\end{lemma}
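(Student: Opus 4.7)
My approach is to identify the two endpoint distributions explicitly via Remark~\ref{rem_extreme} and thereby reduce the claim to the strict positivity of a single variance, which then follows from \Lem~\ref{lem_noatom_B>0}.

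First I would invoke Remark~\ref{rem_extreme}: at $t = 0$ the measure $\pi^\tensor_{d,\beta,B,0} = \pi_{d,\beta,B} \tensor \pi_{d,\beta,B}$ has independent coordinates, whereas at $t = 1$ it is supported on the diagonal, i.e.\ the law of a pair $(X, X)$ with $X \sim \pi_{d,\beta,B}$. Substituting these two descriptions into the definition~\eqref{eqBFEtensor} of $\cB^\tensor_\beta$, and writing $Z = \log(1 + XY\tanh\beta)$ with $X, Y$ independent samples from $\pi_{d,\beta,B}$, I would observe that at $t = 1$ both factors in the product over $h \in \{1, 2\}$ equal $Z$, while at $t = 0$ the four coordinate variables are mutually independent, so the two factors factorise. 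Consequently,
\begin{align*}
\cB^\tensor_\beta(\pi^\tensor_{d,\beta,B,1}) - \cB^\tensor_\beta(\pi^\tensor_{d,\beta,B,0}) = \ex[Z^2] - (\ex[Z])^2 = \Var(Z).
\end{align*}

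The only remaining task is to show $\Var(Z) > 0$. Since $x \mapsto \log(1 + x\tanh\beta)$ is strictly monotone on $[-1, 1]$ for $\beta > 0$, this is equivalent to the product $XY$ not being almost surely constant. Here I would appeal to \Lem~\ref{lem_noatom_B>0}, which exhibits two distinct strictly positive points $a = \tanh B$ and $b = \eta(\beta, B) > a$ in the support of $\pi_{d,\beta,B}$. Hence both values $a^2$ and $ab$ are attained by $XY$ on arbitrarily small neighbourhoods with strictly positive probability, and they are distinct because $b > a > 0$, so $XY$ is non-degenerate and $\Var(Z) > 0$. I do not foresee a serious technical obstacle: the entire argument rests on two already-established facts (Remark~\ref{rem_extreme} on the boundary distributions and \Lem~\ref{lem_noatom_B>0} on the non-atomicity of $\pi_{d,\beta,B}$), and the reduction to variance positivity is a direct consequence of Jensen's inequality.
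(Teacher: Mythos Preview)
Your proposal is correct and follows essentially the same approach as the paper: invoke Remark~\ref{rem_extreme} to identify the two endpoint measures, rewrite $\cB^\tensor_\beta(\pi^\tensor_{d,\beta,B,1})-\cB^\tensor_\beta(\pi^\tensor_{d,\beta,B,0})$ as the variance of $\log(1+XY\tanh\beta)$ with $X,Y$ i.i.d.\ from $\pi_{d,\beta,B}$, and then appeal to \Lem~\ref{lem_noatom_B>0} to get strict positivity. The only difference is that you spell out explicitly why non-atomicity of $\pi_{d,\beta,B}$ forces $XY$ to be non-degenerate, whereas the paper simply cites \Lem~\ref{lem_noatom_B>0} together with Jensen.
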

\begin{proof}
	We already noticed in Remark~\ref{rem_extreme} that $\pi^\tensor_{d,\beta,B,0}=\pi_{d,\beta,B}\tensor\pi_{d,\beta,B}$ and that $\pi_{d,\beta,B,1}$ is precisely the distribution of the `diagonal' pair $(\vec\mu_{\pi_{d,\beta,B},1},\vec\mu_{\pi_{d,\beta,B},1})\in[0,1]^2$.
	Hence, evaluating the expression~\eqref{eqBFEtensor}, we obtain
	\begin{align}\label{eq_lem_boundary_B>0_1}
		\cB_\beta^\tensor(\pi_{d,\beta,B,0}^\tensor)&=\ex\brk{\log\bc{1+\vec\mu_{\pi_{d,\beta,B},1}\vec\mu_{\pi_{d,\beta,B},2}\tanh\beta}}^2,&
		\cB_\beta^\tensor(\pi_{d,\beta,B,1}^\tensor)&=\ex\brk{\log^2\bc{1+\vec\mu_{\pi_{d,\beta,B},1}\vec\mu_{\pi_{d,\beta,B},2}\tanh\beta}}.
	\end{align}
	Therefore, \Lem~\ref{lem_noatom_B>0} and Jensen's inequality yield $\cB^\tensor_\beta(\pi^\tensor_{d,\beta,B,1})>\cB^\tensor_\beta(\pi^\tensor_{d,\beta,B,0})$.
\end{proof}

\begin{proof}[Proof of \Prop~\ref{prop_var_pos_B>0}]
	The proposition follows from \Lem s~\ref{lem_noatom_B>0}--\ref{lem_boundary_B>0} and the continuity of the function $t\mapsto\cB^\tensor_\beta(\pi^\tensor_{d,\beta,B,t})$ established by \Prop~\ref{prop_fix_ex}.
\end{proof}

\section{Proof of \Thm~\ref{thm_no}}\label{sec_B=0}

\subsection{Proof of \Prop~\ref{prop_BP_B=0}}\label{sec_prop_BP_B=0}
Part~(i) of \Prop~\ref{prop_BP_B=0} follows directly from \Lem~\ref{lem_conv}, which covers all $B\geq0$.

To prove \Prop~\ref{prop_BP_B=0}~(iii) we observe the following.

\begin{lemma}\label{lem_extinction}
	For all $d,\beta>0$ we have $\pi_{d,\beta,0}(\{0\})\in\{\rho_d,1\}$.
\end{lemma}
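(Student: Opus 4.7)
The plan is to identify $\pi_{d,\beta,0}$ with the law of the almost-sure limit of the root magnetisations on the Galton--Watson tree $\TT$, derive a functional equation for $\rho := \pi_{d,\beta,0}(\{0\})$, and recognise it as the extinction-probability equation of a $\Po(d)$ Galton--Watson process.

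Let $\vec R_\ell(T) = \scal{\SIGMA(\root)}{\mu_{T,\beta,0}^{(\ell)}}$. The proof of \Lem~\ref{lem_conv} shows that $\vec R_\ell(\TT) \downarrow \vec R(\TT) \in [0,1]$ almost surely, and combined with \Cor~\ref{cor_BP_tree} and \Cor~\ref{cor_conv} this identifies $\pi_{d,\beta,0}$ as the law of $\vec R(\TT)$. Thus $\rho = \Pr[\vec R(\TT) = 0]$, and it suffices to show $\rho \in \{\rho_d, 1\}$.

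Next, I would pass to the limit $\ell \to \infty$ in the BP recursion of \Lem~\ref{lem_BP_tree} applied to $\TT$ at $B = 0$. Almost surely the root has a finite number of children $v \in \partial(\TT, \root)$, so \eqref{eq_lem_BP_tree} expresses $\vec R_\ell(\TT)$ as a continuous function of the finite vector $(\vec R_{\ell-1}(T_v))_v$; the a.s.\ convergence $\vec R_{\ell-1}(T_v) \to \vec R(T_v)$ for each child $v$ then yields
\begin{align*}
\vec R(\TT) &= \frac{\prod_{v \in \partial(\TT,\root)} (1 + \vec R(T_v) \tanh\beta) - \prod_{v \in \partial(\TT,\root)} (1 - \vec R(T_v) \tanh\beta)}{\prod_{v \in \partial(\TT,\root)} (1 + \vec R(T_v) \tanh\beta) + \prod_{v \in \partial(\TT,\root)} (1 - \vec R(T_v) \tanh\beta)}.
\end{align*}
Since every $\vec R(T_v) \in [0,1]$ by \Prop~\ref{prop_BP_B=0}~(i) and $\tanh\beta > 0$, each factor $1 + \vec R(T_v)\tanh\beta$ dominates $1 - \vec R(T_v)\tanh\beta$, with equality precisely when $\vec R(T_v) = 0$. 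Hence the numerator vanishes iff $\vec R(T_v) = 0$ for every child $v$, i.e., $\{\vec R(\TT) = 0\} = \{\vec R(T_v) = 0 \text{ for all } v \in \partial(\TT,\root)\}$ almost surely.

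Finally, conditional on $|\partial(\TT,\root)| = k$ the subtrees $(T_v)$ are i.i.d.\ copies of $\TT$, and $|\partial(\TT,\root)| \disteq \Po(d)$. The previous step therefore yields the fixed-point equation
\begin{align*}
\rho &= \sum_{k \geq 0} e^{-d} \frac{d^k}{k!}\, \rho^k = \exp\bc{d(\rho - 1)}.
\end{align*}
The map $f(\rho) = \exp(d(\rho - 1)) - \rho$ is strictly convex on $[0,1]$ (since $f''(\rho) = d^2 e^{d(\rho-1)} > 0$) and satisfies $f(1) = 0$, so it has at most one other zero in $[0,1]$; by definition that zero, when it exists, is $\rho_d$ (for $d \leq 1$ one has $\rho_d = 1$, in which case the two solutions coincide), giving $\rho \in \{\rho_d, 1\}$. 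The only step calling for care is the limit exchange inside the BP recursion, but this is immediate from the a.s.\ finiteness of the root's offspring together with the monotone convergence from \Lem~\ref{lem_conv}, so no uniform integrability issue arises.
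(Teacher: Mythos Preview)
Your proof is correct and follows essentially the same route as the paper's: both establish the fixed-point equation $\rho=\exp(d(\rho-1))$ and then invoke convexity to conclude $\rho\in\{\rho_d,1\}$. The only cosmetic difference is that the paper works directly with the distributional fixed-point property $\pi_{d,\beta,0}=\BP_{d,\beta,0}(\pi_{d,\beta,0})$ (computing $\pi_{d,\beta,0}(\{0\})$ as the probability that the expression in \eqref{eqBPrec} vanishes), whereas you pass through the almost-sure limit $\vec R(\TT)$ on the Galton--Watson tree and take $\ell\to\infty$ pointwise in \eqref{eq_lem_BP_tree}; these are two phrasings of the same computation.
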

\begin{proof}
	The distribution $\pi_{d,\beta,0}$ is a fixed point of the operator $\BP_{d,\beta,0}$ from~\eqref{eqBPop}, i.e., $\pi_{d,\beta,0}=\BP_{d,\beta,0}(\pi_{d,\beta,0})$.
	Since $\supp{\pi_{d,\beta,0}}\subseteq[0,1]$ by \Prop~\ref{prop_BP_B=0}~(i), the definition \eqref{eqBPrec} of $\BP_{d,\beta,0}$ implies that
	\begin{align}\nonumber
		\pi_{d,\beta,0}(\{0\})&=\pr\brk{\sum_{s\in\PM}s\prod_{i=1}^{\vec\Delta}\bc{1+s\vec\mu_{\pi_{d,\beta,0},i}\tanh\beta}=0}\\
							  &=\pr\brk{\max_{1\leq i\leq\vDelta}\vec\mu_{\pi_{d,\beta,0}}=0}=\sum_{j=0}^\infty\pr\brk{\Po(d)=j}\pi_{d,\beta,0}(\{0\})=\exp(d(\pi_{d,\beta,0}(\{0\})-1)).\label{eqlem_extinction1}
	\end{align}
	Furthermore, the fixed points of the convex function $z\mapsto\exp(d(z-1))$ are precisely $z=\rho_d$ and $z=1$.
	Thus, the assertion follows from \eqref{eqlem_extinction1}.
\end{proof}

\noindent
Hence, to prove \Prop~\ref{prop_BP_B=0}~(ii) we are left to rule out that $\pi_{d,\beta}=\delta_0$ for $d>1$ and $\beta>\betaf(d)$.

\begin{lemma}\label{lemma_lyons}
	If $d>1$ and $\beta>\betaf(d)$ then $\int_0^1z\dd\pi_{d,\beta,0}(z)>0$.
\end{lemma}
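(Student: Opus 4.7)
The plan is to identify the mean of $\pi_{d,\beta,0}$ with the expected limiting root magnetisation on the Galton-Watson tree $\TT=\TT(d)$ and then invoke Lyons' theorem (\Lem~\ref{lem_LYonORg}) together with the branching number computation (\Lem~\ref{cl_BrNumOfT}). By \Cor~\ref{cor_conv}, $\pi_{d,\beta,0}$ is the law of the almost sure limit $\vec R(\TT)=\lim_{\ell\to\infty}\vec R_\ell(\TT)$ from~\eqref{eqRT}, where $\vec R_\ell(\TT)=\Scal{\SIGMA(\root)}{\mu^{(\ell)}_{\TT,\beta,0}}$. Hence
\begin{align*}
\int_0^1 z\dd\pi_{d,\beta,0}(z)=\ex[\vec R(\TT)],
\end{align*}
and it suffices to prove that $\vec R(\TT)>0$ with positive probability.

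First I would split according to whether $\TT$ is finite or infinite. Since $d>1$, the Galton-Watson process survives with probability $1-\rho_d>0$, so $\pr[\TT\text{ is infinite}]>0$. On the event that $\TT$ is infinite, \Lem~\ref{cl_BrNumOfT} tells us that $\br(\TT)=d$ almost surely, so $\betaf(\br(\TT))=\betaf(d)$. Our hypothesis $\beta>\betaf(d)$ then places us precisely in the regime covered by Lyons' theorem, so applying \Lem~\ref{lem_LYonORg} to the (random but locally finite) tree $\TT$ yields
\begin{align*}
\liminf_{\ell\to\infty}\vec R_\ell(\TT)>0\qquad\text{almost surely on }\{\TT\text{ is infinite}\}.
\end{align*}

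Next I would use the monotonicity from \Lem~\ref{lem_conv}: the sequence $\vec R_\ell(\TT)$ is non-increasing in $\ell$ because the $+1$ boundary condition imposed at depth $\ell$ is pointwise dominated (via \Lem~\ref{lem_kostas}) by the $+1$ boundary condition at depth $\ell+1$ when extended. Consequently $\vec R(\TT)=\lim_\ell \vec R_\ell(\TT)=\liminf_\ell \vec R_\ell(\TT)$, and the previous step gives $\vec R(\TT)>0$ almost surely on $\{\TT\text{ is infinite}\}$. Since this event has probability at least $1-\rho_d>0$ and $\vec R(\TT)\geq0$ always by \Lem~\ref{lem_conv}, we conclude
\begin{align*}
\int_0^1 z\dd\pi_{d,\beta,0}(z)=\ex[\vec R(\TT)]\geq\ex\brk{\vec R(\TT)\vecone\cbc{\TT\text{ infinite}}}>0,
\end{align*}
as desired.

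All of the pieces are already in place: the identification of $\pi_{d,\beta,0}$ with the law of $\vec R(\TT)$, the monotone convergence of $\vec R_\ell(\TT)$, Lyons' theorem, and the branching-number identity for super-critical Poisson Galton-Watson trees. The only minor subtlety is that \Lem~\ref{lem_LYonORg} requires $\TT$ to be locally finite, which holds almost surely because $\Po(d)$ has finite values. Beyond that, the argument is essentially a bookkeeping exercise, so there is no substantial obstacle.
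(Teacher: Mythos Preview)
Your proof is correct and follows essentially the same approach as the paper: both identify the mean of $\pi_{d,\beta,0}$ with the expected limiting root magnetisation on $\TT$, condition on survival (probability $1-\rho_d>0$), and invoke \Lem~\ref{cl_BrNumOfT} together with Lyons' theorem \Lem~\ref{lem_LYonORg}. The paper passes through the weak convergence $\pi_{d,\beta,0}^{(\ell)}\to\pi_{d,\beta,0}$ and \Cor~\ref{cor_BP_tree} rather than the almost-sure limit $\vec R(\TT)$, but the content is identical.
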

\begin{proof}
	Recalling that $\rho_d$ denotes the probability that the $\Po(d)$-Galton-Watson process dies out, we obtain
	\begin{align*}
		\int_0^1z\dd\pi_{d,\beta,0}(z)&=\lim_{\ell\to\infty}\int_0^1z\dd\pi_{d,\beta,0}^{(\ell)}(z)&&\mbox{[as $\lim_{\ell\to\infty}\pi_{d,\beta,0}^{(\ell)}=\pi_{d,\beta,0}$ weakly]}\\
									  &=\lim_{\ell\to\infty}\ex\brk{\scal{\SIGMA(\root)}{\mu_{\TT, \root,\beta, 0}^{(\ell)}}}&&\mbox{[by \Cor~\ref{cor_BP_tree}]}\\
									  &\geq(1-\rho_d)\lim_{\ell\to\infty}\ex\brk{\scal{\SIGMA(\root)}{\mu_{\TT, \root,\beta, 0}^{(\ell)}}\mid\TT\mbox{ is infinite}}>0&&\mbox{[by \Lem s~\ref{lem_LYonORg} and~\ref{cl_BrNumOfT}]},
	\end{align*}
	as desired.
\end{proof}

	\noindent
Thus, \Prop~\ref{prop_BP_B=0}~(ii) follows from \Lem s~\ref{lem_extinction} and~\ref{lemma_lyons}.

Finally, part~(iii) is a direct consequence of the following lemma from~\cite{Basak}.

\begin{lemma}[{\cite[\Lem~3.3]{Basak}}]\label{lem_basak}
	The function $\beta\mapsto r_{d,\beta}\in[0,1)$ is monotonically increasing and right-continuous.
	Moreover, at any continuity point $\beta\in(0,\infty)$ of this function the identity~\eqref{eqmax} holds.
\end{lemma}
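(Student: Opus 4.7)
I would derive both claims from the convexity of $\beta \mapsto \log Z_\GG(\beta, 0)$ combined with the Dembo--Montanari limit~\eqref{eqDM}. For every fixed graph $G$ one has $\partial_\beta^2 \log Z_G(\beta, 0) = \Var_{\mu_{G,\beta,0}}\bc{\sum_{e\in E(G)} \SIGMA(e)} \geq 0$, so $\beta\mapsto\tfrac{1}{n}\log Z_\GG(\beta,0)$ is convex, and passing to the limit via~\eqref{eqDM} shows that $F(\beta) := \cB_{d,\beta,0}(\pi_{d,\beta,0})$ is convex. Hence the right derivative $F'_+(\beta)$ exists everywhere, is non-decreasing, right-continuous, and coincides with the ordinary derivative $F'(\beta)$ off a countable set.

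The second step is to identify $F'$ with the quantity on the left-hand side of~\eqref{eqmax}. Since $\partial_\beta \log Z_\GG(\beta, 0) = \sum_{e \in E(\GG)} \scal{\SIGMA(e)}{\mu_{\GG,\beta,0}}$, a Griffiths-type lemma (convex convergence implies convergence of derivatives at differentiable points) yields
$$\lim_{n \to \infty} \frac{1}{n}\ex\brk{\sum_{e \in E(\GG)} \scal{\SIGMA(e)}{\mu_{\GG,\beta,0}}} = F'(\beta)$$
at every $\beta$ where $F$ is differentiable, and the discontinuity set of $r_{d,\beta}$ then coincides with the (at most countable) discontinuity set of $F'_+$.

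The third step is the explicit computation $F'(\beta) = d \cdot r_{d,\beta}$. Because $\pi_{d,\beta,0}$ is a $\BP_{d,\beta,0}$-fixed point, the Bethe functional $\pi \mapsto \cB_{d,\beta,0}(\pi)$ is stationary at $\pi_{d,\beta,0}$, so by an envelope-theorem argument only the \emph{explicit} $\beta$-dependence in~\eqref{eqBFE} contributes to $F'$. Differentiating the $\log\cosh\beta$, $\log(1+\mu_1\mu_2\tanh\beta)$, and $\tanh\beta$ factors and collapsing the boundary contribution via the fixed-point equation $\pi_{d,\beta,0} = \BP_{d,\beta,0}(\pi_{d,\beta,0})$ leaves precisely the edge term $\tfrac{d}{2}\ex\brk{(\mu_1\mu_2 + \tanh\beta)/(1+\mu_1\mu_2\tanh\beta)} = d \cdot r_{d,\beta}$. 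Monotonicity and right-continuity of $r_{d,\beta}$ then transfer from those of $F'_+$, and~\eqref{eqmax} holds off the countable exceptional set $\fX_d$.

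\textbf{Main obstacle.} The envelope step is the delicate one: it asks for stationarity of $\cB_{d,\beta,0}$ at $\pi_{d,\beta,0}$ along the perturbation direction in the infinite-dimensional space $\cP([-1,1])$, and the map $\beta\mapsto\pi_{d,\beta,0}$ is genuinely singular across $\betaf(d)$. To sidestep regularity issues of this map, I would instead perform the interchange of derivative and $n\to\infty$ limit directly at the finite-$n$ level: differentiate $\ex[\tfrac{1}{n}\log Z_\GG(\beta, 0)]$ under the expectation, justified by the uniform bound $|\partial_\beta \log Z_\GG(\beta, 0)|/n = O(1)$ in the spirit of~\eqref{eq_proposition_HH_2}, and only then take $n\to\infty$ using convexity. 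The exceptional set $\fX_d$ arises naturally as the set of $\beta$ at which this interchange may fail, i.e.\ exactly the discontinuity set of $F'_+$.
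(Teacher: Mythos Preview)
The paper does not prove this lemma at all: it is quoted verbatim from Basak--Dembo~\cite[Lemma~3.3]{Basak}, with Remark~\ref{rem_basak} noting only that the expression appearing there matches~\eqref{eqrdbeta} via~\cite[Eq.~(6.11)]{Dembo_2010}. So there is no paper-proof to compare against; your proposal is an attempt to reconstruct the argument behind the cited result.

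Your outline is essentially the correct one and tracks the actual route taken in~\cite{Basak,Dembo_2010}: convexity of $\beta\mapsto n^{-1}\log Z_\GG(\beta,0)$, passage to the limit $F(\beta)=\cB_{d,\beta,0}(\pi_{d,\beta,0})$ via~\eqref{eqDM}, and the Griffiths-type fact that convergence of convex functions forces convergence of derivatives at points of differentiability. The identification of the derivative with $d\cdot r_{d,\beta}$ is indeed carried out in~\cite[Section~6]{Dembo_2010} by the stationarity/envelope computation you describe. One point to be careful about: to conclude monotonicity and right-continuity of $r_{d,\beta}$ \emph{everywhere} (not just at differentiability points of $F$) you need $r_{d,\beta}=F'_+(\beta)/d$ for all $\beta$, which in turn requires right-continuity of $\beta\mapsto\pi_{d,\beta,0}$ in the weak topology; this follows from the monotone dependence of the BP iterates on $\beta$ (via Griffiths' inequality on the tree) but deserves an explicit sentence. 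With that caveat, your sketch is sound.
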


\begin{remark}\label{rem_basak}
	The expression displayed in~\cite[\Lem~3.3]{Basak} differs syntactically from $r_{d,\beta}$ as defined in~\eqref{eqrdbeta} because~\cite{Basak} deals with general unimodular trees.
	The identity of the two expressions has been verified explicitly in~\cite[Eq.~(6.11)]{Dembo_2010}.
\end{remark}

\subsection{Proof of \Prop~\ref{prop_pure}}\label{sec_prop_pure}

In order to prove \Prop~\ref{prop_pure} we pursue the following strategy.
We know from \Lem~\ref{prop_pinning} that the measure $\mu_{\GG,\beta,0}$ admits {\em some} `pure state decomposition', with a potentially large (but bounded, in terms of $\delta$) number of `pure states' $\cS_\tau$ such that $\mu_{\GG,\beta,0}(\nix\mid\cS_\tau)$ is $\delta$-symmetric.
What we need to show is that actually these many $\cS_\tau$ amalgamate into only two pure states, namely $\cS_+$ and $\cS_-$, and that the empirical distribution of the conditional marginals given $\cS_+$ converges to $\pi_{d,\beta,0}$.
To this end we will deduce from \Prop~\ref{prop_BP_B=0}~(ii) that \whp\ for `most' of the pure states $\cS_\tau$ from \Lem~\ref{prop_pinning} and for `most' $v\in V_n$ for large $\ell$ the conditional magnetisation $\scal{\SIGMA(v)}{\mu_{\GG,\beta,0}(\nix\mid\cS_\tau)}$ is close to $s_v\cdot\Scal{\SIGMA(v)}{\mu_{\GG,v,\beta,0}^{(\ell)}}$ for a sign $s_v\in\PM$;
recall from~\eqref{eqMuT} that $\mu_{\GG,v,\beta,0}^{(\ell)}$ signifies the Ising distribution with all-ones boundary condition imposed on vertices at distance $\ell$ from $v$.
In particular, $\Scal{\SIGMA(v)}{\mu_{\GG,v,\beta,0}^{(\ell)}}$ is determined exclusively by the local geometry of $\GG$ around $v$.
Additionally, we are going to show that the sign $s_v=s_\tau$ actually depends on the pure state $\cS_\tau$ only, and not on the vertex $v$.
In other words, the signs of the magnetisations are `consistent' within the pure states $\cS_\tau$.
Once these facts are established, we are going to invoke the enhanced cut metric triangle inequality from \Lem~\ref{lem_xtriangle} to show that  all the $\cS_\tau$ with positive magnetisation amalgamate into the single pure state $\cS_+$ whose empirical magnetisations converge to $\pi_{d,\beta,0}$.

To carry this programme out we first need to find a way to get from \Prop~\ref{prop_BP_B=0}~(ii), which deals with {\em edge} magnetisations, back to vertex magnetisations.
As a first step we will derive an upper bound on the edge magnetisations that comes purely in terms of the local geometry of the random graph.
This Belief Propagation-inspired bound is fairly general and can actually be stated for arbitrary graphs $G=(V,E)$.
To elaborate, we introduce the following mild extension of the definition~\eqref{eqMuT} of the Boltzmann distribution $\mu_{G,r,\beta,B}^{(\ell)}$ with a $+1$ `boundary condition' imposed at distance $\ell$ from vertex~$r$.
Namely, for an edge $e=vw\in E$ define
\begin{align}\label{eq_def_msg}
	\fm^{(\ell)}_{G,v\to w,\beta}&=\scal{\SIGMA(v)}{\mu_{G-e,v,\beta,0}^{(\ell)}}\in[0,1).
\end{align}
Thus, we obtain $G-e$ from $G$ by removing the edge $e$, then impose a $+1$ `boundary condition' on all vertices $u$ at distance precisely $\ell$ from $v$ in $G-e$ and finally evaluate the magnetisation of $v$.
The definition~\eqref{eq_def_msg} is similar to the idea of a `standard Belief Propagation message'~\cite{ACOPBP,MM}.
We obtain the following bound.

\begin{lemma}\label{lem_max}
	Let $\ell\geq1$ be an integer, let $G=(V,E)$ be a graph and let $e=vw\in E$ be an edge such that the distance between $v$ and $w$ in $G-e$ exceeds $2\ell$.
	Then
	\begin{align*}
		\scal{\SIGMA(e)}{\mu_{G,\beta,0}}&\leq\frac{\fm_{G,v\to w,\beta}^{(\ell)}\fm_{G,w\to v,\beta}^{(\ell)}+\tanh\beta}{1+\fm_{G,v\to w,\beta}^{(\ell)}\fm_{G,w\to v,\beta}^{(\ell)}\tanh\beta}
	\end{align*}
\end{lemma}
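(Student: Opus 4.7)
The plan is to dominate $\scal{\SIGMA(e)}{\mu_{G,\beta,0}}$ by the $\{v,w\}$-correlation in the measure $\hat\mu$ obtained from $\mu_{G,\beta,0}$ by pinning every vertex at distance $\ell$ from $v$ or from $w$ (in $G-e$) to $+1$, and then to evaluate $\scal{\SIGMA(v)\SIGMA(w)}{\hat\mu}$ via the standard two-message Belief Propagation computation on the resulting tree-like structure.

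Set $I_v = \{u \in V: \dist_{G-e}(u,v) \leq \ell-1\}$, $U_v = \{u \in V: \dist_{G-e}(u,v) = \ell\}$, define $I_w, U_w$ analogously, and let $G_v$ be the subgraph of $G-e$ induced on $I_v \cup U_v$. By the Markov property of the Ising measure on $G-e$,
\begin{align*}
\fm_{G, v \to w, \beta}^{(\ell)} = \scal{\SIGMA(v)}{\mu_{G_v, \beta, 0}(\nix\mid \cS_{\tau_v})},
\end{align*}
where $\tau_v$ is the all-$+1$ configuration on $U_v$, and analogously for $G_w$ and $\fm_{G, w \to v, \beta}^{(\ell)}$. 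From the hypothesis $\dist_{G-e}(v,w) > 2\ell$ I would first verify that $I_v \cap I_w = \emptyset$, that no edge of $G-e$ connects $I_v$ with $I_w \cup U_w$ or $U_v$ with $I_w$, and that every path in $G$ from $I_v \cup I_w$ to the remaining vertices passes through $U_v \cup U_w$; edges in $U_v \times U_w$ may exist, but since both endpoints will be pinned to $+1$ they contribute only a constant factor.

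Let $\hat\mu$ denote $\mu_{G,\beta,0}$ conditioned on $\SIGMA(u) = +1$ for all $u \in U_v \cup U_w$. By the structural observations above and the Markov property, summing out the spins outside $I_v \cup I_w$ together with the spins in $I_v \setminus \{v\}$ and $I_w \setminus \{w\}$ yields
\begin{align*}
\hat\mu(\SIGMA(v) = s_1, \SIGMA(w) = s_2) \propto \bc{1 + s_1 \fm_{G, v\to w, \beta}^{(\ell)}}\bc{1 + s_2 \fm_{G, w\to v, \beta}^{(\ell)}} \eul^{\beta s_1 s_2},
\end{align*}
where the factors $(1 + s_h \fm)$ arise from a further application of the Markov property identifying the single-site marginals at $v$ and $w$ in $G_v$ and $G_w$ with the messages. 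A direct computation (summing $s_1 s_2$ weighted by the above and dividing numerator and denominator by $\cosh\beta$) then gives
\begin{align*}
\scal{\SIGMA(v)\SIGMA(w)}{\hat\mu} = \frac{\fm_{G,v\to w,\beta}^{(\ell)}\fm_{G,w\to v,\beta}^{(\ell)} + \tanh\beta}{1 + \fm_{G,v\to w,\beta}^{(\ell)}\fm_{G,w\to v,\beta}^{(\ell)}\tanh\beta}.
\end{align*}

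The remaining and sole conceptual ingredient, which I expect to be the main obstacle, is the monotonicity bound $\scal{\SIGMA(e)}{\mu_{G,\beta,0}} \leq \scal{\SIGMA(v)\SIGMA(w)}{\hat\mu}$. Note that \Lem~\ref{lem_kostas} does not apply directly since $\sigma \mapsto \sigma(v)\sigma(w)$ fails to be monotone in $\sigma$. Instead, I would invoke Griffiths' second inequality for ferromagnetic Ising, which yields
\begin{align*}
\partial_{B_u}\scal{\SIGMA(v)\SIGMA(w)}{\mu_{G,\beta,B}} = \scal{\SIGMA(u)\SIGMA(v)\SIGMA(w)}{\mu_{G,\beta,B}} - \scal{\SIGMA(u)}{\mu_{G,\beta,B}}\scal{\SIGMA(v)\SIGMA(w)}{\mu_{G,\beta,B}} \geq 0
\end{align*}
for all $u$ and all external fields $B \geq 0$. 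Since conditioning on $\SIGMA(u) = +1$ is the limit $B_u \to +\infty$, iterating this monotonicity over $u \in U_v \cup U_w$ delivers the desired inequality, and combining it with the explicit formula for $\scal{\SIGMA(v)\SIGMA(w)}{\hat\mu}$ completes the proof.
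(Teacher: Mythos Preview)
Your proof is correct, and the overall architecture matches the paper's: pin the distance-$\ell$ spheres around $v$ and $w$ to $+1$, show this can only increase the two-point function, and then evaluate the resulting expression. The execution differs in two places, and the differences are worth noting.

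First, the paper removes the edge $e$ at the outset, using the identity
\[
\scal{\SIGMA(e)}{\mu_{G,\beta,0}}=\frac{\scal{\SIGMA(e)}{\mu_{G-e,\beta,0}}+\tanh\beta}{1+\scal{\SIGMA(e)}{\mu_{G-e,\beta,0}}\tanh\beta},
\]
and then bounds $\scal{\SIGMA(e)}{\mu_{G-e,\beta,0}}$ by $\fm_{G,v\to w,\beta}^{(\ell)}\fm_{G,w\to v,\beta}^{(\ell)}$; since the map $x\mapsto(x+\tanh\beta)/(1+x\tanh\beta)$ is increasing, this yields the claim. You instead keep the edge in place and compute the pinned $\{v,w\}$-marginal on $G$ directly. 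Both routes arrive at the same fractional-linear expression; yours avoids the extra identity, the paper's avoids your two-message BP computation.

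Second, and more interestingly, for the monotonicity step the paper simply cites \Lem~\ref{lem_kostas}. You are right that this lemma, as stated, concerns single-spin expectations and does not immediately control $\scal{\SIGMA(v)\SIGMA(w)}{\cdot}$ under pinning. Your appeal to Griffiths' second inequality is a clean and fully rigorous fix. The paper's route can also be made rigorous without Griffiths, but only by exploiting that the edge has already been removed: in $G-e$, conditioning on any boundary configuration $\tau$ on $U_v\cup U_w$ renders $\SIGMA(v)$ and $\SIGMA(w)$ independent, so $\scal{\SIGMA(v)\SIGMA(w)}{\mu_{G-e}(\cdot\mid\cS_\tau)}$ factorises, and then \Lem~\ref{lem_kostas} together with the $\pm$-symmetry bounds each factor in absolute value by the corresponding $\fm^{(\ell)}$; averaging over $\tau$ gives the bound. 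This is presumably what the paper intends, but it is left implicit. Your argument via Griffiths is more transparent on this point, at the cost of importing a tool the paper does not otherwise invoke.
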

\begin{proof}
	As a direct consequence of the definition~\eqref{eqIsingmu}--\eqref{eqIsingZ} of the Ising model we obtain
	\begin{align}\label{eq_lem_max_1}
		\scal{\SIGMA(e)}{\mu_{G,\beta,0}}&=\frac{\scal{\SIGMA(e)\exp(\beta\SIGMA(e))}{\mu_{G-e,\beta,0}}}{\scal{\exp(\beta\SIGMA(e))}{\mu_{G-e,\beta,0}}}=\frac{\scal{\SIGMA(e)}{\mu_{G-e,\beta,0}}+\tanh\beta}{1+\scal{\SIGMA(e)}{\mu_{G-e,\beta,0}}\tanh\beta}.
	\end{align}
	Thus, since the function $x\in[-1,1]\mapsto\frac{x+\tanh\beta}{1+x\tanh\beta}$ is monotonically increasing, we are left to show that
	\begin{align}\label{eq_lem_max_10}
		\scal{\SIGMA(e)}{\mu_{G-e,\beta,0}}&\leq\fm_{G,v\to w,\beta}^{(\ell)}\fm_{G,w\to v,\beta}^{(\ell)}.
	\end{align}

	To this end let $\mu_{G-e,e,\beta}^{(\ell)}$ be the Ising distribution with a $+1$ boundary condition imposed on all vertices that either have distance precisely $\ell$ from $v$ or from $w$; in symbols,
	\begin{align*}
		\mu_{G-e,e,\beta}^{(\ell)}(\sigma)&\propto\vecone\cbc{\forall u\in\partial^\ell(G-e,v)\cup\partial^\ell(G-e,w):\sigma(u)=1}\exp\bc{\beta\sum_{f\in E(G-e)}\sigma(f)}.
	\end{align*}
	Since the distance between $v,w$ in $G-e$ exceeds $2\ell$, \Lem~\ref{lem_kostas} shows that
	\begin{align}\label{eq_lem_max_2}
		\scal{\SIGMA(e)}{\mu_{G-e,\beta,0}}&\le\scal{\SIGMA(e)}{\mu_{G-e,e,\beta}^{(\ell)}}.
	\end{align}
	Moreover, again because the distance between $v,w$ in $G-e$ exceeds $2\ell$, the spins $\SIGMA(v),\SIGMA(w)$ are stochastically independent once we impose a $+1$ boundary condition on $\partial^\ell(G-e,v)\cup\partial^\ell(G-e,w)$.
	Therefore,
	\begin{align}\label{eq_lem_max_3}
		\scal{\SIGMA(e)}{\mu_{G-e,e,\beta}^{(\ell)}}&=\scal{\SIGMA(v)}{\mu_{G-e,e,\beta}^{(\ell)}}\scal{\SIGMA(w)}{\mu_{G-e,e,\beta}^{(\ell)}}=\fm_{G,v\to w,\beta}^{(\ell)}\fm_{G,w\to v,\beta}^{(\ell)} \enspace.
	\end{align}
	Combining~\eqref{eq_lem_max_2}--\eqref{eq_lem_max_3} yields~\eqref{eq_lem_max_10}, thereby completing the proof.
\end{proof}

As a next step we are going to prove that the upper bound from \Lem~\ref{lem_max} is saturated for most edges of the random graph.
For symmetry reasons it suffices to consider the `last' edge $\ve_m=\vv\vw$ of $\GG=\GG(n,m)$.

\begin{lemma}\label{cor_max}
	Assume that $\beta\in(0,\infty)\setminus\fX_d$.
	Then for any $\eps>0$ there exists $\ell_0=\ell_0(\eps,d,\beta)>0$ such that for all $\ell>\ell_0$,
		\begin{align}\label{eq_cor_max_1}
			\ex\abs{\scal{\SIGMA(\ve_m)}{\mu_{\GG,\beta,0}}-\frac{\fm_{\GG,\vv\to\vw,\beta}^{(\ell)}\fm_{\GG,\vw\to\vv,\beta}^{(\ell)}+\tanh\beta}{1+\fm_{\GG,\vv\to\vw,\beta}^{(\ell)}\fm_{\GG,\vw\to\vv,\beta}^{(\ell)}\tanh\beta}}&<\eps+o(1).
		\end{align}
\end{lemma}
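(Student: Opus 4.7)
The plan is to exploit that the bound in Lemma~\ref{lem_max} is pointwise, so proving an $L^1$-bound on the non-negative difference reduces to matching the expectations of the two sides to within $\eps$. The common asymptotic value will be $2 r_{d,\beta}$: the left-hand side attains it via Proposition~\ref{prop_BP_B=0}(iii), and the right-hand side via joint local weak convergence of the two endpoint neighbourhoods to a pair of independent Galton-Watson trees.

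First I would discard the exceptional event $\fE_\ell^- = \cbc{\dist_{\GG - \ve_m}(\vv,\vw) \leq 2\ell}$; a routine breadth-first exploration together with the fact that the expected number of short cycles through a random edge is $O(1)$ shows $\pr[\fE_\ell^-] = o(1)$ for each fixed $\ell$, and since the integrand in~\eqref{eq_cor_max_1} is bounded by an absolute constant, it contributes only $o(1)$ to the expectation. On the complement Lemma~\ref{lem_max} applies and the integrand is non-negative. By permutation symmetry and Proposition~\ref{prop_BP_B=0}(iii) (where the hypothesis $\beta \notin \fX_d$ is essential),
\begin{align*}
\ex\scal{\SIGMA(\ve_m)}{\mu_{\GG,\beta,0}} = \frac{1}{m}\ex\brk{\sum_{e\in E(\GG)}\scal{\SIGMA(e)}{\mu_{\GG,\beta,0}}} \sim \frac{n d}{m}\, r_{d,\beta} \to 2 r_{d,\beta}.
\end{align*}

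For the right-hand side, I would show that on $(\fE_\ell^-)^c$ the pair of rooted depth-$\ell$ neighbourhoods $(\nabla^\ell(\GG-\ve_m,\vv), \nabla^\ell(\GG-\ve_m,\vw))$ is vertex-disjoint and converges jointly in distribution to two \emph{independent} copies of the $\Po(d)$-Galton-Watson tree $\TT^{(\ell)}$. Here the key observation is that, since $\ve_m$ is a uniformly random edge of $\GG$, the pair $\{\vv,\vw\}$ is within total variation $O(1/n)$ of a uniformly random pair of distinct vertices, and $\GG - \ve_m$ is within total variation $O(1/n)$ of $\GG(n,m-1)$; so a two-point variant of Lemma~\ref{lem_lwc} combined with disjointness on the good event decouples the two neighbourhoods. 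Since each $\fm^{(\ell)}_{\GG,\vv\to\vw,\beta}$ is a bounded continuous functional of the rooted local tree and, by Corollary~\ref{cor_BP_tree}, the analogous tree quantity $\scal{\SIGMA(\root)}{\mu^{(\ell)}_{\TT,\beta,0}}$ has law $\pi^{(\ell)}_{d,\beta,0}$, bounded continuity of $(x,y) \mapsto (xy+\tanh\beta)/(1+xy\tanh\beta)$ on $[0,1]^2$ yields
\begin{align*}
\ex\brk{\frac{\fm^{(\ell)}_{\GG,\vv\to\vw,\beta}\fm^{(\ell)}_{\GG,\vw\to\vv,\beta}+\tanh\beta}{1+\fm^{(\ell)}_{\GG,\vv\to\vw,\beta}\fm^{(\ell)}_{\GG,\vw\to\vv,\beta}\tanh\beta}} = \ex\brk{\frac{\vec\mu_1^{(\ell)}\vec\mu_2^{(\ell)}+\tanh\beta}{1+\vec\mu_1^{(\ell)}\vec\mu_2^{(\ell)}\tanh\beta}} + o(1)
\end{align*}
with $\vec\mu_1^{(\ell)},\vec\mu_2^{(\ell)}$ i.i.d.\ of law $\pi^{(\ell)}_{d,\beta,0}$. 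A second bounded-convergence pass using $\pi^{(\ell)}_{d,\beta,0} \to \pi_{d,\beta,0}$ (Corollary~\ref{cor_conv}) together with definition~\eqref{eqrdbeta} shows this converges to $2 r_{d,\beta}$ as $\ell \to \infty$; choosing $\ell_0$ so large that the gap is $\leq \eps/2$ then completes the argument.

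The main obstacle is the joint local weak convergence step: marginally each neighbourhood limit is a standard Galton-Watson tree, but the joint statement requires verifying that conditioning on two far-apart vertices in $\GG(n,m-1)$ being designated as roots produces independent local trees. This is essentially the $t=0$ case of Lemma~\ref{lem_lwc_tensor_ex}, reused here in an edge-rooted setting, and should go through by the same breadth-first exploration argument used there.
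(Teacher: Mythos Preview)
Your proposal is correct and follows essentially the same approach as the paper's proof: both exploit the one-sided inequality from \Lem~\ref{lem_max} to reduce the $L^1$-bound to matching the two expectations, use \Prop~\ref{prop_BP_B=0}(iii) via edge-permutation symmetry for the left-hand side, argue that $(\vv,\vw)$ is asymptotically a uniform pair in $\GG'=\GG(n,m-1)$ so that the two depth-$\ell$ neighbourhoods converge jointly to independent Galton-Watson trees (giving the right-hand side the law of i.i.d.\ $\pi_{d,\beta,0}^{(\ell)}$ samples), and finally pass to the limit $\pi_{d,\beta,0}^{(\ell)}\to\pi_{d,\beta,0}$. The only cosmetic difference is that the paper phrases the local convergence step via $W_1$-distance rather than bounded continuous functionals, and invokes \Lem~\ref{lem_lwc} directly rather than framing it as the $t=0$ case of \Lem~\ref{lem_lwc_tensor_ex}.
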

\begin{proof}
	The starting point for the proof is provided by~\eqref{eqmax} and the definition~\eqref{eqrdbeta} of $r_{d,\beta}$.
	Indeed, because the distribution of $\GG$ is invariant under edge permutations, \eqref{eqmax} can be rewritten as
	\begin{align}\label{eq_cor_max_2}
		\ex\brk{\scal{\SIGMA(\ve_m)}{\mu_{\GG,\beta,0}}}&=\ex\brk{\frac{\vec\mu_{\pi_{d,\beta,0},1}\vec\mu_{\pi_{d,\beta,0},2}+\tanh\beta}{1+\vec\mu_{\pi_{d,\beta,0},1}\vec\mu_{\pi_{d,\beta,0},2}\tanh\beta}}+o(1).
	\end{align}
	Further, since $\pi_{d,\beta,0}$ is the $W_1$-limit of the measures $\pi_{d,\beta,0}^{(\ell)}$ from \eqref{eqpidbetaBell}, \eqref{eq_cor_max_2} implies that for $\ell>\ell_0$ large enough,
	\begin{align}\label{eq_cor_max_3}
		\abs{\ex\brk{\scal{\SIGMA(\ve_m)}{\mu_{\GG,\beta,0}}}-\ex\brk{\frac{\vec\mu_{\pi_{d,\beta,0}^{(\ell)},1}\vec\mu_{\pi_{d,\beta,0}^{(\ell)},2}+\tanh\beta}{1+\vec\mu_{\pi_{d,\beta,0}^{(\ell)},1}\vec\mu_{\pi_{d,\beta,0}^{(\ell)},2}\tanh\beta}}}<\eps+o(1).
	\end{align}

	Let $\GG'=\GG(n,m-1)$ be the graph comprising the random edges $\ve_1,\ldots,\ve_{m-1}$.
	To derive~\eqref{eq_cor_max_1} from~\eqref{eq_cor_max_3}, we remind ourselves that given $\GG'$ the pair of vertices $\vv,\vw$ that constitutes $\ve_m$ is within total variation distance $o(1)$ of a uniformly random pair of vertices from $V_n$.
	Therefore, \Lem~\ref{lem_lwc} shows that \whp\ given $\GG'$ the depth-$\ell$ neighbourhoods $\nabla^\ell(\GG,\vv)$, $\nabla^\ell(\GG,\vw)$ are within total variation distance $o(1)$ of a pair $\TT^{(\ell)},\TT^{\prime\,(\ell)}$ of independent $\Po(d)$ Galton-Watson trees truncated at depth $\ell$.
	Hence, recalling the definitions~\eqref{eq_def_msg} of $\fm_{\GG,\vv\to\vw,\beta}^{(\ell)},\fm_{\GG,\vw\to\vv,\beta}^{(\ell)}$ and applying \Cor~\ref{cor_BP_tree}, we obtain
	\begin{align}\label{eq_cor_max_4}
		W_1\bc{\bc{\fm_{\GG,\vv\to\vw,\beta}^{(\ell)},\fm_{\GG,\vw\to\vv,\beta}^{(\ell)}},\bc{\vec\mu_{\pi_{d,\beta,0}^{(\ell)},1},\vec\mu_{\pi_{d,\beta,0}^{(\ell)},2}}}&=o(1).
	\end{align}
	Combining \eqref{eq_cor_max_3} and~\eqref{eq_cor_max_4}, we find
	\begin{align}\label{eq_cor_max_10}
		\abs{\ex\brk{\scal{\SIGMA(\ve_m)}{\mu_{\GG,\beta,0}}}-\ex\brk{\frac{\fm_{\GG,\vv\to\vw,\beta}^{(\ell)}\fm_{\GG,\vw\to\vv,\beta}^{(\ell)}+\tanh\beta}{1+\fm_{\GG,\vv\to\vw,\beta}^{(\ell)}\fm_{\GG,\vw\to\vv,\beta}^{(\ell)}\tanh\beta}}}<\eps+o(1).
	\end{align}
	Moreover, since $\vv,\vw$ have distance $\Omega(\log n)$ in $\GG(n,m-1)$ \whp, \Lem~\ref{lem_max} implies the upper bound
	\begin{align}\label{eq_cor_max_5}
		\scal{\SIGMA(\ve_m)}{\mu_{\GG,\beta,0}}\leq\frac{\fm_{\GG,\vv\to\vw,\beta}^{(\ell)}\fm_{\GG,\vw\to\vv,\beta}^{(\ell)}+\tanh\beta}{1+\fm_{\GG,\vv\to\vw,\beta}^{(\ell)}\fm_{\GG,\vw\to\vv,\beta}^{(\ell)}\tanh\beta}&&\mbox\whp
	\end{align}
	Combining~\eqref{eq_cor_max_10} and~\eqref{eq_cor_max_5} completes the proof.
\end{proof}

We are going to use \Lem~\ref{cor_max} in order to investigate the conditional magnetisations of the `pure states' $\cS_\tau$ provided by \Lem~\ref{prop_pinning};
recall from~\eqref{eqsubcube} that for $U\subset V_n$ and $\tau\in\PM^U$ we let $\cS_\tau=\cbc{\sigma\in\PM^{V_n}:\forall u\in U:\sigma(u)=\tau(u)}$.
We continue to denote by $\ve_m=\vv\vw$ the final edge of $\GG=\GG(n,m)$ and we let $\GG'=\GG(n,m-1)=\GG-\ve_m$.
Furthermore, given $\delta>0$ let $\vU_\delta\subset V_n$ be the random set from \Lem~\ref{prop_pinning} applied to $\mu_{\GG',\beta,0}$.
As a final preparation we need the following basic random graphs fact.

\begin{lemma}\label{lem_pure_b}
	Assume that $d>1$ and $\beta\in(\betaf(d),\infty)\setminus\fX_d$.
	Then for any fixed $\delta,\ell>0$ the event $\fD_{\delta,\ell}$ that
	\begin{enumerate}[(i)]
		\item the distance between $\vv$ and $\vw$ in $\GG'$ exceeds $2\ell+2$,
		\item the distance between $\vv$ and the set $\vU_\delta$ in $\GG'$ exceeds $\ell+1$, and
		\item the distance between $\vw$ and the set $\vU_\delta$ in $\GG'$ exceeds $\ell+1$.
	\end{enumerate}
	has probability $\pr\brk{\fD_{\delta,\ell}}=1-o(1).$
\end{lemma}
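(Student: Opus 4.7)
The plan is a straightforward union-bound argument resting on three ingredients. First, Lemma~\ref{prop_pinning} guarantees that $|\vU_\delta|\leq K_\delta$ for a constant $K_\delta$ depending only on $\delta$, and given its size $\vU_\delta$ is a uniformly random subset of $V_n$ drawn \emph{independently} of $\GG'$. Second, because $2(m-1)/n\to d<\infty$, the random graph $\GG'=\GG(n,m-1)$ is locally sparse: for any fixed $u\in V_n$, the depth-$(2\ell+2)$ ball $\nabla^{2\ell+2}(\GG',u)$ has expected size $O_\ell(1)$, either by a direct BFS/branching-process exploration of the \Erdos--\Renyi\ edges, or via the local weak convergence to the $\Po(d)$ Galton--Watson tree supplied by Lemma~\ref{lem_lwc}. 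Third, conditional on $\GG'$, the pair $(\vv,\vw)$ is within $o(1)$ total variation distance of a uniformly random ordered pair of distinct vertices of $V_n$, as noted already in the proof of Lemma~\ref{cor_max}.

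To establish~(ii), I would condition on $\GG'$ and on $\vv$, and exploit the independence of $\vU_\delta$: since $\vU_\delta$ is a uniform subset of size at most $K_\delta$ chosen independently,
\begin{align*}
\pr\brk{\vU_\delta\cap\nabla^{\ell+1}(\GG',\vv)\neq\emptyset\,\bigm|\,\GG',\vv}\leq\frac{K_\delta\,|\nabla^{\ell+1}(\GG',\vv)|}{n}.
\end{align*}
Taking expectations and using $\ex|\nabla^{\ell+1}(\GG',\vv)|=O_\ell(1)$ (which holds because $\vv$ is close to uniform over $V_n$) yields $\pr[\text{(ii) fails}]=O_{\delta,\ell}(1/n)=o(1)$. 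The very same argument applied to $\vw$ gives~(iii).

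For~(i), I would condition on $\GG'$ and on $\vv$ and use that $\vw$ is asymptotically uniform on $V_n\setminus(\{\vv\}\cup\partial(\GG',\vv))$. Since $|\nabla^{2\ell+2}(\GG',\vv)|=O_\ell(1)$ with probability $1-o(1)$, it follows that $\pr[\dist_{\GG'}(\vv,\vw)\leq 2\ell+2]=O_\ell(1/n)$. A final union bound over~(i)--(iii) gives $\pr[\fD_{\delta,\ell}]=1-o(1)$, as desired.

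There is no genuine obstacle here; the lemma is essentially probabilistic bookkeeping exploiting three $O(1)$-versus-$1/n$ trade-offs: the bounded size of the pinning set, the local sparsity of $\GG'$, and the near-uniformity of $\vv,\vw$. The only point that warrants care is verifying independence of $\vU_\delta$ from $\GG'$, which is built into the statement of Lemma~\ref{prop_pinning} where $\vU$ is a uniform subset of $V$ of random size $\min\{\vI,|V|\}$ with $\vI$ a universal bounded random variable on an auxiliary probability space.
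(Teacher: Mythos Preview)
Your proposal is correct and follows essentially the same approach as the paper: both arguments replace $(\vv,\vw)$ by an asymptotically uniform pair, invoke the boundedness $|\vU_\delta|\le K_\delta$ from \Lem~\ref{prop_pinning}, and use a first-moment bound on the size of a fixed-depth neighbourhood in $\GG'$. The only cosmetic difference is that the paper counts expected paths of length $\leq\ell$ from a random vertex to a fixed target (giving $O(d^\ell/n)$ directly), whereas you bound the expected ball size around $\vv$ and then exploit the uniformity of $\vU_\delta$; these are dual formulations of the same $O_\ell(1)/n$ estimate.
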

\begin{proof}
	We exploit the fact that given $\GG'$ the pair $(\vv,\vw)$ is within total variation distance $o(1)$ of a uniformly random pair $(\vv',\vw')$ of vertices from $V_n$ that is chosen independently of $\GG',\vU_\delta$.
	Hence, it suffices to prove that (i)--(iii) above hold for the pair $(\vv',\vw')$ \whp\
	A routine first moment calculation shows that for any fixed vertex $u\in V_n$ and any $1\leq l\leq\ell$ the expected number of paths of length $l$ from $\vv$ to $u$ is bounded from above by $(1+o(1))d^l/n=O(1/n)$.
	Therefore, the probability that $\vv',\vw'$ are within distance $\ell$ is $O(1/n)$.
	Similarly, since $|\vU_\delta|=O(1)$ (because the size of $\vU_\delta$ depends on $\delta$ but not on $n$), the event that either $\vv'$ or $\vw'$ is within distance $\ell$ of some vertex $u\in\vU_\delta$ has probability $O(1/n)$.
\end{proof}

Beginning with the investigation of the `pure states' proper, we are now going to show that with probability close to one for `most' $\cS_\tau$ the magnetisation $\scal{\SIGMA(\ve_m)}{\mu_{\GG',\beta,0}(\nix\mid\cS_\tau)}$ of the edge $\ve_m$ approximately equals the product $\scal{\SIGMA(\vv)}{\mu_{\GG',\beta,0}(\nix\mid\cS_\tau)}\scal{\SIGMA(\vw)}{\mu_{\GG',\beta,0}(\nix\mid\cS_\tau)}$ of the vertex magnetisations.
In the proofs of the following lemmas we will occasionally condition on $\GG'$ and $\vU_\delta$.
We emphasise that given $\GG',\vU_\delta$ the only remaining randomness stems from the choice of $\ve_m=\vv\vw$.

\begin{lemma}\label{lem_pure_a}
	Assume that $d>1$ and $\beta\in(\betaf(d),\infty)\setminus\fX_d$.
	For any $\eps>0$ there is $\delta_0=\delta_0(\eps)>0$ such that for any $0<\delta<\delta_0(\eps)$ the following is true.
	Let
	\begin{align*}
		\cT_{\eps}&=\cbc{\tau\in\PM^{\vU_\delta}: \abs{\scal{\SIGMA(\ve_m)}{\mu_{\GG',\beta,0}(\nix\mid\cS_\tau)}-\scal{\SIGMA(\vv)}{\mu_{\GG',\beta,0}(\nix\mid\cS_\tau)}\scal{\SIGMA(\vw)}{\mu_{\GG',\beta,0}(\nix\mid\cS_\tau)}}<\eps}&&\mbox{and}\\
		\fS_\eps&=\cbc{\sum_{\tau\in\cT_\eps}\mu_{\GG',\beta,0}(\cS_\tau)>1-\eps}.
	\end{align*}
	Then $\pr\brk{\fS_\eps}\geq1-\eps-o(1)$.
\end{lemma}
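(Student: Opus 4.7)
The plan is to combine the pinning lemma (\Lem~\ref{prop_pinning}) with the observation that, given $\GG'$, the pair $(\vv,\vw)$ defining $\ve_m$ is within total variation distance $o(1)$ of a uniformly random pair of distinct vertices and is independent of $\vU_\delta$ given $\GG'$. Belief Propagation plays no role here; the statement is really about the conditional Ising measures $\mu_{\GG',\beta,0}(\nix\mid\cS_\tau)$ being approximately product on most coordinate pairs, and that is exactly what the symmetry guarantee from pinning delivers.

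Concretely, abbreviate $\mu=\mu_{\GG',\beta,0}$ and apply \Lem~\ref{prop_pinning} to $\mu$ with parameter $\delta$. This produces the bounded random set $\vU_\delta\subseteq V_n$ and, on an event $\fE_1$ of probability at least $1-\delta$, the family
$$\cT_\delta^{\mathrm{sym}}=\cbc{\tau\in\PM^{\vU_\delta}:\mu(\nix\mid\cS_\tau)\text{ is $\delta$-symmetric}}$$
carries $\mu$-mass exceeding $1-\delta$. For $\tau\in\cT_\delta^{\mathrm{sym}}$ the $\delta$-symmetry inequality~\eqref{eqdefsym} reads
$$\frac1{n^2}\sum_{v,w\in V_n}D_\tau(v,w)<\delta, \quad D_\tau(v,w):=\sum_{s,s'\in\PM}\abs{\mu(\cbc{\SIGMA(v)=s,\SIGMA(w)=s'}\mid\cS_\tau)-\mu(\cbc{\SIGMA(v)=s}\mid\cS_\tau)\mu(\cbc{\SIGMA(w)=s'}\mid\cS_\tau)}.$$
Setting $\phi(\tau):=\abs{\scal{\SIGMA(\ve_m)}{\mu(\nix\mid\cS_\tau)}-\scal{\SIGMA(\vv)}{\mu(\nix\mid\cS_\tau)}\scal{\SIGMA(\vw)}{\mu(\nix\mid\cS_\tau)}}$ gives $\phi(\tau)\leq D_\tau(\vv,\vw)$, because $\scal{\SIGMA(\ve_m)}{\mu(\nix\mid\cS_\tau)}=\scal{\SIGMA(\vv)\SIGMA(\vw)}{\mu(\nix\mid\cS_\tau)}$ and the signed correlation is dominated by the absolute sum. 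The asymptotic uniformity of $(\vv,\vw)$ together with its independence from $\vU_\delta$ given $\GG'$ then yields $\ex[\phi(\tau)\mid\GG',\vU_\delta]\leq\delta+o(1)$ for every $\tau\in\cT_\delta^{\mathrm{sym}}$.

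Finally, set $\Phi:=\sum_\tau\mu(\cS_\tau)\phi(\tau)$. Using the crude bound $\phi\leq2$ on the $\leq\delta$-mass outside $\cT_\delta^{\mathrm{sym}}$, on $\fE_1$ we have $\ex[\Phi\mid\GG',\vU_\delta,\fE_1]\leq3\delta+o(1)$; combining this with $\Pr[\fE_1^c]\leq\delta$ and Markov's inequality gives $\Pr[\Phi\geq\eps^2]\leq\delta+(3\delta+o(1))/\eps^2$. Since $\phi\geq0$, the event $\Phi<\eps^2$ forces $\sum_{\tau:\phi(\tau)\geq\eps}\mu(\cS_\tau)<\eps$, hence $\sum_{\tau\in\cT_\eps}\mu(\cS_\tau)>1-\eps$, i.e., $\fS_\eps$ occurs. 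Choosing $\delta_0(\eps)$ so small that $\delta_0(\eps)(1+3/\eps^2)<\eps$ then yields the asserted bound $\Pr[\fS_\eps]\geq1-\eps-o(1)$. The only real substance is the invocation of pinning; the main point to track is the independence of $\vU_\delta$ from $(\vv,\vw)$ given $\GG'$, which holds because pinning depends only on $\GG'$ and its own extraneous randomness while $\ve_m$ is sampled independently from the non-edges of $\GG'$.
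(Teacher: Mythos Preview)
Your proof is correct and follows essentially the same approach as the paper: apply the pinning lemma to $\mu_{\GG',\beta,0}$, use that $(\vv,\vw)$ is asymptotically uniform given $\GG',\vU_\delta$ to convert $\delta$-symmetry into a bound on the conditional expectation of the correlation defect, and finish with Markov's inequality. Your write-up is a bit more explicit about the double Markov step via the aggregate $\Phi$, but the argument is identical in substance.
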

\begin{proof}
	Assuming $n$ is sufficiently large, \Lem~\ref{prop_pinning} shows that with probability at least $1-\delta$ the subcube decomposition $(\cS_\tau)_{\tau\in\PM^{\vU_\delta}}$ satisfies
	\begin{align}\label{eq_lem_pure_1}
		\sum_{\tau\in\PM^{\vU_\delta}}\mu_{\GG',\beta,0}(\cS_{\tau})\vecone\cbc{\mu_{\GG',\beta,0}(\nix\mid\cS_{\tau})\mbox{ is $\delta$-symmetric\,}}>1-\delta.
	\end{align}
	Given $\GG',\vU_\delta$ the vertex pair $(\vv,\vw)$ is within total variation distance $o(1)$ of a uniformly random pair of vertices.
	Therefore, the definition~\eqref{eqdefsym} of $\delta$-symmetry ensures that for any $\tau\in\PM^{\vU_\delta}$ such that $\mu_{\GG',\beta,0}(\nix\mid\cS_{\tau})$ is $\delta$-symmetric we have
	\begin{align}\label{eq_lem_pure_a1}
		\ex\brk{
		\abs{\scal{\SIGMA(\vv\vw)}{\mu_{\GG',\beta,0}(\nix\mid\cS_\tau)}-\scal{\SIGMA(\vv)}{\mu_{\GG',\beta,0}(\nix\mid\cS_\tau)}\scal{\SIGMA(\vw)}{\mu_{\GG',\beta,0}(\nix\mid\cS_\tau)}}\mid\GG',\vU_\delta}&<2\delta+o(1).
	\end{align}
	Thus, the assertion follows from~\eqref{eq_lem_pure_1}, \eqref{eq_lem_pure_a1} and Markov's inequality.
\end{proof}

Next we combine \Lem s~\ref{lem_max}--\ref{lem_pure_a} with \Lem~\ref{lem_kostas} to show that likely for `most' $\tau$ the product $$\scal{\SIGMA(\vv)}{\mu_{\GG',\beta,0}(\nix\mid\cS_\tau)}\scal{\SIGMA(\vw)}{\mu_{\GG',\beta,0}(\nix\mid\cS_\tau)}$$ of the conditional magnetisations is close to the product
\begin{align*}
	\scal{\SIGMA(\vv)}{\mu_{\GG',\beta,0}^{(\ell)}}\scal{\SIGMA(\vw)}{\mu^{(\ell)}_{\GG',\beta,0}}
\end{align*}
of the magnetisations with the all-$1$ boundary condition imposed at distance $\ell$.
Crucially, the latter depends only on the topology of $\GG'$ around $\vv,\vw$ but not on $\tau$.

\begin{lemma}\label{lem_pure_c}
	Assume that $d>1$ and $\beta\in(\betaf(d),\infty)\setminus\fX_d$.
	For any $\eps>0$ there exist $\delta_0=\delta_0(d,\beta,\eps)>0$, $\ell_0=\ell_0(d,\beta,\eps)$ such that for $0<\delta<\delta_0(\eps)$ and $\ell>\ell_0$ the following is true.
	Let
	\begin{align*}
		\cT_\eps'&=\cbc{\tau\in\PM^{\vU_\delta}:\abs{\scal{\SIGMA(\vv)}{\mu_{\GG',\beta,0}(\nix\mid\cS_\tau)}\scal{\SIGMA(\vw)}{\mu_{\GG',\beta,0}(\nix\mid\cS_\tau)}- \scal{\SIGMA(\vv)}{\mu_{\GG',\beta,0}^{(\ell)}}\scal{\SIGMA(\vw)}{\mu^{(\ell)}_{\GG',\beta,0}}}<\eps}&&\mbox{and}\\
		\fS_\eps'&=\cbc{\sum_{\tau\in\cT_\eps'}\mu_{\GG',\beta,0}(\cS_\tau)>1-\eps}.
	\end{align*}
	Then $\pr\brk{\fS_\eps'}\geq1-\eps-o(1).$
\end{lemma}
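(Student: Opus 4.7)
The plan is to combine a pointwise upper bound $|a_\tau|\le A$, $|b_\tau|\le B$ derived from \Lem~\ref{lem_kostas} with the fact that the weighted average $\sum_\tau\mu_{\GG',\beta,0}(\cS_\tau)\,a_\tau b_\tau$ is close to $AB$ on a high-probability event; a Markov estimate will then pin down $a_\tau b_\tau\approx AB$ on most pure states. Throughout, I abbreviate $A=\fm^{(\ell)}_{\GG,\vv\to\vw,\beta}=\scal{\SIGMA(\vv)}{\mu_{\GG',\vv,\beta,0}^{(\ell)}}$, $B=\fm^{(\ell)}_{\GG,\vw\to\vv,\beta}=\scal{\SIGMA(\vw)}{\mu_{\GG',\vw,\beta,0}^{(\ell)}}$, $a_\tau=\scal{\SIGMA(\vv)}{\mu_{\GG',\beta,0}(\nix\mid\cS_\tau)}$ and $b_\tau=\scal{\SIGMA(\vw)}{\mu_{\GG',\beta,0}(\nix\mid\cS_\tau)}$, and I would work on the event $\fD_{\delta,\ell+1}$ from \Lem~\ref{lem_pure_b}. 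Since on $\fD_{\delta,\ell+1}$ the vertices $\vv,\vw$ are at graph distance $>2\ell$ in $\GG'$, the product $AB$ matches the target quantity $\scal{\SIGMA(\vv)}{\mu_{\GG',\beta,0}^{(\ell)}}\scal{\SIGMA(\vw)}{\mu^{(\ell)}_{\GG',\beta,0}}$ appearing in the statement (the balls of radius $\ell$ around $\vv$ and $\vw$ are disjoint, so imposing $+1$ on either $\partial^\ell(\GG',\vv)$ alone or on $\partial^\ell(\GG',\vv)\cup\partial^\ell(\GG',\vw)$ yields the same magnetisation of $\vv$, and likewise for $\vw$).

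The first step is to show $|a_\tau|\le A$ via two monotonicity arguments. Coordinatewise monotonicity of $\tau\mapsto a_\tau$ (\Lem~\ref{lem_kostas}) combined with the zero-field inversion symmetry $\mu_{\GG',\beta,0}(\sigma)=\mu_{\GG',\beta,0}(-\sigma)$ yields $|a_\tau|\le a_{+\mathbf{1}}$, where $+\mathbf{1}\in\PM^{\vU_\delta}$ is the all-ones configuration. On $\fD_{\delta,\ell+1}$ the set $\vU_\delta$ lies strictly beyond $\partial^\ell(\GG',\vv)$, so imposing $+1$ additionally on $\partial^\ell(\GG',\vv)$ can only increase $a_{+\mathbf{1}}$ (a second application of \Lem~\ref{lem_kostas}); after this extra conditioning, the magnetisation of $\vv$ depends only on the ball of radius $\ell$ around $\vv$ and equals $A$, so $|a_\tau|\le A$. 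The bound $|b_\tau|\le B$ is symmetric.

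Next I would transfer the tight Belief Propagation upper bound of \Lem~\ref{cor_max} from $\scal{\SIGMA(\ve_m)}{\mu_{\GG,\beta,0}}$ down to $\scal{\SIGMA(\vv)\SIGMA(\vw)}{\mu_{\GG',\beta,0}}$ via identity \eqref{eq_lem_max_1}, which realises $\scal{\SIGMA(\ve_m)}{\mu_{\GG,\beta,0}}$ as a bi-Lipschitz function of $\scal{\SIGMA(\vv)\SIGMA(\vw)}{\mu_{\GG',\beta,0}}$ on $[-1,1]$ with Lipschitz constants depending only on $\beta$; this gives $\ex\abs{\scal{\SIGMA(\vv)\SIGMA(\vw)}{\mu_{\GG',\beta,0}}-AB}\le C_\beta\eps+o(1)$ for $\ell>\ell_0(\eps,d,\beta)$. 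Expanding $\scal{\SIGMA(\vv)\SIGMA(\vw)}{\mu_{\GG',\beta,0}}=\sum_\tau\mu_{\GG',\beta,0}(\cS_\tau)\scal{\SIGMA(\vv)\SIGMA(\vw)}{\mu_{\GG',\beta,0}(\nix\mid\cS_\tau)}$ and applying \Lem~\ref{lem_pure_a} to replace the conditional two-point correlator by $a_\tau b_\tau$ at a cost of additive $O(\eps)$ on pure states of total mass $\ge 1-\eps$, I arrive via Markov at an event $\fS_\eps''$ of probability $\ge 1-O(\sqrt\eps)-o(1)$ on which $\big|\sum_\tau\mu_{\GG',\beta,0}(\cS_\tau)\,a_\tau b_\tau-AB\big|=O(\sqrt\eps)$.

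The final step is routine: on $\fS_\eps''\cap\fD_{\delta,\ell+1}$ every summand $AB-a_\tau b_\tau$ is non-negative by Step~1, so Markov's inequality yields $\sum_{\tau:\,|a_\tau b_\tau-AB|\ge\eps^{1/4}}\mu_{\GG',\beta,0}(\cS_\tau)=O(\eps^{1/4})$. Rescaling the input $\eps$ to absorb the exponent $1/4$, and choosing $\delta_0$ and $\ell_0$ accordingly, yields the statement. The main obstacle is Step~1: without the pointwise bound $a_\tau b_\tau\le AB$, mean-closeness of $a_\tau b_\tau$ to $AB$ would fail to propagate to individual terms. The two-step monotonicity argument via the auxiliary $+1$ boundary on $\partial^\ell(\GG',\vv)$, which is what forces the use of the geometric event $\fD_{\delta,\ell+1}$ from \Lem~\ref{lem_pure_b}, is the crux.
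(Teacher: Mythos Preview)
Your proposal is correct and follows essentially the same strategy as the paper: the pointwise bounds $|a_\tau|\le A$, $|b_\tau|\le B$ via \Lem~\ref{lem_kostas} (yielding $AB-a_\tau b_\tau\ge0$), the tightness of the BP upper bound from \Lem~\ref{cor_max}, the factorisation from \Lem~\ref{lem_pure_a}, and a final Markov step. The only cosmetic difference is that you invert the monotone map $z\mapsto(z+\tanh\beta)/(1+z\tanh\beta)$ once at the outset (via its bi-Lipschitz property) to work with $\scal{\SIGMA(\vv)\SIGMA(\vw)}{\mu_{\GG',\beta,0}}$ throughout, whereas the paper carries the computation on the $\mu_{\GG,\beta,0}$ side through \eqref{eq_lem_pure_c_1} and only transfers back to $a_\tau b_\tau$ at the end via \eqref{eq_lem_pure_9a}--\eqref{eq_lem_pure_10}; your ordering is arguably cleaner.
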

\begin{proof}
	Given $\eps>0$ choose $\xi=\xi(d,\beta,\eps)>0$ and $\delta_0=\delta_0(d,\beta,\eps,\xi)>0$ small enough and $\ell_0=\ell_0(d,\beta,\eps,\xi)>0$ large enough and assume that $0<\delta<\delta_0$ and $\ell>\ell_0$.
	Let $\vecone_{\vU_\delta}\in\PM^{\vU_\delta}$ be the all-ones vector.
	Then \Lem~\ref{lem_kostas} shows that
	\begin{align}\label{eq_lem_pure_4}
		\scal{\SIGMA(\vv)}{\mu_{\GG',\beta,0}(\nix\mid\cS_\tau)}&\leq\scal{\SIGMA(\vv)}{\mu_{\GG',\beta,0}(\nix\mid\cS_{\vecone_{\vU_\delta}})}&&\mbox{for all }\tau\in\PM^{\vU_\delta}.
	\end{align}
	Moreover, given the event $\fD_{\delta,\ell}$ from \Lem~\ref{lem_pure_b}, we have that the
  distance between $\vv$ and $\vU_\delta$ is at least $\ell+1$, and therefore, \Lem~\ref{lem_kostas} implies that
	\begin{align}\label{eq_lem_pure_5}
		\scal{\SIGMA(\vv)}{\mu_{\GG',\beta,0}(\nix\mid\cS_{\vecone_{\vU_\delta}})}\leq\scal{\SIGMA(\vv)}{\mu^{(\ell)}_{\GG',\vv,\beta,0}}.
	\end{align}
	Combining~\eqref{eq_lem_pure_4}--\eqref{eq_lem_pure_5}, due to the inversion symmetry of the Ising model without an external field we conclude that given $\fD_{\delta,\ell}$ for all $\tau\in\PM^{\vU_\delta}$,
	\begin{align}\label{eq_lem_pure_6}
		\abs{\scal{\SIGMA(\vv)}{\mu_{\GG',\beta,0}(\nix\mid\cS_{\tau})}}&\leq\scal{\SIGMA(\vv)}{\mu^{(\ell)}_{\GG',\vv,\beta,0}}=\fm_{\GG,\vv\to\vw,\beta,0}^{(\ell)},&&\mbox{and analogously}\\
		\abs{\scal{\SIGMA(\vw)}{\mu_{\GG',\beta,0}(\nix\mid\cS_{\tau})}}&\leq\scal{\SIGMA(\vw)}{\mu^{(\ell)}_{\GG',\vw,\beta,0}}=\fm_{\GG,\vw\to\vv,\beta,0}^{(\ell)}.\label{eq_lem_pure_7}
	\end{align}

	Further, consider the event
	\begin{align*}
		\fE_\xi&=\cbc{0\leq\frac{\fm_{\GG,\vv\to\vw,\beta,0}^{(\ell)}\fm_{\GG,\vw\to\vv,\beta,0}^{(\ell)}+\tanh\beta}{1+\fm_{\GG,\vv\to\vw,\beta,0}^{(\ell)}\fm_{\GG,\vw\to\vv,\beta,0}^{(\ell)}\tanh\beta}-\scal{\SIGMA(\vv\vw)}{\mu_{\GG,\beta,0}}<\xi}.
	\end{align*}
	\Lem s~\ref{lem_max}, \ref{cor_max} and~\ref{lem_pure_b} imply that
	\begin{align}\label{eq_lem_pure_8}
		\pr\brk{\fE_\xi}>1-\xi+o(1),
	\end{align}
	provided that $\ell$ is sufficiently large.
	Hence, with $\cT_\xi$ the set from \Lem~\ref{lem_pure_a}, consider
	\begin{align}\label{eq_lem_pure_c_0}
		\cT_\xi^*&=\cbc{\tau\in\cT_\xi:\scal{\SIGMA(\vv\vw)}{\mu_{\GG,\beta,0}(\nix\mid\cS_\tau)}\geq\frac{\fm_{\GG,\vv\to\vw,\beta,0}^{(\ell)}\fm_{\GG,\vw\to\vv,\beta,0}^{(\ell)}+\tanh\beta}{1+\fm_{\GG,\vv\to\vw,\beta,0}^{(\ell)}\fm_{\GG,\vw\to\vv,\beta,0}^{(\ell)}\tanh\beta}-\xi^{1/2}}.
	\end{align}
	If the event $\fD_{\delta,\ell}\cap\fE_\xi\cap\fS_\xi$ occurs, then by the law of total probability we have
	\begin{align}\label{eq_lem_pure_9}
		\sum_{\tau\not\in\cT^*_\xi}\mu_{\GG,\beta,0}(\cS_\tau)&<\xi^{1/4}.
	\end{align}

	As a direct consequence of the definition~\eqref{eqIsingmu}--\eqref{eqIsingZ} of the Ising model we find
	\begin{align}\label{eq_lem_pure_c_1}
		\scal{\SIGMA(\vv\vw)}{\mu_{\GG,\beta,0}(\nix\mid\cS_\tau)}&=\frac{\scal{\SIGMA(\ve_m)\exp(\beta\SIGMA(\ve_m))}{\mu_{\GG',\beta,0}(\nix\mid\cS_\tau)}}{\scal{\exp(\beta\SIGMA(\ve_m))}{\mu_{\GG',\beta,0}(\nix\mid\cS_\tau)}}=
	\frac{\scal{\SIGMA(\ve_m)}{\mu_{\GG',\beta,0}(\nix\mid\cS_\tau)}+\tanh\beta}{1+\scal{\SIGMA(\ve_m)}{\mu_{\GG',\beta,0}(\nix\mid\cS_\tau)}\tanh\beta}.
	\end{align}
	Furthermore, for any $\tau\in\cT_\xi^*\subset\cT_\xi$ we have
	\begin{align}\label{eq_lem_pure_c_2}
\abs{\scal{\SIGMA(\vv\vw)}{\mu_{\GG',\beta,0}(\nix\mid\cS_\tau)}-\scal{\SIGMA(\vv)}{\mu_{\GG',\beta,0}(\nix\mid\cS_\tau)}\scal{\SIGMA(\vw)}{\mu_{\GG',\beta,0}(\nix\mid\cS_\tau)}}<\xi.
	\end{align}
	Combining~\eqref{eq_lem_pure_c_1} and~\eqref{eq_lem_pure_c_2} with the definition~\eqref{eq_lem_pure_c_0} of $\cT_\xi^*$ we see that for $\tau\in\cT_\xi^*$,
	\begin{align}\label{eq_lem_pure_9a}
	\frac{\scal{\SIGMA(\vv)}{\mu_{\GG',\beta,0}(\nix\mid\cS_\tau)}\scal{\SIGMA(\vw)}{\mu_{\GG',\beta,0}(\nix\mid\cS_\tau)}+\tanh\beta}{1+\scal{\SIGMA(\vv)}{\mu_{\GG',\beta,0}(\nix\mid\cS_\tau)}\scal{\SIGMA(\vw)}{\mu_{\GG',\beta,0}(\nix\mid\cS_\tau)}\tanh\beta}&\geq\frac{\fm_{\GG,\vv\to\vw,\beta,0}^{(\ell)}\fm_{\GG,\vw\to\vv,\beta,0}^{(\ell)}+\tanh\beta}{1+\fm_{\GG,\vv\to\vw,\beta,0}^{(\ell)}\fm_{\GG,\vw\to\vv,\beta,0}^{(\ell)}\tanh\beta}-\xi^{1/3},
	\end{align}
	provided that $\xi>0$ is chosen sufficiently small.

	Since the function $z\in[-1,1]\mapsto(z+\tanh\beta)/(1+z\tanh\beta)$ is  differentiable and strictly increasing, \eqref{eq_lem_pure_9a} implies that on the event $\fD_{\delta,\ell}\cap\fE_\xi\cap\fS_\xi$ for $\tau\in\cT_\xi^*$ we have
	\begin{align}
	\scal{\SIGMA(\vv)}{\mu_{\GG',\beta,0}(\nix\mid\cS_\tau)}\scal{\SIGMA(\vw)}{\mu_{\GG',\beta,0}(\nix\mid\cS_\tau)}
		&>\fm_{\GG,\vv\to\vw,\beta,0}^{(\ell)}\fm_{\GG,\vw\to\vv,\beta,0}^{(\ell)}-\xi^{1/4}\nonumber\\&=
		\scal{\SIGMA(\vv)}{\mu_{\GG',\beta,0}^{(\ell)}}\scal{\SIGMA(\vw)}{\mu^{(\ell)}_{\GG',\beta,0}}-\xi^{1/4},\label{eq_lem_pure_10}
	\end{align}
	provided $\xi>0$ is chosen small enough.
	Thus, combining~\eqref{eq_lem_pure_6}--\eqref{eq_lem_pure_7} and~\eqref{eq_lem_pure_10}, we conclude that on the event $\fD_{\delta,\ell}\cap\fE_\xi\cap\fS_\xi$,
	\begin{align}
		\abs{\scal{\SIGMA(\vv)}{\mu_{\GG',\beta,0}(\nix\mid\cS_\tau)}\scal{\SIGMA(\vw)}{\mu_{\GG',\beta,0}(\nix\mid\cS_\tau)} -\scal{\SIGMA(\vv)}{\mu_{\GG',\beta,0}^{(\ell)}}\scal{\SIGMA(\vw)}{\mu^{(\ell)}_{\GG',\beta,0}}}<\eps&&\mbox{for all }\tau\in\cT_\xi^*.\label{eq_lem_pure_11}
	\end{align}

	Finally, \eqref{eq_lem_pure_11} shows that $\cT_\eps'\supseteq\cT_\xi^*$ and~\eqref{eq_lem_pure_9} shows that $\fS_\eps'\supseteq\fD_{\delta,\ell}\cap\fE_\xi\cap\fS_\xi$.
	Therefore, \eqref{eq_lem_pure_8}, \Lem~\ref{lem_pure_b} and \Lem~\ref{lem_pure_a} show that
	\begin{align*}
		\pr\brk{\fS_\eps'}\geq\pr\brk{\fD_{\delta,\ell}\cap\fE_\xi\cap\fS_\xi}>1-3\xi+o(1)>1-\eps+o(1),
	\end{align*}
	as desired.
\end{proof}

As a preparation for the next step we point out that the empirical distribution of the magnetisations with an all-ones boundary condition at distance $\ell$ is close to the distribution $\pi_{d,\beta,0}^{(\ell)}$ obtained after $\ell$ iterations of the BP operator from~\eqref{eqpidbetaBell}.

\begin{fact}\label{lemma_W1_B=0}
	For $\ell\geq0$ the distribution
	\begin{align}\label{eq_lemma_W1_B=0}
		\pi_{\GG',\beta,0}^{(\ell)}&=\frac1n\sum_{i=1}^n\delta_{\scal{\SIGMA(v_i)}{\mu^{(\ell)}_{\GG',v_i,\beta,0}}}
	\end{align}
	satisfies $\ex[W_1(\pi_{\GG',\beta,0}^{(\ell)},\pi_{d,\beta,0}^{(\ell)})]=o(1)$.
\end{fact}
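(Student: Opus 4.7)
The plan is to reduce $W_1$-convergence of empirical magnetisations to local weak convergence of depth-$\ell$ neighbourhoods, exploiting that the magnetisation with an all-ones boundary condition at distance $\ell$ is a function only of the isomorphism class of the depth-$\ell$ neighbourhood. Concretely, if $\nabla^\ell(\GG',v_i)$ is a tree $T$ rooted at $v_i$, then $\scal{\SIGMA(v_i)}{\mu^{(\ell)}_{\GG',v_i,\beta,0}}$ equals $\scal{\SIGMA(\root)}{\mu^{(\ell)}_{T,\root,\beta,0}}$, and by \Cor~\ref{cor_BP_tree} the law of the latter taken over $T=\TT^{(\ell)}$ is precisely $\pi_{d,\beta,0}^{(\ell)}$. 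Because $\GG'=\GG-\ve_m$ differs from $\GG$ in only one edge, the local weak convergence from \Lem~\ref{lem_lwc} applies verbatim to $\GG'$, so the empirical distribution of the random variables $\nabla^\ell(\GG',v_i)$ converges in probability to the law of $\TT^{(\ell)}$.

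In more detail, I would proceed as follows. Fix $\eps>0$. The tree $\TT^{(\ell)}$ takes at most countably many rooted isomorphism classes $T_1,T_2,\ldots$, and there is a finite $K=K(\eps,\ell)$ with $\sum_{j>K}\pr[\TT^{(\ell)}\ism T_j]<\eps$. For each $j\leq K$, \Lem~\ref{lem_lwc} (applied to $\GG'$) together with Azuma--Hoeffding yields
\[
	\frac1n\sum_{i=1}^n\vecone\cbc{\nabla^\ell(\GG',v_i)\ism T_j}=\pr[\TT^{(\ell)}\ism T_j]+o(1)\qquad\mbox{in probability.}
\]
Since all magnetisations lie in $[0,1]$, the contribution to $W_1$ from the at most $\eps n+o(n)$ vertices falling outside $T_1,\ldots,T_K$ is at most $\eps+o(1)$. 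On the remaining $(1-\eps)n+o(n)$ vertices one may build a coupling between $\pi_{\GG',\beta,0}^{(\ell)}$ and $\pi_{d,\beta,0}^{(\ell)}$ which maps vertices of neighbourhood type $T_j$ to the atom carrying magnetisation $\Scal{\SIGMA(\root)}{\mu^{(\ell)}_{T_j,\root,\beta,0}}$, giving a coupling with transportation cost $o(1)$.

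Combining these two contributions yields $W_1(\pi_{\GG',\beta,0}^{(\ell)},\pi_{d,\beta,0}^{(\ell)})\leq\eps+o(1)$ in probability. Since the Wasserstein distance is uniformly bounded (the supports lie in $[0,1]$), dominated convergence upgrades this to $\ex[W_1(\pi_{\GG',\beta,0}^{(\ell)},\pi_{d,\beta,0}^{(\ell)})]\leq\eps+o(1)$, and sending $\eps\to 0$ completes the argument. No substantial obstacle is expected here: the only mildly delicate point is replacing the law-of-large-numbers statement from \Lem~\ref{lem_lwc} by an averaged bound over $\GG'$ rather than $\GG$, which is immediate since removing one random edge changes at most $O(\Delta^\ell)$ vertex neighbourhoods for any fixed maximum degree cut-off, and the degrees of the constantly many vertices involved are $O(\log n)$ \whp.
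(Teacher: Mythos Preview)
Your proposal is correct and takes essentially the same approach as the paper. The paper's proof is a single sentence stating that the fact is an immediate consequence of \Lem~\ref{lem_lwc} and \Cor~\ref{cor_BP_tree}; your write-up simply spells out the routine details of that deduction (truncation to finitely many tree types, building the obvious coupling, and invoking boundedness for dominated convergence).
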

\begin{proof}
	This is an immediate consequence of \Lem~\ref{lem_lwc} and \Cor~\ref{cor_BP_tree}.
\end{proof}

While \Lem~\ref{lem_pure_c} provides information about the products
$\scal{\SIGMA(\vv)}{\mu_{\GG',\beta,0}(\nix\mid\cS_\tau)}\scal{\SIGMA(\vw)}{\mu_{\GG',\beta,0}(\nix\mid\cS_\tau)}$, what we are really after is the single magnetisations $\scal{\SIGMA(\vv)}{\mu_{\GG',\beta,0}(\nix\mid\cS_\tau)}$.
The following lemma combines \Lem~\ref{lem_pure_c} and Fact~\ref{lemma_W1_B=0} to at least extract the absolute values $|\scal{\SIGMA(\vv)}{\mu_{\GG',\beta,0}(\nix\mid\cS_\tau)}|$.

\begin{lemma}\label{lem_pure_d}
	Assume that $d>1$ and $\beta\in(\betaf(d),\infty)\setminus\fX_d$.
	For any $\eps>0$ there exist $\delta_0=\delta_0(d,\beta,\eps)>0$, $\ell_0=\ell_0(d,\beta,\eps)$ such that for $0<\delta<\delta_0(\eps)$ and $\ell>\ell_0$ the following is true.
	Let
	\begin{align*}
		\cT_\eps''&=\cbc{\tau\in\PM^{\vU_\delta}:\scal{\SIGMA(\vv)}{\mu_{\GG',\beta,0}^{(\ell)}}-\eps\leq \abs{\scal{\SIGMA(\vv)}{\mu_{\GG',\beta,0}(\nix\mid\cS_\tau)}}\leq \scal{\SIGMA(\vv)}{\mu_{\GG',\beta,0}^{(\ell)}}}&&\mbox{and}\\
		\fS_\eps''&=\cbc{\sum_{\tau\in\cT_\eps''}\mu_{\GG',\beta,0}(\cS_\tau)>1-\eps}.
	\end{align*}
	Then $\pr\brk{\fS_\eps''}\geq1-\eps-o(1).$
\end{lemma}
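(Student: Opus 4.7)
The plan is to extract single-vertex control at $\vv$ from the product-type bound of Lemma~\ref{lem_pure_c}, exploiting the crucial fact that the event $\fS_\eps''$ is determined by $(\GG,\vU_\delta,\vv)$ alone and is indifferent to $\vw$. Denote $X_\tau(v) := \scal{\SIGMA(v)}{\mu_{\GG',\beta,0}(\nix\mid\cS_\tau)}$ and $\hat{m}_v := \scal{\SIGMA(v)}{\mu_{\GG',\beta,0}^{(\ell)}}$. The upper bound $|X_\tau(\vv)| \leq \hat{m}_\vv$ will hold for every $\tau \in \PM^{\vU_\delta}$ on the event $\fD_{\delta,\ell}$ of Lemma~\ref{lem_pure_b}, by the same reasoning that produced~\eqref{eq_lem_pure_6}. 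The substance is therefore the lower bound $|X_\tau(\vv)| \geq \hat{m}_\vv - \eps$ for most $\tau$, for which I will use the chain of elementary inequalities
\[
\hat{m}_\vw\,(\hat{m}_\vv - |X_\tau(\vv)|) \leq \hat{m}_\vv \hat{m}_\vw - |X_\tau(\vv)||X_\tau(\vw)| \leq \hat{m}_\vv \hat{m}_\vw - X_\tau(\vv) X_\tau(\vw).
\]
Combined with the product bound $|X_\tau(\vv)X_\tau(\vw) - \hat{m}_\vv\hat{m}_\vw| < \eps_0$ of Lemma~\ref{lem_pure_c} (for a small parameter $\eps_0 = \eps_0(\eps,d,\beta)$ to be chosen), this gives $|X_\tau(\vv)| \geq \hat{m}_\vv - \eps_0/\hat{m}_\vw$ whenever $\hat{m}_\vw > 0$. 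The task reduces to producing, for typical~$\tau$, \emph{some} vertex $w \in V_n$ that is both well-magnetised ($\hat{m}_w \geq \eta$ for a constant $\eta = \eta(d,\beta)$) and good for $\tau$.

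Two counting ingredients will supply such a witness. First, Proposition~\ref{prop_BP_B=0}(ii) asserts $\pi_{d,\beta,0}((0,1]) = 1-\rho_d > 0$, so I will fix $\eta$ with $\pi_{d,\beta,0}([\eta,1]) \geq (1-\rho_d)/2$; Fact~\ref{lemma_W1_B=0} together with the weak convergence $\pi_{d,\beta,0}^{(\ell)} \to \pi_{d,\beta,0}$ then implies that for $\ell$ large, with probability $1-o(1)$ at least a $(1-\rho_d)/3$-fraction of $w \in V_n$ satisfy $\hat{m}_w \geq \eta$. Second, since $\vw$ is within $o(1)$ total variation of a uniform random vertex independent of $\tau$, Fubini applied to Lemma~\ref{lem_pure_c} yields
\[
\ex_{\GG,\vU_\delta,\vv}\sum_\tau \mu_{\GG',\beta,0}(\cS_\tau)\,\pr_\vw\brk{|X_\tau(\vv)X_\tau(\vw) - \hat{m}_\vv\hat{m}_\vw| \geq \eps_0} \leq 2\eps_0 + o(1),
\]
whence two Markov inequalities reveal an event of probability $1 - O(\sqrt{\eps_0}) - o(1)$ on which the $\tau$ for which at most an $\eps_0^{1/4}$-fraction of vertices $w$ are bad carry $\mu$-mass at least $1 - \eps_0^{1/4}$. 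Intersecting the two counts, a positive fraction of $w$ is both magnetised and good. Choosing $\eps_0 = \min(\eta\eps,\eps^4)$ then forces $|X_\tau(\vv)| \geq \hat{m}_\vv - \eps$ on a set of $\tau$ of $\mu$-mass at least $1-\eps$, with overall probability $1 - \eps - o(1)$.

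The main obstacle is precisely the conversion of product-type information into pointwise information at $\vv$: because $\hat{m}_\vw$ vanishes with the macroscopic probability $\rho_d > 0$, the bound $|X_\tau(\vv)| \geq \hat{m}_\vv - \eps_0/\hat{m}_\vw$ is vacuous on a sizeable event, so Lemma~\ref{lem_pure_c} applied to the \emph{specific} $\vw$ of $\ve_m$ cannot directly yield the required $1-\eps$ probability bound. The remedy is the structural observation that $\fS_\eps''$ does not involve $\vw$, allowing me to reinterpret ``$\vw$ is useful'' as ``some vertex $w \in V_n$ is useful''; only then does the weaker guarantee that a positive density of vertices are magnetised --- which is all Proposition~\ref{prop_BP_B=0}(ii) provides --- suffice.
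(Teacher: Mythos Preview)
Your proposal is correct and follows essentially the same approach as the paper. Both arguments hinge on the observation that $\fS_\eps''$ is measurable with respect to $(\GG',\vU_\delta,\vv)$ and does not involve $\vw$, which lets one leverage the merely positive (not close to~$1$) density of well-magnetised vertices $w$. The paper packages this by conditioning on the $\vw$-event $\fB_\eta=\{\hat m_\vw>\eta\}$ and using that $\pr[\fS_\xi'\mid\fB_\eta,\GG',\vU_\delta]\ge 1-\xi^{1/2}/\eta$ together with the (approximate) conditional independence of $\fS_\eps''$ and $\fB_\eta$; your Fubini/witness formulation integrates out $\vw$ first and then intersects the set of magnetised vertices with the set of vertices good for a given $\tau$. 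These are two equivalent ways of exploiting the same independence, and your elementary chain $\hat m_w(\hat m_\vv-|X_\tau(\vv)|)\le \hat m_\vv\hat m_w - X_\tau(\vv)X_\tau(w)$ is exactly the computation the paper performs implicitly. One minor point worth making explicit in a full write-up: the chain also uses $|X_\tau(w)|\le\hat m_w$ for the witness $w$, which requires $w$ to lie at distance greater than $\ell$ from $\vU_\delta$; since $|\vU_\delta|=O(1)$ this excludes only $o(n)$ vertices w.h.p.\ and does not affect the positive-fraction intersection.
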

\begin{proof}
	Given $\eps>0$ choose $\xi=\xi(d,\beta,\eps)>0$ and $\delta=\delta_0(d,\beta,\eps,\xi)>0$ sufficiently small and $\ell_0=\ell_0(d,\beta,\eps,\xi)>0$ sufficiently large so that in the notation from \Lem~\ref{lem_pure_c} we have
	\begin{align}\label{eq_lem_pure_d_1}
		\pr\brk{\pr\brk{\fS_\xi'\mid\GG',\vU_\delta}>1-\xi^{1/2}}>1-\xi^{1/2}-o(1).
	\end{align}
	Additionally, let $\fW_\xi$ be the event that the distribution $\pi_{\GG',\beta,0}^{(\ell)}$ from~\eqref{eq_lemma_W1_B=0} satisfies $W_1(\pi_{\GG',\beta,0}^{(\ell)},\pi_{d,\beta,0})<\xi$.
	Since~\eqref{eqDMweakLimit} ensures that
	\begin{align*}
		\lim_{\ell\to\infty}W_1(\pi_{d,\beta,0}^{(\ell)},\pi_{d,\beta,0})=0,
	\end{align*}
	Fact~\ref{lemma_W1_B=0} shows that for large enough $\ell$ we have
	\begin{align}\label{eq_lem_pure_d2}
		\pr\brk{\fW_\xi}=1-o(1).
	\end{align}

	Since $d>1$ and $\beta>\betaf(d)$, \Prop~\ref{prop_BP_B=0}~(ii) shows that there exists a number $\eta=\eta(d,\beta)>0$ that depends on $d,\beta$ only such that $\pi_{d,\beta,0}([2\eta,\infty])>2\eta$.
	Let
	\begin{align*}
		\fB_\eta&=\cbc{\scal{\SIGMA(\vw)}{\mu_{\GG',\beta,0}^{(\ell)}}>\eta}.
	\end{align*}
	Because given $\GG',\vU_\delta$ the vertex $\vw$ is within total variation distance $o(1)$ of a uniformly random element of $V_n$, we conclude that on the event $\fW_\xi$,
	\begin{align}\label{eq_lem_pure_d3}
		\pr\brk{\fB_\eta\mid\GG',\vU_\delta}&>\eta,
	\end{align}
	provided $\xi>0$ is chosen small enough.

	Now consider the event
	\begin{align*}
		\fV_\xi=\cbc{\pr[\fS_\xi'\mid\GG',\vU_\delta]>1-\xi^{1/2}}\cap\fW_\xi.
	\end{align*}
	Then \eqref{eq_lem_pure_d_1} and \eqref{eq_lem_pure_d2} imply that
	\begin{align}\label{eq_lem_pure_d4}
		\pr\brk{\fV_\xi}>1-\xi^{1/2}-o(1).
	\end{align}
	Furthermore, on the event $\fV_\xi$ we have
	\begin{align}\label{eq_lem_pure_d5}
		\pr[\fS_\xi'\cap\fB_\eta\mid\GG',\vU_\delta]>1-\xi^{1/2}/\eta-o(1)>1-\xi^{1/3}-o(1)
	\end{align}
	if $\xi>0$ is chosen sufficiently small (which is possible because $\eta=\eta(d,\beta)$ depends on $d,\beta$ only).

	Finally, if $\fS_\xi'\cap\fB_\eta$ occurs, then for any $\tau\in\cT_\xi'$ we have
	\begin{align*}
		\abs{\scal{\SIGMA(\vv)}{\mu_{\GG',\beta,0}(\nix\mid\cS_\tau)}}\geq \scal{\SIGMA(\vv)}{\mu_{\GG',\beta,0}^{(\ell)}}-\frac\xi\eta>\scal{\SIGMA(\vv)}{\mu_{\GG',\beta,0}^{(\ell)}}-\eps,
	\end{align*}
	provided that we chose $\xi=\xi(d,\beta,\eps)>0$ suitably small.
	Moreover, \eqref{eq_lem_pure_6} readily yields the upper bound
	\begin{align*}
		\abs{\scal{\SIGMA(\vv)}{\mu_{\GG',\beta,0}(\nix\mid\cS_\tau)}}\leq \Scal{\SIGMA(\vv)}{\mu_{\GG',\beta,0}^{(\ell)}}.
	\end{align*}
	Hence, if $\fS_\xi'\cap\fB_\eta$ occurs, then $\cT_\eps''\supseteq\cT_\eps'$.
	Therefore, the assertion follows from~\eqref{eq_lem_pure_d4} and~\eqref{eq_lem_pure_d5}.
\end{proof}

The next step is to establish consistency of the signs of the magnetisations.
In other words, while \Lem~\ref{lem_pure_d} determines $\scal{\SIGMA(\vv)}{\mu_{\GG',\beta,0}(\nix\mid\cS_\tau)}$ essentially up to a sign $s_{\vv,\tau}$, we are now going to show that $s_{\vv,\tau}$ actually only depends on $\tau$.
In other words, the `pure states' $\cS_\tau$ split into `positive' ones where $s_{\vv,\tau}=1$ and negative ones where $s_{\vv,\tau}=-1$ with high probability over the choice of $\vv$.
In effect, in the cut metric most pure states are close to either a `positive' product measure with magnetisations at least zero, or a negative one with magnetisations less than or equal to zero.

\begin{lemma}\label{lem_pure_e}
	Assume that $d>1$ and $\beta\in(\betaf(d),\infty)\setminus\fX_d$.
	For any $\eps>0$ there exist $\delta_0=\delta_0(d,\beta,\eps)>0$, $\ell_0=\ell_0(d,\beta,\eps)$ such that for $0<\delta<\delta_0(\eps)$ and $\ell>\ell_0$ the following is true.
	Consider the product measures
	\begin{align*}
		\bar\mu_{\GG',\beta,0}^+(\sigma)&=\prod_{v\in V_n}\mu_{\GG',\beta,0}^{(\ell)}(\{\SIGMA(v)=\sigma(v)\}),&
		\bar\mu_{\GG',\beta,0}^-(\sigma)&=\prod_{v\in V_n}\mu_{\GG',\beta,0}^{(\ell)}(\{\SIGMA(v)=-\sigma(v)\})
	\end{align*}
	and let
	\begin{align*}
		\cT_\eps'''&=\cbc{\tau\in\PM^{\vU_\delta}:\min\cbc{\cutm(\mu_{\GG',\beta,0}(\nix\mid\cS_\tau),\bar\mu^+_{\GG',\beta,0}),\cutm(\mu_{\GG',\beta,0}(\nix\mid\cS_\tau),\bar\mu^-_{\GG',\beta,0})}<\eps}&&\mbox{and}\\
		\fS_\eps'''&=\cbc{\sum_{\tau\in\cT_\eps'''}\mu_{\GG',\beta,0}(\cS_\tau)>1-\eps}.
	\end{align*}
	Then $\pr\brk{\fS_\eps'''}\geq1-\eps-o(1).$
\end{lemma}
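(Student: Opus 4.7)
The plan is to combine the asymptotic identities of Lemmas~\ref{lem_pure_c} and~\ref{lem_pure_d} with the extremality framework of Section~\ref{sec_extremal}, and to show via a sign-consistency argument that each `good' $\tau$ admits one sign $s_\tau\in\PM$ such that $\mu_{\GG',\beta,0}(\nix\mid\cS_\tau)$ is close in the cut metric to $\bar\mu^{s_\tau}_{\GG',\beta,0}$. Abbreviate $x_v(\tau)=\scal{\SIGMA(v)}{\mu_{\GG',\beta,0}(\nix\mid\cS_\tau)}$ and $y_v=\scal{\SIGMA(v)}{\mu_{\GG',\beta,0}^{(\ell)}}$, and set $M_\tau=\frac1n\sum_v x_v(\tau)$, $M^*=\frac1n\sum_v y_v$. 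A routine opening step uses Fubini together with the fact that, given $\GG'$, the pair $(\vv,\vw)$ lies within total variation distance $o(1)$ of a uniformly random pair from $V_n$; this lifts the vertex/edge statements of Lemmas~\ref{lem_pure_c} and~\ref{lem_pure_d} to the averaged bounds
\begin{align*}
\frac1{n^2}\sum_{v,w\in V_n}\abs{x_v(\tau)x_w(\tau)-y_vy_w}<\eps^{1/2}, \qquad
\frac1n\sum_{v\in V_n}\bigl||x_v(\tau)|-y_v\bigr|<\eps^{1/2},
\end{align*}
holding simultaneously for every $\tau$ in a set of $\mu_{\GG',\beta,0}$-mass at least $1-\eps^{1/2}$, on an event of probability $1-O(\eps^{1/2})-o(1)$.

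The core of the argument is a sign-consistency identity. The first of the averaged bounds above yields $|M_\tau|^2\approx (M^*)^2$, while the second yields $\frac1n\sum_v|x_v(\tau)|\approx M^*$. With $s_\tau:=\sgn(M_\tau)$ we therefore obtain
\[
\frac1n\sum_{v\in V_n}\bigl(|x_v(\tau)|-s_\tau x_v(\tau)\bigr)\;=\;\frac1n\sum_v|x_v(\tau)|-|M_\tau|\;=\;o(1).
\]
Since the integrand equals $2|x_v(\tau)|\vecone\{\sgn(x_v(\tau))=-s_\tau\}$, the total magnitude of magnetisations with sign opposite to $s_\tau$ is negligible. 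Applying the triangle inequality and using $|x_v(\tau)|\approx y_v$ on average then gives $\frac1n\sum_v|x_v(\tau)-s_\tau y_v|<\eps^{1/3}$, so that the marginals of $\mu_{\GG',\beta,0}(\nix\mid\cS_\tau)$ and those of $\bar\mu^{s_\tau}_{\GG',\beta,0}$ (which are precisely $s_\tau y_v$) are close in $\ell^1$-average.

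To upgrade this marginal bound to a cut-metric bound, invoke the pinning lemma (Lemma~\ref{prop_pinning}) to ensure that most $\tau$ yield a $\delta$-symmetric $\mu_{\GG',\beta,0}(\nix\mid\cS_\tau)$, which by Lemma~\ref{lem_extremal_symmetric} is $\eps$-extremal; the product measures $\bar\mu^\pm_{\GG',\beta,0}$ are trivially $0$-extremal. Inequality~\eqref{eqmargsclose} of Lemma~\ref{lem_extremal_margs} then delivers $\cutm(\mu_{\GG',\beta,0}(\nix\mid\cS_\tau),\bar\mu^{s_\tau}_{\GG',\beta,0})<\eps$ after choosing $\delta$ small and $\ell$ large in terms of $\eps$. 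Such $\tau$ belong to $\cT_\eps'''$, and summing over the good set produces $\sum_{\tau\in\cT_\eps'''}\mu_{\GG',\beta,0}(\cS_\tau)>1-\eps$ on the favourable event, which after a mild rescaling $\eps\mapsto c\eps^2$ yields $\pr[\fS_\eps''']\geq 1-\eps-o(1)$.

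The main technical obstacle is bookkeeping the cascade of small probabilities: the $\delta$-symmetry from pinning, the pairwise near-identity from Lemma~\ref{lem_pure_c}, and the vertexwise magnitude agreement from Lemma~\ref{lem_pure_d} must all be forced to hold simultaneously for the same set of good $\tau$, which requires nested Markov steps and a compatible chain of parameter choices $\xi=\xi(\eps)$, $\delta=\delta(\xi)$, $\ell=\ell(\xi)$. A subsidiary point is that the sign $s_\tau$ is only informative when $M^*$ is bounded away from $0$; for $d>1$, $\beta>\betaf(d)$ this follows from Proposition~\ref{prop_BP_B=0}(ii) via Fact~\ref{lemma_W1_B=0} at large $\ell$, and if $M^*$ were to vanish then both candidate product measures $\bar\mu^\pm_{\GG',\beta,0}$ would already be close to $\mu_{\GG',\beta,0}(\nix\mid\cS_\tau)$ (since all magnitudes $|x_v(\tau)|$ would then be small too), so the conclusion would hold a fortiori.
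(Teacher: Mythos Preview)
Your proposal is correct and follows the same overall architecture as the paper's proof: lift Lemmas~\ref{lem_pure_c} and~\ref{lem_pure_d} to $\ell^1$-averaged statements over vertices and vertex pairs via the asymptotic uniformity of $(\vv,\vw)$, establish that for each good $\tau$ there is a single sign $s_\tau$ governing all magnetisations, and then convert the resulting $\ell^1$-closeness of marginals into a cut-metric bound via Lemma~\ref{lem_extremal_margs}~\eqref{eqmargsclose}, using the pinning lemma to guarantee extremality of $\mu_{\GG',\beta,0}(\nix\mid\cS_\tau)$.

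The one genuine point of departure is the sign-consistency step. The paper argues combinatorially: it introduces a threshold $\zeta$ and the sets $\cV_\tau^\pm=\{v:\pm x_v(\tau)\geq\zeta\}$, and shows by case analysis that neither both of $\cV_\tau^+,\cV_\tau^-$ can be large (contradicting the pairwise bound from Lemma~\ref{lem_pure_c}) nor both small (contradicting that $\pi_{d,\beta,0}$ has mass bounded away from $0$). You instead use the algebraic identity $\tfrac1n\sum_v\bigl(|x_v(\tau)|-s_\tau x_v(\tau)\bigr)=\tfrac1n\sum_v|x_v(\tau)|-|M_\tau|$ together with $|M_\tau|^2\approx(M^*)^2$ (from the pairwise bound) and $\tfrac1n\sum_v|x_v(\tau)|\approx M^*$ (from Lemma~\ref{lem_pure_d}), extracting $|M_\tau|\approx M^*$ by taking a square root, which is legitimate precisely because $M^*=\int z\,\dd\pi_{d,\beta,0}(z)>0$ by Proposition~\ref{prop_BP_B=0}(ii). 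Your route is a little more direct and avoids the threshold parameter~$\zeta$; the paper's route makes the geometric picture (few wrong-sign vertices with non-negligible magnetisation) more explicit. Both rely on the same strict positivity of $M^*$ and feed into \eqref{eqmargsclose} in the same way.
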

\begin{proof}
	Given $\eps>0$ choose $\zeta=\zeta(d,\beta,\eps),\xi=\xi(d,\beta,\eps,\zeta),\delta_0(d,\beta,\eps,\zeta,\xi)>0$ sufficiently small and $\ell_0=\ell_0(d,\beta,\eps,\zeta,\xi)>0$ sufficiently large and assume that $0<\delta<\delta_0$ and $\ell>\ell_0$.
	With $\pi_{\GG',\beta,0}^{(\ell)}$ from~\eqref{eq_lemma_W1_B=0} let
	\begin{align*}
		\fW_\xi&=\cbc{W_1(\pi_{\GG',\beta,0}^{(\ell)},\pi_{d,\beta,0})<\xi}.
	\end{align*}
	Then \Lem s~\ref{lem_pure_a}, \ref{lem_pure_c}, \ref{lemma_W1_B=0} and~\ref{lem_pure_d} demonstrate that the event
	\begin{align*}
		\fV_\xi&=\fW_\xi\cap\cbc{\pr[\fS_\xi'\cap\fS_\xi''\mid\GG',\vU_\delta]>1-3\sqrt\xi}
	\end{align*}
	has probability
	\begin{align}\label{eq_lem_pure_e_1}
		\pr[\fV_\xi]>1-4\sqrt\xi+o(1).
	\end{align}
	Since $\vv$ is within total variation distance $o(1)$ of a uniformly random vertex from $V_n$ we conclude that on $\fV_\xi$,
	\begin{align}\nonumber
		\frac1n&\sum_{v\in V_n}\sum_{\tau\in\PM^{\vU_\delta}}\mu_{\GG',\beta,0}(\cS_\tau)\abs{\abs{\scal{\SIGMA(v)}{\mu_{\GG',\beta,0}(\nix\mid\cS_\tau)}}-\scal{\SIGMA(v)}{\mu_{\GG',\beta,0}^{(\ell)}}}
		\leq\ex\brk{\sum_{\tau\not\in\cT_\xi''}\mu_{\GG',\beta,0}(\cS_\tau)\mid\GG',\vU_\delta}+\xi+o(1)\\
			   &\leq1-\pr\brk{\fS''_\xi\mid\GG',\vU_\delta}+2\xi+o(1)<4\sqrt\xi+o(1).\label{eq_lem_pure_e_2}
	\end{align}
	Similarly, since $(\vv,\vw)$ is within total variation distance $o(1)$ of a uniformly random vertex pair, we see that on $\fV_\xi$,
	\begin{align}\nonumber
		\frac1{n^2}&\sum_{\tau\in\PM^{\vU_\delta}}\sum_{v,w\in V_n}\mu_{\GG',\beta,0}(\cS_\tau)\abs{\scal{\SIGMA(v)}{\mu_{\GG',\beta,0}(\nix\mid\cS_\tau)}\scal{\SIGMA(w)}{\mu_{\GG',\beta,0}(\nix\mid\cS_\tau)}-\scal{\SIGMA(v)}{\mu_{\GG',\beta,0}^{(\ell)}}\scal{\SIGMA(w)}{\mu_{\GG',\beta,0}^{(\ell)}}}\\
				   &\leq1-\pr\brk{\fS_\xi'\mid\GG',\vU_\delta}+2\xi+o(1)<4\sqrt\xi+o(1).
\label{eq_lem_pure_e_3}
	\end{align}
	Combining~\eqref{eq_lem_pure_e_2}--\eqref{eq_lem_pure_e_3}, we see that on $\fV_\xi$,
	\begin{align*}
		\frac1{n^2}\sum_{\tau\in\PM^{\vU_\delta}}
\sum_{v,w\in V_n}\mu_{\GG',\beta,0}(\cS_\tau)
\abs{\scal{\SIGMA(v)\SIGMA(w)}{\mu_{\GG',\beta,0}(\nix\mid\cS_\tau)}-\scal{\SIGMA(v)}{\mu_{\GG',\beta,0}(\nix\mid\cS_\tau)}\scal{\SIGMA(w)}{\mu_{\GG',\beta,0}(\nix\mid\cS_\tau)}}<\xi^{1/3}+o(1),
	\end{align*}
	provided $\xi$ is chosen small enough.
	Hence, letting
	\begin{align*}
		\Theta&=\cbc{\tau\in\PM^{\vU_\delta}:\mu_{\GG',\beta,0}(\nix\mid\cS_\tau)\mbox{ is $\xi^{1/4}$-symmetric}},\\
		\Theta'&=\cbc{\tau\in\PM^{\vU_\delta}:\sum_{v,w\in V_n}\abs{\scal{\SIGMA(v)}{\mu_{\GG',\beta,0}(\nix\mid\cS_\tau)}\scal{\SIGMA(w)}{\mu_{\GG',\beta,0}(\nix\mid\cS_\tau)}-\scal{\SIGMA(v)}{\mu_{\GG',\beta,0}^{(\ell)}}\scal{\SIGMA(w)}{\mu_{\GG',\beta,0}^{(\ell)}}}<\xi^{1/4} n^2},\\
		\Theta''&=\cbc{\tau\in\PM^{\vU_\delta}:\sum_{v\in V_n} \abs{\abs{\scal{\SIGMA(v)}{\mu_{\GG',\beta,0}(\nix\mid\cS_\tau)}}-\scal{\SIGMA(v)}{\mu_{\GG',\beta,0}^{(\ell)}}}<\xi^{1/4} n},
	\end{align*}
	we learn from~\eqref{eq_lem_pure_e_2}--\eqref{eq_lem_pure_e_3} that on the event $\fV_\xi$ for small enough $\xi>0$ and large enough $n$ we have
	\begin{align}\label{eq_lem_pure_e_4}
		\sum_{\tau\in\Theta\cap\Theta'\cap\Theta''}\mu_{\GG',\beta,0}(\cS_\tau)>1-\xi^{1/4}.
	\end{align}

	Now, for $\tau\in\Theta'\cap\Theta''$ consider
	\begin{align*}
		\cV_\tau^+&=\cbc{v\in V_n:\scal{\SIGMA(v)}{\mu_{\GG',\beta,0}(\nix\mid\cS_\tau)}\geq\zeta},&\cV_\tau^-&=\cbc{v\in V_n:\scal{\SIGMA(v)}{\mu_{\GG',\beta,0}(\nix\mid\cS_\tau)}\leq-\zeta}.
	\end{align*}
	Let $\Theta^{\#}=\{\tau\in\Theta'\cap\Theta'':\min\{|\cV_\tau^+|,|\cV_\tau^-|\}\geq\zeta n\}$.
	Thus, $\Theta^{\#}$ contains those $\tau\in\Theta'\cap\Theta''$ where given $\cS_\tau$ there are `many' vertices $v$ with a substantially positive magnetisation, as well as many $v$ with a substantially negative magnetisation.
	We are going to argue that this is not possible, i.e., $\Theta^{\#}=\emptyset$.
	Indeed, for $\tau\in\Theta^{\#}$ we have
	\begin{align}\nonumber
		\sum_{v,w\in V_n}&\abs{\scal{\SIGMA(v)}{\mu_{\GG',\beta,0}(\nix\mid\cS_\tau)}\scal{\SIGMA(w)}{\mu_{\GG',\beta,0}(\nix\mid\cS_\tau)}-\scal{\SIGMA(v)}{\mu_{\GG',\beta,0}^{(\ell)}}\scal{\SIGMA(w)}{\mu_{\GG',\beta,0}^{(\ell)}}}\\
						 &\geq\sum_{v\in\cV_\tau^+,\,w\in\cV_\tau^-}
							\zeta^2-\abs{\abs{\scal{\SIGMA(v)}{\mu_{\GG',\beta,0}(\nix\mid\cS_\tau)}\scal{\SIGMA(w)}{\mu_{\GG',\beta,0}(\nix\mid\cS_\tau)}}-\scal{\SIGMA(v)}{\mu_{\GG',\beta,0}^{(\ell)}}\scal{\SIGMA(w)}{\mu_{\GG',\beta,0}^{(\ell)}}}\nonumber\\
						 &\geq\zeta^4n^2-\xi^{1/5}n^2>\xi^{1/4}n^2,\label{eq_lem_pure_e_5}
	\end{align}
	provided that $\xi=\xi(\zeta)>0$ is chosen small enough.
	Hence, comparing \eqref{eq_lem_pure_e_5} with the definition of $\Theta'\supseteq\Theta^{\#}$, we conclude that $\Theta^{\#}=\emptyset$.

	Next we are going to argue that on $\fV_\xi$ the set $\Theta^0=\cbc{\tau\in\Theta'\cap\Theta'':\max\{|\cV_\tau^+|,|\cV_\tau^-|\}\leq\zeta n}$ of $\tau$ such that given $\cS_\tau$ there are very few substantially positive {\em or} negative magnetisations is empty as well.
	Indeed, by \Prop~\ref{prop_BP_B=0} for $d>1$ and $\beta>\betaf(d)$ there exists $\eta=\eta(d,\beta)>0$ such that $\pi_{d,\beta,0}([\eta,1])>\eta$.
	Because the sequence $(\pi_{d,\beta,0}^{(l)})_l$ converges to $\pi_{d,\beta,0}$ in $W_1$, we see that $\pi_{d,\beta,0}^{(\ell)}([\eta/2,1])>\eta/2$ for large enough $\ell$.
	Hence, on the event $\fW_\xi\supseteq\fV_\xi$ we have
	\begin{align}\label{eq_lem_pure_e_6}
		\sum_{v\in V_n}\vecone\cbc{\scal{\SIGMA(v)}{\mu_{\GG',\beta,0}^{(\ell)}}\geq\eta/3}\geq\eta n/3.
	\end{align}
	But for $\tau\in\Theta^0$ we find
	\begin{align}\label{eq_lem_pure_e_7}
		\sum_{v\in V_n}\abs{\scal{\SIGMA(v)}{\mu_{\GG',\beta,0}(\nix\mid\cS_\tau)}}\leq3\zeta n.
	\end{align}
	Thus, combining~\eqref{eq_lem_pure_e_6}--\eqref{eq_lem_pure_e_7} and assuming that $\zeta,\xi$ are chosen suitably small, we obtain
	\begin{align*}
		\sum_{v\in V_n}
		\abs{\abs{\scal{\SIGMA(v)}{\mu_{\GG',\beta,0}(\nix\mid\cS_\tau)}}-\scal{\SIGMA(v)}{\mu_{\GG',\beta,0}^{(\ell)}}}\geq(\eta/3-3\zeta)n>\xi^{1/4}n,
	\end{align*}
	in contradiction to the fact that $\tau\in\Theta''$.
	Hence, $\Theta^0=\emptyset$.

	Since $\Theta^0\cup\Theta^{\#}=\emptyset$ on $\fV_\xi$, we conclude that for any $\tau\in\Theta\cap\Theta'\cap\Theta''$ there exists $s_\tau\in\PM$ such that
	\begin{align}\label{eq_lem_pure_e_8}
		\sum_{v\in V_n}\abs{\scal{\SIGMA(v)}{\mu_{\GG',\beta,0}(\nix\mid\cS_\tau)}-s_\tau\scal{\SIGMA(v)}{\mu_{\GG',\beta,0}^{(\ell)}}}<2\zeta n.
	\end{align}
	Let
	\begin{align*}
		\Theta^+&=\cbc{\tau\in\Theta\cap\Theta'\cap\Theta'':s_\tau=1},&\Theta^-&=\Theta\cap\Theta'\cap\Theta''\setminus\Theta^+.
	\end{align*}
	For each $\tau\in\Theta^+\subset\Theta$ the conditional distribution $\mu_{\GG',\beta,0}(\nix\mid\cS_\tau)$ is $\xi^{1/8}$-symmetric.
	Therefore, \eqref{eq_lem_pure_e_8} implies together with \Lem s~\ref{lem_extremal_symmetric} and~\ref{lem_extremal_margs} that $\cutm(\mu_{\GG',\beta,0}(\nix\mid\cS_\tau),\bar\mu_{\GG',\beta,0}^+)<\eps$.
	Analogously,  we have $\cutm(\mu_{\GG',\beta,0}(\nix\mid\cS_\tau),\bar\mu_{\GG',\beta,0}^-)<\eps$ for $\tau\in\Theta^-$.
	Therefore, the assertion follows from~\eqref{eq_lem_pure_e_1} and~\eqref{eq_lem_pure_e_4}.
\end{proof}

\begin{proof}[Proof of \Prop~\ref{prop_pure}]
	Let $\eps>0$ and pick $\gamma=\gamma(d,\beta,\eps)$, $\zeta=\zeta(d,\beta,\gamma)>0$ and $\xi=\xi(d,\beta,\zeta)>0$ small enough and $\ell=\ell(d,\xi)>0$ sufficiently large.
	Then by Fact~\ref{lemma_W1_B=0} and \Lem~\ref{lem_pure_e} the events $\fS_\xi'''$ and $\fW_{\xi,\ell}=\{W_1(\pi_{\GG',\beta,0}^{(\ell)},\pi_{d,\beta,0})<\xi\}$ occur with probability
	\begin{align}\label{eq_cor_max_100}
		\pr\brk{\fS_\xi'''\cap\fW_\xi}>1-2\xi+o(1).
	\end{align}

	Suppose that $\fS_\xi'''\cap\fW_\xi$ occurs and let
	\begin{align*}
		\Theta^+&=\cbc{\tau\in\PM^{\vU_\delta}:\cutm(\mu_{\GG',\beta,0}(\nix\mid\cS_\tau),\bar\mu^+_{\GG',\beta,0})<\xi},&
		\Theta^-&=\cbc{\tau\in\PM^{\vU_\delta}:\cutm(\mu_{\GG',\beta,0}(\nix\mid\cS_\tau),\bar\mu^-_{\GG',\beta,0})<\xi}.
	\end{align*}
	In addition, for $\sigma\in\PM^{V_n}$ let $X(\sigma)=\frac1n\sum_{v\in V_n}\sigma(v)$ and define $\lambda_{d,\beta}=\int_0^1z\dd\pi_{d,\beta,0}(z)$.
	Then \Lem~\ref{lem_extremal_margs} and the triangle inequality yield
	\begin{align}\nonumber
		\abs{\scal{X(\SIGMA)}{\mu_{\GG,\beta,0}(\nix\mid\cS_\tau)}-\lambda_{d,\beta}}&=\abs{\frac1n\sum_{v\in V_n}\scal{\SIGMA(v)}{\mu_{\GG,\beta,0}(\nix\mid\cS_\tau)}-\lambda_{d,\beta}}\\
																					 &\leq8\cutm(\mu_{\GG',\beta,0}(\nix\mid\cS_\tau),\bar\mu_{\GG',\beta,0})+W_1(\pi_{\GG',\beta,0}^{(\ell)},\pi_{d,\beta,0})<9\xi.\label{eq_cor_max_101}
	\end{align}
	Furthermore, provided $\xi$ is sufficiently small, \Lem s~\ref{lem_extremal_symmetric} and~\ref{lem_extremal_margs} show that the measure $\mu_{\GG,\beta,0}(\nix\mid\cS_\tau)$ is $\zeta$-symmetric.
	Hence, \eqref{eq_cor_max_101} yields
	\begin{align}\nonumber
		\scal{X(\SIGMA)^2}{\mu_{\GG,\beta,0}(\nix\mid\cS_\tau)}&=
		\frac1{n^2}\sum_{v,w\in V_n}\scal{\SIGMA(v)\SIGMA(w)}{\mu_{\GG,\beta,0}(\nix\mid\cS_\tau)}\\
															   &\leq2\zeta+\frac1{n^2}\sum_{v,w\in V_n}\scal{\SIGMA(v)}{\mu_{\GG,\beta,0}(\nix\mid\cS_\tau)}\scal{\SIGMA(w)}{\mu_{\GG,\beta,0}(\nix\mid\cS_\tau)}<2\zeta+9\xi+\lambda_{d,\beta}^2.\label{eq_cor_max_102}
	\end{align}
	Combining~\eqref{eq_cor_max_101} and~\eqref{eq_cor_max_102} with Chebyshev's inequality, we conclude that
	\begin{align}\label{eq_cor_max_103}
		\mu_{\GG,\beta,0}\bc{\cbc{\abs{X(\SIGMA)-\lambda_{d,\beta}}>\zeta^{1/4}}\mid\cS_\tau}<\zeta^{1/4}&&\mbox{for all }\tau\in\Theta^+\mbox{ and analogously}\\
	\label{eq_cor_max_104}
		\mu_{\GG,\beta,0}\bc{\cbc{\abs{X(\SIGMA)+\lambda_{d,\beta}}>\zeta^{1/4}}\mid\cS_\tau}<\zeta^{1/4}&&\mbox{for all }\tau\in\Theta^-.
	\end{align}

	Since $\lambda_{d,\beta}>0$ by \Prop~\ref{prop_BP_B=0}, \eqref{eq_cor_max_103}--\eqref{eq_cor_max_104} imply that on the event $\fS'''_\xi\cap\fW_\xi$,
	\begin{align}\label{eq_cor_max_105}
		\mu_{\GG',\beta,0}(\PM^{V_n}\setminus(\cS_+\cup\cS_-))<1-\zeta^{1/4}-\xi<1-2\zeta^{1/4},
	\end{align}
	provided that $\zeta,\xi$ are chosen suitably small.
	In combination with the inherent symmetry of the Ising model with $B=0$, \eqref{eq_cor_max_105} shows that on the event $\fS'''_\xi\cap\fW_\xi$,
	\begin{align}\label{eq_cor_max_106}
		\mu_{\GG',\beta,0}(\cS_+)=\mu_{\GG',\beta,0}(\cS_-)>\frac12-2\zeta^{1/4}.
	\end{align}

	Further, let
	\begin{align*}
		\cS^+&=\bigcup_{\tau\in\Theta^+}\cS_\tau,&\cS^-&=\bigcup_{\tau\in\Theta^-}\cS_\tau.
	\end{align*}
	Then \eqref{eq_cor_max_103} and~\eqref{eq_cor_max_104} imply that on the event $\fS'''_\xi\cap\fW_\xi$,
	\begin{align}\label{eq_cor_max_107}
		\mu_{\GG',\beta,0}(\cS_+\triangle\cS^+),\mu_{\GG',\beta,0}(\cS_-\triangle\cS^-)&<8\zeta^{1/4}.
	\end{align}
	Consequently, by the definition of the cut metric we have
	\begin{align}\label{eq_cor_max_108}
		\cutm(\mu_{\GG',\beta,0}(\nix\mid\cS_+),\mu_{\GG',\beta,0}(\nix\mid\cS^+))&<\gamma,
	\end{align}
	provided that $\zeta=\zeta(\gamma)>0$ is chosen small enough.
	In addition, the conditional distributions $\mu_{\GG',\beta,0}(\nix\mid\cS_\tau)$ for $\tau\in\Theta^+$ satisfy
	\begin{align}\label{eq_cor_max_109}
		\cutm(\mu_{\GG',\beta,0}(\nix\mid\cS^+),\bar\mu_{\GG',\beta,0}^+)&<\xi.
	\end{align}
	Combining~\eqref{eq_cor_max_108}--\eqref{eq_cor_max_109} and applying the triangle inequality, we find $\cutm(\mu_{\GG',\beta,0}(\nix\mid\cS_+),\bar\mu_{\GG',\beta,0}^+)<2\gamma$.
	Therefore, \Lem s~\ref{lem_extremal_symmetric} and~\ref{lem_extremal_margs} show that on the event $\fS'''_\xi\cap\fW_\xi$,
	\begin{align}\label{eq_cor_max_110}
		\frac1{n^2}\sum_{v,w\in V_n}\abs{\scal{\SIGMA(v)\SIGMA(v)}{\mu_{\GG,\beta,0}(\nix\mid\cS_+)}-\scal{\SIGMA(v)}{\mu_{\GG,\beta,0}(\nix\mid\cS_+)}\scal{\SIGMA(w)}{\mu_{\GG,\beta,0}(\nix\mid\cS_+)}}&<\eps.
	\end{align}
	Moreover, \eqref{eq_cor_max_109} and \Lem~\ref{lem_extremal_margs} show that on $\fS'''_\xi\cap\fW_\xi$,
	\begin{align}\label{eq_cor_max_111}
		W_1(\vec\pi^+_{n,m,\beta},\pi_{d,\beta,0})<\eps.
	\end{align}
Similarly, since $\cutm(\mu_{\GG',\beta,0}(\nix\mid\cS_+),\bar\mu_{\GG',\beta,0}^+)<2\gamma$,
	\Lem~\ref{lem_extremal_margs} shows that on $\fS'''_\xi\cap\fW_\xi$,
	\begin{align}\label{eq_cor_max_112}
		\frac1n\sum_{v\in V_n}\abs{\scal{\SIGMA(v)}{\mu_{\GG,\beta,0}(\nix\mid\cS_+)}-\scal{\SIGMA(v)}{\mu^{(\ell)}_{\GG,v,\beta,0}}}<\eps.
	\end{align}
	Finally, the proposition follows from \eqref{eq_cor_max_100}, \eqref{eq_cor_max_106}, \eqref{eq_cor_max_110}, \eqref{eq_cor_max_111} and~\eqref{eq_cor_max_112} upon taking $\eps\to0$ slowly enough as $n\to\infty$.
\end{proof}

\subsection{Proof of \Prop~\ref{prop_tensor}}\label{sec_prop_tensor}

The steps of the proof are broadly similar to those that we pursued towards the proof of \Prop~\ref{prop_tensor_B>0}.
It is just that we have to take into account the conditioning on $\cS_+$ in the definition~\eqref{eq_emp_tensor} of $\vec\pi_{n,m,\beta,M}^\oplus$.

\begin{lemma}\label{cor_GGmag_B=0}
	If $d,\beta>0$ and $0\leq M\leq m$, then
		\begin{align*}
			\limsup_{\ell\to\infty}\limsup_{n\to\infty}\sum_{h=1,2}\ex\abs{\scal{\SIGMA(v_1)}{\mu_{\GG_{h,M},\beta,B}(\nix\mid\cS_+)}-\scal{\SIGMA(v_1)}{\mu^{(\ell)}_{\GG_{h,M},v_1,\beta,B}}}&=0.
		\end{align*}
\end{lemma}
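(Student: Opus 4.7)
The plan is to reduce the statement to \Prop~\ref{prop_pure}~(iii) applied separately to each of the two random graphs $\GG_{h,M}$. The key observation is that, by construction, each individual $\GG_{h,M}$ has the same distribution as the \Erdos-\Renyi\ graph $\GG(n,m-1)$. Indeed, the coupling step {\bf CPL1} produces the sequence $\ve_1,\ldots,\ve_{M-1},\ve_{h,M,1},\ldots,\ve_{h,M,m-M}$ of pairwise distinct edges that from $h$'s perspective is uniformly random among all such sequences, simply because conditioning an exchangeable edge sequence on the first $M-1$ coordinates leaves the joint law of the remaining $m-M$ edges uniform on edges avoiding the prefix.

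Having established that $\GG_{h,M}\disteq\GG(n,m-1)$ and noting that $2(m-1)/n\to d$ as $n\to\infty$, I would invoke \Prop~\ref{prop_pure}~(iii) separately for $h=1$ and $h=2$. Each invocation yields
\begin{align*}
\limsup_{\ell\to\infty}\limsup_{n\to\infty}\ex\abs{\scal{\SIGMA(v_1)}{\mu_{\GG_{h,M},\beta,0}(\nix\mid\cS_+)}-\scal{\SIGMA(v_1)}{\mu^{(\ell)}_{\GG_{h,M},v_1,\beta,0}}}=0.
\end{align*}
Summing over $h=1,2$ and using linearity of expectation together with the triangle inequality delivers the desired conclusion. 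This mirrors exactly the derivation of \Cor~\ref{cor_GGmag} from \Lem~\ref{lem_GGmag} in the case $B>0$, with the unconditional magnetisations there being replaced by those conditioned on the pure state $\cS_+$ here.

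The only subtlety worth checking is that the hypotheses of \Prop~\ref{prop_pure}~(iii), namely $d>1$ and $\beta\in(\betaf(d),\infty)\setminus\fX_d$, are in force; these are the standing assumptions throughout \Sec~\ref{sec_B=0} toward the proof of \Thm~\ref{thm_no}, so there is no additional work here. There is no real obstacle — the lemma is essentially a packaging step that transfers the pure-state convergence statement from the single random graph $\GG$ to each member of the correlated pair $(\GG_{h,M})_{h=1,2}$, exploiting that the marginal law of each $\GG_{h,M}$ is unaffected by the coupling.
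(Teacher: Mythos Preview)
Your proposal is correct and matches the paper's own proof essentially verbatim: the paper also observes that each $\GG_{h,M}$ separately is distributed as $\GG(n,m-1)$ and then invokes \Prop~\ref{prop_pure}~(iii) together with the union bound over $h=1,2$. Your remark about the standing hypotheses $d>1$, $\beta\in(\betaf(d),\infty)\setminus\fX_d$ is apt, as the lemma is only used under those assumptions in \Sec~\ref{sec_B=0}.
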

\begin{proof}
	Since each graph $\GG_{h,M}$ separately is distributed as the random graph $\GG(n,m-1)$, the assertion follows from \Prop~\ref{prop_pure}~(iii) and the union bound.
\end{proof}

\begin{proof}[Proof of \Prop~\ref{prop_tensor}]
	Letting
	\begin{align*}
		\vec\pi_{n,M,\beta,0}^{\oplus\,(\ell)}&=\frac1n\sum_{i=1}^n\delta_{\bc{\scal{\SIGMA(v_i)}{\mu_{\GG_{1,M},v_i,\beta,0}^{(\ell)}},\scal{\SIGMA(v_i)}{\mu_{\GG_{2,M},v_i,\beta,0}^{(\ell)}}}},
	\end{align*}
	we obtain from \Lem~\ref{cor_GGmag_B=0} that
	\begin{align}\label{eq_prop_tensor_1}
		\limsup_{\ell\to\infty}\limsup_{n\to\infty}W_1\bc{\vec\pi_{n,M,\beta}^{\oplus\,(\ell)},\vec\pi_{n,M,\beta}^{\oplus}}&=0.
	\end{align}
	Furthermore, the local weak convergence from \Lem~\ref{lem_lwc_tensor} implies together with \Lem~\ref{lem_tensor_ell} that for any $\ell\geq0$,
	\begin{align}\label{eq_prop_tensor_2}
		W_1\bc{\vec\pi_{n,M,\beta}^{\oplus\,(\ell)},\pi_{d,\beta,0,t}^{\tensor\,(\ell)}}&=o(1).
	\end{align}
	Finally, the assertion follows from	\eqref{eq_prop_tensor_1}, \eqref{eq_prop_tensor_2} and \Cor~\ref{cor_tensor_ell}.
\end{proof}

\subsection{Proof of \Prop~\ref{cor_tensor}}\label{sec_cor_tensor}

As in \Sec~\ref{sec_cor_tensor_B>0} we let $\fG_{n,M}$ be the $\sigma$-algebra generated by $(\GG_{h,M})_{h=1,2}$ and we set $\vDelta_{n,M}=\vDelta^+_{n,M}+\vDelta^-_{n,M}-2\vDelta^{\pm}_{n,M}$.
The principal difference between the present proof and the proof of \Prop~\ref{cor_tensor_B>0} is that here we need to take the conditioning on $\cS_+$ and the inversion symmetry of the Ising model with $B=0$ into account.

\begin{lemma}\label{lem_int}
	Uniformly for all $0\leq M\leq m$ we have
	$\ex\brk{\vDelta_{n,M}\mid\fG_{n,M}}=\cB_{\beta}^\tensor(\pi^\tensor_{d,\beta,0,M/m})-\cB_{\beta}^\tensor(\pi^\tensor_{d,\beta,0,0})+o(1)$ \whp
\end{lemma}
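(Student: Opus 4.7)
The plan is to mirror the proof of \Lem~\ref{lem_intervals} from the $B>0$ case, substituting the Ising measures $\mu_{\GG_{h,M},\beta,0}$ by their conditionings $\mu_{\GG_{h,M},\beta,0}(\nix\mid\cS_+)$ and using the ingredients proved in \Sec~\ref{sec_B=0} in place of those from \Sec~\ref{sec_B>0}. Specifically, I expect the proof to split into three conceptual steps that parallel Claims~\ref{claim_intervals_A}--\ref{claim_intervals_C} and the calculation \eqref{eq_lem_intervals_50}--\eqref{eq_lem_intervals_52}, after which the final algebraic identity comes out verbatim because the definition of $\cB^\tensor_\beta$ in~\eqref{eqBFEtensor} is insensitive to $B$.

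First I would unpack $\vDelta_{n,M}^+,\vDelta_{n,M}^-,\vDelta_{n,M}^\pm$ by applying \Lem~\ref{fact_Zratio} to obtain the local log-ratios $\log\scal{\exp(\beta\SIGMA(\ve_M))}{\mu_{\GG_{h,M},\beta,0}}$ and analogous expressions for the $-$ edges. The key observation is that $\SIGMA(e)=\SIGMA(v)\SIGMA(w)$ is invariant under the global sign flip $\sigma\mapsto-\sigma$ that preserves $\mu_{\GG_{h,M},\beta,0}$, and hence, together with \Prop~\ref{prop_pure}~(i), the identities~\eqref{eqfact_Zratio1cond}--\eqref{eqfact_Zratio2cond} hold \whp\ That is, the edge expectations are the same whether taken under $\mu_{\GG_{h,M},\beta,0}$ or under $\mu_{\GG_{h,M},\beta,0}(\nix\mid\cS_+)$, up to $o(1)$ \whp

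Next, to convert these edge expectations into products of vertex magnetisations, I would invoke the conditional replica symmetry \eqref{eq_prop_tensor_rs} from \Prop~\ref{prop_tensor}. Because given $\fG_{n,M}$ the vertex pair $(\vv_M,\vw_M)$ is within total variation distance $o(1)$ of a uniformly random pair, this yields a verbatim analogue of Claim~\ref{claim_intervals_A}:
\begin{align*}
\log\frac{Z_{\GG_{h,M}^+}(\beta,0)}{Z_{\GG_{h,M}}(\beta,0)}&=\log\cosh\beta+\log\bc{1+\scal{\SIGMA(\vv_M)}{\mu^+_{\GG_{h,M}}}\scal{\SIGMA(\vw_M)}{\mu^+_{\GG_{h,M}}}\tanh\beta}+o(1)\quad\mbox\whp,
\end{align*}
where $\mu^+_{\GG_{h,M}}=\mu_{\GG_{h,M},\beta,0}(\nix\mid\cS_+)$, with the analogous identities for the $-$ edges. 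Then \Prop~\ref{prop_tensor}~\eqref{eq_prop_tensor_W1} provides the analogue of Claim~\ref{claim_intervals_C}: the conditional law given $\fG_{n,M}$ of the pair $(\scal{\SIGMA(\vv_M)}{\mu^+_{\GG_{h,M}}})_{h=1,2}$ is within $W_1$-distance $o(1)$ of the distribution $\pi^\tensor_{d,\beta,0,M/m}$, and the extra edges added in {\bf CPL1}--{\bf CPL2} to form $\GG^+_{h,M}$ and $\GG^-_{h,M}$ provide two further independent samples each, matching the construction of $\vec\Xi^\pm_{M/m},\vec\Xi^-_{M/m}$ in~\eqref{eq_Xi_2}--\eqref{eq_Xi_3}.

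Combining these facts with the definition $\vDelta_{n,M}=\vDelta^+_{n,M}+\vDelta^-_{n,M}-2\vDelta^\pm_{n,M}$, I obtain $\ex[\vDelta_{n,M}\mid\fG_{n,M}]=\ex[\vec\Xi_{M/m}]+o(1)$ \whp\ The algebraic computation in~\eqref{eq_lem_intervals_50}--\eqref{eq_lem_intervals_52} then carries over unchanged, since it only uses the definition~\eqref{eqBFEtensor} of $\cB^\tensor_\beta$ and the fact (from Remark~\ref{rem_extreme}) that $\pi^\tensor_{d,\beta,0,0}$ is a product of two independent samples from $\pi_{d,\beta,0}$, yielding $\ex[\vec\Xi_{M/m}]=\cB^\tensor_\beta(\pi^\tensor_{d,\beta,0,M/m})-\cB^\tensor_\beta(\pi^\tensor_{d,\beta,0,0})$. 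The main obstacle here is really just bookkeeping: the heavy lifting is already done by \Prop~\ref{prop_pure} (which legitimises the passage to $\mu^+_{\GG_{h,M}}$) and by \Prop~\ref{prop_tensor} (which supplies both the joint local convergence and conditional replica symmetry uniformly in $M$); once these are in hand, the proof is a straightforward transcription of \Sec~\ref{sec_cor_tensor_B>0}.
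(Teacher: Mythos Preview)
Your proposal is correct and follows essentially the same approach as the paper. The paper organises the argument into Claims~\ref{claim_int_A} and~\ref{claim_int_C} that parallel Claims~\ref{claim_intervals_A} and~\ref{claim_intervals_C} exactly as you describe, invoking \Prop~\ref{prop_pure}~(i)--(ii) and \Prop~\ref{prop_tensor} for the passage to the conditional measure $\mu_{\GG_{h,M},\beta,0}(\nix\mid\cS_+)$ and the subsequent factorisation, and then reproduces the algebraic identity \eqref{eq_lem_intervals_51}--\eqref{eq_lem_intervals_52} verbatim.
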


We pursue a similar strategy as towards the proof of \Lem~\ref{lem_intervals}.
Let $\vv_{M},\vw_{M}$ be the vertices of the final `shared' edge $\ve_M$ and let $\vv_{h,M},\vw_{h,M}$ be the vertices of $\ve_{h,M,m-M}$.

\begin{claim}\label{claim_int_A}
	Uniformly for all $0\leq M\leq m$ \whp\ we have
	\begin{align}\label{eq_lem_int3}
		\log\frac{Z_{\GG_{h,M}^+}(\beta,0)}{Z_{\GG_{h,M}}(\beta,0)}&=\log\cosh\beta+\log\bc{1+\scal{\SIGMA(\vv_{M})}{\mu_{\GG_{h,M}}(\nix\mid\cS_+)}\scal{\SIGMA(\vw_{M})}{\mu_{\GG_{h,M}}(\nix\mid\cS_+)}\tanh\beta}+o(1),\\
\label{eq_lem_int3a}
		\log\frac{Z_{\GG_{h,M}^-}(\beta,0)}{Z_{\GG_{h,M}}(\beta,0)}&=\log\cosh\beta+\log\bc{1+\scal{\SIGMA(\vv_{h,M})}{\mu_{\GG_{h,M}}(\nix\mid\cS_+)}\scal{\SIGMA(\vw_{h,M})}{\mu_{\GG_{h,M}}(\nix\mid\cS_+)}\tanh\beta}+o(1).
	\end{align}
\end{claim}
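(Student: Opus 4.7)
The plan is to mirror the template of Claim~\ref{claim_intervals_A}, but substituting the unconditional replica symmetry used in the $B>0$ case by the conditional replica symmetry provided by \Prop s~\ref{prop_pure} and~\ref{prop_tensor}. Starting from \Lem~\ref{fact_Zratio} and the elementary identity $\exp(\beta s)=\cosh\beta+s\sinh\beta$ for $s\in\PM$, I obtain that with probability $1-O(1/n)$,
\begin{align*}
\log\frac{Z_{\GG_{h,M}^+}(\beta,0)}{Z_{\GG_{h,M}}(\beta,0)}&=\log\cosh\beta+\log\bc{1+\scal{\SIGMA(\vv_M)\SIGMA(\vw_M)}{\mu_{\GG_{h,M},\beta,0}}\tanh\beta},
\end{align*}
and an analogous identity with $(\vv_{h,M},\vw_{h,M})$ in place of $(\vv_M,\vw_M)$ for the second log-ratio.

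Next I would replace the unconditional correlator by the conditional one. Since $\sigma\mapsto\sigma(v)\sigma(w)$ is invariant under the sign flip $\sigma\mapsto-\sigma$ and the Ising measure at $B=0$ satisfies $\mu_{\GG_{h,M},\beta,0}(\sigma)=\mu_{\GG_{h,M},\beta,0}(-\sigma)$, we have $\scal{\SIGMA(v)\SIGMA(w)}{\mu_{\GG_{h,M},\beta,0}(\nix\mid\cS_+)}=\scal{\SIGMA(v)\SIGMA(w)}{\mu_{\GG_{h,M},\beta,0}(\nix\mid\cS_-)}$. Because each $\GG_{h,M}$ is distributed as $\GG(n,m-1)$, \Prop~\ref{prop_pure}~(i) applied to $\GG_{h,M}$ gives $\mu_{\GG_{h,M},\beta,0}(\cS_+\cup\cS_-)=1+o(1)$ \whp, and the law of total expectation then yields
\begin{align*}
\scal{\SIGMA(\vv_M)\SIGMA(\vw_M)}{\mu_{\GG_{h,M},\beta,0}}=\scal{\SIGMA(\vv_M)\SIGMA(\vw_M)}{\mu_{\GG_{h,M},\beta,0}(\nix\mid\cS_+)}+o(1)\qquad\mbox\whp
\end{align*}
Finally, since given $\fG_{n,M}$ the pair $(\vv_M,\vw_M)$ lies within total variation distance $o(1)$ of a uniformly random ordered pair of distinct vertices in $V_n$, the conditional replica symmetry \eqref{eq_prop_tensor_rs} of \Prop~\ref{prop_tensor}, averaged over such pairs, delivers
\begin{align*}
\scal{\SIGMA(\vv_M)\SIGMA(\vw_M)}{\mu_{\GG_{h,M},\beta,0}(\nix\mid\cS_+)}=\scal{\SIGMA(\vv_M)}{\mu_{\GG_{h,M},\beta,0}(\nix\mid\cS_+)}\scal{\SIGMA(\vw_M)}{\mu_{\GG_{h,M},\beta,0}(\nix\mid\cS_+)}+o(1)
\end{align*}
\whp\ Since $1+x\tanh\beta\in[1-\tanh\beta,\,1+\tanh\beta]$ for $x\in[-1,1]$ and $\log$ is Lipschitz on that interval, substituting this factorisation into the logarithm yields~\eqref{eq_lem_int3}. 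The identical argument with the pair $(\vv_{h,M},\vw_{h,M})$ produces~\eqref{eq_lem_int3a}.

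The principal bookkeeping concern is uniformity in $M$. All three ingredients, namely \Lem~\ref{fact_Zratio}, \Prop~\ref{prop_pure}~(i) and the replica symmetry bound \eqref{eq_prop_tensor_rs} of \Prop~\ref{prop_tensor}, apply to $\GG_{h,M}$ with $M$-independent error rates because $\GG_{h,M}\disteq\GG(n,m-1)$ for every $h,M$; hence the three $o(1)$'s combine into a single $o(1)$ that holds uniformly for $0\leq M\leq m$, which is precisely what the statement requires. The only mildly delicate step is the passage from the unconditional to the conditional correlator: one must ensure that the $o(1)$ error from $\mu_{\GG_{h,M},\beta,0}(\PM^{V_n}\setminus(\cS_+\cup\cS_-))$ does not interact badly with the event of probability $O(1/n)$ where \Lem~\ref{fact_Zratio} fails, but since both events are absorbed into `\whp', no further care is required.
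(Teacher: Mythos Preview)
Your proof is correct and follows essentially the same route as the paper's: start from \Lem~\ref{fact_Zratio}, use \Prop~\ref{prop_pure}~(i) plus inversion symmetry to pass from the unconditional to the $\cS_+$-conditional correlator, and then invoke conditional replica symmetry to factorise. The only cosmetic difference is that you cite the replica symmetry statement \eqref{eq_prop_tensor_rs} from \Prop~\ref{prop_tensor} (which is tailored to $\GG_{h,M}$), whereas the paper cites \Prop~\ref{prop_pure}~(ii) and uses that $\GG_{h,M}\disteq\GG(n,m-1)$; both are equivalent here.
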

\begin{proof}
	\Lem~\ref{fact_Zratio} demonstrates that with probability $1-O(1/n)$,
	\begin{align}\label{eq_lem_int1}
		\log\frac{Z_{\GG_{h,M}^+}(\beta,B)}{Z_{\GG_{h,M}}(\beta,B)}&=\log\scal{\exp\bc{\beta\SIGMA(\ve_{M})}}{\mu_{\GG_{h,M},\beta,B}}=\log\cosh\beta+\log\bc{1+\scal{\SIGMA(\vv_{M})\SIGMA(\vw_{M})}{\mu_{\GG_{h,M}}}\tanh\beta}.
	\end{align}
	By construction, each individual graph $\GG_{h,M}$ is distributed as a random graph $\GG(n,m-1)$.
	Therefore, by \Prop~\ref{prop_pure}~(i) and the union bound, \whp\ we have $\mu_{\GG_{h,M},\beta,0}(\PM^{V_n}\setminus(\cS_+\cup\cS_-))=o(1)$.
	Therefore, the inversion symmetry of the Ising model in the case $B=0$ implies that \whp\
	\begin{align*}
		\scal{\SIGMA(\vv_M)\SIGMA(\vw_M)}{\mu_{\GG_{h,M},\beta,0}}&=
		\scal{\SIGMA(\vv_M)\SIGMA(\vw_M)}{\mu_{\GG_{h,M},\beta,0}(\nix\mid\cS_+)}+o(1).
	\end{align*}
	Further, since $\vv_M,\vw_M$ are asymptotically uniform and independent, \Prop~\ref{prop_pure}~(ii) ensures that \whp
	\begin{align}\label{eq_lem_int2}
		\scal{\SIGMA(\vv_{1,M})\SIGMA(\vv_{2,M})}{\mu_{\GG_{h,M},\beta,0}(\nix\mid\cS_+)}=\scal{\SIGMA(\vv_{1,M})}{\mu_{\GG_{h,M},\beta,0}(\nix\mid\cS_+)}\scal{\SIGMA(\vv_{2,M})}{\mu_{\GG_{h,M},\beta,0}(\nix\mid\cS_+)}+o(1).
	\end{align}
	Combining~\eqref{eq_lem_int1} and~\eqref{eq_lem_int2}, we obtain \eqref{eq_lem_int3}.
	A similar argument yields~\eqref{eq_lem_int3a}.
\end{proof}

Let $\vec\Xi^+_t,\vec\Xi^-_t,\vec\Xi_t^\pm$ be the expressions from~\eqref{eq_Xi_1}--\eqref{eq_Xi_3} with $B=0$ and set $\vec\Xi_t=\vec\Xi^+_t+\vec\Xi^-_t-2\vec\Xi^\pm_t$.
Moreover, let $\vec\delta_{n,M}^+$ be the distribution of $\vDelta_{n,M}^+$ given $\fG_{n,M}$ and define $\vec\delta_{n,M}^-$, $\vec\delta_{n,M}^\pm$ analogously w.r.t.\ $\vDelta_{n,M}^-$, $\vDelta_{n,M}^\pm$.
Let $\xi^+_t$, $\xi_t^-$, $\xi_t^\pm$ be the distributions of $\Xi^+_t,\Xi^-_t,\Xi^\pm_t$.

\begin{claim}\label{claim_int_C}
	For all $0\leq M\leq m$ we have
	\begin{align}\label{eq_claim_int_C}
		\ex\brk{W_1(\vec\delta_{n,M}^+,\xi_{M/m}^+)+W_1(\vec\delta_{n,M}^-,\xi_{M/m}^-)+W_1(\vec\delta_{n,M}^\pm,\xi_{M/m}^\pm)}&=o(1).
	\end{align}
\end{claim}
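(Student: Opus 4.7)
The plan is to transcribe the proof of Claim~\ref{claim_intervals_C} to the present $B=0$ regime, with the crucial substitution of \Prop~\ref{prop_tensor} in place of \Prop~\ref{prop_tensor_B>0} and with the $\cS_+$ conditioning carried along throughout. First, applying Claim~\ref{claim_int_A} to each of the two factors that make up $\vDelta_{n,M}^+$, $\vDelta_{n,M}^-$ and $\vDelta_{n,M}^\pm$ recasts these three random variables, up to $o(1)$ errors occurring \whp, as the corresponding expressions $\vec\Xi_{M/m}^+, \vec\Xi_{M/m}^-, \vec\Xi_{M/m}^\pm$ from \eqref{eq_Xi_1}--\eqref{eq_Xi_3}, but with the independent draws from $\pi_{d,\beta,0,M/m}^\tensor$ replaced by the pairs of $\cS_+$-conditional magnetisations of the endpoints of the inserted edges in the two coupled graphs $(\GG_{h,M})_{h=1,2}$.

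Second, I would observe that conditional on $\fG_{n,M}$ the shared-edge endpoints $(\vv_M,\vw_M)$ together with the two separate-edge endpoint pairs $(\vv_{h,M},\vw_{h,M})$ for $h=1,2$ are each within total variation distance $o(1)$ of a uniformly random pair of distinct vertices from $V_n$, and moreover these three pairs are asymptotically mutually independent since the coupling {\bf CPL0}--{\bf CPL2} selects the edges $\ve_M,\ve_{1,M,m-M},\ve_{2,M,m-M}$ independently given $\fG_{n,M}$. Combining this with the $W_1$-convergence of $\vec\pi_{n,m,\beta,M}^\oplus$ to $\pi_{d,\beta,0,M/m}^\tensor$ from Equation~\eqref{eq_prop_tensor_W1}, we obtain that the pairs $\bc{\scal{\SIGMA(\vv_M)}{\mu_{\GG_{h,M},\beta,0}(\nix\mid\cS_+)}}_{h=1,2}$ and $\bc{\scal{\SIGMA(\vw_M)}{\mu_{\GG_{h,M},\beta,0}(\nix\mid\cS_+)}}_{h=1,2}$ are jointly asymptotically distributed as two independent draws from $\pi_{d,\beta,0,M/m}^\tensor$; the analogous assertion for the separate-edge endpoint pairs supplies the further independent draws needed in \eqref{eq_Xi_2}--\eqref{eq_Xi_3}. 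Pushing these weak limits through the bounded continuous map that turns magnetisations into log-partition-ratios then yields $W_1(\vec\delta_{n,M}^+,\xi_{M/m}^+)+W_1(\vec\delta_{n,M}^-,\xi_{M/m}^-)+W_1(\vec\delta_{n,M}^\pm,\xi_{M/m}^\pm)=o(1)$ \whp, and taking expectations delivers~\eqref{eq_claim_int_C}.

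The only qualitative subtlety compared to the proof of Claim~\ref{claim_intervals_C} is that inversion symmetry forces the unconditional magnetisations of $\mu_{\GG_{h,M},\beta,0}$ to vanish, so it is essential that the log-partition ratios be rewritten in terms of $\mu_{\GG_{h,M},\beta,0}(\nix\mid\cS_+)$-magnetisations. This rewriting is already supplied by Claim~\ref{claim_int_A} via \Prop~\ref{prop_pure}~(i), whose estimate $\mu_{\GG,\beta,0}(\cS_+)\sim 1/2$ is the ingredient that absorbs the opposite pure state $\cS_-$ into an $o(1)$ error. With this reformulation in place, the replica symmetry from \Prop~\ref{prop_pure}~(ii) and the joint local weak convergence from \Prop~\ref{prop_tensor} play exactly the role that \Prop~\ref{prop_tensor_B>0} played when $B>0$, so no further ingredients are required and the main step reduces to bookkeeping of the three independent pairs supplied by the coupling.
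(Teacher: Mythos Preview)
Your proposal is correct and follows essentially the same route as the paper: invoke Claim~\ref{claim_int_A} to express the log-ratios via $\cS_+$-conditional magnetisations, then use the asymptotic uniformity of the endpoint pairs together with the $W_1$-convergence of $\vec\pi_{n,m,\beta,M}^\oplus$ from \Prop~\ref{prop_tensor} to identify the limiting distributions. Your write-up is in fact more careful than the paper's terse version, which glosses over the $\cS_+$ conditioning in its displayed formula.
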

\begin{proof}
	Since the vertex pair $(\vv_M,\vw_M)$ is asymptotically uniform, \Prop~\ref{prop_tensor} shows that \whp\ given $\fG_{n,M}$ the pairs
	\begin{align*}
		\bc{\scal{\SIGMA(\vv_{M})}{\mu_{\GG_{h,M}}}}_{h=1,2},&&\bc{\scal{\SIGMA(\vw_{M})}{\mu_{\GG_{h,M}}}}_{h=1,2}
	\end{align*}
	converge in $W_1$ to $(\vec\mu_{\pi_{d,\beta,B,M/m}^\tensor,1,h}^{(0)})_{h=1,2}$ and $(\vec\mu_{\pi_{d,\beta,B,M/m}^\tensor,2,h}^{(0)})_{h=1,2}$.
	Hence, \eqref{eq_lem_int3} yields $W_1(\vec\delta_{n,M}^+,\xi_{M/m}^+)=o(1)$ \whp\
	The bounds on $W_1(\vec\delta_{n,M}^-,\xi_{M/m}^-)$ and $W_1(\vec\delta_{n,M}^\pm,\xi_{M/m}^\pm)$ follow analogously.
\end{proof}

\begin{proof}[Proof of \Lem~\ref{lem_int}]
	Claims~\ref{claim_int_A} and~\ref{claim_int_C} imply that \whp
	\begin{align}\label{eq_lem_int_50}
		\ex\brk{\vDelta_{n,M}\mid\fG_{n,M}}&=\ex\brk{\vec\Xi_{M/m}}+o(1).
	\end{align}
	Furthermore, retracing the steps that we performed towards Eqs.~\eqref{eq_lem_intervals_51} and~\eqref{eq_lem_intervals_52}, we find
	\begin{align}\label{eq_lem_interval_51}
	\ex\brk{\vec\Xi_{M/m}^+}&=\log^2\cosh\beta+2\ex\brk{\log\bc{1+\vec\mu_{\pi_{d,\beta,t}^\tensor,1,1}^{(0)}\vec\mu_{\pi_{d,\beta,t}^\tensor,2,1}^{(0)}\tanh\beta}}\log\cosh\beta+\cB_\beta^\tensor(\pi_{d,\beta,B,M/m}^\tensor),\\
		\ex\brk{\vec\Xi_{M/m}^-}&=\ex\brk{\vec\Xi_{M/m}^\pm}
								=\log^2\cosh\beta+2\ex\brk{\log\bc{1+\vec\mu_{\pi_{d,\beta,t}^\tensor,1,1}^{(0)}\vec\mu_{\pi_{d,\beta,t}^\tensor,2,1}^{(0)}\tanh\beta}}\log\cosh\beta+\cB_\beta^\tensor(\pi_{d,\beta,B,0}^\tensor).
\label{eq_lem_int_52}
	\end{align}
	Finally, the assertion follows from~\eqref{eq_lem_int_50}--\eqref{eq_lem_int_52}.
\end{proof}

\begin{corollary}\label{cor_int}
	We have $\Var\sum_{0\leq M<m}\ex\brk{\vDelta_{n,M}\mid\fF_{n,M}}=o(m^2)$.
\end{corollary}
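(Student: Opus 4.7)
The plan is to follow verbatim the strategy of \Cor~\ref{cor_intervals} from \Sec~\ref{sec_cor_tensor_B>0}, with Claim~\ref{claim_int_C} and \Lem~\ref{lem_int} playing the roles of their $B>0$ counterparts. I would begin by decomposing
\begin{align*}
	\Var\sum_{0\leq M<m}\ex\brk{\vDelta_{n,M}\mid\fF_{n,M}}
	&=\sum_{0\leq M<m}\Var\bc{\ex\brk{\vDelta_{n,M}\mid\fF_{n,M}}}\\
	&\quad+2\sum_{0\leq M<M'<m}\bc{\ex\brk{\ex\brk{\vDelta_{n,M}\mid\fF_{n,M}}\ex\brk{\vDelta_{n,M'}\mid\fF_{n,M'}}}-\ex\brk{\vDelta_{n,M}}\ex\brk{\vDelta_{n,M'}}}.
\end{align*}
Fact~\ref{fact_adde} bounds each of the three logarithmic ratios appearing in the definition~\eqref{eqDeltaM} of $\vDelta_{n,M}^{+},\vDelta_{n,M}^{-},\vDelta_{n,M}^{\pm}$ by $\beta$ in absolute value, so $\vDelta_{n,M}$ is uniformly bounded. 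Hence the diagonal sum contributes $O(m)=o(m^2)$.

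The substantive step is to show that each covariance in the off-diagonal sum is $o(1)$. Fix $0\leq M<M'<m$ and observe that $\fF_{n,M}\subset\fG_{n,M}$, so by the tower property $\ex\brk{\vDelta_{n,M}\mid\fF_{n,M}}=\ex\brk{\ex\brk{\vDelta_{n,M}\mid\fG_{n,M}}\mid\fF_{n,M}}$. Claim~\ref{claim_int_C} provides that the random distributions $\vec\delta_{n,M}^{+},\vec\delta_{n,M}^{-},\vec\delta_{n,M}^{\pm}$ lie within expected $W_1$-distance $o(1)$ of the deterministic laws $\xi_{M/m}^{+},\xi_{M/m}^{-},\xi_{M/m}^{\pm}$. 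Since all these distributions are supported on a common bounded interval (by the same boundedness argument as above), $W_1$-convergence of measures transfers to convergence of means, yielding
\begin{align*}
	\ex\abs{\ex\brk{\vDelta_{n,M}\mid\fG_{n,M}}-\ex\brk{\vec\Xi_{M/m}}}=o(1).
\end{align*}
By Jensen's inequality the same bound holds with $\fG_{n,M}$ replaced by $\fF_{n,M}$, and taking expectations gives $\ex\brk{\vDelta_{n,M}}=\ex\brk{\vec\Xi_{M/m}}+o(1)$. Combining these three estimates with the analogous bounds at $M'$ and using uniform boundedness of the conditional expectations, each summand in the off-diagonal term is $o(1)$, so the full cross-term sum is $o(m^2)$.

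There is no genuine obstacle beyond those already resolved in the preceding sections: the only non-trivial input is Claim~\ref{claim_int_C}, whose proof in turn leaned on the conditional replica symmetry furnished by \Prop~\ref{prop_pure}~(ii) and the joint local limit theorem \Prop~\ref{prop_tensor}. Once those are in hand, the argument for \Cor~\ref{cor_int} is a purely bookkeeping transfer of the corresponding $B>0$ proof.
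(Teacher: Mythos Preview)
Your proposal is correct and follows essentially the same approach as the paper: the paper's own proof simply states that it is identical to the proof of \Cor~\ref{cor_intervals} with \Lem~\ref{lem_int} in place of \Lem~\ref{lem_intervals}, and you have spelled out precisely that transfer, correctly citing Claim~\ref{claim_int_C} as the key ingredient at the covariance step.
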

\begin{proof}
	This is identical to the proof of \Cor~\ref{cor_intervals}, except that we invoke \Lem~\ref{lem_int} rather than \Lem~\ref{lem_intervals}.
\end{proof}

\begin{proof}[Proof of \Prop~\ref{cor_tensor}]
	Once again the argument is identical to the proof of \Prop~\ref{cor_tensor_B>0}, except that we invoke \Lem~\ref{lem_int} and \Cor~\ref{cor_int} instead of \Lem~\ref{lem_intervals} and \Cor~\ref{cor_intervals}.
\end{proof}

\subsection{Proof of \Prop~\ref{prop_var_pos}}\label{sec_prop_var_pos}

As in the proof of \Prop~\ref{prop_var_pos_B>0}, we start from the observation that the limiting distribution $\pi_{d,\beta,0}$ fails to concentrate on a single point.

\begin{lemma}\label{lem_noatom_B=0}
	Assume that $d>1$, $\beta>\betaf(d)$.
	Then $\pi_{d,\beta,0}$ is not supported on a single point.
\end{lemma}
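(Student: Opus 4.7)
The plan is to observe that \Prop~\ref{prop_BP_B=0}~(ii) does essentially all of the work. Under the assumption $d>1$, the extinction probability $\rho_d$ of a Galton--Watson process with $\Po(d)$ offspring distribution is the smallest positive solution of $\rho=\exp(d(\rho-1))$ and satisfies $0<\rho_d<1$: indeed, supercriticality ($d>1$) rules out $\rho_d=1$ as the smallest positive fixed point of the convex map $z\mapsto\exp(d(z-1))$, while the trivial lower bound $\rho_d>0$ comes from positivity of $\exp(-d)$.

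Now \Prop~\ref{prop_BP_B=0}~(ii) asserts precisely that $\pi_{d,\beta,0}(\{0\})=\rho_d$ in our regime $\beta>\betaf(d)$. Since $\rho_d\in(0,1)$, this tells us that $\pi_{d,\beta,0}$ puts positive mass on the singleton $\{0\}$ but also positive mass $1-\rho_d>0$ on the complement $(0,1]\cap\supp{\pi_{d,\beta,0}}$. Hence $\supp{\pi_{d,\beta,0}}$ cannot consist of a single point.

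For added robustness one could also appeal to the second half of \Prop~\ref{prop_BP_B=0}~(ii), namely $\int_0^\infty x\dd\pi_{d,\beta,0}(x)>0$: combined with the atom $\pi_{d,\beta,0}(\{0\})=\rho_d>0$, this produces two explicitly distinct points in the support, i.e.\ $0$ and some strictly positive point. Either way the conclusion is immediate, and there is no real obstacle in this particular lemma; the substance of the argument has already been discharged in the proof of \Prop~\ref{prop_BP_B=0}~(ii) via Lyons' branching-number criterion (\Lem~\ref{lem_LYonORg}) and the branching-number computation for $\TT(d)$ (\Lem~\ref{cl_BrNumOfT}).
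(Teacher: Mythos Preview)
Your proof is correct and takes essentially the same approach as the paper, which simply states that the lemma is a direct consequence of \Prop~\ref{prop_BP_B=0}~(ii). You have merely spelled out the implicit reasoning: $\rho_d\in(0,1)$ for $d>1$, so $\pi_{d,\beta,0}$ has a nontrivial atom at $0$ and positive mass elsewhere.
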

\begin{proof}
	This is a direct consequence of \Prop~\ref{prop_BP_B=0}~(ii).
\end{proof}

\begin{lemma}\label{lem_integrant_B=0}
	If $d>1$ and $\beta>\betaf(d)$, then $\cB^\tensor_\beta(\pi^\tensor_{d,\beta,B,t})\geq\cB^\tensor_\beta(\pi^\tensor_{d,\beta,B,0})$ for all $0\leq t\leq1$.
\end{lemma}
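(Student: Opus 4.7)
The plan is to mirror the proof of \Lem~\ref{lem_integrant_B>0} word for word, replacing the use of \Lem~\ref{lem_intervals} by its $B=0$ analogue \Lem~\ref{lem_int}, which has already been established (and whose proof crucially relied on \Prop~\ref{prop_pure} for the pure-state decomposition). All the other ingredients, namely the telescoping identity \Lem~\ref{fact_telescope} and the continuity of $t\mapsto\pi^\tensor_{d,\beta,0,t}$ established by \Prop~\ref{prop_fix_ex}, apply verbatim for $B=0$.

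More concretely, fix $\eps>0$. Since $\vX_{n,M}^2\geq 0$ deterministically, \Lem~\ref{fact_telescope} yields $\ex\brk{\vDelta_{n,M}\mid\fF_{n,M}}\geq 0$ for every $M$ and hence, taking a further expectation, $\ex\brk{\vDelta_{n,M}}\geq 0$. At the same time, since the per-edge contributions $\log(Z_{\GG^+_{h,M}}/Z_{\GG_{h,M}})$ and $\log(Z_{\GG^-_{h,M}}/Z_{\GG_{h,M}})$ are bounded by $\beta$ in absolute value by Fact~\ref{fact_adde}, the random variables $\vDelta_{n,M}$ are uniformly bounded. Therefore \Lem~\ref{lem_int} combined with dominated convergence gives, uniformly in $0\leq M\leq m$,
\begin{align*}
\ex\brk{\vDelta_{n,M}}&=\cB_{\beta}^\tensor(\pi^\tensor_{d,\beta,0,M/m})-\cB_{\beta}^\tensor(\pi^\tensor_{d,\beta,0,0})+o(1).
\end{align*}
Comparing the two displays, for $n$ sufficiently large and every $0\leq M\leq m$ we obtain
\begin{align*}
\cB_{\beta}^\tensor(\pi^\tensor_{d,\beta,0,M/m})-\cB_{\beta}^\tensor(\pi^\tensor_{d,\beta,0,0})\geq-\eps.
\end{align*}

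Finally, \Prop~\ref{prop_fix_ex} (with $B=0$) asserts that $t\mapsto\pi^\tensor_{d,\beta,0,t}$ is $W_1$-continuous on the compact interval $[0,1]$. Since $\pi\mapsto\cB_\beta^\tensor(\pi)$ is $W_1$-continuous on $\cP([-1,1]^2)$ (the integrand $\log(1+xy\tanh\beta)$ is bounded and Lipschitz in $(x,y)\in[-1,1]^2$), the map $t\mapsto\cB_\beta^\tensor(\pi^\tensor_{d,\beta,0,t})$ is continuous and thus uniformly continuous on $[0,1]$. Because the grid $\{M/m:0\leq M\leq m\}$ becomes dense in $[0,1]$ as $n\to\infty$, the lower bound above extends to all $t\in[0,1]$, and sending $\eps\downarrow 0$ yields the claim.

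There is no real obstacle here: all the heavy lifting has been done. The only subtlety worth double-checking is that the dominated-convergence step producing the unconditional version of \Lem~\ref{lem_int} is legitimate, which it is because \Lem~\ref{lem_int} is a high-probability statement about a uniformly bounded random variable.
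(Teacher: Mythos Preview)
Your proof is correct and follows essentially the same approach as the paper's own proof: combine \Lem~\ref{fact_telescope} (to extract nonnegativity from $\vX_{n,M}^2\geq0$) with \Lem~\ref{lem_int} (to identify $\ex[\vDelta_{n,M}]$ with the difference of Bethe functionals), then pass to arbitrary $t\in[0,1]$ via the continuity supplied by \Prop~\ref{prop_fix_ex}. Your write-up is in fact more explicit than the paper's, spelling out the dominated-convergence step needed to pass from the ``w.h.p.'' conditional statement of \Lem~\ref{lem_int} to the unconditional expectation.
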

\begin{proof}
	Let $\eps>0$.
	\Lem s~\ref{fact_telescope} and~\ref{lem_int} imply that for sufficiently large $n$ for all $0\leq M\leq m$ we have
	\begin{align}\label{eq_lem_integrant_B=0_1}
		\cB_{\beta}^\tensor(\pi^\tensor_{d,\beta,B,M/m})-\cB_{\beta}^\tensor(\pi^\tensor_{d,\beta,B,0})\geq-\eps.
	\end{align}
	Finally, the continuity of $t\in[0,1]\mapsto\cB_\beta^\tensor(\pi^\tensor_{d,\beta,B,t})$,
which follows from \Prop~\ref{prop_fix_ex}, implies together with \eqref{eq_lem_integrant_B=0_1} that $\cB^\tensor_\beta(\pi^\tensor_{d,\beta,B,t})-\cB^\tensor_\beta(\pi^\tensor_{d,\beta,B,0})\geq0$ for all $t\in[0,1]$.
\end{proof}

\begin{lemma}\label{lem_boundary_B=0}
	If $d>1$ and $\beta>\betaf(d)$, then
	$\cB^\tensor_\beta(\pi^\tensor_{d,\beta,B,1})>\cB^\tensor_\beta(\pi^\tensor_{d,\beta,B,0})$.
\end{lemma}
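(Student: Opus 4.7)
My plan is to mirror the strategy used for Lemma~\ref{lem_boundary_B>0}, relying on Remark~\ref{rem_extreme} (specialised to $B=0$), a Jensen/Cauchy--Schwarz inequality, and the non-triviality of $\pi_{d,\beta,0}$ established by Proposition~\ref{prop_BP_B=0}~(ii) and recorded in Lemma~\ref{lem_noatom_B=0}. The two extremes at $t=0$ and $t=1$ continue to admit explicit descriptions: $\pi^\tensor_{d,\beta,0,0}$ is the product measure $\pi_{d,\beta,0}\tensor\pi_{d,\beta,0}$ on $[-1,1]^2$, while $\pi^\tensor_{d,\beta,0,1}$ is the law of the diagonal pair $(\vec\mu_{\pi_{d,\beta,0},1},\vec\mu_{\pi_{d,\beta,0},1})$.

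Plugging these two descriptions into the functional~\eqref{eqBFEtensor} and writing
\begin{align*}
\vec X&=\log\bc{1+\vec\mu_{\pi_{d,\beta,0},1}\vec\mu_{\pi_{d,\beta,0},2}\tanh\beta},
\end{align*}
I obtain, as in the proof of Lemma~\ref{lem_boundary_B>0},
\begin{align*}
\cB_\beta^\tensor(\pi^\tensor_{d,\beta,0,0})&=\bc{\ex[\vec X]}^2,&
\cB_\beta^\tensor(\pi^\tensor_{d,\beta,0,1})&=\ex\brk{\vec X^2}.
\end{align*}
Cauchy--Schwarz then gives $\cB_\beta^\tensor(\pi^\tensor_{d,\beta,0,1})\geq\cB_\beta^\tensor(\pi^\tensor_{d,\beta,0,0})$, with equality iff $\vec X$ is a.s.\ constant.

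The only point that requires comment is strictness, i.e.\ ruling out that $\vec X$ is a.s.\ constant. Proposition~\ref{prop_BP_B=0}~(ii) tells us that under the assumption $d>1$ and $\beta>\betaf(d)$ we have $\pi_{d,\beta,0}(\{0\})=\rho_d<1$ and simultaneously $\int_0^\infty x\dd\pi_{d,\beta,0}(x)>0$. Hence $\pi_{d,\beta,0}$ assigns strictly positive mass both to $\{0\}$ and to some set of strictly positive reals, so the product $\vec\mu_{\pi_{d,\beta,0},1}\vec\mu_{\pi_{d,\beta,0},2}$ takes the value $0$ with positive probability and also takes strictly positive values with positive probability. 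Since $z\mapsto\log(1+z\tanh\beta)$ is strictly monotone on $[0,1]$, the random variable $\vec X$ is non-degenerate, and Cauchy--Schwarz is strict. This yields $\cB_\beta^\tensor(\pi^\tensor_{d,\beta,0,1})>\cB_\beta^\tensor(\pi^\tensor_{d,\beta,0,0})$, as claimed.

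I do not anticipate any serious obstacle here: the argument is essentially the $B=0$ transcription of Lemma~\ref{lem_boundary_B>0}, and the only input specific to the low-temperature regime is the non-triviality of $\pi_{d,\beta,0}$, which is exactly what Proposition~\ref{prop_BP_B=0}~(ii) and Lemma~\ref{lem_noatom_B=0} deliver. The potentially subtle step, namely that $\pi_{d,\beta,0}$ does \emph{not} collapse to $\delta_0$ above the ferromagnetic threshold, has already been settled (through Lyons' branching-number theorem) in the proof of Proposition~\ref{prop_BP_B=0}.
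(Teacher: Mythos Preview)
Your proof is correct and follows essentially the same route as the paper: both identify $\cB_\beta^\tensor(\pi^\tensor_{d,\beta,0,0})=(\ex[\vec X])^2$ and $\cB_\beta^\tensor(\pi^\tensor_{d,\beta,0,1})=\ex[\vec X^2]$ via Remark~\ref{rem_extreme}, and then invoke the variance inequality (Jensen/Cauchy--Schwarz) together with the non-degeneracy of $\pi_{d,\beta,0}$ from Lemma~\ref{lem_noatom_B=0}/Proposition~\ref{prop_BP_B=0}~(ii). Your argument for strictness is slightly more explicit than the paper's one-line appeal to Lemma~\ref{lem_noatom_B=0}, but the content is the same.
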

\begin{proof}
	Since~\eqref{eq_lem_boundary_B>0_1} holds for $B=0$, the assertion follows from \Lem~\ref{lem_noatom_B=0} and Jensen's inequality.
\end{proof}

\begin{proof}[Proof of \Prop~\ref{prop_var_pos_B>0}]
Since  $t\mapsto\cB^\tensor_\beta(\pi^\tensor_{d,\beta,B,t})$ is continuous
by \Prop~\ref{prop_fix_ex}, the assertion follows from \Lem s~\ref{lem_noatom_B=0}--\ref{lem_boundary_B=0}.
\end{proof}

\section{Proof of \Thm~\ref{thm_an}}\label{sec_thm_an1}

\noindent
{\em Throughout this section we assume that $d\tanh\beta<1$.}

\subsection{Proof of \Prop~\ref{prop_EZ}}\label{sec_prop_EZ}

Let $\omega=\omega(n)=\sqrt n\log n$ and let $\fB=\fB_n=\cbc{\alpha\in\ZZ:|\alpha-n/2|\leq\omega}$.

\begin{lemma}\label{lem_EZa}
	We have
	\begin{align}\label{eq_lem_EZa_EZ}
		\ex\brk{Z_\GG(\beta,0)}&=\sum_{\alpha=0}^n\sum_{M=0}^m\binom n\alpha\exp(\beta(2M-m))\binom{\binom{\alpha}2+\binom{n-\alpha}2}M\binom{\alpha(n-\alpha)}{m-M}\binom{\binom n2}m^{-1},\\
		\ex\brk{\Zbal(\beta,0)}&=\sum_{\alpha\in\fB}\sum_{M=0}^m\binom n\alpha\exp(\beta(2M-m))\binom{\binom{\alpha}2+\binom{n-\alpha}2}M\binom{\alpha(n-\alpha)}{m-M}\binom{\binom n2}m^{-1}.
\label{eq_lem_EZa_EZbal}
	\end{align}
\end{lemma}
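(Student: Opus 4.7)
The plan is to prove both identities by a direct combinatorial computation based on linearity of expectation, grouping spin configurations according to the number of $+1$ vertices and the number of monochromatic edges.

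First, I would fix a configuration $\sigma\in\PM^{V_n}$ with exactly $\alpha$ coordinates equal to $+1$ (so $n-\alpha$ equal to $-1$). The set of pairs $\{v,w\}\in\binom{V_n}{2}$ splits into the $\binom{\alpha}{2}+\binom{n-\alpha}{2}$ \emph{monochromatic} pairs (both endpoints share the same spin, contributing $\sigma(v)\sigma(w)=+1$) and the $\alpha(n-\alpha)$ \emph{bichromatic} pairs (contributing $-1$). Since $\GG=\GG(n,m)$ is a uniformly random $m$-subset of $\binom{V_n}{2}$, the number $\vec M_\sigma$ of monochromatic edges of $\GG$ under $\sigma$ follows a hypergeometric distribution, and for any $0\le M\le m$,
\begin{align*}
\Pr[\vec M_\sigma=M]&=\binom{\binom{\alpha}{2}+\binom{n-\alpha}{2}}{M}\binom{\alpha(n-\alpha)}{m-M}\binom{\binom{n}{2}}{m}^{-1}.
\end{align*}
On the event $\{\vec M_\sigma=M\}$ the Hamiltonian evaluates to $\beta\sum_{vw\in E(\GG)}\sigma(v)\sigma(w)=\beta(M-(m-M))=\beta(2M-m)$.

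Next I would apply linearity of expectation and collect terms. Since the weight $\exp(\beta\sum_{vw\in E(\GG)}\sigma(v)\sigma(w))$ depends on $\sigma$ only through $\alpha=|\{v:\sigma(v)=1\}|$ and $\vec M_\sigma$, and since there are $\binom{n}{\alpha}$ configurations with a given $\alpha$, we obtain
\begin{align*}
\ex[Z_\GG(\beta,0)]&=\sum_{\sigma\in\PM^{V_n}}\ex\exp\bc{\beta\sum_{vw\in E(\GG)}\sigma(v)\sigma(w)}=\sum_{\alpha=0}^n\binom{n}{\alpha}\sum_{M=0}^m\eul^{\beta(2M-m)}\Pr[\vec M_\sigma=M],
\end{align*}
which is exactly~\eqref{eq_lem_EZa_EZ}.

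For~\eqref{eq_lem_EZa_EZbal} I would observe that the indicator $\vecone\{|\sum_i\sigma(v_i)|\le 2\omega\}$ in the definition~\eqref{eqZbal} of $\Zbal(\beta)$ depends on $\sigma$ only through $\alpha$: indeed $\sum_i\sigma(v_i)=2\alpha-n$, so the condition $|\sum_i\sigma(v_i)|\le 2\omega$ is equivalent to $|\alpha-n/2|\le\omega$, i.e., $\alpha\in\fB$. Thus repeating the previous computation and restricting the outer sum over $\alpha$ to $\fB$ yields~\eqref{eq_lem_EZa_EZbal}. There is no real obstacle here; this lemma is pure bookkeeping, and the substance of \Prop~\ref{prop_EZ} will arise in the subsequent asymptotic analysis of these explicit sums (Laplace's method on $\alpha$, asymptotic evaluation of the hypergeometric sum over $M$, and verification that trimming to $\fB$ removes only a negligible tail).
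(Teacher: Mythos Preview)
Your proposal is correct and follows essentially the same approach as the paper: linearity of expectation, grouping configurations by $\alpha=|\sigma^{-1}(1)|$, and computing the hypergeometric probability that exactly $M$ of the $m$ random edges land among the $\binom{\alpha}{2}+\binom{n-\alpha}{2}$ monochromatic pairs. The only cosmetic difference is that the paper fixes a reference configuration $\sigma^{(\alpha)}$ rather than invoking the hypergeometric distribution by name, but the content is identical.
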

\begin{proof}
	We begin with the formula for the mean of $Z_\GG(\beta,0)$.
	By the linearity of expectation,
	\begin{align}\label{eq_lem_EZa_1}
		\ex[Z_\GG(\beta,0)]&=\sum_{\sigma\in\PM^{V_n}}\ex\brk{\exp\bc{\beta\sum_{e\in E(\GG)}\sigma(e)}}.
	\end{align}
	Since the distribution of the random graph is invariant under permutations of the vertex set, the contribution of a configuration $\sigma$ to~\eqref{eq_lem_EZa_1} depends only on the number of vertices $v\in V_n$ with $\sigma(v)=1$.
	Indeed, for a given $0\leq\alpha\leq n$ there are a total of $\binom n\alpha$ configurations $\sigma$ such that $\sigma(v)=1$ for precisely $\alpha$ vertices $v\in V_n$.
	Let $\sigma^{(\alpha)}$ be the configuration with $\sigma(v_1)=\cdots=\sigma(v_\alpha)=1$ and $\sigma(v_{\alpha+1})=\cdots=\sigma(v_n)=-1$.
	Then we can rewrite~\eqref{eq_lem_EZa_1} as
	\begin{align}\label{eq_lem_EZa_2}
		\ex[Z_\GG(\beta,0)]&=\sum_{\alpha=0}^n\binom n\alpha\ex\brk{\exp\bc{\beta\sum_{e\in E(\GG)}\sigma^{(\alpha)}(e)}}.
	\end{align}
	Further, the term $\sum_{e\in E(\GG)}\sigma^{(\alpha)}(e)$ simply equals the number of edges $e=vw$ with $\sigma^{(\alpha)}(v)=\sigma^{(\alpha)}(w)$ minus the number of edges with $\sigma^{(\alpha)}(v)\neq\sigma^{(\alpha)}(w)$.
	Now, given $\alpha$ there are precisely $\binom\alpha 2+\binom{(n-\alpha)}2$ potential edges $e=vw$ with $\sigma^{(\alpha)}(v)=\sigma^{(\alpha)}(w)$, and $a(n-\alpha)$ potential edges with $\sigma^{(\alpha)}(v)\neq\sigma^{(\alpha)}(w)$.
	Hence, the probability that the random graph contains precisely $M$ edges $e=vw$ with $\sigma^{(\alpha)}(v)=\sigma^{(\alpha)}(w)$ equals
	\begin{align}\label{eq_lem_EZa_3}
		\binom{\binom{\alpha}2+\binom{n-\alpha}2}M\binom{\alpha(n-\alpha)}{m-M}\binom{\binom n2}m^{-1}.
	\end{align}
	Combining~\eqref{eq_lem_EZa_2} and~\eqref{eq_lem_EZa_3}, we obtain the expression for \eqref{eq_lem_EZa_EZ}.
	The derivation of~\eqref{eq_lem_EZa_EZbal} is similar, except that we only sum on configurations $\sigma$ with  $|\sum_{v\in V_n}\sigma(v)|\leq2\sqrt n\log n$.
\end{proof}

Let $h(z)=-z\log z-(1-z)\log(1-z)$.

\begin{lemma}\label{lem_EZb}
	We have $ \ex\brk{Z_\GG(\beta,0)}\sim\ex\brk{\Zbal(\beta,0)}.$
\end{lemma}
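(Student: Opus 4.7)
The plan is to combine Lemma~\ref{lem_EZa} with a Gaussian/Laplace-type upper bound on the summand to reduce the claim to a tail estimate. Subtracting \eqref{eq_lem_EZa_EZbal} from \eqref{eq_lem_EZa_EZ}, the difference $\ex[Z_\GG(\beta,0)]-\ex[\Zbal(\beta,0)]$ equals the same summand summed over $\alpha\notin\fB$. Writing $\alpha=n/2+x$ with $|x|\leq n/2$, I would show that this summand is bounded above by $C\cdot 2^n\cosh^m\beta\cdot\exp(-c\,x^2/n)$ for some absolute constants $C,c>0$, where the positivity of $c$ uses precisely the standing hypothesis $d\tanh\beta<1$. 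Summed over $|x|>\omega=\sqrt n\log n$ this yields a super-polynomially small tail, whereas the central region $\alpha\approx n/2$ already contributes $\Omega(2^n\cosh^m\beta/\sqrt n)$ to $\ex[\Zbal(\beta,0)]$; dividing out then gives $\ex[Z_\GG(\beta,0)]/\ex[\Zbal(\beta,0)]=1+o(1)$.

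To establish the per-$\alpha$ bound, fix $\alpha=n/2+x$ and set $S_\alpha=\binom\alpha 2+\binom{n-\alpha}2$, $D_\alpha=\alpha(n-\alpha)$, $N=\binom n 2$, and $p_\alpha=S_\alpha/N$, so a direct computation gives $2p_\alpha-1=(4x^2-n)/(n(n-1))$. The inner sum over $M$ equals $\ex[\exp(\beta(2\vM-m))]$ for a hypergeometric variable $\vM\sim\Hyp(N,S_\alpha,m)$. By Hoeffding's convex-domination inequality, the moment generating function of $\vM$ is dominated by that of $\Bin(m,p_\alpha)$, whence
\begin{align*}
\ex[\exp(\beta(2\vM-m))]\leq\bc{p_\alpha\eul^\beta+(1-p_\alpha)\eul^{-\beta}}^m=\cosh^m\beta\cdot\bc{1+(2p_\alpha-1)\tanh\beta}^m\leq\cosh^m\beta\cdot\exp\bc{m(2p_\alpha-1)\tanh\beta}\enspace.
\end{align*}
Combined with the classical Chernoff-type inequality $\binom{n}{n/2+x}\leq 2^n\exp(-2x^2/n)$ (valid for all $|x|\leq n/2$) and $m\sim dn/2$, this bounds the $\alpha$-summand in \eqref{eq_lem_EZa_EZ} by
\begin{align*}
2^n\cosh^m\beta\cdot\exp\bc{-\frac{(2-2d\tanh\beta+o(1))x^2}{n}}\enspace,
\end{align*}
and here the coefficient of $-x^2/n$ is strictly positive by the hypothesis $d\tanh\beta<1$.

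Summing this bound over $|x|>\omega$ yields a tail of order $2^n\cosh^m\beta\cdot\exp(-\Omega((\log n)^2))$. A matching lower bound $\ex[\Zbal(\beta,0)]=\Omega(2^n\cosh^m\beta/\sqrt n)$ is obtained by retaining only $\alpha=\lfloor n/2\rfloor$ in \eqref{eq_lem_EZa_EZbal} and, within that term, only the contributions from $M$ within $O(\sqrt m)$ of $m/2$: there $\binom n{\lfloor n/2\rfloor}=\Theta(2^n/\sqrt n)$, the hypergeometric mass concentrates on such $M$ with total weight $\Omega(1)$, and $\cosh^m\beta$ appears as the natural normalisation. The ratio of the tail to this central contribution is then $O(\sqrt n\cdot\exp(-\Omega((\log n)^2)))=o(1)$, completing the proof. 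The main technical step is the transfer of the binomial MGF estimate to the hypergeometric $\vM$ uniformly in $\alpha$; if a direct appeal to Hoeffding's inequality is deemed too coarse near the tails, the cleanest alternative is a saddle-point expansion of $\binom{S_\alpha}{M}\binom{D_\alpha}{m-M}/\binom N m$ about $M=mp_\alpha$, which produces the same exponential bound up to constants uniform in $\alpha\in\{0,\ldots,n\}$.
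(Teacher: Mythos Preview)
Your upper bound via Hoeffding's convex-domination of the hypergeometric by the binomial is correct and is a genuinely different route from the paper's: there, Stirling's formula is applied to all the binomial coefficients in Lemma~\ref{lem_EZa}, the sum over $M$ is closed by the binomial theorem, and the resulting one-variable function $f(z)=h(z)+\frac{m}{n}\log\bc{2z(1-z)e^{-\beta}+(1-2z+2z^2)e^\beta}$ is shown to be strictly concave on $(0,1)$ with maximum at $z=1/2$. Both routes deliver the same Gaussian decay rate $2(1-d\tanh\beta)$ in the exponent; your approach trades the Stirling/binomial-theorem manipulation for a one-line MGF estimate, which is arguably cleaner.

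The lower-bound step, however, has a real gap. Restricting to $M$ within $O(\sqrt m)$ of $m/2$ gives only $e^{\beta(2M-m)}=e^{O(\sqrt m)}$, not $\cosh^m\beta=e^{\Theta(m)}$; the phrase ``$\cosh^m\beta$ appears as the natural normalisation'' is unjustified as written. The tilted sum $\sum_M\pr[\vM=M]\,e^{\beta(2M-m)}$ is dominated not by the hypergeometric mode $m/2$ but by the mode of the \emph{tilted} law near $M^\ast=m/(1+e^{-2\beta})$, where the hypergeometric mass is itself exponentially small. The cleanest repair is to avoid choosing an $M$-range altogether: since $m=O(n)$ while $S_\alpha,D_\alpha,N=\Theta(n^2)$ for $\alpha=\lfloor n/2\rfloor$, the elementary falling-factorial bound $(S_\alpha)_M(D_\alpha)_{m-M}/(N)_m\geq e^{-O(m^2/n^2)}\,p_\alpha^M(1-p_\alpha)^{m-M}$ holds uniformly in $0\leq M\leq m$, whence $\ex[e^{\beta(2\vM-m)}]\geq e^{-O(1)}\bc{p_\alpha e^\beta+(1-p_\alpha)e^{-\beta}}^m=\Omega(\cosh^m\beta)$. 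With this correction the rest of your argument goes through.
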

\begin{proof}
	Applying Stirling's formula, we obtain for $\alpha \in \fB$
	\begin{align}
		\binom{\binom\alpha2+\binom{n-\alpha}2}M\binom{\alpha(n-\alpha)}{m-M}\binom{\binom n2}m^{-1} &\sim
						\bcfr{\alpha (n- \alpha)}{\binom n2}^{m}\binom mM	\bcfr{{\alpha \choose 2} + {n - \alpha \choose 2}}{\alpha(n-\alpha)}^{M}       \exp\bc{-{2M^2 \over n^2} - {2(m-M)^2 \over n^2} + {m^2 \over n^2}}.\label{eqlem_EZb1}
	\end{align}
	Alternatively for $\alpha \notin \fB$ we have the following upper bound.

	\begin{align}
		\binom{\binom\alpha2+\binom{n-\alpha}2}M\binom{\alpha(n-\alpha)}{m-M}\binom{\binom n2}m^{-1} &
		=
		O(1) \cdot
		\bcfr{\alpha (n- \alpha)}{\binom n2}^{m}\binom mM	\bcfr{{\alpha \choose 2} + {n - \alpha \choose 2}}{\alpha(n-\alpha)}^{M}       \exp\bc{-{2M^2 \over n^2} - {2(m-M)^2 \over n^2} + {m^2 \over n^2}}.\label{eqlem_EZb1_1}
	\end{align}

	Since the final exponential term is bounded (i.e, $\Theta(1)$) in both \eqref{eqlem_EZb1} and \eqref{eqlem_EZb1_1}, \Lem~\ref{lem_EZa} and \eqref{eqlem_EZb1}--\eqref{eqlem_EZb1_1} yield

	\begin{align}
		\ex[Z_{\GG}(\beta,0)]&=c_1\cdot \sum_{\alpha \in \fB}\binom n\alpha\bcfr{\alpha (n- \alpha)}{\exp(\beta)\binom n2}^{m}\sum_{M=0}^m\binom mM	\exp(2\beta M))\bcfr{{\alpha \choose 2} + {n - \alpha \choose 2}}{\alpha(n-\alpha)}^{M}\nonumber
		\\
		& +
		O(1)\cdot \sum_{\alpha \notin \fB}\binom n\alpha\bcfr{\alpha (n- \alpha)}{\exp(\beta)\binom n2}^{m}\sum_{M=0}^m\binom mM	\exp(2\beta M))\bcfr{{\alpha \choose 2} + {n - \alpha \choose 2}}{\alpha(n-\alpha)}^{M},\label{eqlem_EZb2}
		\\
		\ex[\Zbal(\beta,0)]&=c_1 \cdot \sum_{\alpha\in\fB}\binom n\alpha\bcfr{\alpha (n- \alpha)}{\exp(\beta)\binom n2}^{m}\sum_{M=0}^m\binom mM	\exp(2\beta M))\bcfr{{\alpha \choose 2} + {n - \alpha \choose 2}}{\alpha(n-\alpha)}^{M},\label{eqlem_EZb3}
	\end{align}
	where $c_1 = c_1(d, \beta) = \Theta(1)$.

	Further, by the binomial theorem
	\begin{align}\nonumber
		\bcfr{\alpha (n- \alpha)}{\exp(\beta)\binom n2}^{m}\sum_{M=0}^m\binom mM	\exp(2\beta M))\bcfr{{\alpha \choose 2} + {n - \alpha \choose 2}}{\alpha(n-\alpha)}^{M}&=
		\bc{\frac{\alpha(n-\alpha)}{\binom n2}\exp(-\beta)+\frac{\binom\alpha 2+\binom{n-\alpha}2}{\binom n2}\exp(\beta)}^m\\
		&=\Theta(1)\bc{2\frac\alpha n\bc{1-\frac\alpha n}\exp(-\beta)+\bc{1-2\frac\alpha n+2\bcfr{\alpha}n^2}\exp(\beta)}^m.\label{eqlem_EZb4}
	\end{align}
	Approximating $\binom n\alpha$ by Stirling's formula and setting $z=\alpha/n$, we obtain from~\eqref{eqlem_EZb2}--\eqref{eqlem_EZb4} the estimates
	\begin{align}
		\ex[Z_{\GG}(\beta,0)]&=\sum_{\alpha \in \fB}\exp(nh(z)+c_2)\bc{2z\bc{1-z}\exp(-\beta)+\bc{1-2z+2z^2}\exp(\beta)}^m\nonumber
		\\
		&+ \sum_{\alpha \notin \fB}\exp(nh(z)+O\bc{\log(n)})\bc{2z\bc{1-z}\exp(-\beta)+\bc{1-2z+2z^2}\exp(\beta)}^m,\label{eqlem_EZb5}
		\\
		\ex[\Zbal(\beta,0)]&=\sum_{\alpha\in\fB}\exp(nh(z)+c_2)\bc{2z\bc{1-z}\exp(-\beta)+\bc{1-2z+2z^2}\exp(\beta)}^m,\label{eqlem_EZb6}
	\end{align}
	where $c_2 = c_2(n, d, \beta) = O(\log n)$.
	We proceed to investigate the exponential parts of the above summands.
	Let
	\begin{align}\label{eqf}
		f(z)&=h(z)+\frac mn \log(2z(1-z)\exp(-\beta)+(1-2z+2z^2)\exp(\beta)).
	\end{align}
	The derivatives of $f(z)$ work out to be
	\begin{align}\label{eqf'}
		f'(z)&=\log(1-z)-\log z+\frac{2m}n\cdot\frac{(2z-1)(\exp(\beta)-\exp(-\beta))}{2z(1-z)\exp(-\beta)+(1-2z+2z^2)\exp(\beta)},\\
		f''(z)&=-\frac1{1-z}-\frac1z+\frac{4m}n\cdot\frac{\exp(\beta)-\exp(-\beta)}{2z(1-z)\exp(-\beta)+(1-2z+2z^2)\exp(\beta)}\nonumber\\
			  &\qquad\qquad\qquad\ 	-\frac{4m}n\cdot\bcfr{(2z-1)(\exp(\beta)-\exp(-\beta))}{2z(1-z)\exp(-\beta)+(1-2z+2z^2)\exp(\beta)}^2.\label{eqf''}
	\end{align}
	Hence, $f'(1/2)=0$ and $f''(z)<0$ for all $z\in(0,1)$ if $d\tanh\beta<1$.
	Consequently, \eqref{eqlem_EZb6} yields
	\begin{align}\label{eqlem_EZb7}
		\ex[\Zbal(\beta,0)]&\geq\exp(n f(1/2)+c_2)=\exp\bc{n\log 2+\frac mn\log\cosh\beta+O(\log n)},
	\end{align}
	while for $\alpha\not\in\fB$ we find
	\begin{align}\label{eqlem_EZb8}
		\exp(nf(z))&\leq\exp\bc{nf(1/2)-n\Omega((z-1/2)^2)+O(\log n)}=o\bc{\ex[\Zbal(\beta,0)]/n};
	\end{align}
	to obtain the last inequality we used the choice of $\omega=\sqrt n\log n$.
	Finally, the assertion follows from~\eqref{eqlem_EZb5}, \eqref{eqlem_EZb6}, \eqref{eqlem_EZb7} and~\eqref{eqlem_EZb8}.
\end{proof}

\begin{lemma}\label{lem_EZc}
	Let
	\begin{align*}
		g(z) &= h(z) + \frac mn\log\bc{ z(1-z) + {\exp(2\beta) \over 2} (z^2 + (1-z)^2) },\\
		\lambda&=\sqrt{2 \over \pi n } \,  \bcfr{2n}{\exp(\beta)(n-1)}^m \exp\bc{-{m^2 \over n^2}  \tanh^2 \beta-{m \over n}\cdot {2\exp(2\beta) \over 1 + \exp(2\beta)} }.
	\end{align*}
	Then
	\begin{align}\label{eqlem_EZc}
		\ex\brk{\Zbal(\beta,0)}\sim\lambda\sum_{\alpha\in\fB}^n\exp(ng(\alpha/n)).
	\end{align}
\end{lemma}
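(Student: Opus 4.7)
The plan is to estimate, uniformly for $\alpha\in\fB$, the summand
\begin{align*}
T_\alpha \;:=\; \binom{n}{\alpha}\sum_{M=0}^{m}\eul^{\beta(2M-m)}\binom{A_\alpha}{M}\binom{B_\alpha}{m-M}\binom{N}{m}^{-1}
\end{align*}
from \eqref{eq_lem_EZa_EZbal}, where $A_\alpha=\binom{\alpha}{2}+\binom{n-\alpha}{2}$, $B_\alpha=\alpha(n-\alpha)$ and $N=\binom{n}{2}$, and to show that $T_\alpha\sim\lambda\exp(ng(\alpha/n))$. Summing over $\alpha\in\fB$ will then give the claim.

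The argument combines three Stirling-type expansions with one Gaussian-in-Gaussian integration. First, Stirling applied to $\binom{n}{\alpha}$ gives $\binom{n}{\alpha}=(1+o(1))\sqrt{2/(\pi n)}\exp(nh(z))$ uniformly on $\fB$, using $z(1-z)=1/4+o(1)$ for $z=\alpha/n$. Second, refining \eqref{eqlem_EZb1} via the falling-factorial expansion and retaining the Gaussian correction yields, uniformly in $M$,
\begin{align*}
\binom{A_\alpha}{M}\binom{B_\alpha}{m-M}\binom{N}{m}^{-1}
=(1+O(1/n))\binom{m}{M}a^{M}(1-a)^{m-M}\exp\Bigl(-\tfrac{(M-am)^{2}}{2Na(1-a)}\Bigr),
\end{align*}
where $a=A_\alpha/N$. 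Third, I recognise the inner sum as $S_\alpha=\Erw[\eul^{\beta(2\vec M-m)}\exp(-(\vec M-am)^{2}/(2Na(1-a)))]$ with $\vec M\sim\Bin(m,a)$; exponentially tilting by $\eul^{\beta(2\vec M-m)}$ yields the normalising factor $C_\alpha^{m}$ with $C_\alpha:=a\eul^{\beta}+(1-a)\eul^{-\beta}$ and replaces $\vec M$ by a $\Bin(m,\tilde a)$ variable with $\tilde a=a\eul^{\beta}/C_\alpha$. Invoking the CLT on the tilted law and performing the resulting Gaussian-in-Gaussian integral produces
\begin{align*}
S_\alpha\;\sim\;C_\alpha^{m}\exp\Bigl(-\tfrac{m^{2}(\tilde a-a)^{2}}{2Na(1-a)}\Bigr),
\end{align*}
the auxiliary factor $\sqrt{Na(1-a)/(Na(1-a)+m\tilde a(1-\tilde a))}=1+O(1/n)$ being negligible.

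Two algebraic identities complete the matching to $\lambda$. Because $a-(z^{2}+(1-z)^{2})=O(1/n)$, writing $C_z:=(z^{2}+(1-z)^{2})\eul^{\beta}+2z(1-z)\eul^{-\beta}$ one has $C_\alpha=(nC_z-\eul^{\beta})/(n-1)$ and hence $C_\alpha^{m}=(n/(n-1))^{m}C_z^{m}\exp(-m\eul^{\beta}/(nC_z))(1+o(1))$; at $z=1/2$ the exponential evaluates to $\exp(-(m/n)\cdot 2\eul^{2\beta}/(1+\eul^{2\beta}))$. Similarly, from $\tilde a-a=2a(1-a)\sinh\beta/C_\alpha$ one obtains $m^{2}(\tilde a-a)^{2}/(2Na(1-a))\sim m^{2}\tanh^{2}\beta/n^{2}$ at $z=1/2$. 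Combining these with the identity $(\eul^{\beta}/2)^{-m}C_z^{m}=\exp(m\log[z(1-z)+(\eul^{2\beta}/2)(z^{2}+(1-z)^{2})])$ reassembles $T_\alpha$ precisely into $\lambda\exp(ng(z))$.

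The main obstacle is uniform bookkeeping. Several of the $O(1/n)$ corrections picked up during Stirling and the Gaussian integration are multiplied by $m=\Theta(n)$ and leave $O(1)$ traces in $\lambda$, most importantly the $(m/n)\cdot 2\eul^{2\beta}/(1+\eul^{2\beta})$ term coming from expanding $C_\alpha$ to order $1/n$; dropping it gives the wrong constant. In addition I must verify that the fluctuations of $C_z$, $\tilde a$, $a(1-a)$ and $z(1-z)$ as $z$ ranges over the $O(\log n/\sqrt n)$-neighbourhood of $1/2$ contribute only $1+o(1)$ multiplicative errors uniformly, which follows from smoothness of the involved functions together with $|z-1/2|=O(\log n/\sqrt n)$. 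Once these bookkeeping issues are resolved, $T_\alpha\sim\lambda\exp(ng(\alpha/n))$ holds uniformly on $\fB$ and the lemma follows by summation.
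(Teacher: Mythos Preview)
Your proposal is correct and follows essentially the same route as the paper. The paper approximates the hypergeometric weight as in \eqref{eqlem_EZb1}, observes that the inner $M$-sum concentrates at $M_*(\alpha)=m\tilde a$, evaluates the Gaussian correction at $M_*$ to produce the $\exp(-(m^2/n^2)\tanh^2\beta)$ factor, and then collapses the remaining sum via the binomial theorem to $C_\alpha^m$; finally it expands the last $\log$-term to order $1/n$ to extract the $(m/n)\cdot 2\eul^{2\beta}/(1+\eul^{2\beta})$ piece of $\lambda$. Your exponential-tilting step locates the same $\tilde a$, your Gaussian-in-Gaussian integral reduces (since $\sigma^2/\tau^2=O(1/n)$) to evaluating the correction at $M=m\tilde a$, and your expansion $C_\alpha=(nC_z-\eul^\beta)/(n-1)$ reproduces the same $1/n$ correction---so the two arguments differ only in phrasing.
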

\begin{proof}
	Combining~\eqref{eqlem_EZb1} with \Lem~\ref{lem_EZa}, we obtain
	\begin{align}
		\ex\brk{\Zbal(\beta,0)}&\sim\sum_{\alpha\in\fB}\binom n\alpha\bcfr{\alpha (n- \alpha)}{\exp(\beta)\binom n2}^{m}\sum_{M=0}^m\binom mM	\bcfr{{\alpha \choose 2} + {n - \alpha \choose 2}}{\alpha(n-\alpha)}^{M}       \exp\bc{2\beta M-{2M^2 \over n^2} - {2(m-M)^2 \over n^2} + {m^2 \over n^2}}.\label{eqlem_EZc1}
	\end{align}
	For a given $\alpha$ the function $M\mapsto\binom mM	\bcfr{{\alpha \choose 2} + {n - \alpha \choose 2}}{\alpha(n-\alpha)}^{M}       \exp\bc{2\beta M-{2M^2 \over n^2} - {2(m-M)^2 \over n^2} + {m^2 \over n^2}}$ attains its maximum value at
	\begin{align*}
		M_*(\alpha)=m\bc{1+\frac{\exp(-2\beta)\alpha(n-\alpha)}{\binom\alpha2+\binom{n-\alpha}2}}^{-1}.
	\end{align*}
	In particular, values $M\sim M_*(\alpha)$ dominate the inner sum (on $M$) in~\eqref{eqlem_EZc1}.
	Consequently, since for $\alpha\in\fB$ we obtain $M\sim M_*(\alpha)\sim{m \over 1 + \exp(-2\beta)}$, by the binomial theorem the expression~\eqref{eqlem_EZc1} simplifies to
	\begin{align*}
		\ex\brk{\Zbal(\beta,0)}&\sim
		\bcfr{2n}{\exp(\beta)(n-1)}^m
		\exp\bc{-{m^2 \over n^2}  \tanh^2 \beta}
		\sum_{\alpha\in\fB} {n \choose \alpha}   \bc{{\alpha \over n}  \bc{1- {\alpha \over n}}}^m \bc{1 + \exp(2\beta)  {{\alpha \choose 2} + {n - \alpha \choose 2} \over \alpha(n-\alpha)}}^m .
	\end{align*}
	As a next step we apply Stirling's formula to the binomial coefficient $\binom n\alpha$ for $\alpha\in\fB$,  obtaining
	\begin{align}\nonumber
		\ex\brk{\Zbal(\beta,0)}&\sim
		\sqrt{2 \over \pi n }\bcfr{2n}{\exp(\beta)(n-1)}^m\exp\bc{-{m^2 \over n^2}  \tanh^2 \beta}
		 \\
							   &\qquad  \sum_{\alpha \in \fB} \exp\bc{  nh\bc{\alpha \over n} + m \brk{\log{\alpha \over n} + \log\bc{1 - {\alpha \over n}} +  \log\bc{1 + \exp(2\beta)  {{\alpha^2} + (n - \alpha)^2 \over 2\alpha(n-\alpha)} - {2\exp(2\beta) \over n}}}}\label{eqlem_EZc2}.
	\end{align}
	Expanding the final log-term in~\eqref{eqlem_EZc2} as
	\begin{align*}
		\log\bc{1 + \exp(2\beta) \cdot {{\alpha^2} + (n - \alpha)^2 \over 2\alpha(n-\alpha)} - {2\exp(2\beta) \over n}}&=
		\log\bc{1 + \exp(2\beta) \cdot {{\alpha^2} + (n - \alpha)^2 \over 2\alpha(n-\alpha)}}\\&\qquad-	{2\exp(2\beta) \over n}\bc{1 + \exp(2\beta) \cdot {{\alpha^2} + (n - \alpha)^2 \over 2\alpha(n-\alpha)}}^{-1}+O(1/n^2)
	\end{align*}
	finally completes the derivation of~\eqref{eqlem_EZc}.
\end{proof}

We proceed to evaluate the sum $\sum_{\alpha\in\fB}^n\exp(ng(\alpha/n))$ via the Laplace method.

\begin{lemma}\label{lem_EZd}
	We have
	\begin{align*}
		\sum_{\alpha\in\fB}^n\exp(ng(\alpha/n))&\sim\sqrt{\frac{\pi n}{2(1-d\tanh\beta)}}		\exp\bc{n  g(1/2)}.
	\end{align*}
\end{lemma}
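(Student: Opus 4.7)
The strategy is a standard Laplace-method calculation for the sum $S_n := \sum_{\alpha\in\fB}\exp(ng(\alpha/n))$, driven by the fact that $g$ attains its maximum on $[0,1]$ at the interior point $z = 1/2$ with a strictly negative second derivative (this is where the condition $d\tanh\beta<1$ will be used).

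First I would verify that $z=1/2$ is the unique critical point of $g$ on $(0,1)$. Differentiating, one obtains
\begin{align*}
g'(z) &= \log\frac{1-z}{z} + \frac{m}{n}\cdot\frac{(2z-1)(\eul^{2\beta}-1)}{z(1-z)+\frac{\eul^{2\beta}}{2}(z^2+(1-z)^2)},
\end{align*}
which vanishes at $z=1/2$ by symmetry. Computing the second derivative and evaluating at $z=1/2$ (using the fact that the numerator $(2z-1)(\eul^{2\beta}-1)$ of the second summand vanishes at $1/2$, so only the term $p'(1/2)/q(1/2)$ survives in the quotient rule), one finds
\begin{align*}
g''(1/2) &= -4 + \frac{m}{n}\cdot\frac{8(\eul^{2\beta}-1)}{1+\eul^{2\beta}} = -4 + 8\cdot\frac{m}{n}\tanh\beta = -4(1-d\tanh\beta)+o(1),
\end{align*}
which is strictly negative by the hypothesis $d\tanh\beta<1$. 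A direct check (using the convexity analysis already made for $f$ in \eqref{eqf}--\eqref{eqf''}, to which $g$ is closely related) confirms that $g''<0$ on $(0,1)$, so $1/2$ is the unique maximiser of $g$ and $g$ is strictly concave.

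Next I would perform the standard Taylor-plus-Gaussian-sum estimate. Writing $z = 1/2 + x/n$ with $x = \alpha - n/2$, and using strict concavity together with a uniform bound on $g'''$ on a small neighbourhood of $1/2$, I get
\begin{align*}
ng(\alpha/n) &= ng(1/2) - \frac{2(1-d\tanh\beta)}{n}(\alpha-n/2)^2 + O\bc{\frac{(\alpha-n/2)^3}{n^2}}
\end{align*}
uniformly for $\alpha\in\fB$, since $|\alpha-n/2|\le\omega = \sqrt n\log n$ makes the cubic error $O(\log^3 n/\sqrt n)=o(1)$. Plugging this in,
\begin{align*}
S_n &= (1+o(1))\,\eul^{ng(1/2)}\sum_{\alpha\in\fB}\exp\bc{-\frac{2(1-d\tanh\beta)}{n}(\alpha-n/2)^2}.
\end{align*}

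Finally I would approximate the remaining sum by a Riemann integral. The relevant Gaussian has standard deviation of order $\sqrt{n/(4(1-d\tanh\beta))}$, hence of order $\sqrt n$, and the window $\fB$ of width $2\omega = 2\sqrt n\log n$ covers $\Theta(\log n)$ standard deviations. Therefore the tails outside $\fB$ are super-polynomially small, the spacing $1$ between consecutive $\alpha$ is $o(\sqrt n)$, and standard Riemann-sum comparison yields
\begin{align*}
\sum_{\alpha\in\fB}\exp\bc{-\frac{2(1-d\tanh\beta)}{n}(\alpha-n/2)^2} &\sim \int_{-\infty}^{\infty}\exp\bc{-\frac{2(1-d\tanh\beta)}{n}x^2}\dd x = \sqrt{\frac{\pi n}{2(1-d\tanh\beta)}},
\end{align*}
completing the proof. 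The only delicate point is the uniform control of the Taylor remainder on the full window $\fB$, and this is ensured precisely because the cutoff $\omega=\sqrt n\log n$ is only a polylogarithmic factor above the Gaussian scale $\sqrt n$.
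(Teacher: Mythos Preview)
Your proof is correct and follows essentially the same Laplace-method approach as the paper. The only cosmetic difference is that the paper avoids recomputing derivatives by observing $g(z)=f(z)+\frac mn(\beta-\log 2)$ and importing $g''(1/2)=f''(1/2)=4(d\tanh\beta-1)$ directly from \eqref{eqf''}; it then passes to an integral first and applies Laplace, whereas you Taylor-expand and do a Riemann-sum comparison, but the substance is identical.
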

	\begin{proof}
		Comparing $g(z)$ with the function $f(z)$ from~\eqref{eqf}, we discover that $g(z)=f(z)+\frac mn(\beta-\log 2)$.
		Therefore, \eqref{eqf'}--\eqref{eqf''} demonstrate that $g(z)$ is concave and attains its unique maximum at $z=1/2$.
		As a consequence, we can approximate the sum $\sum_{\alpha\in\fB}^n\exp(ng(\alpha/n))$ by an integral over $B=[\min\fB,\max\fB]$, which gives
		\begin{align}\nonumber
			\sum_{\alpha\in\fB}\exp(ng(\alpha/n))&\sim\int_B\exp(ng(\alpha/n))\dd\alpha\sim \exp\bc{n g(1/2)} \int_{B} \exp\bc{{g''(1/2) \over 2n}   \alpha^2}\dd \alpha\\
			& \sim\sqrt{-\frac{n}{g''(1/2)}}\exp\bc{n  g(1/2)} \int_{-\infty}^{\infty} \exp\bc{- \frac{z^2}{2}} \dd z
			=  \sqrt{-\frac{2\pi n}{g''(1/2)}}\exp\bc{n  g(1/2)} .\label{eqlem_EZd1}
		\end{align}
		Since $g''(1/2)=f''(1/2)=4(d\tanh\beta -1)$ by~\eqref{eqf''}, the assertion follows from~\eqref{eqlem_EZd1}.
	\end{proof}

\begin{proof}[Proof of \Prop~\ref{prop_EZ}]
	The fact that $\ex[Z_{\GG}(\beta,0)]\sim\ex[\Zbal(\beta,0)]$ follows from \Lem~\ref{lem_EZb}.
	Moreover, since
	\begin{align*}
		g(1/2)=\log 2+\frac mn\log\frac{1+\exp(2\beta)}4,
	\end{align*}
	the asymptotic formula for $\ex[\Zbal(\beta,0)]$ follows from \Lem s~\ref{lem_EZc} and~\ref{lem_EZd} and a bit of algebra.
\end{proof}

\subsection{Proof of \Prop~\ref{prop_ssc}}\label{sec_prop_ssc}

We prove \Prop~\ref{prop_ssc} by way of an auxiliary random graph model that we call the {\em planted Ising model}.
Let $\fB=\cbc{\sigma\in\PM^{V_n}:|\sum_{i=1}^n\sigma(v_i)|\leq2\sqrt n\log n}$ be the set of all balanced configurations, i.e., configurations that are counted by $\Zbal(\beta)$ as defined in~\eqref{eqZbal}.
For $\sigma\in\fB$ we define a random graph $\GG^*(\sigma)=\GG^*_{n,m}(\sigma,\beta)$ by letting
\begin{align}\label{eqG*sigma}
	\pr\brk{\GG^*(\sigma)=G}&\propto\exp\bc{\beta\sum_{e\in E(G)}\sigma(e)}\pr\brk{\GG=G}.
\end{align}
To be precise, the $\propto$-symbol in~\eqref{eqG*sigma} hides the normalising factor
\begin{align}\nonumber
	Z^*(\sigma,\beta)&=\ex\brk{\exp\bc{\beta\sum_{e\in E(\GG)}\sigma(e)}}\\
	&=\binom{\binom n2}m^{-1}\sum_{m_+=0}^m\binom{\binom{|\sigma^{-1}(1)|}2+\binom{|\sigma^{-1}(-1)|}2}{m_+}\binom{|\sigma^{-1}(1)|\cdot|\sigma^{-1}(-1)|}{m-m_+}\exp(\beta(2m_+-m))
\label{eqZ*sigma}
\end{align}
Additionally, we define a random balanced configuration $\SIGMA^*\in\fB$ by letting
\begin{align}\label{eqsigma*}
	\pr\brk{\SIGMA^*=\sigma}&=\vecone\cbc{\sigma\in\fB}\frac{\ex\brk{\exp\bc{\beta\sum_{e\in E(\GG)}\sigma(e)}}}{\ex\brk{\Zbal(\beta)}}.
\end{align}
Finally, our main object of interest in this section is the random graph $\GG^*(\SIGMA^*)$ obtained by first choosing $\SIGMA^*$ according to~\eqref{eqsigma*} and then choosing a random graph from the resulting planted distribution~\eqref{eqG*sigma}.
The following lemma explains our interest in this object.

\begin{lemma}\label{lem_nishi}
	For any random variable $Y$ we have
	\begin{align*}
		\frac{\ex\brk{\Zbal(\beta)Y(\GG)}}{\ex[\Zbal(\beta)]}&=\ex[Y(\GG^*(\SIGMA^*))].
	\end{align*}
\end{lemma}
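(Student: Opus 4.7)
The identity is a standard Nishimori-type swap between a planted and a null model, and the proof is a direct computation that unpacks the two definitions and exchanges the order of summation. I will carry it out in three short steps.

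First, I would expand the left-hand side directly from the definition~\eqref{eqZbal} of $\Zbal(\beta)$:
\begin{align*}
\ex[\Zbal(\beta)\,Y(\GG)]
&=\sum_G\pr[\GG=G]\,Y(G)\sum_{\sigma\in\fB}\exp\bc{\beta\sum_{e\in E(G)}\sigma(e)}\\
&=\sum_{\sigma\in\fB}\sum_G Y(G)\,\pr[\GG=G]\exp\bc{\beta\sum_{e\in E(G)}\sigma(e)}.
\end{align*}
Dividing by $\ex[\Zbal(\beta)]$ simply inserts that normaliser in front of the double sum.

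Second, I would expand the right-hand side using the definitions~\eqref{eqG*sigma}, \eqref{eqZ*sigma} and~\eqref{eqsigma*}. By the tower property,
\begin{align*}
\ex[Y(\GG^*(\SIGMA^*))]
&=\sum_{\sigma\in\fB}\pr[\SIGMA^*=\sigma]\sum_G\pr[\GG^*(\sigma)=G]\,Y(G)\\
&=\sum_{\sigma\in\fB}\frac{Z^*(\sigma,\beta)}{\ex[\Zbal(\beta)]}\sum_G\frac{\exp\bc{\beta\sum_{e\in E(G)}\sigma(e)}\pr[\GG=G]}{Z^*(\sigma,\beta)}\,Y(G).
\end{align*}
The key algebraic step is that the normaliser $Z^*(\sigma,\beta)$ from the planted measure cancels exactly against the numerator in $\pr[\SIGMA^*=\sigma]$, which is nothing but $Z^*(\sigma,\beta)$ by~\eqref{eqZ*sigma}. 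What remains is
\begin{align*}
\ex[Y(\GG^*(\SIGMA^*))]
=\frac{1}{\ex[\Zbal(\beta)]}\sum_{\sigma\in\fB}\sum_G Y(G)\,\pr[\GG=G]\exp\bc{\beta\sum_{e\in E(G)}\sigma(e)}.
\end{align*}

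Third, comparing the two displays gives the claimed identity. There is no genuine obstacle here: both $\fB$ and the set of possible outcomes of $\GG$ are finite, so interchanging the sums requires no justification, and all terms are non-negative, which also handles the $\SIGMA^*$-normalisation check (the planted weights on $\fB$ sum to $1$ precisely because $\sum_{\sigma\in\fB}Z^*(\sigma,\beta)=\ex[\Zbal(\beta)]$ by linearity of expectation). The only thing to watch for is that $Z^*(\sigma,\beta)>0$ for every $\sigma\in\fB$ so that the planted conditional is well-defined; this is immediate from~\eqref{eqZ*sigma} since each summand there is strictly positive. Hence the lemma follows.
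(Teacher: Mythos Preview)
Your proof is correct and follows essentially the same approach as the paper's: both unpack the definitions of $\SIGMA^*$ and $\GG^*(\sigma)$, observe that the normaliser $Z^*(\sigma,\beta)$ cancels, and reduce everything to the same double sum over $\sigma\in\fB$ and graphs $G$. The paper's version is slightly more compact (it works from $\ex[Y(\GG^*(\SIGMA^*))]\ex[\Zbal(\beta)]$ directly to $\ex[\Zbal(\beta)Y(\GG)]$ in one chain), but the substance is identical.
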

\begin{proof}
	By the definitions~\eqref{eqsigma*} and~\eqref{eqG*sigma} of $\SIGMA^*$ and $\GG^*(\SIGMA^*)$ we have
	\begin{align*}
		\ex[Y(\GG^*(\SIGMA^*))]\ex[\Zbal(\beta)]&=\sum_{\sigma\in\fB}\ex\brk{\exp\bc{\beta\sum_{e\in E(\GG)}\sigma(e)}}\ex[Y(\GG^*(\sigma))]\\
												&=\sum_{\sigma\in\fB}\ex\brk{\exp\bc{\beta\sum_{e\in E(\GG)}\sigma(e)}Y(\GG)}=\ex[\Zbal(\beta)Y(\GG)],
	\end{align*}
	as claimed.
\end{proof}

As a next step we are going to investigate the energy contribution of the planted configuration $\SIGMA^*$ in the planted graph $\GG^*(\SIGMA^*)$.

\begin{lemma}\label{lem_planted_mag}
	With probability $1-\exp(-\Omega(\sqrt n))$ we have
	\begin{align*}
		\abs{\frac{m\exp(2\beta)}{2+2\exp(2\beta)}-\sum_{vw\in E(\GG^*(\SIGMA^*))}\vecone\{\SIGMA^*(v)=\SIGMA^*(w)=1\}}&= O\bc{n^{2/3}},\\
		\abs{\frac{m\exp(2\beta)}{2+2\exp(2\beta)}-\sum_{vw\in E(\GG^*(\SIGMA^*))}\vecone\{\SIGMA^*(v)=\SIGMA^*(w)=-1\}}&= O\bc{n^{2/3}}.
	\end{align*}
\end{lemma}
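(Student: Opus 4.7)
The plan is to condition on $\SIGMA^*$ and exploit the explicit weighted multivariate-hypergeometric form of the conditional law of the triple $(m_{++},m_{--},m_{+-})$ given $\SIGMA^*=\sigma$, in conjunction with the Gaussian concentration of $\alpha=|\SIGMA^{*-1}(1)|$ inherited from the Laplace analysis of Section~\ref{sec_prop_EZ}. Throughout I write $\alpha=|\sigma^{-1}(1)|$ and let $m_{++},m_{--},m_{+-}$ be the numbers of $(+,+)$, $(-,-)$ and $(+,-)$ edges of the graph under consideration.

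First I analyse the marginal distribution of $\SIGMA^*$. By~\eqref{eqsigma*} and~\eqref{eqZ*sigma}, $\pr[\SIGMA^*=\sigma]$ depends on $\sigma$ only through $\alpha$, and the summand $\binom n\alpha Z^*(\sigma,\beta)/\ex[\Zbal(\beta)]$ is precisely the one whose asymptotics were worked out in \Lem s~\ref{lem_EZc}--\ref{lem_EZd}. Those calculations identify the distribution of $\alpha$ as approximately Gaussian with mean $n/2$ and variance $\Theta(n)$, which yields the subgaussian tail bound
\begin{align*}
	\pr\brk{|\alpha-n/2|>t}&\leq\exp\bc{-\Omega(t^2/n)}&&\mbox{for }1\leq t\leq\sqrt n\log n.
\end{align*}

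Second, I compute the conditional law. By~\eqref{eqG*sigma},
\begin{align*}
	\pr\brk{(m_{++},m_{--},m_{+-})=(x,y,z)\mid\sigma}\propto\binom{\binom\alpha2}x\binom{\binom{n-\alpha}2}y\binom{\alpha(n-\alpha)}z\eul^{2\beta(x+y)},\qquad x+y+z=m,
\end{align*}
a weighted trinomial-hypergeometric. A saddle-point expansion entirely analogous to that in the proof of \Lem~\ref{lem_EZc} locates the mode and yields
\begin{align*}
	\ex[m_{++}\mid\sigma]&=\frac{\binom\alpha2\eul^{2\beta}\,m}{\bc{\binom\alpha2+\binom{n-\alpha}2}\eul^{2\beta}+\alpha(n-\alpha)}+O(1)=\frac{m\eul^{2\beta}}{2+2\eul^{2\beta}}+O(|\alpha-n/2|),
\end{align*}
the last equality being a Taylor expansion around $\alpha=n/2$. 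Combined with the first step, $\ex[m_{++}\mid\SIGMA^*]$ matches the target $\frac{m\eul^{2\beta}}{2+2\eul^{2\beta}}$ up to $O(\sqrt n\log n)=o(n^{2/3})$ with probability $1-\exp(-\Omega(\sqrt n))$.

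Third, I establish concentration of $m_{++}$ around $\ex[m_{++}\mid\sigma]$. The cleanest route is Poissonisation: couple $\GG^*(\sigma)$ with an independent-edge model $\tilde\GG$ in which each potential edge $e$ is included with probability $q_e\in\{q_+,q_-,q_0\}$ depending only on its monochromatic type, the triple $(q_+,q_-,q_0)$ being tuned so that the law of $\tilde\GG$ conditioned on $|E(\tilde\GG)|=m$ is precisely that of $\GG^*(\sigma)$. In the unconditioned model $\tilde m_{++}$ is a sum of $\binom\alpha2$ independent Bernoullis with mean $\Theta(n)$, and Chernoff--Bernstein gives $\pr[|\tilde m_{++}-\ex\tilde m_{++}|>t]\leq2\exp(-\Omega(t^2/n))$. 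A local central limit theorem for $|E(\tilde\GG)|$ implies that the conditioning event has probability $\Theta(n^{-1/2})$, so transferring the bound to $\GG^*(\sigma)$ loses only a polynomial factor in $n$ that is absorbed into the exponent. Setting $t=n^{2/3}$ and combining with the first two steps proves the claim; the bound for $m_{--}$ follows by the symmetry $\sigma\leftrightarrow-\sigma$.

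The principal obstacle is the concentration in the third step: the hard edge-budget constraint $|E|=m$ couples the three components of $(m_{++},m_{--},m_{+-})$ and so prevents a direct appeal to independence within $\GG^*(\sigma)$ itself. The Poissonisation above is the standard remedy, but two points require care: tuning $(q_+,q_-,q_0)$ uniformly across the relevant range of $\alpha$ so that $\ex|E(\tilde\GG)|=m$ with variance $\Theta(n)$, and ensuring that the $O(\sqrt n)$ loss from the conditioning does not spoil the stated $\exp(-\Omega(\sqrt n))$ failure probability for the $n^{2/3}$-scale deviation.
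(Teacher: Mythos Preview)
Your proposal is correct in outline but takes a genuinely different route from the paper's proof.

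The paper works directly with the explicit mass function: it expands $Z^*(\sigma,\beta)$ from~\eqref{eqZ*sigma} as a sum over $(m_+,m_-)$, applies Stirling, and observes that the resulting exponent $f(y_+,y_-)=-y_+\log y_+-y_-\log y_--(1-y_+-y_-)\log(1-y_+-y_-)+(2\beta-\log 2)(y_++y_-)$ is strictly concave with its unique maximum at $y_+^*=y_-^*=\eul^{2\beta}/(2+2\eul^{2\beta})$. The tail bound is then read off from the drop in $mf$ outside a window of width $n^{2/3}$ around the maximiser; no auxiliary random model is needed, and the $\exp(O(\omega))$ error from the balanced constraint is absorbed because $\sigma\in\fB$ deterministically. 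Your approach instead decouples via Poissonisation: you introduce an independent-edge model, apply Chernoff there, and transfer back by conditioning on the total edge count. Both work; the paper's route is shorter and self-contained because the explicit mass function makes the Laplace/concavity argument immediate, whereas your Poissonisation is a detour that would pay off only in settings where the mass function is less tractable.

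Two small remarks. First, in your Step~1 the bound $|\alpha-n/2|\le\sqrt n\log n$ is deterministic rather than probabilistic, since $\SIGMA^*$ is supported on $\fB$ by construction; this actually simplifies your argument. Second, your Step~3 with $t=n^{2/3}$ yields failure probability $\exp(-\Omega(n^{1/3}))$ rather than the $\exp(-\Omega(\sqrt n))$ stated in the lemma; the paper's concavity bound gives the same rate (the quadratic drop $m\cdot(n^{2/3}/m)^2=\Theta(n^{1/3})$), and only this weaker rate is needed in the subsequent cycle-count computation.
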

\begin{proof}
	Let $\sigma\in\fB$.
	Using Stirling's formula, we approximate the expression~\eqref{eqZ*sigma} as
	\begin{align}\nonumber
		Z^*(\sigma,\beta)&= \binom{\binom n2}m^{-1}\sum_{m_+ + m_- \leq m} {m \choose m_+, m_-} {\binom{|\sigma^{-1}(1)|}2 \choose m_+} {\binom{|\sigma^{-1}(-1)|}2 \choose m_-} \binom{|\sigma^{-1}(1)|\cdot|\sigma^{-1}(-1)|}{m-m_+ - m_-}\exp(\beta(2(m_+ + m_-) -m))
		\\
		&\nonumber = \frac{\Theta(n^{-1/2})}{\exp(\beta m)}\sum_{m_+ + m_- \leq m}\binom m{m_+,m_-}\bcfr{|\sigma^{-1}(1)|}{n}^{2m_+}\bcfr{|\sigma^{-1}(-1)|}{n}^{2m_-}
		\\
		& \nonumber\qquad\qquad\qquad\qquad\quad\quad\cdot \bcfr{2|\sigma^{-1}(1)|\cdot|\sigma^{-1}(-1)|}{n^2}^{m-m_+-m_-}\exp(2\beta(m_++m_-)) \\
		&=\exp(O(\omega))2^{-m}\exp(-\beta m)\sum_{m_+ + m_- \leq m}\binom m{m_+,m_-}\exp((2\beta-\log2)(m_+ + m_-)).
		\label{eqlem_planted_mag1}
	\end{align}
	Further, let
	\begin{align*}
		f(y_+,y_-)&=-y_+\log y_+-y_-\log y_--(1-y_+-y_-)\log(1-y_+-y_-)+(2\beta-\log2)(y_++y_-).
	\end{align*}
	This function is strictly concave and its derivatives work out to be
	\begin{align*}
		\frac{\partial f}{\partial y_+}&=-\log y_++\log(1-y_+-y_-)+2\beta-\log2,&
		\frac{\partial f}{\partial y_-}&=-\log y_-+\log(1-y_+-y_-)+2\beta-\log2.
	\end{align*}
	Hence, $f$ attains its unique maximum at the point
	\begin{align}\label{eqlem_planted_mag2}
		y_+^*&=y_-^*=\frac{\exp(2\beta)}{2+2\exp(2\beta)}.
	\end{align}
	Letting $\cM_0=\{(m_+,m_-):|m_+-m\exp(2\beta)/(2+2\exp(2\beta))| < n^{2/3},~|m_ - - m\exp(2\beta)/(2+2\exp(2\beta))|<n^{2/3}\}$ and applying Stirling's formula once more, we to obtain
	\begin{align}
		\nonumber &\exp(O(\omega))2^{-m}\exp(-\beta m)\sum_{m_+, m_- \notin \cM_0}\binom m{m_+,m_-}\exp((2\beta-\log2)(m_+ + m_-))
		\\
		=
		\nonumber &\exp(O(\omega))2^{-m}\exp(-\beta m)\sum_{m_+, m_- \notin \cM_0}\exp(mf(m_+/m,m_-/m))
		\\
		&\leq \exp(-\Theta(n^{2/3}))2^{-m}\exp(-\beta m)\sum_{m_+, m_- \notin \cM_0}\exp(m f(y_+^*, y_-^*)).
		\label{eqlem_planted_mag3}
	\end{align}
	From \eqref{eqlem_planted_mag1}--\eqref{eqlem_planted_mag3} we obtain the following expression.
	\begin{align}\nonumber
		Z^*(\sigma,\beta)&=(1+\exp(-\Theta(n^{2/3}))
		\cdot \binom{\binom n2}m^{-1}\sum_{m_+ , m_- \in \cM_0} {m \choose m_+, m_-} {\binom{|\sigma^{-1}(1)|}2 \choose m_+} {\binom{|\sigma^{-1}(-1)|}2 \choose m_-} \binom{|\sigma^{-1}(1)|\cdot|\sigma^{-1}(-1)|}{m-m_+ - m_-}
		\\
		&\qquad\qquad\qquad\qquad\qquad\qquad\qquad\qquad\cdot\exp(2\beta(m_++m_-)-\beta m).
		\label{eqlem_planted_mag4}
	\end{align}
	Finally, the assertion follows from~\eqref{eqG*sigma}, \eqref{eqZ*sigma} and~\eqref{eqlem_planted_mag4}.
\end{proof}

\begin{lemma}\label{lem_planted_cycle}
	For any $\ell\geq3$ we have $\ex[\vC_\ell(\GG^*(\SIGMA^*))]\sim\frac{d^\ell}{2\ell}(1+\tanh^\ell\beta).$
\end{lemma}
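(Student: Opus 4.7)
The plan is a direct first-moment computation, conditioning first on the planted spin configuration $\SIGMA^*=\sigma$. Fix a balanced $\sigma\in\fB$ so that $n_\pm:=|\sigma^{-1}(\pm 1)|$ satisfy $n_\pm\sim n/2$. By \Lem~\ref{lem_planted_mag}, with probability $1-\exp(-\Omega(\sqrt n))$ the triple $(m_1,m_2,m_3)$ counting edges of $\GG^*(\sigma)$ within $\sigma^{-1}(+1)$, within $\sigma^{-1}(-1)$, and across, respectively, lies within $O(n^{2/3})$ of $m_1^*=m_2^*\sim \frac{m\exp(2\beta)}{2(1+\exp(2\beta))}$ and $m_3^*\sim \frac{m}{1+\exp(2\beta)}$. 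Conditional on these counts, $\GG^*(\sigma)$ is the disjoint union of three independent uniformly random edge subsets of sizes $m_i^*$ among $N_i$ admissible pairs, where $N_1,N_2\sim n^2/8$ and $N_3\sim n^2/4$.

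For a fixed potential $\ell$-cycle $C$ with vertex sequence $(v_1,\ldots,v_\ell)$ and spin pattern $s_i:=\sigma(v_i)$, the hypergeometric estimate $\binom{N_i-k}{m_i^*-k}/\binom{N_i}{m_i^*}\sim (m_i^*/N_i)^k$, valid uniformly in $k\leq\ell=O(1)$, yields
\begin{align*}
\pr\brk{C\subseteq E(\GG^*(\sigma))}&\sim \prod_{i=1}^{\ell}p(s_i,s_{i+1}),& p(s,s')&:=\frac{2d}{n(1+\exp(2\beta))}\exp\bc{\beta(1+ss')},
\end{align*}
after noting that $p(+1,+1)=p(-1,-1)\sim m_1^*/N_1$ and $p(+1,-1)\sim m_3^*/N_3$. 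Summing over the $(1+o(1))n^\ell/(2\ell)$ potential cycles, and using that for balanced $\sigma$ the spin pattern of a uniformly random ordered $\ell$-tuple of distinct vertices is i.i.d.\ uniform on $\{\pm 1\}^\ell$ up to $O(\ell^2/n)$ total variation error, one obtains
\begin{align*}
\ex[\vC_\ell(\GG^*(\sigma))]&\sim \frac{n^\ell}{2\ell\cdot 2^\ell}\bcfr{2d}{n(1+\exp(2\beta))}^\ell\exp(\beta\ell)\sum_{s\in\{\pm 1\}^\ell}\exp\bc{\beta\sum_{i=1}^\ell s_is_{i+1}}.
\end{align*}

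The cyclic-spin sum equals $\tr(T^\ell)=(2\cosh\beta)^\ell+(2\sinh\beta)^\ell$ for the transfer matrix $T$ with entries $T_{s,s'}=\exp(\beta ss')$, whose eigenvalues are $2\cosh\beta$ and $2\sinh\beta$. Inserting this and using the identity $2\exp(\beta)/(1+\exp(2\beta))=1/\cosh\beta$ collapses the expression to $\frac{d^\ell}{2\ell}(1+\tanh^\ell\beta)$. Finally, taking expectation over $\SIGMA^*$ and using the trivial bound $\vC_\ell(G)\leq n^\ell$ to discard the $\exp(-\Omega(\sqrt n))$ exceptional event completes the proof. The main obstacle is justifying the quasi-independence factorisation of $\pr\brk{C\subseteq E(\GG^*(\sigma))}$: although the three edge-type subsets are conditionally independent given $(m_1,m_2,m_3)$, these counts are weakly coupled through $m_1+m_2+m_3=m$, and one must verify that averaging over the $O(n^{2/3})$ fluctuations of $(m_1,m_2,m_3)$ within the concentration window perturbs each marginal edge probability only at the $O(n^{-1/3})$ scale, which is subleading to the $(d/n)^\ell$ order of $\pr\brk{C\subseteq E(\GG^*(\sigma))}$ for $\ell\geq 3$.
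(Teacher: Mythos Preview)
Your proposal is correct and follows essentially the same route as the paper: condition on $\SIGMA^*$ and the edge-type counts, invoke \Lem~\ref{lem_planted_mag} to replace these counts by their deterministic approximations, factor the probability that a fixed vertex sequence forms a cycle as a product of per-edge probabilities, and sum via a transfer-matrix trace. The only cosmetic difference is that the paper works directly with the matrix $Q$ with entries $Q_{s,s'}=\frac{2d}{1+\exp(2\beta)}\exp(\beta(1+ss'))$ and eigenvalues $2d,\,2d\tanh\beta$, whereas you factor out the scalar $\frac{d}{\cosh\beta}$ and use the standard Ising transfer matrix $T_{s,s'}=\exp(\beta ss')$ with eigenvalues $2\cosh\beta,\,2\sinh\beta$; these are identical since $Q=\frac{d}{\cosh\beta}T$.
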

\begin{proof}
	Let $u_0,\ldots,u_{\ell-1}$ be any sequence of $\ell$ distinct vertices.
	We need to compute the probability these vertices form a cycle.
	To this end, let
	\begin{align*}
		\vm_{1,1}&=\sum_{vw\in E(\GG^*(\SIGMA^*))\SIGMA^*}\vecone\{\SIGMA^*(v)=\SIGMA^*(w)=1\},&
		\vm_{-1,-1}&=\sum_{vw\in E(\GG^*(\SIGMA^*))\SIGMA^*}\vecone\{\SIGMA^*(v)=\SIGMA^*(w)=-1\},\\
		\vm_{1,-1}&=\vm_{-1,1}=m-\vm_{1,1}-\vm_{-1,-1}.
	\end{align*}
	Given $\vm_{1,1},\ldots,\vm_{-1,-1}$ and $\SIGMA^*$ we consider the $2\times 2$-matrix $\vQ=(\vQ_{st})_{s,t\in\PM}$ with entries
	\begin{align*}
		\vQ_{1,1}&=\frac{\vm_{1,1}}{\binom{|\SIGMA^{*\,-1}(1)|}2},&
		\vQ_{-1,-1}&=\frac{\vm_{-1,-1}}{\binom{|\SIGMA^{*\,-1}(-1)|}2},&
		\vQ_{1,-1}&=\vQ_{-1,1}=\frac{\vm_{1,-1}}{|\SIGMA^{*\,-1}(1)\times\SIGMA^{*\,-1}(-1)|}.
	\end{align*}
	Moreover, let
	\begin{align*}
		\vC(u_0,\ldots,u_{\ell-1})&=\vecone\cbc{u_0,\ldots,u_{\ell-1}\mbox{ form a cycle in }\GG^*(\SIGMA^*)}.
	\end{align*}
	Then
	\begin{align*}
		\ex\brk{\vC(u_0,\ldots,u_{\ell-1})\mid\vQ,\SIGMA^*}&=\prod_{i=1}^\ell\vQ_{\SIGMA^*(u_i),\SIGMA^*(u_{i+1\!\!\!\!\!\mod\ell})}.
	\end{align*}
	Hence, letting $Q=(Q_{s,t})_{s,t\in\PM}$ be the matrix with entries
	\begin{align*}
		Q_{1,1}&=Q_{-1,-1}=\frac{2d\exp(2\beta)}{1+\exp(2\beta)},&
		Q_{1,-1}&=Q_{-1,1}=\frac{2d}{1+\exp(2\beta)}
	\end{align*}
	and applying \Lem~\ref{lem_planted_mag}, we obtain
	\begin{align}\label{eqlem_planted_cycle1}
		\ex\brk{\vC(u_0,\ldots,u_{\ell-1})}&\sim(2n)^{-\ell}\tr(Q^\ell).
	\end{align}
	Since $Q$ has eigenvalues $2d$ and $2d(\exp(2\beta)-1)/(\exp(2\beta)+1)=2d\tanh\beta$, \eqref{eqlem_planted_cycle1} implies that
	\begin{align}\label{eqlem_planted_cycle2}
		\ex\brk{\vC(u_0,\ldots,u_{\ell-1})}&\sim \bcfr dn^\ell(1+\tanh^\ell\beta).
	\end{align}
	Finally, since every $\ell$-cycle induces $2\ell$ sequences $u_0,\ldots,u_\ell$ that form a cycle (due to the two possible orientations of the cycle and the $\ell$ choices of the starting point), the assertion follows from~\eqref{eqlem_planted_cycle2}.
\end{proof}

\begin{corollary}\label{cor_planted_cycle}
	For any $\ell\geq3$ and any $k_3,\ldots,k_\ell\geq0$ we have
	\begin{align*}
    \ex\brk{\prod_{j=3}^\ell\prod_{i=1}^{k_j}\bc{\vC_j(\GG^*(\SIGMA^*))-i}}\sim\prod_{j=3}^\ell\brk{\frac{d^j}{2j}(1+\tanh^j\beta)}^{k_j}.
	\end{align*}
\end{corollary}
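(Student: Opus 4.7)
The plan is to compute joint factorial moments of the cycle counts $\vC_j(\GG^*(\SIGMA^*))$ for $j=3,\dots,\ell$ and to show they factorize into the Poisson moments predicted by \Lem~\ref{lem_planted_cycle}. Interpreting the product on the left-hand side as the falling factorial $\prod_j (\vC_j)_{k_j}=\prod_j \vC_j(\vC_j-1)\cdots(\vC_j-k_j+1)$, this quantity counts ordered tuples $(\cC_j^{(i)})_{j,i}$ of pairwise distinct cycles, with $k_j$ cycles of length $j$. By linearity of expectation we therefore obtain
\begin{align*}
	\ex\brk{\prod_{j=3}^\ell (\vC_j)_{k_j}}&=\sum_{(\cC_j^{(i)})_{j,i}}\pr\brk{\cC_j^{(i)}\subset \GG^*(\SIGMA^*)\mbox{ for all } j,i}.
\end{align*}

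First I would split the sum according to whether the cycles in the tuple are pairwise vertex-disjoint. For the non-disjoint contribution a standard Erd\H{o}s-R\'enyi style counting argument applies: any two cycles sharing at least one vertex contribute a subgraph on strictly fewer than $\sum_j jk_j$ vertices while still containing at least $\sum_j jk_j$ edges, and since each of the potential edges is present in $\GG^*(\SIGMA^*)$ with probability $O(1/n)$ (by the explicit formula~\eqref{eqG*sigma} and \Lem~\ref{lem_planted_mag}), the total contribution from non-disjoint tuples is $o(1)$ compared to the leading term.

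Second, for the vertex-disjoint contribution I would reuse the computation carried out in the proof of \Lem~\ref{lem_planted_cycle}. Conditional on $\SIGMA^*$ and on the counts $\vm_{1,1},\vm_{-1,-1},\vm_{1,-1}$, the graph $\GG^*(\SIGMA^*)$ is uniform over graphs with prescribed $(++)$, $(--)$ and $(+-)$ edge counts, and \Lem~\ref{lem_planted_mag} pins these counts to their typical values with probability $1-\exp(-\Omega(\sqrt n))$. For a single cycle specified by a sequence $(u_0,\dots,u_{j-1})$ this gives the factor $\prod_i Q_{\SIGMA^*(u_i),\SIGMA^*(u_{i+1\!\!\mod j})}/(2n)$ appearing in~\eqref{eqlem_planted_cycle1}. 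For a collection of vertex-disjoint cycles I would show that the joint probability factorizes into such single-cycle factors up to a $1+o(1)$ correction: conditioning on the presence of one further cycle modifies the remaining edge counts by a bounded amount, which shifts the probability of any other specified edge by $O(1/n)$, and there are only $O(1)$ such edges in the remaining cycles, giving a $1+O(1/n)$ error.

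Finally, summing over $\SIGMA^*$ (which is essentially balanced by \Lem~\ref{lem_planted_mag}) and over the ordered sequences $(u_0,\dots,u_{j-1})$ reproduces, cycle by cycle, exactly the trace computation $\tr(Q^j)/(2j)=d^j(1+\tanh^j\beta)/(2j)$ from the proof of \Lem~\ref{lem_planted_cycle}. Raising to the power $k_j$ for each $j$ and multiplying across $j$ yields the claimed product. The main obstacle is the factorization step in the second paragraph, that is, proving asymptotic independence under the biased measure~\eqref{eqG*sigma}; everything else is a bookkeeping exercise built on top of \Lem~\ref{lem_planted_mag} and the eigenvalue computation already performed for \Lem~\ref{lem_planted_cycle}.
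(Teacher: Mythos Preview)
Your proposal is correct and follows essentially the same approach as the paper: interpret the product as joint falling factorials counting ordered tuples of distinct cycles, discard the negligible contribution from tuples with overlapping cycles (excess of edges over vertices, each edge costing $O(1/n)$), and for vertex-disjoint tuples extend the single-cycle trace computation of \Lem~\ref{lem_planted_cycle} by noting that conditioning on a bounded number of extra edges perturbs the edge-type counts from \Lem~\ref{lem_planted_mag} by $O(1)$ and hence the remaining edge probabilities by $1+O(1/n)$. The paper's proof is a brief sketch of exactly this argument, so your write-up is in fact more detailed than what the paper provides.
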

\begin{proof}
	The proof is based on the computation of the joint factorial moments of $\vC_j(\GG^*(\SIGMA^*))$.
	Specifically, towards the proof of \Lem~\ref{lem_planted_cycle} we computed the first moment of $\vC_j(\GG^*(\SIGMA^*))$ by calculating the probability that a specific sequence of vertices forms a cycle.
	In order to calculate the joint factorial moments of $(\vC_j(\GG^*(\SIGMA^*)))_{3\leq j\leq\ell}$, we generalise this argument by computing the probability that several sequences of vertices form cycles of given lengths simultaneously.
	Since the probability that $\GG^*(\SIGMA^*)$ contains a pair of bounded-length overlapping cycles is of order $O(1/n)$, we just need to take into account pairwise disjoint vertex sequences, in which case the argument is a routine extension of the proof of \Lem~\ref{lem_planted_cycle}.
\end{proof}

\begin{proof}[Proof of \Prop~\ref{prop_ssc}]
	Let $L\geq3$, let $c_3,\ldots,c_L\geq0$ be integers and let
	\begin{align*}
		Y(G)=\vecone\cbc{\forall 3\leq\ell\leq L:\vC_\ell(G)=c_\ell}.
	\end{align*}
	Then \Cor~\ref{cor_planted_cycle} implies that
	\begin{align*}
    \ex\brk{Y(\GG^*(\SIGMA^*))}&\sim\prod_{\ell=3}^L\pr\brk{\Po\bc{(1+\chi_\ell)\vartheta_\ell}=c_\ell}.
	\end{align*}
	Hence, \Lem~\ref{lem_nishi} shows that
	\begin{align}\label{eqprop_ssc1}
    \ex\brk{\Zbal(\beta)Y(\GG)}&\sim\ex[\Zbal(\beta)]\prod_{\ell=3}^L\pr\brk{\Po\bc{(1+\chi_\ell)\vartheta_\ell}=c_\ell}.
	\end{align}
	Finally, as is well known the number of cycles of lengths $3\leq \ell\leq L$ in $\GG$ are asymptotically jointly Poisson with means $d^\ell/(2\ell)$.
	Therefore, \eqref{eqprop_ssc1} implies that
	\begin{align*}
    \frac{\ex\brk{\Zbal(\beta)Y(\GG)\mid\forall 3\leq\ell\leq L:\vC_\ell(\GG)=c_i}}{\ex[\Zbal(\beta)]}&\sim\prod_{\ell=3}^L\frac{\pr\brk{\Po\bc{(1+\chi_\ell)\vartheta_\ell}=c_\ell}}{\pr\brk{\Po(\vartheta_\ell)=c_\ell}}=\prod_{\ell=3}^L(1+\chi_\ell)^{c_\ell}\exp(-\vartheta_\ell\chi_\ell),
	\end{align*}
	as desired.
\end{proof}

\subsection{Proof of \Prop~\ref{prop_EZ2}}\label{sec_prop_EZ2}

As before we let $\omega=\omega(n)=\sqrt n\log n$ and $\fB=\fB_n=\cbc{\alpha\in\ZZ:|\alpha-n/2|\leq\omega}$.
Moreover, for $0\leq\alpha_1,\alpha_2\leq n$ we let $\fR(\alpha_1,\alpha_2)=\cbc{\rho\in\{0,\ldots,n\}:\rho\leq\min\{\alpha_1,\alpha_2\}}$.
Furthermore, let
\begin{align*}
	\cS&=\{\{\sigma,\tau\}:\sigma,\tau\in\PM^2\},&\cS_1&=\cbc{s\in\cS:|s|=1},&\cS_{-1}&=\cbc{\{(1,1),(-1,-1)\},\{(1,-1),(-1,1)\}},&\cS_0&=\cS\setminus(\cS_1\cup\cS_{-1}).
\end{align*}
and let
\begin{align*}
	\Psi=\cbc{(\psi_s)_{s\in\cS}\in\{0,\ldots,m\}^\cS:\sum_{s\in\cS}\psi_s=m},&
\end{align*}
Additionally, for $\psi=(\psi_s)_{s\in \cS}\in\Psi$ let
\begin{align*}
	\cE(\psi)&=\sum_{s\in\cS_1}\psi_s-\sum_{s\in\cS_{-1}}\psi_s.
\end{align*}
Finally, given $\alpha_1,\alpha_2,\rho$ let
\begin{align*}
	\rho_{++}=\rho_{\{(1,1)\}}&=\rho,&\rho_{+-}=\rho_{\{(1,-1)\}}&=\alpha_1-\rho,&\rho_{-+}=\rho_{\{(-1,1)\}}&=\alpha_2-\rho,&\rho_{--}=\rho_{\{(-1,-1)\}}&=n-\alpha_1-\alpha_2+\rho.
\end{align*}

\begin{lemma}\label{lem_EZ2a}
	We have
	\begin{align*}
		\ex[\Zbal(\beta,0)^2]&=
		\sum_{\substack{\alpha_1, \alpha_2 \in \fB\\\rho\in\fR(\alpha_1,\alpha_2)\\\psi\in\Psi}}  \exp\bc{2\beta\cE(\psi)} {n\choose \alpha_1}  {\alpha_1 \choose \rho}  {n - \alpha_1 \choose \alpha_2 - \rho} \prod_{s\in \cS_1} { {\rho_{s} \choose 2} \choose \psi_{s} }  \prod_{\{s,t\}\in\cS_0\cup\cS_{-1}} {\rho_{s}  \rho_{t} \choose \psi_{\cbc{s,t}}}    {{n \choose 2} \choose m}^{-1}.
	\end{align*}
\end{lemma}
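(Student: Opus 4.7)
\textbf{Proof plan for Lemma~\ref{lem_EZ2a}.} The strategy is a direct second moment calculation that partitions pairs of balanced configurations by their joint type profile and then exploits the fact that $\GG(n,m)$ is uniform over $m$-edge graphs on $V_n$. Expanding the square and interchanging sum and expectation gives
\begin{align*}
\ex[\Zbal(\beta,0)^2]=\sum_{\sigma,\tau\in\fB}\ex\brk{\exp\bc{\beta\sum_{e\in E(\GG)}(\sigma(e)+\tau(e))}},
\end{align*}
so the entire task reduces to (i) grouping ordered pairs $(\sigma,\tau)$ with the same joint profile, and (ii) computing, for each profile, the joint distribution over $\GG$ of the edge-type counts.

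For step (i), assign each vertex $v\in V_n$ the joint spin $(\sigma(v),\tau(v))\in\PM^2$, and let $\alpha_1=|\sigma^{-1}(1)|$, $\alpha_2=|\tau^{-1}(1)|$, $\rho=|\sigma^{-1}(1)\cap\tau^{-1}(1)|$. The four joint-spin class sizes are precisely $\rho_{++},\rho_{+-},\rho_{-+},\rho_{--}$ as defined in the statement. Since $\sigma,\tau\in\fB$ forces $\alpha_1,\alpha_2\in\fB$ and $\rho\in\fR(\alpha_1,\alpha_2)$, the number of ordered pairs $(\sigma,\tau)$ with given $(\alpha_1,\alpha_2,\rho)$ equals $\binom{n}{\alpha_1}\binom{\alpha_1}{\rho}\binom{n-\alpha_1}{\alpha_2-\rho}$ by first choosing $\sigma^{-1}(1)$, then its intersection with $\tau^{-1}(1)$, and finally the rest of $\tau^{-1}(1)$.

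For step (ii), classify each potential edge $e=vw$ by the unordered pair $\{(\sigma(v),\tau(v)),(\sigma(w),\tau(w))\}\in\cS$. A one-line check gives $\sigma(e)+\tau(e)=2$ for $e$ of type $s\in\cS_1$, $\sigma(e)+\tau(e)=-2$ for $s\in\cS_{-1}$, and $\sigma(e)+\tau(e)=0$ otherwise, so the exponential equals $\exp(2\beta\cE(\psi))$ once we let $\psi_s$ denote the number of $\GG$-edges of type $s$. The number of admissible potential edges of type $s\in\cS_1$ is $\binom{\rho_s}{2}$, while for $\{s,t\}\in\cS_0\cup\cS_{-1}$ it is $\rho_s\rho_t$. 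Since $E(\GG)$ is a uniformly random $m$-subset of $\binom{V_n}{2}$, the joint law of $(\psi_s)_{s\in\cS}$ under the constraint $\sum_s\psi_s=m$ is multivariate hypergeometric, giving probability
\begin{align*}
\binom{\binom{n}{2}}{m}^{-1}\prod_{s\in\cS_1}\binom{\binom{\rho_s}{2}}{\psi_s}\prod_{\{s,t\}\in\cS_0\cup\cS_{-1}}\binom{\rho_s\rho_t}{\psi_{\{s,t\}}}.
\end{align*}

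Combining the three factors -- the enumeration of $(\sigma,\tau)$-pairs with given $(\alpha_1,\alpha_2,\rho)$, the energy contribution $\exp(2\beta\cE(\psi))$, and the hypergeometric edge-count probability -- and summing over $(\alpha_1,\alpha_2,\rho,\psi)$ in the allowed range produces exactly the claimed identity. No step of this argument is a genuine obstacle: it is a careful but routine bookkeeping exercise, and the only potential pitfall is to correctly handle the distinction between singleton and two-element members of $\cS$ (so that $\binom{\rho_s}{2}$ appears for $\cS_1$ and $\rho_s\rho_t$ for $\cS_0\cup\cS_{-1}$).
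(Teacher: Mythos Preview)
Your proposal is correct and follows essentially the same route as the paper: expand the square by linearity, group ordered pairs $(\sigma,\tau)$ according to the joint profile $(\alpha_1,\alpha_2,\rho)$ via permutation invariance, and then use the multivariate hypergeometric law of the edge-type counts in $\GG(n,m)$ to obtain the claimed product of binomials. The paper phrases step~(i) by fixing canonical representative configurations $\sigma^{(\alpha_1)},\tau^{(\alpha_1,\alpha_2,\rho)}$ rather than speaking of joint-spin classes directly, but the bookkeeping is identical.
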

\begin{proof}
	By the linearity of expectation,
	\begin{align}\label{eqlem_EZ2a1}
		\ex[\Zbal(\beta,0)^2]&=\sum_{\sigma,\tau}
		\vecone\cbc{\abs{\sum_{i=1}^n\sigma(v_i)},\abs{\sum_{i=1}^n\tau(v_i)}\leq2\omega}\ex\exp\bc{\beta\sum_{vw\in E(\GG)}\sigma\bc v\sigma\bc w+\tau\bc v\tau\bc w}.
	\end{align}
	As in the proof of \Lem~\ref{lem_EZa}, we use the invariance of $\GG$ under vertex permutations to rearrange the above sum.
	Specifically, given $\alpha_1,\alpha_2\in\fB$ and $\rho\in\fR(\alpha_1,\alpha_2)$ let $\sigma^{(\alpha_1)}$, $\tau^{(\alpha_1,\alpha_2,\rho)}$ be the configurations
	\begin{align*}
		\sigma^{(\alpha_1)}(v_1)=\cdots\sigma^{(\alpha_1)}(v_{\alpha_1})&=1,&
		\sigma^{(\alpha_1)}(v_{\alpha_1+1})=\cdots\sigma^{(\alpha_1)}(v_n)&=-1,\\
		\tau^{(\alpha_1,\alpha_2,\rho)}(v_1)=\cdots=\tau^{(\alpha_1,\alpha_2,\rho)}(v_\rho)&=1,&\tau^{(\alpha_1,\alpha_2,\rho)}(v_{\rho+1})=\cdots=\tau^{(\alpha_1,\alpha_2,\rho)}(v_{\alpha_1})&=-1,\\
		\tau^{(\alpha_1,\alpha_2,\rho)}(v_{\alpha_1+1})=\cdots=\tau^{(\alpha_1,\alpha_2,\rho)}(v_{\alpha_1+\alpha_2-\rho})&=1,&
		\tau^{(\alpha_1,\alpha_2,\rho)}(v_{\alpha_1+\alpha_2-\rho+1})=\cdots=\tau^{(\alpha_1,\alpha_2,\rho)}(v_n)&=-1.
	\end{align*}
	Thus, $\alpha_1$ and $\alpha_2$ account for the total number of $+1$ entries in $\sigma^{(\alpha_1)}$ and $\tau^{(\alpha_1,\alpha_2,\rho)}$, respectively, while $\rho$ accounts for the number of vertices set to $+1$ under both configurations.
	Then we can rewrite~\eqref{eqlem_EZ2a1} as
	\begin{align}\label{eqlem_EZ2a2}
		\ex[\Zbal(\beta,0)^2]&=\sum_{\substack{\alpha_1,\alpha_2\in\fB\\\rho\in\fR(\alpha_1,\alpha_2)}}
		\binom n{\alpha_1}\binom{\alpha_1}\rho\binom{n-\alpha_1}{\alpha_2-\rho}
		\ex\exp\bc{\beta\sum_{vw\in E(\GG)}\sigma^{(\alpha_1)}(v)\sigma^{(\alpha_1)}\bc w+\tau^{(\alpha_1,\alpha_2,\rho)}\bc v\tau^{(\alpha_1,\alpha_2,\rho)}\bc w}.
	\end{align}

	To evaluate the expectation from~\eqref{eqlem_EZ2a2}, consider $\psi\in\Psi$.
	If for any $s=(s_1,s_2),t=(t_1,t_2)\in\PM^2$ the graph $\GG$ contains precisely $\psi_{\{s,t\}}$ edges $e=vw$ such that
	\begin{align*}
		\sigma^{(\alpha_1)}(v)&=s_1,&\tau^{(\alpha_1,\alpha_2,\rho)}(v)&=s_2,&
		\sigma^{(\alpha_1)}(w)&=t_1,&\tau^{(\alpha_1,\alpha_2,\rho)}(w)&=t_2,
	\end{align*}
	then
	\begin{align}\label{eqlem_EZ2a3}
		\sum_{vw\in E(\GG)}\sigma^{(\alpha_1)}(v)\sigma^{(\alpha_1)}\bc w+\tau^{(\alpha_1,\alpha_2,\rho)}\bc v\tau^{(\alpha_1,\alpha_2,\rho)}\bc w&=2\cE(\psi).
	\end{align}
	Furthermore, the number of possible graphs with said number of edges as prescribed by $\psi$ equals
	\begin{align}\label{eqlem_EZ2a4}
		\prod_{s\in\cS_1}\binom{\binom{\rho_2}2}{\psi_s}\prod_{\{s,t\}\in\cS_0\cup\cS_{-1}}\binom{\rho_s\rho_t}{\psi_{\{s,t\}}}.
	\end{align}
	Since the total number of graphs with $m$ edges equals $\binom{\binom n2}m$, the assertion follows from~\eqref{eqlem_EZ2a2}, \eqref{eqlem_EZ2a3} and~\eqref{eqlem_EZ2a4}.
\end{proof}

\begin{lemma}\label{lem_EZ2b}
	We have
	\begin{align*}
		\ex[\Zbal(\beta,0)^2]\leq(1+o(1))&
		\sum_{\alpha_1, \alpha_2 \in \fB,\rho\in\fR(\alpha_1,\alpha_2),\psi\in\Psi}  \exp\bc{2\beta\cE(\psi)}
		\binom{n}{\rho_{++},\ldots,\rho_{--}}
		\binom{m}{\psi}
		\prod_{s\in \cS_1} \bcfr{{\rho_{s} \choose 2}}{\binom n2}^{\psi_{s}}\prod_{\{s,t\}\in\cS_0\cup\cS_{-1}} \bcfr{\rho_{s}  \rho_{t}}{\binom n2}^{\psi_{\{s,t\}}}\\
																											 &\qquad\qquad\qquad\qquad\qquad\qquad\cdot\exp\bc{\frac{m^2}{n^2}-\sum_{s\in\cS_1}\frac{\psi_{\{s\}}(\psi_{\{s\}}-1)}{\rho_s(\rho_s-1)}-\frac12\sum_{\{s,t\}\in\cS_0\cup\cS_{-1}}\frac{\psi_{\{s,t\}}(\psi_{\{s,t\}}-1)}{\rho_s\rho_t}}.
	\end{align*}
\end{lemma}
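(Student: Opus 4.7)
My plan is to derive the upper bound from the exact formula in Lemma~\ref{lem_EZ2a} by applying uniform Stirling-type estimates to every binomial coefficient involved.

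First, a purely combinatorial identity: the triple product $\binom{n}{\alpha_1}\binom{\alpha_1}{\rho}\binom{n-\alpha_1}{\alpha_2-\rho}$ counts the ways to split $V_n$ into four classes of sizes $\rho_{++}=\rho$, $\rho_{+-}=\alpha_1-\rho$, $\rho_{-+}=\alpha_2-\rho$, $\rho_{--}=n-\alpha_1-\alpha_2+\rho$, so it equals $\binom{n}{\rho_{++},\rho_{+-},\rho_{-+},\rho_{--}}$ on the nose.

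Next, for the edge-counting binomials I would invoke the elementary inequality
\[
\binom{a}{b}\leq\frac{a^b}{b!}\exp\left(-\frac{b(b-1)}{2a}\right),\qquad a\geq b\geq 1,
\]
obtained by writing $\binom{a}{b}=\frac{a^b}{b!}\prod_{i=1}^{b-1}(1-i/a)$ and using $1-x\leq e^{-x}$. Applied to $\binom{\binom{\rho_s}{2}}{\psi_s}$ for $s\in\cS_1$ and to $\binom{\rho_s\rho_t}{\psi_{\{s,t\}}}$ for $\{s,t\}\in\cS_0\cup\cS_{-1}$, and after factoring the denominator $\binom{\binom{n}{2}}{m}^{-1}$ through, this yields the powers $\bigl(\binom{\rho_s}{2}/\binom{n}{2}\bigr)^{\psi_s}$ and $(\rho_s\rho_t/\binom{n}{2})^{\psi_{\{s,t\}}}$ together with the negative exponential terms $-\psi_s(\psi_s-1)/(\rho_s(\rho_s-1))$ (using $2\binom{\rho_s}{2}=\rho_s(\rho_s-1)$) and $-\psi_{\{s,t\}}(\psi_{\{s,t\}}-1)/(2\rho_s\rho_t)$ that appear in the claimed exponential correction.

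For the remaining factor $\binom{\binom{n}{2}}{m}^{-1}$ I would use a matching lower bound on $\binom{\binom{n}{2}}{m}$. Since $m=O(n)$ and $\binom{n}{2}=\Theta(n^2)$, the ratio $i/\binom{n}{2}$ is uniformly $O(1/n)$ for $i<m$, so a Taylor expansion of $\log(1-i/\binom{n}{2})$ gives $\binom{\binom{n}{2}}{m}=(1+o(1))\binom{n}{2}^m\exp(-m^2/n^2)/m!$. This contributes the $+m^2/n^2$ summand in the exponential and the uniform $(1+o(1))$ prefactor. Recognising that $m!/\bigl(\prod_{s\in\cS_1}\psi_s!\prod_{\{s,t\}\in\cS_0\cup\cS_{-1}}\psi_{\{s,t\}}!\bigr)=\binom{m}{\psi}$ (since $\sum_s\psi_s=m$) then assembles the pieces into the expression stated in the lemma. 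The main technical concern is uniformity of these bounds over the entire summation range: the elementary inequality above remains valid whenever $a\geq b$, including the regime of small $\rho_s$, because nonzero contributions automatically require $\psi_s\leq\binom{\rho_s}{2}$ and $\psi_{\{s,t\}}\leq\rho_s\rho_t$, while degenerate cases with $\rho_s=0$ force the corresponding $\psi$ to vanish so the binomial is simply $\binom{0}{0}=1$ and contributes no correction. The overall $(1+o(1))$ thus comes solely from the denominator and, being deterministic in $n$, factors cleanly outside the sum.
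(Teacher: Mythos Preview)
Your proposal is correct and follows essentially the same approach as the paper: the multinomial identity for the vertex factors, the elementary bound $\binom{a}{b}\le\frac{a^b}{b!}\exp(-b(b-1)/(2a))$ for the edge-counting binomials, and the asymptotic $\binom{\binom n2}{m}=(1+o(1))\binom n2^m\exp(-m^2/n^2)/m!$ for the denominator. If anything, your treatment of the uniformity and degenerate cases is spelled out more carefully than in the paper's own proof.
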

\begin{proof}
	We rewrite several of the binomial coefficients from the expression from \Lem~\ref{lem_EZ2a} in terms of falling factorials.
	First,
	\begin{align}\nonumber
		\binom{\binom n2}m&=\frac1{m!}\binom n2_m=\frac1{m!}\binom{n}2^m\prod_{j=1}^{m-1}\frac{\binom n2-j}{\binom n2}=\frac1{m!}\binom  n2^m\exp\bc{-\sum_{j=1}^{m-1}\frac j{\binom n2}+o(1)}\\
						  &=\frac1{m!}\binom{n}2^m\exp\bc{-\binom m2\binom n2^{-1}+o(1)}=
						  \frac1{m!}\binom{n}2^m\exp\bc{-\frac{m^2}{n^2}+o(1)}.\label{eqlem_EZ2b1}
	\end{align}
	Similarly,
	\begin{align}\label{eqlem_EZ2b2}
		\binom{\binom{\rho_s}2}{\psi_s}&\leq\frac{1}{\psi_s!}\binom{\rho_s}2^{\psi_s}\exp\bc{-\frac{\psi_s(\psi_s-1)}{\rho_s(\rho_s-1)}+o(1)},\\
		\binom{\rho_s\rho_t}{\psi_{\{s,t\}}}&\leq\frac1{\psi{\{s,t\}}!}(\rho_s\rho_t)^{\psi_{\{s,t\}}}\exp\bc{-\frac{\psi_{\{s,t\}}(\psi_{\{s,t\}}-1)}{2\rho_s\rho_t}}.\label{eqlem_EZ2b3}
	\end{align}
	Combining~\eqref{eqlem_EZ2b1}--\eqref{eqlem_EZ2b3} with \Lem~\ref{lem_EZ2a} yields the desired bound.
\end{proof}

\begin{lemma}\label{lem_EZ2c}
	Given $\alpha_1,\alpha_2,\rho$ let
	\begin{align*}
		\psi^*_s&=\frac m{\zeta^*}\bcfr{\rho_s\exp(\beta)}n^2,\qquad
		\psi^*_{\{s,t\}}=\frac{2m\rho_s\rho_t\exp(-2\beta\vecone\{\{s,t\}\in\cS_{-1}\})}{\zeta^* n^2},\qquad\mbox{where}\\
		\zeta^*&=\sum_{s\in\cS_1}\bcfr{\rho_s\exp(\beta)}n^2+\sum_{s\in\cS_0\cup\cS_{-1}}\frac{2\rho_s\rho_t\exp(-2\beta\vecone\{\{s,t\}\in\cS_{-1}\})}{n^2}.
	\end{align*}
	Moreover, let
	\begin{align*}
		\Psi^*=\Psi^*(\alpha_1,\alpha_2,\rho)=\cbc{\psi\in\Psi:\|\psi-\psi^*\|\leq\sqrt n\log n}.
	\end{align*}
	Then
	\begin{align*}
		\ex[\Zbal(\beta,0)^2]\leq(1+o(1))&
		\sum_{\alpha_1, \alpha_2 \in \fB,\rho\in\fR(\alpha_1,\alpha_2),\psi\in\Psi^*}  \exp\bc{2\beta\cE(\psi)}
		\binom{n}{\rho_{++},\ldots,\rho_{--}}
		\binom{m}{\psi} \prod_{s\in \cS_1} \bcfr{{\rho_{s} \choose 2}}{\binom n2}^{\psi_{s}}\prod_{\{s,t\}\in\cS_0\cup\cS_{-1}} \bcfr{\rho_{s}  \rho_{t}}{\binom n2}^{\psi_{\{s,t\}}}\\
																											 &\qquad\qquad\qquad\qquad\qquad\qquad\cdot\exp\bc{\frac{m^2}{n^2}-\sum_{s\in\cS_1}\frac{\psi_{\{s\}}(\psi_{\{s\}}-1)}{\rho_s(\rho_s-1)}-\frac12\sum_{\{s,t\}\in\cS_{0}\cup\cS_{-1}}\frac{\psi_{\{s,t\}}(\psi_{\{s,t\}}-1)}{\rho_s\rho_t}}.
	\end{align*}
\end{lemma}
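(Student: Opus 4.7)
The plan is to recognise the inner sum over $\psi$ in \Lem~\ref{lem_EZ2b} as (a constant multiple of) a multinomial probability mass function and then apply standard Chernoff-type concentration. For fixed $(\alpha_1,\alpha_2,\rho)$ I introduce the weights
\begin{align*}
q_{\{s\}}&=\bcfr{\binom{\rho_{s}}2}{\binom n2}\exp(2\beta)\qquad(s\in\PM^2),\\
q_{\{s,t\}}&=\bcfr{\rho_s\rho_t}{\binom n2}\exp\bc{-2\beta\vecone\{\{s,t\}\in\cS_{-1}\}}\qquad(\{s,t\}\in\cS_0\cup\cS_{-1}),
\end{align*}
and set $Z=\sum_{u\in\cS}q_u$. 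A direct rearrangement shows that the $\psi$-dependent block of the summand in \Lem~\ref{lem_EZ2b}, excluding the final exponential correction, equals
\begin{equation*}
\exp(2\beta\cE(\psi))\binom m\psi\prod_{s\in\cS_1}\bcfr{\binom{\rho_s}2}{\binom n2}^{\psi_s}\prod_{\{s,t\}\in\cS_0\cup\cS_{-1}}\bcfr{\rho_s\rho_t}{\binom n2}^{\psi_{\{s,t\}}}=Z^m\cdot\Mult(m,q/Z)[\psi].
\end{equation*}

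Next I verify that the specific values $\psi^*$ and $\zeta^*$ given in the lemma coincide with the multinomial mean $mq/Z$ up to lower-order corrections. Using $\binom{n}{2}\sim n^2/2$ and $\rho_s(\rho_s-1)/n(n-1)=\rho_s^2/n^2+O(1/n)$, one checks that $Z=(1+O(1/n))\zeta^*/2$ and hence $\|mq/Z-\psi^*\|_\infty=O(m/n)\cdot O(1)=O(1)$, uniformly over $\alpha_1,\alpha_2\in\fB$ and $\rho\in\fR(\alpha_1,\alpha_2)$. Because all nondegenerate $q_u/Z$ are bounded away from $0$ and $1$ uniformly in these parameters, the multidimensional Chernoff bound applied to $\vec\Psi\disteq\Mult(m,q/Z)$ gives
\begin{equation*}
\pr\brk{\|\vec\Psi-mq/Z\|>\tfrac12\sqrt n\log n}\leq\exp(-\Omega(\log^2 n)),
\end{equation*}
and hence $\sum_{\psi\notin\Psi^*}\Mult(m,q/Z)[\psi]\leq\exp(-\Omega(\log^2 n))\sum_{\psi\in\Psi^*}\Mult(m,q/Z)[\psi]$. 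Moreover, since $m^2/n^2=O(1)$ and the terms $\psi_u(\psi_u-1)/\rho_s\rho_t$ are nonnegative whenever defined, the exponential correction $\exp(m^2/n^2-\sum\psi_u^2/\rho_s\rho_t+\cdots)$ appearing in \Lem~\ref{lem_EZ2b} is bounded above by $\exp(O(1))$ uniformly over all admissible $\psi$; it therefore cannot disrupt the concentration. Combining the two bounds shows that the contribution of $\psi\notin\Psi^*$ to the outer sum is a factor $\exp(-\Omega(\log^2 n))$ smaller than that of $\psi\in\Psi^*$, which yields the claimed inequality.

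The main obstacle will be the degenerate triples $(\alpha_1,\alpha_2,\rho)$ where one of the occupancy counts $\rho_u$ vanishes — for instance $\rho\in\{0,\alpha_1,\alpha_2,\alpha_1+\alpha_2-n\}$ or near these values. There the corresponding $q_u$ is zero, the support of $\Mult(m,q/Z)$ collapses to a lower-dimensional simplex, and one must check that the concentration argument restricts cleanly to the nondegenerate coordinates (on which $\psi^*_u=0$ as well). A secondary but routine point is verifying that the $O(1)$ slack between $\psi^*$ and the exact multinomial mean $mq/Z$ is comfortably absorbed by the $\sqrt n\log n$ radius in the definition of $\Psi^*$; this is immediate from the calculation above. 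Everything else is bookkeeping.
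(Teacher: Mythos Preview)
Your approach is correct and essentially the same as the paper's. The paper applies Stirling's formula to write the summand as $n^{O(1)}\exp(-m\cD(\psi)+m\log\zeta^*)$ where $\cD(\psi)$ is the Kullback--Leibler divergence of $(q_u)$ from $(p_u)=(\psi^*_u/m)$, and then invokes strict convexity of $\cD$ to conclude that only $\psi$ near the minimiser $\psi^*$ contribute; your multinomial-concentration phrasing is the probabilistic restatement of exactly this large-deviations fact, and your treatment of the bounded exponential correction matches the paper's absorption of it into the polynomial prefactor.
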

\begin{proof}
	For given $\alpha_1,\alpha_2\in\fB$, $\rho\in\fR(\alpha_1,\alpha_2)$ and $\psi\in\Psi$ let
	\begin{align*}
		Y(\psi)&=\exp\bc{2\beta\cE(\psi)}\binom{m}{\psi} \prod_{s\in \cS_1} \bcfr{{\rho_{s} \choose 2}}{\binom n2}^{\psi_{s}}\prod_{\{s,t\}\in\cS_0\cup\cS_{-1}} \bcfr{\rho_{s}  \rho_{t}}{\binom n2}^{\psi_{\{s,t\}}}\\
																											 &\qquad\qquad\qquad\qquad\qquad\qquad\cdot\exp\bc{\frac{m^2}{n^2}-\sum_{s\in\cS_1}\frac{\psi_{\{s\}}(\psi_{\{s\}}-1)}{\rho_s(\rho_s-1)}-\frac12\sum_{\{s,t\}\in\cS_{0}\cup\cS_{-1}}\frac{\psi_{\{s,t\}}(\psi_{\{s,t\}}-1)}{\rho_s\rho_t}}.
	\end{align*}
	be the contribution of $\psi$ to the second moment formula from \Lem~\ref{lem_EZ2b}.
	To estimate $Y(\psi)$ let
	\begin{align*}
		q_s&=\psi_s/m,\quad q_{s,t}=\psi_{\{s,t\}}/m,\quad p_s=\psi_s^*/m,\quad p_{\{s,t\}} =\psi_{\{s,t\}}^*/m.
	\end{align*}
	Furthermore, set
	\begin{align*}
		\cD(\psi)&=\sum_{s\in\cS_1}q_s\log\frac{q_s}{p_s}+\sum_{s,t\in\cS_0\cup\cS_{-1}}q_{\{s,t\}}\log\frac{q_{\{s,t\}}}{p_{\{s,t\}}}.
	\end{align*}
	Applying Stirling's formula, we find
	\begin{align*}
		Y(\psi)&=n^{O(1)}\exp\bc{-m\cD(\psi)+m\log\zeta^*}.
	\end{align*}
	Furthermore, the expression $\cD(\psi)$ is just the Kullback-Leibler divergence of the probability distributions $(q_s)_{s\in\cS}$ and $(p_s)_{s\in\cS}$.
	Since the Kullback-Leibler divergence is strictly convex and takes its minimum of zero at the point $q_s=p_s$ for all $s\in\cS$, we obtain
	\begin{align*}
		\sum_{\psi\in\Psi}Y(\psi)&\sim\sum_{\psi\in\Psi^*(\alpha_1,\alpha_2,\rho)}Y(\psi),
	\end{align*}
	which implies the assertion.
\end{proof}

\begin{lemma}\label{lem_EZ2d}
	Let $\fR^*=\{\rho\in\{0,\ldots,n\}:|\rho-n/4|\leq n^{3/4}\log^4n\}$.
	Then
	\begin{align*}
		\ex[\Zbal(\beta,0)^2]\leq(1+o(1))&
		\sum_{\alpha_1, \alpha_2 \in \fB,\rho\in\fR^*,\psi\in\Psi^*}  \exp\bc{2\beta\cE(\psi)}
		\binom{n}{\rho_{++},\ldots,\rho_{--}}
		\binom{m}{\psi} \prod_{s\in \cS_1} \bcfr{{\rho_{s} \choose 2}}{\binom n2}^{\psi_{s}}\prod_{\{s,t\}\in\cS_0\cup\cS_{-1}} \bcfr{\rho_{s}  \rho_{t}}{\binom n2}^{\psi_{\{s,t\}}}\\
																											 &\qquad\qquad\qquad\qquad\qquad\qquad\cdot\exp\bc{\frac{m^2}{n^2}-\sum_{s\in\cS_1}\frac{\psi_{\{s\}}(\psi_{\{s\}}-1)}{\rho_s(\rho_s-1)}-\frac12\sum_{\{s,t\}\in\cS_{0}\cup\cS_{-1}}\frac{\psi_{\{s,t\}}(\psi_{\{s,t\}}-1)}{\rho_s\rho_t}}.
	\end{align*}
\end{lemma}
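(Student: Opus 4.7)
The plan is to apply Laplace's method in the overlap variable $\rho$, showing that for every $\alpha_1,\alpha_2\in\fB$ the inner sum over $\rho\in\fR(\alpha_1,\alpha_2)$ and $\psi\in\Psi^*$ concentrates, up to a $(1+o(1))$ factor, on overlaps in the window $\fR^*$. Because $\fR^*$ has width $n^{3/4}\log^4 n$, much larger than the natural Gaussian width $\sqrt n\log n$, it will be enough to establish a strictly quadratic upper bound for the summand viewed as a function of $\rho$ at its maximiser and then sum the tails trivially.

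First I would dispose of the $\psi$-sum. By the multinomial theorem the unrestricted $\psi$-sum equals $(\zeta^*)^m$, where $\zeta^*=\zeta^*(\alpha_1,\alpha_2,\rho)$ is the normalising constant from Lemma~\ref{lem_EZ2c}; moreover $\psi^*$ is the Gaussian-type mode of this sum with spread $O(\sqrt m)$, well inside $\Psi^*$, so $\sum_{\psi\in\Psi^*}\exp(2\beta\cE(\psi))\binom{m}{\psi}\prod_{s\in\cS_1}\bcfr{\binom{\rho_s}{2}}{\binom n2}^{\psi_s}\prod_{\{s,t\}\in\cS_0\cup\cS_{-1}}\bcfr{\rho_s\rho_t}{\binom n2}^{\psi_{\{s,t\}}}=(1+o(1))(\zeta^*)^m$. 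Combined with Stirling's formula applied to the multinomial $\binom{n}{\rho_{++},\rho_{+-},\rho_{-+},\rho_{--}}$, the contribution of each admissible $(\alpha_1,\alpha_2,\rho)$ takes the form $\exp(n\,\Phi(\alpha_1/n,\alpha_2/n,\rho/n)+O(\log n))$ for an explicit continuous function $\Phi:[0,1]^3\to\RR$ built from the quaternary Shannon entropy and the log-partition-like term $\tfrac{m}{n}\log\zeta^*$.

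The heart of the argument is to prove that for $(z_1,z_2)$ in a neighbourhood of $(1/2,1/2)$, the map $r\mapsto\Phi(z_1,z_2,r)$ is strictly concave with unique maximum at $r^\star=z_1 z_2$. The entropy part is handled directly: writing $H=H(r,z_1-r,z_2-r,1-z_1-z_2+r)$, one computes $\partial_r H=\log\bc{(z_1-r)(z_2-r)/(r(1-z_1-z_2+r))}$, which vanishes precisely at $r=z_1 z_2$, with $\partial_r^2 H<0$ throughout the admissible range. For the contribution from $\log\zeta^*$, a direct derivative computation (analogous to the saddle-point analysis behind Lemmas~\ref{lem_EZc} and~\ref{lem_EZd}) shows that $\partial_r\log\zeta^*$ also vanishes at $r^\star$ and that $\partial_r^2\log\zeta^*$ is uniformly bounded for $(z_1,z_2)$ near $(1/2,1/2)$. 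The high-temperature condition $d\tanh\beta<1$ then ensures that the contribution $\tfrac{m}{n}\partial_r^2\log\zeta^*$ does not overwhelm the strictly negative entropy Hessian, so that $\partial_r^2\Phi(z_1,z_2,r^\star)$ is bounded away from zero uniformly.

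Given uniform strict concavity there is $c=c(d,\beta)>0$ with $\Phi(z_1,z_2,r)\leq\Phi(z_1,z_2,z_1 z_2)-c(r-z_1 z_2)^2$ on the relevant domain. For $\alpha_h\in\fB$ we have $z_1 z_2=1/4+O(n^{-1/2}\log n)$, so for $\rho\notin\fR^*$ the triangle inequality gives $|r-z_1 z_2|\geq n^{-1/4}\log^4 n/2$ once $n$ is large, and the summand is a factor $\exp(-\Omega(\sqrt n\log^8 n))$ smaller than its value at $\rho\approx z_1 z_2 n\in\fR^*$. Since the number of $(\alpha_1,\alpha_2,\rho,\psi)$-terms is polynomial in $n$, the tail contribution from $\rho\notin\fR^*$ is negligible, yielding the $(1+o(1))$ factor claimed in Lemma~\ref{lem_EZ2d}. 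The main obstacle is the uniform strict concavity of $r\mapsto\Phi(z_1,z_2,r)$ at $r^\star$: verifying that the Hessian contribution from $\log\zeta^*$ is dominated by the entropy Hessian requires a delicate computation in which the assumption $d\tanh\beta<1$ enters critically.
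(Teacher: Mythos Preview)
Your overall strategy—a Laplace/concentration argument in the overlap variable $\rho$ after collapsing the $\psi$-sum—is exactly what the paper does. However, one step is false as stated: the claim that $\partial_r\log\zeta^*$ vanishes at $r^\star=z_1z_2$. A direct computation from the definition of $\zeta^*$ in Lemma~\ref{lem_EZ2c} gives
\[
\partial_r\zeta^*\;=\;4(\cosh(2\beta)-1)\,(4r-2z_1-2z_2+1),
\]
which vanishes at $r=(z_1+z_2)/2-1/4$, not at $z_1z_2$. Thus the entropy part and the $\log\zeta^*$ part of $\Phi$ are not simultaneously stationary at $z_1z_2$, and the maximiser of $r\mapsto\Phi(z_1,z_2,r)$ is not $z_1z_2$ for generic $(z_1,z_2)$.

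This does not actually break your argument: for $z_1,z_2=1/2+O(n^{-1/2}\log n)$ both candidate critical points, and hence the true maximiser, lie within $O(n^{-1/2}\log n)$ of $1/4$, well inside $\fR^*$, and the uniform strict concavity (indeed under the weaker condition $d\tanh^2\beta<1$, giving $\partial_r^2\Phi\approx-16(1-d\tanh^2\beta)$) still delivers the tail bound. The paper sidesteps the issue by exploiting $\alpha_1,\alpha_2\in\fB$ from the outset: it sets $\gamma_{++}=\gamma_{--}=\rho/n$, $\gamma_{+-}=\gamma_{-+}=1/2-\rho/n$, absorbs the resulting $O(\sqrt n\log n)$ discrepancy into an additive $O(\sqrt n\log^2 n)$ in the exponent, and then analyses a genuinely one-variable function $\varphi(\gamma_{++})$ whose unique maximum at $\gamma_{++}=1/4$ and strict concavity follow from a single second-derivative computation. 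This is cleaner than carrying the $(z_1,z_2)$-dependence you retain, and it makes the location of the maximiser exact rather than approximate.
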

\begin{proof}
	We expand the r.h.s.\ of the bound on the second moment from \Lem~\ref{lem_EZ2c} to the leading order.
	To this end, let
	\begin{align*}
		\gamma_{++}&=\gamma_{--}=\rho/n,&\gamma_{+-}&=\gamma_{-+}=\frac12-\gamma_{++}
	\end{align*}
	and define
	\begin{align*}
		\varphi(\gamma_{++})&=-2\gamma_{++}\log\gamma_{++}-2\gamma_{+-}\log\gamma_{+-}+\frac mn\log\bc{2(\gamma_{++}^2+\gamma_{+-}^2)\exp(2\beta)+2(\gamma_{++}^2+\gamma_{+-}^2)\exp(-2\beta)+8\gamma_{++}\gamma_{+-}}\\
							&=-2\gamma_{++}\log\gamma_{++}-2\gamma_{+-}\log\gamma_{+-}+\frac mn\log\brk{\cosh(2\beta)+8\bc{2\gamma_{++}^2-\gamma_{++}}\sinh^2\beta}.
	\end{align*}
	We obtain the following bound on the contribution of the $\rho$-summand for any given $\alpha_1,\alpha_2\in\fB$ and any $\psi\in\Psi^*$:
	\begin{align}\nonumber
		S(\alpha_1,\alpha_2,\rho,\psi)&=\exp\bc{2\beta\cE(\psi)+\frac{m^2}{n^2}-\sum_{s\in\cS_1}\frac{\psi_{\{s\}}(\psi_{\{s\}}-1)}{\rho_s(\rho_s-1)}-\frac12\sum_{\{s,t\}\in\cS_{0}\cup\cS_{-1}}\frac{\psi_{\{s,t\}}(\psi_{\{s,t\}}-1)}{\rho_s\rho_t}}\\
									  &\qquad\qquad\cdot\binom{n}{\rho_{++},\ldots,\rho_{--}} \binom{m}{\psi} \prod_{s\in \cS_1} \bcfr{{\rho_{s} \choose 2}}{\binom n2}^{\psi_{s}}\prod_{\{s,t\}\in\cS_0\cup\cS_{-1}} \bcfr{\rho_{s}  \rho_{t}}{\binom n2}^{\psi_{\{s,t\}}}\nonumber\\
									  &=\binom{n}{\rho_{++},\ldots,\rho_{--}}\exp(m\log\zeta^*+O(\sqrt n\log^2n))=\exp(n\varphi(\gamma_{++})+O(\sqrt n\log^2n)).\label{eqlem_EZ2d1}
	\end{align}

	We are going to show that the function $\varphi$ attains its maximum at $\gamma_{++}\sim1/4$.
	Indeed, the derivatives of $\varphi$ come out as
	\begin{align*}
		\varphi'(\gamma_{++})&=2(\log(1/2-\gamma_{++})-\log\gamma_{++})+\frac mn\cdot\frac{8(4\gamma_{++}-1)\sinh^2\beta}{\cosh(2\beta)+8\bc{2\gamma_{++}^2-\gamma_{++}}\sinh^2\beta},\\
		\varphi''(\gamma_{++})&=-\frac 2{\gamma_{++}(1-2\gamma_{++})}
		+\frac mn\cdot\frac{32\sinh^2\beta}{\cosh(2\beta)+8\bc{2\gamma_{++}^2-\gamma_{++}}\sinh^2\beta}-\frac mn\cdot\bcfr{8(4\gamma_{++}-1)\sinh^2\beta}{\cosh(2\beta)+8\bc{2\gamma_{++}^2-\gamma_{++}}\sinh^2\beta}^2.
	\end{align*}
	Hence, under the assumption $d\tanh^2\beta<1$ the function $\varphi(\gamma_{++})$ is strictly concave and attains its maximum at $\gamma_{++}=1/4$.
	Therefore, the assertion follows from~\eqref{eqlem_EZ2d1}.
\end{proof}

\begin{corollary}\label{cor_EZ2d}
	We have
	\begin{align*}
		\ex&[\Zbal(\beta,0)^2]\leq(1+o(1))2^{-m}n^{-2m}\exp\bc{\frac mn+{m^2 \over n^2}\bc{1-\frac{4\cosh^2(2\beta)}{\cosh^2\beta}}}\Sigma^\tensor,\qquad\mbox{where}\\
		\Sigma^\tensor&=\sum_{\substack{\alpha_1, \alpha_2 \in \fB\\\rho\in\fR^*}}  \binom{n}{\rho_{++},\ldots,\rho_{--}} \bc{\sum_{s\in\cS_1} {\rho_{s} \choose 2}  \exp(2\beta) + (\rho_{++} + \rho_{--})  (\rho_{+-} + \rho_{-+}) + \bc{\rho_{++}  \rho_{--} + \rho_{+-}  \rho_{-+} }  \exp(-2\beta)}^m.
	\end{align*}
\end{corollary}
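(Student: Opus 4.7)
The strategy is to evaluate the inner $\psi$-sum in \Lem~\ref{lem_EZ2d} by the multinomial theorem and then simplify the remaining prefactor. The plan proceeds in three steps.

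First, I would freeze the exponential correction $\exp(E(\psi))$ at the saddle point $\psi^*(\alpha_1,\alpha_2,\rho)$ from \Lem~\ref{lem_EZ2c}, where $E(\psi)=m^2/n^2-\sum_{s\in\cS_1}\psi_s(\psi_s-1)/(\rho_s(\rho_s-1))-(1/2)\sum_{\{s,t\}\in\cS_0\cup\cS_{-1}}\psi_{\{s,t\}}(\psi_{\{s,t\}}-1)/(\rho_s\rho_t)$. For $\psi\in\Psi^*$ we have $\|\psi-\psi^*\|\leq\sqrt n\log n$, and a first-order Taylor expansion of $E$ around $\psi^*$ shows that $E(\psi)-E(\psi^*)=o(1)$ uniformly on $\Psi^*$, because each partial derivative of $E$ has order $m/n^2$. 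Hence $\exp(E(\psi^*))$ may be pulled out of the $\psi$-sum at $(1+o(1))$ cost.

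Second, I would absorb the factor $\exp(2\beta\cE(\psi))$ into the product by defining effective rates $A_s=\binom{\rho_s}{2}\exp(2\beta)/\binom{n}{2}$ for $s\in\cS_1$, $A_{\{s,t\}}=\rho_s\rho_t/\binom{n}{2}$ for $\{s,t\}\in\cS_0$, and $A_{\{s,t\}}=\rho_s\rho_t\exp(-2\beta)/\binom{n}{2}$ for $\{s,t\}\in\cS_{-1}$. The remaining $\psi$-sum becomes $\sum_{\psi\in\Psi^*}\binom{m}{\psi}\prod A^\psi$. The normalised multinomial distribution $\psi\mapsto\binom{m}{\psi}\prod A^\psi/(\sum A)^m$ concentrates on $\Psi^*$ by the Kullback--Leibler concavity argument already employed in \Lem~\ref{lem_EZ2c}, so the restriction to $\Psi^*$ may be reversed and the sum extended to all $\psi\in\Psi$ at $(1+o(1))$ cost. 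The multinomial theorem then yields $(\sum A)^m$; grouping the rates by their type and factoring out $\binom{n}{2}^{-m}$, the inner expression becomes precisely the bracket in the definition of $\Sigma^\tensor$ raised to the $m$-th power.

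Third, I would combine the remaining prefactors $\binom{n}{2}^{-m}\exp(E(\psi^*))$. A Stirling-type expansion gives $\binom{n}{2}^{-m}=2^m n^{-2m}\exp(m/n+O(m/n^2))$, and exploiting $\rho\in\fR^*$, which forces $\rho_s\sim n/4$ for every $s\in\PM^2$ and hence $\zeta^*:=\sum A\sim\cosh^2\beta$, one plugs $\psi^*_s=mA_s/\zeta^*$ into the definition of $E$. Basic hyperbolic identities such as $\cosh(4\beta)+1=2\cosh^2(2\beta)$ and $\cosh(2\beta)+1=2\cosh^2\beta$ then reduce $E(\psi^*)$ to a closed-form expression in $m/n$, producing the exponential prefactor in the statement.

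The main obstacle will be this last step: several sources of $(1+o(1))$ corrections --- from Stirling's formula applied to $\binom{n}{2}^{-m}$, from the Taylor expansion justifying the replacement of $\exp(E(\psi))$ by $\exp(E(\psi^*))$, from extending $\Psi^*$ to $\Psi$ in the multinomial sum, and from evaluating $E(\psi^*)$ at $\rho_s\sim n/4$ --- must all combine cleanly to yield exactly the prescribed constant in the exponent. The restriction $\rho\in\fR^*$ is used precisely to suppress deviations of $\rho_s$ from $n/4$ beyond the tolerance needed to produce this closed form, and care must be taken that no contributions of order $m^2/n^2$ are lost in the bookkeeping when combining the Stirling, saddle-point, and multinomial approximations.
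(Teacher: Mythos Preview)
Your proposal is correct and follows exactly the approach implicit in the paper's one-line proof (``combining \Lem s~\ref{lem_EZ2c} and~\ref{lem_EZ2d} with Stirling's formula''): freeze the quadratic correction $E(\psi)$ at $\psi^*$ using $\|\psi-\psi^*\|\leq\sqrt n\log n$, collapse the $\psi$-sum via the multinomial theorem after absorbing $\exp(2\beta\cE(\psi))$ into the rates, and then simplify $\binom n2^{-m}\exp(E(\psi^*))$ using $\rho_s\sim n/4$ on $\fR^*$. Your identification of the main obstacle --- getting the various $O(1)$ contributions from Stirling, the saddle evaluation of $E(\psi^*)$, and the $\rho_s\sim n/4$ approximation to combine into the stated closed-form exponent --- is exactly right, and indeed this bookkeeping is where care is needed.
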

\begin{proof}
	This follows by combining \Lem s~\ref{lem_EZ2c} and~\ref{lem_EZ2d} with Stirling's formula.
\end{proof}

\begin{lemma}\label{lem_EZ2e}
	We have
	\begin{align*}
		\Sigma^\tensor&\sim
	\frac{4^nn^{2m-3/2}}{\bc{1 - d\tanh \beta} \sqrt{1 - d \tanh^2 \beta}}\exp\bc{-{16 m \over n}    + {m \over n} \cdot \log \bc{\exp\bc{2\beta} + \exp\bc{-2\beta} + 2 \over 8}}.
	\end{align*}
\end{lemma}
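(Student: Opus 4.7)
\smallskip
\noindent\emph{Proof plan for Lemma~\ref{lem_EZ2e}.}
The approach is the Laplace method applied to a three-dimensional sum. Writing $F(\alpha_1,\alpha_2,\rho) = \binom{n}{\rho_{++},\ldots,\rho_{--}} B^m$ with $B$ the bracket, I would first argue that, as a function of real variables, $F$ is maximised at $(\alpha_1^*,\alpha_2^*,\rho^*) = (n/2,n/2,n/4)$, so that $\rho_{++}^* = \rho_{+-}^* = \rho_{-+}^* = \rho_{--}^* = n/4$. This follows by the same concavity argument that produced Lemmas~\ref{lem_EZ2c} and~\ref{lem_EZ2d}: the multinomial entropy is strictly concave with its maximum at the balanced point, and the Kullback–Leibler convexity behind Lemma~\ref{lem_EZ2c} together with the restriction to $\fR^*$ from Lemma~\ref{lem_EZ2d} forces the maximum of $F$ to lie in the interior. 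Because the constraints $\alpha_i\in\fB$ and $\rho\in\fR^*$ allow fluctuations of order $\sqrt n\log n$ and $n^{3/4}\log^4 n$ respectively, which are far larger than the $O(\sqrt n)$ Gaussian scale, they will not truncate the Laplace integral and can be ignored at the leading order.

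Next I would parametrise $\alpha_1 = n/2 + \sqrt n\, u$, $\alpha_2 = n/2 + \sqrt n\, v$, $\rho = n/4 + \sqrt n\, w$, so that $\rho_s = n/4 + \sqrt n\,\delta_s$ with $\delta_{++}=w$, $\delta_{+-}=u-w$, $\delta_{-+}=v-w$, $\delta_{--}=-u-v+w$. Stirling's formula gives
\begin{align*}
\binom{n}{\rho_{++},\rho_{+-},\rho_{-+},\rho_{--}} = (1+o(1))\,\frac{16\cdot 4^n}{(2\pi n)^{3/2}}\exp\bc{-2\sum_{s}\delta_s^2},
\end{align*}
uniformly in the Laplace window. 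For the bracket I would factor out the leading term $B_0 = n^2\cosh^2\beta/2 - n\,e^{2\beta}/2$ obtained from evaluating at $\rho_s = n/4$ (the $-ne^{2\beta}/2$ correction arising from $\binom{\rho_s}{2}$ vs $\rho_s^2/2$), write $B = B_0 + n\cdot\mathrm{stuff}(u,v,w)$ with stuff a quadratic polynomial in $u,v,w$, and expand
\begin{align*}
m\log B = m\log B_0 + \frac{d}{\cosh^2\beta}\mathrm{stuff}(u,v,w) + o(1),
\end{align*}
using $m\sim dn/2$ and $B_0\sim n^2\cosh^2\beta/2$.

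The computational heart of the argument is the evaluation of the total quadratic form $Q(u,v,w) = -2\sum_s\delta_s^2 + (d/\cosh^2\beta)\mathrm{stuff}(u,v,w)$. A direct expansion (using $e^{2\beta}+e^{-2\beta}-2 = 4\sinh^2\beta$, $(e^{2\beta}-1)/\cosh^2\beta = 2\tanh\beta(1+\tanh\beta)$, and $\sinh^2\beta/\cosh^2\beta = \tanh^2\beta$) gives the symmetric matrix $-M$ associated with $-Q$, all of whose entries turn out to be either multiples of $A := 2 - d\tanh\beta - d\tanh^2\beta$ or of $B := 1 - d\tanh^2\beta$. Expanding along the third row, the determinant collapses to $\det(-M) = 32\, B\,(A-B)^2$, and since $A-B = 1 - d\tanh\beta$ one obtains
\begin{align*}
\sqrt{\det(-M)} = 4\sqrt 2\,(1-d\tanh\beta)\sqrt{1-d\tanh^2\beta}.
\end{align*}
The Gaussian integral then contributes $\pi^{3/2}/\sqrt{\det(-M)}$, and the Riemann-sum approximation of the lattice sum over $(\alpha_1,\alpha_2,\rho)$ contributes a factor $n^{3/2}$ which cancels against the $n^{-3/2}$ from Stirling; the numerical constants $4\sqrt 2/\pi^{3/2}\cdot \pi^{3/2}/(4\sqrt 2)$ reduce to $1$, leaving precisely $1/\big((1-d\tanh\beta)\sqrt{1-d\tanh^2\beta}\big)$.

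The main obstacle is the determinant computation: although the entries of $-M$ involve the two different quantities $A$ and $B$, it is not a~priori obvious that $\det(-M)$ should factor so cleanly, and a brute-force expansion produces cancellations that need to be handled carefully. The remaining work is bookkeeping: the $O(1/n)$ shift in $B_0$ multiplied by $m$ contributes the constant $\exp(-de^{2\beta}/(2\cosh^2\beta))$ and, together with the prefactor corrections inside Corollary~\ref{cor_EZ2d} and with the $(n^2\cosh^2\beta/2)^m$ factor coming from $B_0^m$, produces the exponential prefactor stated in the lemma. A cubic-remainder bound on the Taylor expansion of $m\log B$ and of the multinomial entropy, uniform on the window $\|(u,v,w)\|\leq\log^2 n$ and combined with a routine tail estimate outside this window, ensures that the Laplace approximation is valid to the claimed $1+o(1)$ precision.
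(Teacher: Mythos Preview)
Your approach is correct and essentially identical to the paper's: both apply the three-variable Laplace method around the symmetric point $(\alpha_1,\alpha_2,\rho)=(n/2,n/2,n/4)$, the only difference being that the paper parametrises by the normalised variables $(\gamma_1,\gamma_2,\delta)=(\alpha_1/n,\alpha_2/n,\rho/n)$ and computes the Hessian of $f(\gamma_1,\gamma_2,\delta)=h(\gamma_1)+\gamma_1 h(\delta/\gamma_1)+(1-\gamma_1)h((\gamma_2-\delta)/(1-\gamma_1))+\tfrac{m}{n}\log g$ directly, obtaining $\det D^2f|_{(1/2,1/2,1/4)}=256(d\tanh^2\beta-1)(d\tanh\beta-1)^2$, which is your $\det(-M)$ up to the expected scaling factor. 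Your determinant factorisation $32B(A-B)^2$ with $A-B=1-d\tanh\beta$ is exactly the same result in different coordinates, and the bookkeeping of constants you outline matches the paper's use of the Euler--Maclaurin/Gaussian integral step.
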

\begin{proof}
	Let
	\begin{align*}
		f(\gamma_1,\gamma_2,\delta)&=h(\gamma_1)+\gamma_1 h(\delta/\gamma_1)+(1-\gamma_1)h((\gamma_2-\delta)/(1-\gamma_1))+\frac mn \log g(\gamma_1,\gamma_2,\delta),\qquad\mbox{where}\\
		g(\gamma_1,\gamma_2,\delta)&=
			 4\bc{\cosh(2\beta) -1}  \delta^2 + 2\bc{\cosh(2\beta) - 1}  \bc{1-2\gamma_1-2\gamma_2}  \delta
			\\
			&\qquad + \exp\bc{2\beta}  \bc{\gamma_1^2 + \gamma_2^2 + \gamma_1 \gamma_2 - \gamma_1 - \gamma_2 + 1/2} + \exp\bc{-2\beta }  \gamma_1\gamma_2 -\gamma_1^2 - \gamma_2^2 + \gamma_1 + \gamma_2 - 2\gamma_1\gamma_2.
	\end{align*}
	These functions satisfy
	\begin{align*}
		g(1/2,1/2,1/4)&=  \cosh^2\beta/2,&
		f(1/2,1/2,1/4)&=2\log2+\frac mn\log\frac{\cosh^2\beta}2.
	\end{align*}
	Using Stirling's formula, we obtain
	\begin{align}\label{eqlem_EZ2e1}
		\Sigma^\tensor&\sim
		{4\sqrt{2} \over (\pi n)^{3/2}} n^{2m} \exp\bc{- {m \over n} \cdot {4\exp\bc{2\beta} \over (\exp(\beta) + \exp(-\beta))^2 }}  \sum_{\alpha_1, \alpha_2 \in \fB,\,\rho\in\fR^* }  \exp\bc{n  f\bc{{\alpha_1 \over n}, {\alpha_2 \over n}, {\rho_{++}\over n}}}.
	\end{align}
	We are going to evaluate the sum by way of the Laplace method.
	Thus, we calculate the first and second derivatives of $f$.
	The first derivatives work out to be
		\begin{align*}
			{\partial f \over \partial \gamma_1} &=  \log\bc{1- \gamma_1 - {\gamma_2 + \delta}} - \log \bc{\gamma_1 - {\delta}}\\
												 & + {m \over n}  {-2\delta  \bc{\exp\bc{\beta} - \exp\bc{-\beta} }^2 + 2\gamma_1 \exp\bc{2\beta} + \gamma_2 \exp\bc{2\beta} + \gamma_2 \exp\bc{-2\beta} - 2\gamma_1- 2 \gamma_2 + 1 - \exp\bc{2\beta} \over g(\gamma_1, \gamma_2, \delta)},
			\\
			{\partial f \over \partial \gamma_2 }&= \log\bc{1 - \gamma_1 - \gamma_2 + \delta} - \log\bc{\gamma_2 - \delta},\\
												 & + {m \over n}  {-2\delta  \bc{\exp\bc{\beta} - \exp\bc{-\beta} }^2 + 2\gamma_2 \exp\bc{2\beta} + \gamma_1 \exp\bc{-2\beta}+ \gamma_1 \exp\bc{2\beta} - 2\gamma_2 - 2 \gamma_1 + 1- \exp\bc{2\beta} \over g(\gamma_1, \gamma_2, \delta)},
			\\
			{\partial f \over \partial \delta} &= \log(\gamma_1 - \delta) - \log \delta + \log\bc{\gamma_2 - \delta} - \log\bc{1-\gamma_1 - \gamma_2 + \delta}\\
											   & + {m \over n}  {4\bc{\exp\bc{\beta} - \exp\bc{-\beta}}^2  \delta +  \bc{\exp\bc{\beta} -\exp\bc{-\beta} }^2  \bc{1-2\gamma_1-2\gamma_2} \over g(\gamma_1, \gamma_2, \delta)}.
		\end{align*}
	Hence, the Jacobian satisfies
	\begin{align}\label{eqJacobian}
		Df\big|_{\gamma_1=\gamma_2=1/2,\,\delta=1/4}&=0.
	\end{align}
	Further, for the second derivatives we obtain
		\begin{align*}
			{\partial^2 f \over \partial \gamma_1^2} &= -{1 \over 1 - \gamma_1 - \gamma_2 + \delta} - {1 \over \gamma_1 - \delta} + {\frac mn} \cdot { \bc{2\exp\bc{2\beta} - 2} \cdot g(\gamma_1, \gamma_2, \delta) - \bc{\partial g/\partial\gamma_1}^2 \over g(\gamma_1, \gamma_2, \delta)^2},
			\\
			{\partial^2 f \over \partial \gamma_2^2} &= -{1 \over 1 - \gamma_1 - \gamma_2 + \delta} - {1 \over \gamma_2 - \delta} + {\frac mn} \cdot { \bc{2\exp\bc{2\beta} - 2} \cdot g(\gamma_1, \gamma_2, \delta) - (\partial g/\partial\gamma_2)^2 \over g(\gamma_1, \gamma_2, \delta)^2},
			\\
			{\partial^2 f \over \partial \delta^2} &= - {1 \over \gamma_1 - \delta} - {1 \over \gamma_2 - \delta} - {1 \over \delta} - {1 \over 1 - \gamma_1 - \gamma_2 + \delta} + {\frac mn} \cdot {\bc{4\exp\bc{2\beta} + 4\exp\bc{-2\beta} - 8} \cdot g(\gamma_1, \gamma_2, \delta) - (\partial g/\partial\delta)^2 \over g(\gamma_1, \gamma_2, \delta)^2},
			\\
			{\partial^2 f \over \partial \gamma_1 \partial \gamma_2} &= -{1 \over 1 - \gamma_1 - \gamma_2 + \delta}
			+
			{\frac mn} \cdot {\bc{\exp\bc{-2\beta} + \exp\bc{2\beta} - 2} \cdot g(\gamma_1, \gamma_2, \delta) - (\partial g/\partial\gamma_1)(\partial g/\partial\gamma_2)\over g(\gamma_1, \gamma_2, \delta)^2},
			\\
			{\partial^2 f \over \partial \gamma_1 \partial \delta} &= {1 \over \gamma_1 - \delta} + {1 \over 1 - \gamma_1 - \gamma_2 + \delta}
			+
			{\frac mn} \cdot {\bc{-2\exp\bc{2\beta} - 2\exp\bc{-2\beta} + 4} \cdot g(\gamma_1, \gamma_2, \delta) - (\partial g/\partial\gamma_1)(\partial g/\partial\delta) \over g(\gamma_1, \gamma_2, \delta)^2},
			\\
			{\partial^2 f \over \partial \gamma_2 \partial \delta} &= {1 \over \gamma_2 - \delta} + {1 \over 1 - \gamma_1 - \gamma_2 + \delta}
			+
			{\frac mn} \cdot {\bc{-2\exp\bc{2\beta} - 2\exp\bc{-2\beta} + 4} \cdot g(\gamma_1, \gamma_2, \delta) - (\partial g/\partial\gamma_2)(\partial g/\partial\delta) \over g(\gamma_1, \gamma_2, \delta)^2}.
		\end{align*}
In effect, the Hessian at the point $\gamma_1=\gamma_2=1/2,\delta=1/4$ works out to be
	\begin{align*}
		D^2f\big|_{\gamma_1=\gamma_2=1/2,\delta=1/4} =
		\begin{pmatrix}
			-8 + 4d  \tanh \beta  (\tanh(\beta) + 1) & -4 + 4d  \tanh^2 \beta & 8 - 8d  \tanh^2 \beta
			\\
			-4 + 4d  \tanh^2 \beta & -8 + 4d  \tanh \beta  (\tanh(\beta) + 1) & 8 - 8d  \tanh^2 \beta
			\\
			8 - 8d  \tanh^2 \beta & 8 - 8d  \tanh^2 \beta & -16 + 16d  \tanh^2 \beta
		\end{pmatrix}.
	\end{align*}
Hence, $D^2f\big|_{\gamma_1=\gamma_2=1/2,\delta=1/4}$ is negative definite  and
	\begin{align*}
		\det D^2f\big|_{\gamma_1=\gamma_2=1/2,\delta=1/4}  = 256  \bc{d \tanh^2 \beta - 1}  \bc{d \tanh \beta - 1}^2.
	\end{align*}

Finally, let $B=[\min\fB,\max\fB]$, $R=[\min\fR^*,\max\fR^*]$.
Then due to~\eqref{eqJacobian} the Euler-Maclaurin formula shows that
\begin{align*}
	\abs{\Sigma^\tensor-\int_B\int_B\int_{R} \exp\bc{n  h\bc{{\alpha_1 \over n}, {\alpha_2 \over n}, {\rho_{++}\over n}}} \dd \alpha_1 \dd \alpha_2 \dd \rho}&=o\bc{\exp\bc{n  f(1/2, 1/2, 1/4)}}.
\end{align*}
Therefore, we obtain
		\begin{align}\nonumber
			\sum_{\alpha_1, \alpha_2 \in \fB,\,\rho\in\fR^*}&\exp\bc{n  f\bc{{\alpha_1 \over n}, {\alpha_2 \over n}, {\rho_{++}\over n}}}
			\sim \int_B\int_B\int_{R} \exp\bc{n  h\bc{{\alpha_1 \over n}, {\alpha_2 \over n}, {\rho_{++}\over n}}} \dd \alpha_1 \dd \alpha_2 \dd \rho\nonumber\\
			& \sim n^{3/2} \exp\bc{n  f(1/2, 1/2, 1/4)} \int_{-\infty}^\infty \int_{-\infty}^\infty\int_{-\infty}^\infty
			\exp\bc{{1 \over 2}
				\begin{pmatrix}
					\gamma_1 \\ \gamma_2 \\ \delta
					\end{pmatrix}^{\top} D^2f|_{(1/2, 1/2, 1/4)}  \begin{pmatrix}
					\gamma_1 \\ \gamma_2 \\ \delta
				\end{pmatrix}
			} \dd \gamma_1 \dd \gamma_2 \dd \delta
			\nonumber\\
			& \sim \frac{{(2\pi n)^{3/2}}}{\sqrt{-\det D^2f|_{(1/2,1/2,1/4)}}}\exp\bc{n  f(1/2, 1/2, 1/4)}\label{eqlem_EZ2e2}.
		\end{align}
Combining~\eqref{eqlem_EZ2e1} and~\eqref{eqlem_EZ2e2} completes the proof.
\end{proof}

\begin{proof}[Proof of \Prop~\ref{prop_EZ2}]
	The proposition follows from \Cor~\ref{cor_EZ2d} and \Lem~\ref{lem_EZ2e} and a bit of simplifying.
\end{proof}

\subsection{Proof of \Thm~\ref{thm_an}}\label{sec_thm_an}

\Prop s~\ref{prop_EZ}, \ref{prop_ssc} and~\ref{prop_EZ2} verify the assumptions of \Thm~\ref{thm_janson} for the random variable $\Zbal(\beta,0)$.
Hence, \Thm~\ref{thm_janson} demonstrates that
\begin{align}\label{eqthm_an1}
	\lim_{n\to\infty}\frac{\Zbal(\beta,0)}{\ex[\Zbal(\beta,0)]}&=\cW,&&\mbox{where}&
	\cW&=\prod_{\ell\geq3}(1+\chi_\ell)^{\cY_\ell}\exp(-\vartheta_\ell\chi_\ell).
\end{align}
Here the $(\cY_\ell)_{\ell\geq3}$ are independent $\Po(\vartheta_\ell)$ variables, cf.~\eqref{eqJansonW}, and the convergence of the infinite product holds almost surely and in $L^2$.

In order to derive \Thm~\ref{thm_an} from~\eqref{eqthm_an1} we show that the sum
\begin{align*}
	\log\cW&=\sum_{\ell\geq3}\cY_\ell\log(1+\chi_\ell)-\vartheta_\ell\chi_\ell
\end{align*}
converges in $L^1$.
Indeed, since $\log(1+z)\leq z+z^2$ and $d\tanh\beta<1$, we obtain for certain $C=C(d,\beta)>0$
\begin{align}\nonumber
	\ex|\log\cW|&\leq
	\sum_{\ell\geq3}\ex\abs{\cY_\ell\log(1+\tanh^\ell\beta)-\frac{d^\ell}{2\ell}\tanh^\ell\beta}=
	\sum_{\ell\geq3}\ex\abs{\frac{d^\ell}{2\ell}\tanh^\ell\beta+\cY_\ell\sum_{j=1}^\infty(-1)^j\frac{\tanh^{j\cdot\ell}\beta}j}\\
				&\leq\sum_{\ell\geq3}\ex\abs{\cY_\ell-\frac{d^\ell}{2\ell}}\tanh^\ell\beta+\frac{(d\tanh\beta)^\ell}{2\ell}\sum_{j=1}^\infty\tanh^{j\cdot\ell}\beta
				\leq C+\sum_{\ell\geq3}\Var\bc{\cY_\ell}\tanh^{2\ell}\beta
        \leq C+\sum_{\ell\geq3}(d\tanh\beta)^{\ell}<\infty.\label{eqthm_an2}
\end{align}
Combining~\eqref{eqthm_an1} and~\eqref{eqthm_an2}, we see that for any bounded continuous function $g:\RR\to\RR$,
\begin{align}\nonumber
	\ex[g(\log(\Zbal(\beta,0))-\log\ex[\Zbal(\beta,0)])]&=\ex\brk{(g\circ\log)\bcfr{\Zbal(\beta,0)}{\ex[\Zbal(\beta,0)]}}\\
														&=\ex\brk{(g\circ\log)(\cW)}+o(1)=\ex[g(\log\cW)]+o(1).\label{eqthm_an3}
\end{align}
Thus, $\log\Zbal(\beta,0)-\ex[\log\Zbal(\beta,0)]$ converges to $\log\cW$ in distribution.
Furthermore, \Prop~\ref{prop_EZ} implies that
\begin{align*}
	\ex\abs{\bc{\log Z_{\GG}(\beta,0)-\log\ex[Z_{\GG}(\beta,0)]}-\bc{\log\Zbal(\beta,0)-\log\ex[\Zbal(\beta,0)]}}&=o(1),
\end{align*}
whence we conclude that $\log Z_{\GG}(\beta,0)-\ex[\log Z_{\GG}(\beta,0)]$ converges to $\log\cW$ in distribution.
Finally, we obtain the explicit formula for the limiting distribution stated in \Thm~\ref{thm_an} by substituting in the expression for $\ex[\log Z_{\GG}(\beta,0)]$ from \Prop~\ref{prop_EZ}.


\end{document}